\newtheorem{thm}{Theorem}
\newtheorem{prop}[thm]{Proposition} 
\newtheorem{lem}[thm]{Lemma}
\newtheorem{cor}[thm]{Corollary}
\newtheorem*{claim}{Claim}
\theoremstyle{remark}
\newtheorem*{rem}{Remark}
\theoremstyle{definition}
\newtheorem{defn}[thm]{Definition}
\newtheorem{example}[thm]{Example} 
\newtheorem{question}[thm]{Question}
\newtheorem*{axm}{Axiom}
\title[From alpha-Ramsey to ultra-Ramsey]{From abstract alpha-Ramsey theory to abstract ultra-Ramsey theory}
\author{Timothy Trujillo}
\address{Colorado School of Mines\\ Department of Applied Mathematics and Statistics\\ 1500 Illinois St., Golden, CO 80401, USA}
\email{trujillo@mines.edu}
\begin{document}
\maketitle

\begin{abstract}
We work within the framework of the Alpha-Theory introduced by Benci and Di Nasso. The Alpha-Theory postulates a few natural properties for an infinite ``ideal'' number $\alpha$. The formulation provides an elementary axiomatics for the methods of abstract ultra-Ramsey theory. 

The main results are Theorem \ref{alpha Ramsey theorem}, Theorem \ref{abstract alpha Ramsey theorem}, Theorem \ref{abstract alpha-Ellentuck theorem} and Theorem \ref{abstract ultra-Ellentuck theorem}. Theorem \ref{alpha Ramsey theorem} is an infinite-dimensional extension of the celebrated Ramsey's Theorem. We show that corollaries of this result include the Galvin-Pirky Theorem, the Silver Theorem and the $\vec{\alpha}$-Ellentuck Theorem. We prove that, under the assumption of the $\mathfrak{c}^{+}$-enlarging property, the $\vec{\alpha}$-Ellentuck Theorem is equivalent to the Ultra-Ellentuck Theorem of Todorcevic. Moreover, we use the results to prove a theorem of Louveau about the infinite-dimensional Ramsey theory of selective ultrafilters, and provide a new Ramsey theoretic characterization of the strong Cauchy infinitesimal principle introduced by Benci and Di Nasso.

Theorem \ref{abstract alpha Ramsey theorem} is an abstraction of Theorem \ref{alpha Ramsey theorem} to the setting of triples $(\mathcal{R},\le,r)$ where $\mathcal{R}\not=\emptyset$, $\le$ is a quasi-order on $\mathcal{R}$ and $r$ is a function with domain $\mathbb{N}\times \mathcal{R}$. We use Theorem \ref{abstract alpha Ramsey theorem} to develop the Abstract $\vec{\alpha}$-Ellentuck Theorem, Theorem \ref{abstract alpha-Ellentuck theorem}, and the Abstract Ultra-Ellentuck Theorem, Theorem \ref{abstract ultra-Ellentuck theorem}, extending the Abstract Ellentuck Theorem along the same lines as the $\vec{\alpha}$-Ellentuck Theorem and Ultra-Ellentuck Theorem extend the Ellentuck Theorem, respectively. We conclude with some examples illustrating the theory and an open question related to the local Ramsey theory developed by Di Prisco, Mijares and Nieto.
\end{abstract}

\tableofcontents

\section{Introduction}
All of the main theorems and corollaries follow from Theorem \ref{alpha Ramsey theorem} and Theorem \ref{abstract alpha Ramsey theorem}. These two theorems can be seen as unrestricted infinite-dimensional generalizations of Ramsey's Theorem in the setting of $\vec{\alpha}$-trees. Here $\vec{\alpha}$ denotes some sequence of nonstandard hypernatural numbers indexed by the collection of finite subsets of $\mathbb{N}$. In fact, we show that Ramsey's Theorem follows directly from Theorem \ref{alpha Ramsey theorem} and an abstract form of Ramsey's Theorem follows from Theorem \ref{abstract alpha Ramsey theorem}. 

The Alpha-Theory introduced by Benci and Di Nasso in \cite{AlphaTheory} provides an elementary theoretical foundation for studying $\vec{\alpha}$-trees and developing the $\vec{\alpha}$-Ramsey theory. Under certain saturation assumptions on the Alpha-Theory, $\vec{\alpha}$-trees coincide with $\vec{\mathcal{U}}$-trees as introduced by Blass in \cite{BlassU-trees}. The simplicity of the Alpha-Theory makes the proofs of Theorem \ref{alpha Ramsey theorem} and Theorem \ref{abstract alpha Ramsey theorem} readily apparent. Theorem \ref{alpha Ramsey theorem} does not appear in \cite{BlassU-trees} where $\vec{\mathcal{U}}$-trees are introduced nor in \cite{RamseySpaces} where the ultra-Ramsey theory is developed.

\section{The Alpha-Theory}
Nonstandard analysis was introduced by Robinson in \cite{Robinson1, Robinson2} to reintroduced infinitesimal and infinite numbers into analysis. More recently, Di Nasso and Baglini have had success applying nonstandard analysis to Ramsey theory see \cite{NSA1,NSA2,NSA3,NSA4,NSA5}. Using model theory, Robinson gave a rigorous development of the calculus of infinitesimals. Unfortunately, for many researchers the formalism appeared to be too technical. In an analogous way the ultra-Ramsey theory also can be seen as too technical. Recently, Benci and Di Nasso in \cite{AlphaTheory} have introduced a simplified presentation of nonstandard analysis called the Alpha-Theory. Their presentation shows that technical concepts such as ultrafilter, ultrapower, superstructure and the $*$-transfer principle are not needed to rigorously develop calculus with infinitesimals. In this paper, we show that the same elementary foundation can be used to develop ultra-Ramsey theory and abstract ultra-Ramsey theory. 

Benci and Di Nasso in \cite{AlphaTheory} describe the Alpha-Theory as ``an axiomatic system that postulates a few natural properties for an infinite ``ideal'' natural number $\alpha$.'' The idea of adjoining a new number that behaves like a very large natural number goes back to work of Schmeiden and Laugwitz \cite{omegatheory}. They adjoin a new symbol $\Omega$ and assume that a `formula' is true at $\Omega$ if it is true for all sufficiently large natural numbers. Benci and Di Nasso in \cite{AlphaTheory} state that the Alpha-Theory approach can be seen as a strengthening of the $\Omega$-Theory introduced by Schmeiden and Laugwitz in \cite{omegatheory}. In this section, we follow \cite{AlphaTheory} and give an informal presentation of the Alpha-Theory. For a formal presentation of the Alpha-Theory as a first-order theory see the final section of \cite{AlphaTheory}.

Before introducing the axioms of the Alpha-Theory we make the assumption, as in \cite{AlphaTheory}, that all usual axioms of ZFC are true. We introduce a new symbol $\alpha$ whose properties are postulated by the following five axioms.

\begin{axm}[$\alpha$1 Extension]
For all sequences $\left<\varphi_{i} : i\in \mathbb{N}\right>$ there a unique element $\varphi[\alpha]$, called the ``ideal value of $\varphi$.''
\end{axm}
\begin{axm}[$\alpha$2 Composition]
If $\left<\varphi_{i} : i\in \mathbb{N}\right>$ and $\left<\psi_{i} : i\in \mathbb{N}\right>$ are sequences and $f$ is any function such that $f\circ \varphi$ and $f\circ \psi$ make sense, then 
$$\varphi[\alpha]=\psi[\alpha] \implies (f\circ \varphi)[\alpha] = (f\circ \psi)[\alpha].$$
\end{axm}
\begin{axm}[$\alpha$3 Number]
Suppose that $r\in \mathbb{R}$. Let $\left<\varphi_{i} : i\in\mathbb{N}\right>$ and $\left<\psi_{i}:i\in \mathbb{N}\right>$ be the sequences such that for all $i\in \mathbb{N}$, $\varphi_{i}=r$ and $\psi_{i} = i$. Then $\varphi[\alpha]=r$ and $\psi[\alpha]= \alpha\not \in \mathbb{N}$.
\end{axm}

\begin{axm}[$\alpha$4 Pair]
For all sequences $\left<\varphi_{i} : i\in \mathbb{N}\right>$, $\left<\psi_{i} : i\in \mathbb{N}\right>$ and $\left<\vartheta_{i} : i\in \mathbb{N}\right>$,
$$(\forall i\in \mathbb{N},\ \vartheta_{i} =\{\varphi_{i},\psi_{i}\}) \implies \vartheta[\alpha] =\{\varphi[\alpha],\psi[\alpha]\}.$$
\end{axm}

\begin{axm}[$\alpha$5 Internal Set]
If $\left<\varphi_{i} : i\in \mathbb{N}\right>$ is the sequence such that for all $i\in \mathbb{N}$, $\varphi_{i} =\emptyset$, then $\varphi[\alpha]=\emptyset$. If $\left<\psi_{i} : i\in \mathbb{N}\right>$ is a sequence of nonempty sets, then 
$$\psi[\alpha] =\{ \vartheta[\alpha] : \forall i\in \mathbb{N}, \vartheta_{i} \in \psi_{i}\}.$$
\end{axm}

\begin{defn}
For all sets $A$, we let $\leftidx{^{*}}{A}$ denote the ideal value of the constant sequence $\left<\varphi_{i}:i\in\mathbb{N}\right>$ where for all $i\in\mathbb{N}$, $\varphi_{i}= A$. We call $\leftidx{^{*}}{A}$ the \emph{$*$-transform of $A$}. Note that by the Axiom $\alpha 5$, $\leftidx{^{*}}{A}$ consists of the ideal values of sequences of elements from $A$.
\end{defn}
The set of \emph{hypernatural numbers} is the $*$-transform of the set of natural numbers. Notice that by the number axiom and the internal set axiom, $\alpha \in \leftidx{^{*}}{\mathbb{N}}\setminus \mathbb{N}.$ The set of \emph{nonstandard hypernatural numbers} is exactly the set $\leftidx{^{*}}{\mathbb{N}}\setminus \mathbb{N}.$ In particular, $\alpha$ is an example of a nonstandard hypernatural number. The next proposition follows easily from $\alpha 1$-$\alpha 5$ (for a proof see \cite{AlphaTheory}). The proposition shows that the $*$-transform preserves all basic operations of sets with the exception of the powerset.

\begin{prop}[Proposition 2.2, \cite{AlphaTheory}]\label{star transform} For all sets $A$ and $B$ the following hold:
\begin{multicols}{2}
\begin{enumerate}
\item $A=B \iff \leftidx{^{*}}{A} = \leftidx{^{*}}{B}$
\item $A\in B \iff \leftidx{^{*}}{A} \in \leftidx{^{*}}{B}$
\item $A \subseteq B \iff \leftidx{^{*}}{A} \subseteq \leftidx{^{*}}{B}$
\item $\leftidx{^{*}}{\{A,B\}} = \{\leftidx{^{*}}{A},\leftidx{^{*}}{B}\}$
\item $\leftidx{^{*}}{(A,B)} = (\leftidx{^{*}}{A},\leftidx{^{*}}{B})$
\item $\leftidx{^{*}}{(A\cup B)} = \leftidx{^{*}}{A}\cup\leftidx{^{*}}{B}$
\item $\leftidx{^{*}}{(A\cap B)} = \leftidx{^{*}}{A}\cap\leftidx{^{*}}{B}$
\item $\leftidx{^{*}}{(A\setminus B)} = \leftidx{^{*}}{A}\setminus\leftidx{^{*}}{B}$
\item $\leftidx{^{*}}{(A\times B)} = \leftidx{^{*}}{A}\times\leftidx{^{*}}{B}$
\end{enumerate}
\end{multicols}
\end{prop}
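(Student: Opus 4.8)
The plan is to sort the nine items by the axiom that carries them. The ``$\Longrightarrow$'' halves of (1)--(3), together with (4) and (5), are essentially immediate from $\alpha 1$, $\alpha 4$ and $\alpha 5$. The ``$\Longleftarrow$'' halves of (1)--(3) --- that $\leftidx{^{*}}{(\cdot)}$ reflects equality, membership and inclusion --- are the real content, and I would extract them from the Composition Axiom $\alpha 2$ together with the Number Axiom $\alpha 3$. Items (6)--(9) will then follow by combining (3) with a uniform ``sequence-correction'' argument, again powered by $\alpha 2$, plus the auxiliary fact --- a direct consequence of $\alpha 4$ --- that the ideal value of a sequence of Kuratowski pairs is the Kuratowski pair of the ideal values; call this the \emph{pairs lemma}.

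For the easy directions: if $A=B$ the two defining constant sequences literally coincide, so uniqueness in $\alpha 1$ gives (1)$\Rightarrow$. If $A\in B$, the constant sequences $\varphi_i=A$ and $\psi_i=B$ satisfy $\varphi_i\in\psi_i$ for all $i$, so $\alpha 5$ (with the nonempty $\psi_i=B$) yields $\leftidx{^{*}}{A}=\varphi[\alpha]\in\psi[\alpha]=\leftidx{^{*}}{B}$, which is (2)$\Rightarrow$; quantifying over $x\in A$ and applying $\alpha 5$ once more gives (3)$\Rightarrow$. Item (4) is a single application of $\alpha 4$ to the constant sequence $\vartheta_i=\{A,B\}=\{\varphi_i,\psi_i\}$; its special case $A=B$ gives $\leftidx{^{*}}{\{A\}}=\{\leftidx{^{*}}{A}\}$, and unwinding $(A,B)=\{\{A\},\{A,B\}\}$ then gives (5). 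Iterating $\alpha 4$ along an arbitrary sequence of Kuratowski pairs proves the pairs lemma. (I also note $\leftidx{^{*}}{\{0,1\}}=\{\leftidx{^{*}}{0},\leftidx{^{*}}{1}\}=\{0,1\}$ by (4) and $\alpha 3$, which I use repeatedly below.)

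The crux is (2)$\Leftarrow$, and I expect this to be the step requiring real care, since it is precisely where the Alpha-Theory must replace the usual ultrafilter/{\L}o{\'s}-theorem reasoning. Suppose $\leftidx{^{*}}{A}\in\leftidx{^{*}}{B}$; then $B\neq\emptyset$ (otherwise $\leftidx{^{*}}{B}=\emptyset$ by $\alpha 5$), so $\alpha 5$ lets me write $\leftidx{^{*}}{A}=\vartheta[\alpha]$ with $\vartheta_i\in B$ for all $i$. If $A\notin B$, then $\vartheta_i\neq A$ for every $i$; let $g$ be the function on $\{A\}\cup B$ with $g(A)=0$ and $g(x)=1$ otherwise. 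Then $g$ composed with the constant sequence $A$ is the constant $0$-sequence and $g\circ\vartheta$ is the constant $1$-sequence, so by $\alpha 3$ their ideal values are $0$ and $1$; but these two sequences agree after $*$ since $\leftidx{^{*}}{A}=\vartheta[\alpha]$, so $\alpha 2$ forces $0=1$, a contradiction. Hence $A\in B$. Now (3)$\Leftarrow$ is immediate ($x\in A\Rightarrow\leftidx{^{*}}{x}\in\leftidx{^{*}}{A}\subseteq\leftidx{^{*}}{B}\Rightarrow x\in B$ by (2)$\Leftarrow$), and (1)$\Leftarrow$ follows from (3) applied in both directions.

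For (6)--(9) I would first dispose of the degenerate cases (one of $A,B$ empty, or $A\subseteq B$, etc.) directly from $\alpha 5$ and (3), and otherwise fix base points $a_0\in A$, $b_0\in B$ and argue uniformly. For (6): ``$\supseteq$'' is (3) applied to $A,B\subseteq A\cup B$; for ``$\subseteq$'', given $y=\vartheta[\alpha]$ with $\vartheta_i\in A\cup B$, set $g(x)=0$ on $A$ and $g(x)=1$ on $B\setminus A$, put $n=(g\circ\vartheta)[\alpha]\in\{0,1\}$, let $h$ on $\{0,1\}\times(A\cup B)$ have $h(0,x)=x$ and $h(1,x)=a_0$, and set $\vartheta'_i=h(g(\vartheta_i),\vartheta_i)$, so $\vartheta'_i\in A$ always. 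Since the sequence $\langle h(0,\vartheta_i)\rangle$ is just $\langle\vartheta_i\rangle$, and the pair-sequences $\langle(g(\vartheta_i),\vartheta_i)\rangle$ and $\langle(0,\vartheta_i)\rangle$ have, by the pairs lemma, the common ideal value $(0,y)$ when $n=0$, the Composition Axiom $\alpha 2$ gives $\vartheta'[\alpha]=\vartheta[\alpha]=y$, whence $y\in\leftidx{^{*}}{A}$; symmetrically $y\in\leftidx{^{*}}{B}$ when $n=1$. The same separating-function-plus-correction pattern proves (8): a $\{0,1\}$-valued function detecting membership in $B$ shows $\leftidx{^{*}}{(A\setminus B)}\cap\leftidx{^{*}}{B}=\emptyset$ directly via $\alpha 2$ and $\alpha 3$, and the correction argument supplies $\leftidx{^{*}}{A}\setminus\leftidx{^{*}}{B}\subseteq\leftidx{^{*}}{(A\setminus B)}$. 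Then (7) follows from $A\cap B=A\setminus(A\setminus B)$ and two uses of (8). Finally (9) follows from the pairs lemma and (3): an element of $\leftidx{^{*}}{(A\times B)}$ is $\vartheta[\alpha]$ with $\vartheta_i=(a_i,b_i)$, $a_i\in A$, $b_i\in B$, hence equals $(a[\alpha],b[\alpha])\in\leftidx{^{*}}{A}\times\leftidx{^{*}}{B}$ by the pairs lemma, and conversely any $(a[\alpha],b[\alpha])$ of that form is the ideal value of $\langle(a_i,b_i)\rangle$, an element of $\leftidx{^{*}}{(A\times B)}$.
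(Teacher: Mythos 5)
Your proposal is correct, but there is nothing in the paper to compare it against: the paper deliberately omits a proof of Proposition \ref{star transform}, stating only that it ``follows easily from $\alpha 1$--$\alpha 5$'' and deferring to Proposition 2.2 of Benci--Di Nasso's Alpha-Theory paper. So what you have written is a self-contained derivation of material the paper treats as imported background. Your organization is sound: the forward halves of (1)--(3) and items (4)--(5) really are immediate from $\alpha 1$, $\alpha 4$, $\alpha 5$; the reflection direction (2)$\Leftarrow$ via a two-valued separating function $g$ together with $\alpha 2$ and $\alpha 3$ (forcing $0=1$ if $A\notin B$) is exactly the kind of composition argument the Alpha-Theory is designed to support, and (1)$\Leftarrow$, (3)$\Leftarrow$ then follow formally; the ``pairs lemma'' obtained by iterating $\alpha 4$ on Kuratowski pairs correctly yields (5) and (9); and the separating-function-plus-correction pattern, with $\leftidx{^{*}}{\{0,1\}}=\{0,1\}$ and $\alpha 2$ applied to the two pair-sequences with common ideal value, does deliver (6) and (8), with (7) reduced to (8) via $A\cap B=A\setminus(A\setminus B)$. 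The only spot where I would add a sentence is the $n=1$ branch of the inclusion $\leftidx{^{*}}{A}\setminus\leftidx{^{*}}{B}\subseteq\leftidx{^{*}}{(A\setminus B)}$ in (8): the correction there needs a base point in $A\cap B$, which exists because otherwise $g\circ\vartheta$ is the constant $0$ sequence and $\alpha 3$ forces $n=0$ (equivalently, because you already disposed of the degenerate case $A\cap B=\emptyset$); making that explicit closes the one place where the sketch leans on the reader.
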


 Recall that in ZFC a binary relation $R$ between two sets $A$ and $B$ is identified with the set $\{(x,y)\in A\times B : x R y\}$. Hence, $\leftidx{^{*}}{R}$ is a binary relation between $\leftidx{^{*}}{A}$ and $\leftidx{^{*}}{B}$. The same holds of $n$-place relations. In particular, if $f:A\rightarrow B$ is a function then $\leftidx{^{*}}{f}$ is a binary relation between $\leftidx{^{*}}{A}$ and $\leftidx{^{*}}{B}$. The next proposition shows that $\leftidx{^{*}}{f}$ is also a function.

\begin{prop}[Proposition 2.3, \cite{AlphaTheory}]
Let $f:A\rightarrow B$ be a function. Then $\leftidx{^{*}}{f}:\leftidx{^{*}}{A} \rightarrow \leftidx{^{*}}{B}$ is a function such that, for every sequence $\varphi:\mathbb{N}\rightarrow A$,
$$\leftidx{^{*}}{f}(\varphi[\alpha]) = (f\circ \varphi)[\alpha].$$
Moreover, $f$ is one-to-one if and only if $\leftidx{^{*}}{f}$ is one-to-one.
\end{prop}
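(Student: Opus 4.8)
The plan is to show that $\leftidx{^{*}}{f}$ is a function from $\leftidx{^{*}}{A}$ to $\leftidx{^{*}}{B}$ by verifying the two required properties---that it is total and single-valued---and then to establish the displayed formula along the way, since that formula essentially encodes why the relation is functional. First I would let $\xi\in\leftidx{^{*}}{A}$ be arbitrary. By the Internal Set Axiom $\alpha 5$ (and the definition of $\leftidx{^{*}}{A}$) there is a sequence $\varphi:\mathbb{N}\to A$ with $\xi=\varphi[\alpha]$. Since each $f(\varphi_i)$ makes sense, the composite sequence $f\circ\varphi$ has an ideal value $(f\circ\varphi)[\alpha]$, and because $f:A\to B$ we have $(f\circ\varphi)_i=f(\varphi_i)\in B$ for all $i$, so $(f\circ\varphi)[\alpha]\in\leftidx{^{*}}{B}$. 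Now, viewing $f$ as the set of pairs $\{(\varphi_i,f(\varphi_i)):i\in\mathbb{N}\}\subseteq$ the graph, the pair sequence $\left<(\varphi_i,f(\varphi_i)):i\in\mathbb{N}\right>$ has ideal value $(\varphi[\alpha],(f\circ\varphi)[\alpha])$ by $\alpha 4$ together with Proposition \ref{star transform}(5) applied pointwise (more carefully: the sequence of ordered pairs, each lying in the graph of $f$, has its ideal value lying in $\leftidx{^{*}}{f}$ by $\alpha 5$, and that ideal value equals $(\varphi[\alpha],(f\circ\varphi)[\alpha])$). Hence $(\xi,(f\circ\varphi)[\alpha])\in\leftidx{^{*}}{f}$, which shows totality and simultaneously gives the formula $\leftidx{^{*}}{f}(\xi)=(f\circ\varphi)[\alpha]$ once single-valuedness is known.

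Next I would check single-valuedness. Suppose $(\xi,\eta_1),(\xi,\eta_2)\in\leftidx{^{*}}{f}$. By $\alpha 5$ there are sequences $\left<(a_i,b_i)\right>$ and $\left<(a'_i,b'_i)\right>$ of elements of the graph of $f$ with ideal values $(\xi,\eta_1)$ and $(\xi,\eta_2)$ respectively; so $b_i=f(a_i)$ and $b'_i=f(a'_i)$ for all $i$. Writing these as $\left<a_i\right>=\varphi$, $\left<a'_i\right>=\varphi'$, we get $\varphi[\alpha]=\xi=\varphi'[\alpha]$. The key move is to apply the Composition Axiom $\alpha 2$ with the function $f$: since $\varphi[\alpha]=\varphi'[\alpha]$, we get $(f\circ\varphi)[\alpha]=(f\circ\varphi')[\alpha]$, i.e.\ $\eta_1=\eta_2$. (To be fully rigorous about extracting the first-coordinate sequences from the pair sequences, one uses $\alpha 2$ again with the projection maps, or simply notes that by $\alpha 5$ a sequence of pairs in the graph is the "join" of its coordinate sequences.) This completes the proof that $\leftidx{^{*}}{f}$ is a function, and the displayed identity $\leftidx{^{*}}{f}(\varphi[\alpha])=(f\circ\varphi)[\alpha]$ now holds for every sequence $\varphi:\mathbb{N}\to A$.

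Finally, for the injectivity equivalence: if $f$ is one-to-one, let $f^{-1}:f[A]\to A$ be its inverse and observe that for any sequence $\varphi:\mathbb{N}\to A$, $(f^{-1}\circ f\circ\varphi)[\alpha]=\varphi[\alpha]$, so $\leftidx{^{*}}{(f^{-1})}$ acts as a left inverse of $\leftidx{^{*}}{f}$ on $\leftidx{^{*}}{A}$; hence $\leftidx{^{*}}{f}$ is one-to-one. Conversely, if $f$ is not one-to-one, pick $a\ne a'$ in $A$ with $f(a)=f(a')$; by Proposition \ref{star transform}(1) the constant sequences give $\leftidx{^{*}}{a}\ne\leftidx{^{*}}{a'}$ in $\leftidx{^{*}}{A}$, while $\leftidx{^{*}}{f}(\leftidx{^{*}}{a})=\leftidx{^{*}}{(f(a))}=\leftidx{^{*}}{(f(a'))}=\leftidx{^{*}}{f}(\leftidx{^{*}}{a'})$, so $\leftidx{^{*}}{f}$ is not one-to-one. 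I expect the main obstacle to be purely bookkeeping: carefully tracking how the graph of $f$, as a set of ordered pairs, interacts with $\alpha 4$ and $\alpha 5$ so that "a sequence of pairs from the graph" decomposes into the right pair of coordinate sequences. Once that is handled cleanly, the Composition Axiom does all the real work for single-valuedness, and everything else is a direct application of Proposition \ref{star transform}.
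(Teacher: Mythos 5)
Your argument is correct; note that the paper itself gives no proof of this proposition (it is quoted from Proposition 2.3 of Benci--Di Nasso), and your proof is the standard one that the cited source carries out: a double application of $\alpha 4$ to Kuratowski pairs together with $\alpha 5$ gives totality and lets you decompose elements of $\leftidx{^{*}}{f}$ into coordinate sequences, $\alpha 2$ gives single-valuedness and hence the displayed formula, and constant sequences plus the left inverse $f^{-1}$ handle the injectivity equivalence. The only points you gloss over (the trivial case $A=\emptyset$, and spelling out that the ideal value of a sequence of ordered pairs is the ordered pair of the coordinatewise ideal values) are exactly the bookkeeping you identify, and they go through as you sketch them.
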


\begin{prop}\label{infinite set}
Suppose that $\beta$ is a nonstandard hypernatural number and $X\subseteq \mathbb{N}$. If $\beta\in\leftidx{^{*}}{X}$ then $X$ is infinite.
\end{prop}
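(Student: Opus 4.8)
The plan is to prove the contrapositive: if $X\subseteq\mathbb{N}$ is finite then $\leftidx{^{*}}{X}\subseteq\mathbb{N}$, so that the hypothesis $\beta\in\leftidx{^{*}}{X}$ for a nonstandard hypernatural number $\beta$ (which by definition lies in $\leftidx{^{*}}{\mathbb{N}}\setminus\mathbb{N}$) is impossible unless $X$ is infinite. In fact I would establish the slightly stronger statement that $\leftidx{^{*}}{X}=X$ for every finite $X\subseteq\mathbb{N}$, from which the proposition is immediate.

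First I would record the base fact that $\leftidx{^{*}}{n}=n$ for each $n\in\mathbb{N}$. Indeed, $\leftidx{^{*}}{n}$ is by definition the ideal value of the constant sequence with value $n$, and since $n\in\mathbb{R}$ the Number axiom $\alpha 3$ gives $\leftidx{^{*}}{n}=n$. Applying part (4) of Proposition \ref{star transform} with $A=B=n$ then yields $\leftidx{^{*}}{\{n\}}=\{\leftidx{^{*}}{n}\}=\{n\}$.

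Next I would argue by induction on $|X|$ that $\leftidx{^{*}}{X}=X$ for finite $X\subseteq\mathbb{N}$. When $X=\emptyset$ the first clause of Axiom $\alpha 5$ (equivalently, Proposition \ref{star transform}) gives $\leftidx{^{*}}{\emptyset}=\emptyset$, and the case $|X|=1$ is handled by the previous paragraph. For the inductive step, write $X=Y\cup\{n\}$ with $|Y|<|X|$; then part (6) of Proposition \ref{star transform} together with the inductive hypothesis and the singleton computation gives $\leftidx{^{*}}{X}=\leftidx{^{*}}{Y}\cup\leftidx{^{*}}{\{n\}}=Y\cup\{n\}=X$.

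Finally, suppose $\beta$ is a nonstandard hypernatural number with $\beta\in\leftidx{^{*}}{X}$, and assume toward a contradiction that $X$ is finite. Then $\beta\in\leftidx{^{*}}{X}=X\subseteq\mathbb{N}$, contradicting $\beta\in\leftidx{^{*}}{\mathbb{N}}\setminus\mathbb{N}$; hence $X$ is infinite. There is no serious obstacle here. The only subtlety worth noting is that Axiom $\alpha 5$ describes $\leftidx{^{*}}{X}$ only as the set of ideal values of $X$-valued sequences, which does not by itself give any control on cardinality; the argument instead routes through the observation that a finite set is a finite union of singletons and that the $*$-transform fixes singletons of naturals and commutes with finite unions, both of which are furnished by Proposition \ref{star transform}.
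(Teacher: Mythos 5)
Your proof is correct and follows essentially the same route as the paper: both argue the contrapositive by showing that the $*$-transform of a finite set of naturals is the set itself, hence contained in $\mathbb{N}$, contradicting $\beta\in\leftidx{^{*}}{\mathbb{N}}\setminus\mathbb{N}$. The only difference is bookkeeping: the paper cites $\alpha 3$ and $\alpha 4$ directly for $\leftidx{^{*}}{\{x_{0},\dots,x_{n}\}}=\{x_{0},\dots,x_{n}\}$, while you make the same computation rigorous by induction on $|X|$ using parts (4) and (6) of Proposition \ref{star transform}, which is if anything slightly more careful.
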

\begin{proof} We prove the contrapostive. That is, if $X$ is finite then $\beta\not \in \leftidx{^{*}}{X}$. Suppose that $X=\{x_{0}, x_{1},\dots, x_{n}\}$ is a finite subset of the natural numbers. By $\alpha 3$ and $\alpha 4$, $\leftidx{^{*}}{\{x_{0}, x_{1},\dots, x_{n}\}} = \{\leftidx{^{*}}{x_{0}}, \leftidx{^{*}}{x_{1}},\dots, \leftidx{^{*}}{x_{n}}\} =\{x_{0}, x_{1},\dots, x_{n}\} \subseteq \mathbb{N}$. Since $\beta\not \in \mathbb{N}$, $\beta\not \in \leftidx{^{*}}{X}$. 
\end{proof}

One of the fundamental tools of nonstandard analysis is the use of saturation principles. Benci and Di Nasso in \cite{AlphaTheory} state that ``the Alpha-Theory can be  generalized so to accommodate all nonstandard arguments which use a prescribed level of saturation." Later we show that, under the assumption of the $\mathfrak{c}^{+}$-enlarging property (a saturation principle), the $\vec{\alpha}$-Ellentuck Theorem is equivalent to the Ultra-Ellentuck Theorem of Todorcevic. The $\mathfrak{c}^{+}$-enlarging property is not a theorem of the Alpha-Theory; however, the countable enlarging property does follow from $\alpha$1-$\alpha$5. We omit its proof as it follows by a direct application of Theorem 4.4 in \cite{AlphaTheory}.
\begin{prop}[Countable enlarging property] 
Suppose $\{A_{i}:i\in\mathbb{N}\}$ is a countable family of subsets of some set $A$ with the finite intersection property, i.e. such that any finite intersection $A_{0}\cap\cdots\cap A_{n}\not=\emptyset$. Then $$\bigcap_{i=0}^{\infty}\leftidx{^{*}}{A_{i}} \not=\emptyset.$$
\end{prop}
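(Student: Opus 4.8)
The plan is to produce, directly from the axioms, a single point $\xi$ that lies in every $\leftidx{^{*}}{A_i}$. Set $B_n=A_0\cap A_1\cap\cdots\cap A_n$; by the finite intersection property each $B_n$ is nonempty, and the sets $B_n$ form a decreasing chain. Picking $x_n\in B_n$ for every $n$ (legitimate in ZFC), I would consider the sequence $\langle x_n : n\in\mathbb{N}\rangle$ and let $\xi$ be its ideal value, which exists and is unique by Axiom $\alpha 1$. The claim is that $\xi\in\bigcap_{i\in\mathbb{N}}\leftidx{^{*}}{A_i}$; this is the whole proposition.

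To see $\xi\in\leftidx{^{*}}{A_i}$ for a fixed $i$, note first that $x_n\in B_n\subseteq B_i\subseteq A_i$ for every $n\geq i$, so only the first $i$ terms of $\langle x_n\rangle$ may fall outside $A_i$. The remedy is to replace $\langle x_n : n\in\mathbb{N}\rangle$ by $\langle x_{\max(n,i)} : n\in\mathbb{N}\rangle$: since $\max(n,i)\geq i$ always, every term of the new sequence lies in $A_i$, so by the description of the $*$-transform recorded after its definition (``$\leftidx{^{*}}{A}$ consists of the ideal values of sequences of elements from $A$'') the ideal value of the new sequence lies in $\leftidx{^{*}}{A_i}$. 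Thus it suffices to show that the two sequences have the same ideal value.

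I would deduce this from Axiom $\alpha 2$ once I know the auxiliary fact that the map $g\colon\mathbb{N}\to\mathbb{N}$ with $g(n)=\max(n,i)$ satisfies $\leftidx{^{*}}{g}(\alpha)=\alpha$. Indeed, the sequences $\langle g(n):n\in\mathbb{N}\rangle$ and $\langle n:n\in\mathbb{N}\rangle$ both have ideal value $\alpha$ — the latter by Axiom $\alpha 3$, the former because $\langle g(n):n\in\mathbb{N}\rangle[\alpha]=\leftidx{^{*}}{g}(\alpha)$ by the proposition on $*$-transforms of functions together with the auxiliary fact — so applying Axiom $\alpha 2$ to these two sequences and to the function $f=\langle x_n : n\in\mathbb{N}\rangle$ (regarded as a map from $\mathbb{N}$ onto $\{x_n : n\in\mathbb{N}\}$) gives $\langle x_{\max(n,i)} : n\in\mathbb{N}\rangle[\alpha]=\langle x_n : n\in\mathbb{N}\rangle[\alpha]=\xi$, as required. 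To prove the auxiliary fact, observe that $g$ agrees with the identity on the set $\{i,i+1,i+2,\dots\}$; since the $*$-transform commutes with restriction (by Proposition \ref{star transform}(7) and (9), after writing a restriction as an intersection with a product) and sends an identity function to an identity function (by the proposition on $*$-transforms of functions and the description of $\leftidx{^{*}}{X}$ in Axiom $\alpha 5$), $\leftidx{^{*}}{g}$ is the identity on the $*$-transform of $\{i,i+1,\dots\}$, which by Proposition \ref{star transform}(8) and the fact (noted in the proof of Proposition \ref{infinite set}) that the $*$-transform of a finite set is itself equals $\leftidx{^{*}}{\mathbb{N}}\setminus\{0,1,\dots,i-1\}$; and $\alpha$ belongs to this set since $\alpha\in\leftidx{^{*}}{\mathbb{N}}$ but $\alpha\notin\mathbb{N}$.

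The only step with real content is the auxiliary fact $\leftidx{^{*}}{g}(\alpha)=\alpha$ — equivalently, that modifying a sequence on finitely many coordinates does not change its ideal value — and it is exactly there that $\alpha\notin\mathbb{N}$ (from Axiom $\alpha 3$) is essential. Note that the naive alternative of shifting the sequence, i.e. passing to $\langle x_{n+i} : n\in\mathbb{N}\rangle$, does not work, because its ideal value is $\leftidx{^{*}}{(\langle x_n : n\in\mathbb{N}\rangle)}(\alpha+i)$ and $\alpha+i\neq\alpha$ in $\leftidx{^{*}}{\mathbb{N}}$; taking the maximum rather than a shift is precisely what preserves the tail of the sequence on which it already lies inside $A_i$.
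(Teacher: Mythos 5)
Your proof is correct, and it does more than the paper does at this point: the paper omits the argument entirely, remarking only that the proposition ``follows by a direct application of Theorem 4.4 in \cite{AlphaTheory}'', so your write-up is a genuinely self-contained derivation from $\alpha 1$--$\alpha 5$ together with the two propositions quoted in Section 2. The witness you build is the standard one that the citation encapsulates: choose $x_n\in A_0\cap\cdots\cap A_n$ and let $\xi$ be the ideal value of $\langle x_n\rangle$. The real content you supply is the tail-invariance of ideal values, i.e.\ that passing from $\langle x_n\rangle$ to $\langle x_{\max(n,i)}\rangle$ does not change the ideal value, which is exactly what would otherwise be imported from \cite{AlphaTheory}. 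Your proof of that auxiliary fact is sound at every step: $g\cap(X\times\mathbb{N})=\mathrm{id}_X$ for $g(n)=\max(n,i)$ and $X=\{i,i+1,\dots\}$; starring this identity of sets via Proposition \ref{star transform}(7) and (9); computing $\leftidx{^{*}}{(\mathrm{id}_X)}=\mathrm{id}_{\leftidx{^{*}}{X}}$ from the proposition on $*$-transforms of functions together with $\alpha 5$; and $\alpha\in\leftidx{^{*}}{X}=\leftidx{^{*}}{\mathbb{N}}\setminus\{0,\dots,i-1\}$ by Proposition \ref{star transform}(8), the finite-set computation already used in Proposition \ref{infinite set}, and $\alpha\notin\mathbb{N}$. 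Since $\leftidx{^{*}}{g}$ is a function on $\leftidx{^{*}}{\mathbb{N}}$, the pair $\bigl(\alpha,\leftidx{^{*}}{g}(\alpha)\bigr)$ lies in the starred identity, so $\leftidx{^{*}}{g}(\alpha)=\alpha$, and then $\alpha 2$ (with $f$ the sequence $\langle x_n\rangle$ viewed as a function on $\mathbb{N}$) finishes the argument as you say; $\alpha 5$ then places $\xi$ in each $\leftidx{^{*}}{A_i}$. What the paper's citation buys is brevity; what your argument buys is a visible proof that the countable enlarging property is a theorem of the five axioms as presented here, with the use of $\alpha\notin\mathbb{N}$ isolated exactly where it belongs.
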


Throughout the remainder of this article we will use the propositions of this section implicitly. In order to keep the proofs less cumbersome we only explicitly quote these results when confusion may arise. We also follow the practice in nonstandard analysis, when confusion is unlikely, of dropping the $*$ symbol from $*$-transforms. For example, if $\beta$ is nonstandard hypernatural number then we write $\forall n\in \mathbb{N},$ $n< \beta$, instead of $\forall n\in \mathbb{N}$, $n \ \leftidx{^{*}}{<} \beta.$

\section{Alpha-Ramsey Theory}
We fix the notation we will use for the remainder of the paper regarding subsets of the natural numbers. For $n\in \mathbb{N}$ and $X\subseteq \mathbb{N}$, we use the following:
$$[X]^{n} = \{ Y\subseteq X : |Y| = n \},$$
$$[X]^{<\infty} = \{ Y\subseteq X : |Y| <\infty \},$$
$$[X]^{\infty} = \{ Y\subseteq X : |Y| = \infty \}.$$
If $s\in[\mathbb{N}]^{<\infty}$ and $X\subseteq \mathbb{N}$ then we say \emph{$s$ is an initial segment of $X$} and write $s\sqsubseteq X$, if there exists $i\in\mathbb{N}$ such that $s = \{ j\in X : j\le i\}$. If $s\sqsubseteq X$ and $s\not=X$ then we write $s\sqsubset X$. 

\begin{defn} 
A subset $T$ of $[\mathbb{N}]^{<\infty}$ is called a \emph{tree on $\mathbb{N}$} if $T\not =\emptyset$ and for all $s,t\in[\mathbb{N}]^{<\infty}$,
$$s\sqsubseteq t\in T \implies s\in T.$$ For a tree $T$ on $\mathbb{N}$ and $n\in \mathbb{N}$, we use the following notation: 
$$[T] =\{X\in[\mathbb{N}]^{\infty} : \forall s\in [\mathbb{N}]^{<\infty}( s\sqsubseteq X \implies s \in T) \},$$
$$T(n) =\{s\in T : |s|=n\}.$$
The \emph{stem of $T$}, if it exists, is the $\sqsubseteq$-maximal $s$ in  $T$ that is $\sqsubseteq$-comparable to every element of $T$. If $T$ has a stem we denote it by $st(T)$. For $s\in T$, we use the following notation $$T/s = \{ t\in T : s\sqsubseteq t\}.$$
\end{defn}

For the remainder of this section we fix a sequence $\vec{\alpha}=\left< \alpha_{s} : s\in [\mathbb{N}]^{<\infty}\right>$ where each $\alpha_{s}$ is a nonstandard hypernatural number. Note that in the Alpha-Theory at least one such sequence exists, $\alpha 3$ and $\alpha 5$ imply that $\alpha\in \leftidx{^{*}}{\mathbb{N}}\setminus \mathbb{N}$, take $\vec{\alpha}$ to be the sequence where $\alpha_{s} =\alpha$ for all $s\in[\mathbb{N}]^{<\infty}$. 

\begin{defn}
An \emph{$\vec{\alpha}$-tree} is a tree $T$ with stem $st(T)$ such that $T/st(T)\not=\emptyset$ and for all $s\in T/st(T)$, $$s\cup\{\alpha_{s}\} \in  \leftidx{^{*}}{T}.$$
\end{defn}

\begin{example}Note that $[\mathbb{N}]^{<\infty}$ is a tree on $\mathbb{N}$ with stem $\emptyset$. Moreover, for all $s\in[\mathbb{N}]^{\infty}$, $s\cup\{\alpha_{s}\}\in \leftidx{^{*}}{[\mathbb{N}]^{<\infty}}$. Thus, $[\mathbb{N}]^{<\infty}$ is an $\vec{\alpha}$-tree.
\end{example}
The proof of the next lemma is nearly identical to the proof of Lemma 7.33 of Todorcevic in \cite{RamseySpaces}. The only difference is that we use $\vec{\alpha}$-trees instead of the ultrafilter trees used in \cite{RamseySpaces}.
\begin{lem}\label{alpha diag}
Suppose that $H\subseteq [\mathbb{N}]^{<\infty}$ and for all $s\in H$, $s\cup\{\alpha_{s}\}\in \leftidx{^{*}}{H}$. Then for all $\vec{\alpha}$-trees $T$, if $st(T)\in H$ then there exists an $\vec{\alpha}$-tree $S\subseteq T$ with $st(S)=st(T)$ such that $S/st(S)\subseteq H$.
\end{lem}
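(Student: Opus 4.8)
The plan is to build $S$ explicitly by keeping from $T$ the nodes below the stem together with those nodes above $st(T)$ all of whose initial segments from $st(T)$ onward lie in $H$, and then to obtain the $\vec{\alpha}$-tree property of $S$ by transferring the first-order formula that defines $S$. Concretely, I would set
$$S = \{t\in T : t\sqsubseteq st(T)\} \cup \{t\in T : st(T)\sqsubseteq t \text{ and } (\forall u\in[\mathbb{N}]^{<\infty})(st(T)\sqsubseteq u\sqsubseteq t \implies u\in H)\}.$$
From this definition it is routine that $S\subseteq T$, that $S$ is a tree (downward $\sqsubseteq$-closure is immediate, using that $T$ is a tree), that $st(T)\in S$ (here we use the hypothesis $st(T)\in H$), and that every $t\in S$ with $st(T)\sqsubseteq t$ already lies in $H$ (take $u=t$ in the defining clause); so once $st(S)=st(T)$ is established we will have $S/st(S)\subseteq H$.

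The next step is to pin down the stem of $S$. For every $s\in T$ with $st(T)\sqsubseteq s$ we have $s\cup\{\alpha_{s}\}\in\leftidx{^{*}}{T}$, since $T$ is an $\vec{\alpha}$-tree and $s\in T/st(T)$; if in addition $s\in H$, then $s\cup\{\alpha_{s}\}\in\leftidx{^{*}}{H}$ by the hypothesis on $H$, hence $s\cup\{\alpha_{s}\}\in\leftidx{^{*}}{(T\cap H)}$ by Proposition \ref{star transform}. Applying this with $s=st(T)$ and transferring the first-order description of $\{t\in T\cap H : st(T)\sqsubset t,\ |t|=|st(T)|+1\}$, this set has nonempty $*$-transform, because $st(T)\cup\{\alpha_{st(T)}\}$ belongs to it. Since that witness contains the nonstandard hypernatural $\alpha_{st(T)}$, while a finite set of finite subsets of $\mathbb{N}$ equals its own $*$-transform, the set itself must be infinite. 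As $st(T)\in H$, each of these (infinitely many, pairwise $\sqsubseteq$-incomparable) length-$(|st(T)|+1)$ nodes lies in $S$; combining this with $st(T)\in S$ and the fact that every node of $S$ is $\sqsubseteq$-comparable to $st(T)$ forces $st(S)=st(T)$, and in particular $S/st(S)\ne\emptyset$.

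The heart of the argument is to verify that $S$ is an $\vec{\alpha}$-tree, i.e. that $s\cup\{\alpha_{s}\}\in\leftidx{^{*}}{S}$ for every $s\in S/st(S)=\{t\in S : st(T)\sqsubseteq t\}$. Transferring the defining formula of $S$ — the constant $st(T)$ is a finite subset of $\mathbb{N}$ and so is fixed by $*$, as in the proof of Proposition \ref{infinite set} — yields
$$\leftidx{^{*}}{S} = \{t\in\leftidx{^{*}}{T} : t\sqsubseteq st(T)\} \cup \{t\in\leftidx{^{*}}{T} : st(T)\sqsubseteq t \text{ and } (\forall u\in\leftidx{^{*}}{([\mathbb{N}]^{<\infty})})(st(T)\sqsubseteq u\sqsubseteq t \implies u\in\leftidx{^{*}}{H})\}.$$
Fix $s\in S/st(S)$. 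Then $s\cup\{\alpha_{s}\}\in\leftidx{^{*}}{T}$ as noted, and $st(T)\sqsubseteq s\sqsubseteq s\cup\{\alpha_{s}\}$ because $\alpha_{s}$ exceeds every element of the finite set $s$. Finally, any $u$ with $st(T)\sqsubseteq u\sqsubseteq s\cup\{\alpha_{s}\}$ is either a genuine initial segment with $st(T)\sqsubseteq u\sqsubseteq s$, and then $u\in H$ by the clause witnessing $s\in S$, whence $u=\leftidx{^{*}}{u}\in\leftidx{^{*}}{H}$; or else $u=s\cup\{\alpha_{s}\}$, which lies in $\leftidx{^{*}}{H}$ since $s\in H$. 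Hence $s\cup\{\alpha_{s}\}\in\leftidx{^{*}}{S}$, as required.

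I expect the main obstacle to be the bookkeeping around the claim $st(S)=st(T)$: one must rule out both that the pruned tree collapses to (or below) its would-be stem and that it acquires a longer stem, and the transfer-plus-cardinality argument showing that $S$ branches into infinitely many incomparable nodes immediately above $st(T)$ disposes of both at once. Everything else is a matter of carefully transferring the single bounded first-order formula defining $S$ and checking that each witness $s\cup\{\alpha_{s}\}$ satisfies it, exactly parallel to Todorcevic's proof of Lemma 7.33 with $\vec{\alpha}$-trees in place of ultrafilter trees.
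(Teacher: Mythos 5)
Your proof is correct and follows essentially the same route as the paper: your set $S$ (nodes of $T$ all of whose initial segments above $st(T)$ lie in $H$) is exactly the tree the paper builds level-by-level via $L_{0}=\{st(T)\}$, $L_{n+1}=\{s\cup\{m\}\in H\cap T : s\in L_{n},\ m>\max(s)\}$, and your verifications of $st(S)=st(T)$ (via $st(T)\cup\{\alpha_{st(T)}\}$ forcing infinitely many immediate successors) and of the $\vec{\alpha}$-tree property (via $s\cup\{\alpha_{s}\}\in\leftidx{^{*}}{(H\cap T)}$) use the same witnesses. The only difference is presentational: you transfer the closure formula defining $S$ directly, whereas the paper packages the same star-transform reasoning through the recursive levels $L_{n}$.
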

\begin{proof}
Let $H\subseteq [\mathbb{N}]^{<\infty}$ such that for all $s\in H$, $s\cup\{\alpha_{s}\}\in \leftidx{^{*}}{H}$. Suppose that $T$ is an $\vec{\alpha}$-tree and $st(T)\in H$. We construct an $\vec{\alpha}$-tree $S$, level-by-level, recursively as follows  
$$\begin{cases}
L_{0} =\{st(T)\} \\
L_{n+1} = \{ s\cup \{m\} \in [\mathbb{N}]^{<\infty}: s\in L_{n}, \ m>\max(s) \ \& \ s\cup\{m\}\in H\cap T\}.
\end{cases}$$
 Since $T$ is an $\vec{\alpha}$-tree, for all $n\in \mathbb{N}$ and for all $s\in L_{n}$, $s\cup\{\alpha_{s}\}\in\leftidx{^{*}}{H}\cap\leftidx{^{*}}{T}=\leftidx{^{*}}{(H\cap T)}$. In particular, $s\cup\{\alpha_{s}\}\in \leftidx{^{*}}{L_{n+1}}$.  Let 
$$S = \{s \in [\mathbb{N}]^{<\infty} : s \sqsubseteq st(T)\} \cup \bigcup_{n=0}^{\infty} L_{n}.$$

It is clear that $S$ is a tree and $S\subseteq T$. The set $\{n\in\mathbb{N} : st(T)\cup \{n\}\in L_{1}\}$ is infinite since $st(T)\cup\{\alpha_{st(T)}\}\in \leftidx{^{*}}{L_{1}}$. Thus, $st(S)=st(T)$. If $s\in S/st(S)$ then there exists $n\in \mathbb{N}$ such that $s\in L_{n}$. So $s\cup \{\alpha_{s}\} \in \leftidx{^{*}}{L_{n+1}} \subseteq \leftidx{^{*}}{S}$. Hence, $S$ is an $\vec{\alpha}$-tree. Note that for all $n\in\mathbb{N}$, $L_{n}\subseteq H$. Thus
$$S/st(S) =\bigcup_{n=0}^{\infty} L_{n} \subseteq H.$$ \end{proof}
The proof of the next theorem does not appear in \cite{RamseySpaces}; however, the set $G$ does appear in the proof of Lemma 7.37 in \cite{RamseySpaces}.
\begin{thm}\label{alpha Ramsey theorem}
For all $\mathcal{X}\subseteq [\mathbb{N}]^{\infty}$ and for all $\vec{\alpha}$-trees $T$ there exists an $\vec{\alpha}$-tree $S\subseteq T$ with $st(S)=st(T)$ such that one of the following holds:
\begin{enumerate}
\item $[S]\subseteq \mathcal{X}$.
\item $[S]\cap \mathcal{X}=\emptyset$.
\item For all $\vec{\alpha}$-trees $S'$, if $S'\subseteq S$ then $[S']\not \subseteq \mathcal{X}$ and $[S']\cap\mathcal{X} \not = \emptyset$.
\end{enumerate}
\end{thm}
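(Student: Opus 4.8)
The plan is to reduce to a dichotomy for a single set and then apply it twice — once to $\mathcal{X}$ and once to $[\mathbb{N}]^{\infty}\setminus\mathcal{X}$. Fix throughout an $\vec{\alpha}$-tree $V$ and a set $\mathcal{Y}\subseteq[\mathbb{N}]^{\infty}$. Call a node $s\in V/st(V)$ \emph{$\mathcal{Y}$-accepting} if there is an $\vec{\alpha}$-tree $U\subseteq V$ with $st(U)=s$ and $[U]\subseteq\mathcal{Y}$; let $G$ be the set of $\mathcal{Y}$-accepting nodes and put $B=(V/st(V))\setminus G$. The point of these definitions is that the statement $st(V)\in G$ says precisely that $V$ has an $\vec{\alpha}$-subtree with stem $st(V)$ contained in $\mathcal{Y}$, which will be the ``inside'' alternative.

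The first and hardest step is to show that $B$ satisfies the hypothesis of Lemma \ref{alpha diag}, i.e. $s\cup\{\alpha_{s}\}\in\leftidx{^{*}}{B}$ for every $s\in B$. Writing $V_{s}=\{n:s\cup\{n\}\in V\}$ and $G_{s}=\{n:s\cup\{n\}\in G\}$, the elementary properties of the $*$-transform (in particular Proposition \ref{star transform}) turn $s\cup\{\alpha_{s}\}\in\leftidx{^{*}}{B}$ into $\alpha_{s}\in\leftidx{^{*}}{V_{s}}\setminus\leftidx{^{*}}{G_{s}}$; since $V$ is an $\vec{\alpha}$-tree we already have $\alpha_{s}\in\leftidx{^{*}}{V_{s}}$, so it suffices to prove that $s\notin G$ implies $\alpha_{s}\notin\leftidx{^{*}}{G_{s}}$, which I would do contrapositively. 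Assume $\alpha_{s}\in\leftidx{^{*}}{G_{s}}$; for each $n\in G_{s}$ fix a witnessing $\vec{\alpha}$-tree $U_{n}\subseteq V$ with $st(U_{n})=s\cup\{n\}$ and $[U_{n}]\subseteq\mathcal{Y}$, and glue them along their stems:
\[
U=\{t\in[\mathbb{N}]^{<\infty}:t\sqsubseteq s\}\cup\{t:s\sqsubset t,\ \min(t\setminus s)\in G_{s},\ t\in U_{\min(t\setminus s)}\}.
\]
One then checks, in turn: $U$ is a tree and $U\subseteq V$; the set $\{n:s\cup\{n\}\in U\}$ is exactly $G_{s}$, which is infinite by Proposition \ref{infinite set} because $\alpha_{s}\in\leftidx{^{*}}{G_{s}}$, so $st(U)=s$, and moreover $s\cup\{\alpha_{s}\}\in\leftidx{^{*}}{U}$; for each $t\in U$ with $s\sqsubset t$ the immediate successors of $t$ in $U$ coincide with those in $U_{\min(t\setminus s)}$, so $t\cup\{\alpha_{t}\}\in\leftidx{^{*}}{U}$ follows from $U_{\min(t\setminus s)}$ being an $\vec{\alpha}$-tree; hence $U$ is an $\vec{\alpha}$-tree; and every $X\in[U]$ has $s\sqsubseteq X$ with all of its initial segments lying in $U_{\min(X\setminus s)}$, so $X\in[U_{\min(X\setminus s)}]\subseteq\mathcal{Y}$. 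Thus $U$ witnesses $s\in G$, a contradiction. This gluing — above all the verification that the glued object is again an $\vec{\alpha}$-tree — is the $\vec{\alpha}$-tree counterpart of the combinatorial forcing step inside Todorcevic's Lemma 7.37, and is where essentially all the work lies.

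Granting the claim, the single-set dichotomy follows. Suppose $st(V)\notin G$ (otherwise we already have the ``inside'' alternative). Then $st(V)\in B$, so Lemma \ref{alpha diag} applied with $H=B$ produces an $\vec{\alpha}$-tree $W\subseteq V$ with $st(W)=st(V)$ and $W/st(W)\subseteq B$. For any $\vec{\alpha}$-tree $W'\subseteq W$, every branch of $W'$ is a branch of $W$ and hence extends $st(W)$; a short argument comparing $st(W)$ with $st(W')$ — both are initial segments of such a branch, and $st(W)$ is $\sqsubseteq$-comparable to every node of $W'$ — yields $st(W)\sqsubseteq st(W')$, so $st(W')\in W/st(W)\subseteq B$; since $W'$ is itself an $\vec{\alpha}$-subtree of $V$ with stem $st(W')$, membership $st(W')\in B$ forces $[W']\not\subseteq\mathcal{Y}$. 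Therefore: \emph{either} $V$ has an $\vec{\alpha}$-subtree with stem $st(V)$ contained in $\mathcal{Y}$, \emph{or} $V$ has an $\vec{\alpha}$-subtree $W$ with stem $st(V)$ such that no $\vec{\alpha}$-subtree $W'\subseteq W$ has $[W']\subseteq\mathcal{Y}$.

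Finally, I would apply this dichotomy to $(\mathcal{Y},V)=(\mathcal{X},T)$: its first alternative is precisely case (1). In the remaining case, let $W$ be the witness and apply the dichotomy again to $(\mathcal{Y},V)=([\mathbb{N}]^{\infty}\setminus\mathcal{X},\,W)$. Now the first alternative yields an $\vec{\alpha}$-tree $S\subseteq W\subseteq T$ with $st(S)=st(T)$ and $[S]\cap\mathcal{X}=\emptyset$, which is case (2); and the second alternative yields an $\vec{\alpha}$-tree $S\subseteq W$ with $st(S)=st(T)$ such that no $\vec{\alpha}$-subtree $S'\subseteq S$ has $[S']\subseteq[\mathbb{N}]^{\infty}\setminus\mathcal{X}$, while also (since $S\subseteq W$) no $\vec{\alpha}$-subtree $S'\subseteq S$ has $[S']\subseteq\mathcal{X}$ — so every $\vec{\alpha}$-tree $S'\subseteq S$ satisfies $[S']\not\subseteq\mathcal{X}$ and $[S']\cap\mathcal{X}\neq\emptyset$, which is case (3). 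The only genuinely delicate point in all of this is the gluing claim of the second paragraph.
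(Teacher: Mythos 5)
Your proposal is correct and its combinatorial core is the same as the paper's: your gluing of the witnessing trees $U_{n}$ along the $*$-large set $G_{s}$ is exactly the paper's Claim (there stated for $G$ and $F$ simultaneously), and the passage from ``no accepting node'' to a homogeneous-for-(3) subtree is the same application of Lemma \ref{alpha diag}. The only difference is organizational: the paper runs one pass with $H=[\mathbb{N}]^{<\infty}\setminus(G\cup F)$ and a single use of Lemma \ref{alpha diag}, whereas you iterate a one-colour dichotomy twice (first for $\mathcal{X}$, then for its complement inside the resulting tree), which yields the same trichotomy.
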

\begin{proof}
Suppose that $\mathcal{X}\subseteq[\mathbb{N}]^{\infty}$ and $T$ is an $\vec{\alpha}$-tree. Consider the following sets,
$$G= \{ s \in [\mathbb{N}]^{<\infty}: \exists \mbox{$\vec{\alpha}$-tree $S\subseteq T$ with stem $s$ such that $[S]\subseteq \mathcal{X}$}\}$$ 
$$F= \{ s \in [\mathbb{N}]^{<\infty}: \exists \mbox{$\vec{\alpha}$-tree $S\subseteq T$ with stem $s$ such that $[S]\subseteq [\mathbb{N}]^{\infty}\setminus\mathcal{X}$}\}$$
$$H= \{s\in [\mathbb{N}]^{<\infty} : \forall \mbox{$\vec{\alpha}$-tree $S\subseteq T$ with stem $s$, $[S]\not\subseteq \mathcal{X}$ and $[S]\cap \mathcal{X} \not = \emptyset$}\}.$$
Notice that $H= [\mathbb{N}]^{<\infty}\setminus(G \cup F)$.

\begin{claim}
If $s\cup \{\alpha_{s}\} \not\in \leftidx{^{*}}{H}$ then $s\not \in H$.
\end{claim}
\begin{proof}
Suppose that $s\in [\mathbb{N}]^{<\infty}$ and $s \cup \{\alpha_{s}\} \not \in \leftidx{^{*}}{H} = \leftidx{^{*}}{[\mathbb{N}]^{<\infty}\setminus(\leftidx{^{*}}{G} \cup \leftidx{^{*}}{F})}$. Hence, $s\cup \{\alpha_{s}\}\in \leftidx{^{*}}{G}$ or $s\cup \{\alpha_{s}\}\in \leftidx{^{*}}{F}$.

Consider the case when $s\cup \{\alpha_{s}\}\in \leftidx{^{*}}{G}$. For each $n \in \mathbb{N}$ such that $s\cup\{n\}\in G$, let $T_{n}$ be an $\vec{\alpha}$-tree with stem $s\cup\{n\}$ such that $[T_{n}]\subseteq \mathcal{X}$. Let $A= \{n\in\mathbb{N} : s\cup\{n\}\in G\}$ and note that $\alpha_{s} \in \leftidx{^{*}}{A}$. Let $S=\bigcup_{n\in A} T_{n}$. It is clear that $S$ is a tree with stem $s$, $\{s\cup\{n\} : n\in A\} =\bigcup_{n\in A}\{st(T_{n})\} \subseteq S$ and $[S] = \bigcup_{n\in A} [T_{n}] \subseteq \mathcal{X}$. If $t\in S$ then either $t=s$ or there exists $n\in A$ such that $t\in T_{n}/(s\cup\{n\})$. If $t=s$ then $t\cup\{\alpha_{t}\} = s \cup\{\alpha_{s}\} \in \leftidx{^{*}}{\{s\cup\{n\} : n\in A\}} =\leftidx{^{*}}{\bigcup_{n\in A}\{st(T_{n})\}} \subseteq \leftidx{^{*}}{S}.$ If there exists $n\in A$ such that $t\in T_{n}/(s\cup\{n\})$, then $t\cup\{\alpha_{t}\} \in \leftidx{^{*}}{T_{n}} \subseteq \leftidx{^{*}}{S}$. So $S$ is an $\vec{\alpha}$-tree with stem $s$ such that $[S]\subseteq \mathcal{X}$. Thus, $s\in G$. In particular, $s\not \in H$.

By an identical argument, if $s\cup \{\alpha_{s}\}\in \leftidx{^{*}}{F}$ then there is an $\vec{\alpha}$-tree $S$ with stem $s$ such that $[S]\subseteq ([\mathbb{N}]^{\infty} \setminus\mathcal{X}).$ In this case, we also have $s \not \in H$ as $s\in F$.\end{proof}

If $st(T)\in G$ then (1) holds. If $st(T)\in F$ then (2) holds. Otherwise $st(T)\in H$. By Lemma \ref{alpha diag} there is an $\vec{\alpha}$-tree $S\subseteq T$ such that $st(S)=st(T)$ and $S/st(S)\subseteq H$. If $S'\subseteq S$ is an $\vec{\alpha}$-tree then $st(S')\in S/st(S)\subseteq H$. Since $S'\subseteq T$, $[S']\not \subseteq \mathcal{X}$ and $[S']\cap \mathcal{X} \not=\emptyset$. So if (1) and (2) fail there is an $\vec{\alpha}$-tree showing that (3) holds.
\end{proof}
Although the previous theorem and its proof are quite simple they have many consequences including the following abstraction of Ramsey's Theorem to $\vec{\alpha}$-trees.
\begin{cor}[$\vec{\alpha}$-Ramsey Theorem]\label{alpha-tree Ramsey theorem}
Suppose that $n\in \mathbb{N}$. For all $A\subseteq [\mathbb{N}]^{n}$ and for all $\vec{\alpha}$-trees $T$ there exists an $\vec{\alpha}$-tree $S\subseteq T$ with $st(S)=st(T)$ such that either $S(n)\subseteq A$ or $S(n)\cap A =\emptyset$.
\end{cor}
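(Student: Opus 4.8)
The plan is to deduce the $\vec{\alpha}$-Ramsey Theorem directly from Theorem~\ref{alpha Ramsey theorem} by choosing the right set $\mathcal{X}\subseteq [\mathbb{N}]^{\infty}$ and showing that the third, ``unavoidable mixing'' alternative cannot occur. Given $A\subseteq[\mathbb{N}]^{n}$, I would set
$$\mathcal{X} = \{ X\in[\mathbb{N}]^{\infty} : X\cap\{0,1,\dots,\max(s_X)\} \in A \text{ for the } s_X \text{ with } |s_X|=n,\ s_X\sqsubset X\},$$
or, more cleanly, $\mathcal{X}=\{X\in[\mathbb{N}]^{\infty} : s\in A \text{ where } s\sqsubseteq X,\ |s|=n\}$; that is, $X\in\mathcal{X}$ iff the unique length-$n$ initial segment of $X$ lies in $A$. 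Then apply Theorem~\ref{alpha Ramsey theorem} to $\mathcal{X}$ and $T$ to obtain an $\vec{\alpha}$-tree $S\subseteq T$ with $st(S)=st(T)$ satisfying one of the three alternatives.

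The key observation is that $\mathcal{X}$ is determined by an initial segment of fixed finite length, so it behaves like a clopen set and alternative (3) is vacuous. Concretely, I would argue: take any $\vec{\alpha}$-tree $S'\subseteq S$ and pick any $t\in S'$ with $|t|=n$ and $st(S')\sqsubseteq t$; since $S'/t$ is again an $\vec{\alpha}$-tree (here one checks $S'/t$ has stem $t$ and inherits the $\vec{\alpha}$ condition, which is immediate from the definition), $[S'/t]\neq\emptyset$, and every $X\in[S'/t]$ has $t$ as its length-$n$ initial segment, so $X\in\mathcal{X}$ iff $t\in A$. Hence $[S'/t]\subseteq\mathcal{X}$ if $t\in A$ and $[S'/t]\cap\mathcal{X}=\emptyset$ if $t\notin A$. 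In either case one of ``$[S']\subseteq\mathcal{X}$'' or ``$[S']\cap\mathcal{X}=\emptyset$'' fails to fail — more precisely, $S'$ itself cannot simultaneously satisfy $[S']\not\subseteq\mathcal{X}$ and $[S']\cap\mathcal{X}\neq\emptyset$ once we note that whether $[S']\subseteq\mathcal{X}$ holds is decided by whether $S'(n)\subseteq A$. So alternative (3) applied to $S'=S$ would be self-contradictory, and therefore alternative (1) or (2) must hold for $S$.

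Finally I translate alternatives (1) and (2) back into the statement: if $[S]\subseteq\mathcal{X}$ then every length-$n$ member $s$ of $S$ extends to some $X\in[S]$ (again because $S/s$ is a nonempty $\vec{\alpha}$-tree), and $s\in A$ by definition of $\mathcal{X}$, so $S(n)\subseteq A$; symmetrically, if $[S]\cap\mathcal{X}=\emptyset$ then $S(n)\cap A=\emptyset$. This gives the conclusion with the same stem $st(S)=st(T)$.

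The main obstacle — really the only point requiring care — is verifying that for an $\vec{\alpha}$-tree $S'$ and $t\in S'/st(S')$ the set $S'/t$ is again an $\vec{\alpha}$-tree with stem $t$, and that $[S'/t]\neq\emptyset$; the stem claim needs that the set $\{m : t\cup\{m\}\in S'\}$ is infinite, which follows from $t\cup\{\alpha_t\}\in\leftidx{^{*}}{S'}$ together with Proposition~\ref{infinite set}, and nonemptiness of $[S'/t]$ then follows by the same level-by-level argument used in Lemma~\ref{alpha diag} (or by a König-type argument using that each level of $S'/t$ is infinite). Once this small lemma about restricting $\vec{\alpha}$-trees to a node is in hand, the rest is bookkeeping.
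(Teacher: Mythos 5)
Your proposal is correct and follows essentially the same route as the paper: you take $\mathcal{X}=\{Y\in[\mathbb{N}]^{\infty} : r_{n}(Y)\in A\}$, rule out alternative (3) of Theorem \ref{alpha Ramsey theorem} because any $\vec{\alpha}$-subtree whose stem has length at least $n$ decides $\mathcal{X}$, and translate (1)/(2) back via extendability of nodes of $S(n)$ to branches. The only cosmetic point is that $S'/t$ as defined in the paper is not downward closed, so your auxiliary subtree should be $\{u\in[\mathbb{N}]^{<\infty} : u\sqsubseteq t\}\cup S'/t$ (and when $|st(S')|>n$ take $t=r_{n}(st(S'))$), but this does not affect the argument.
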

\begin{proof}
Let $n\in \mathbb{N}$, $A\subseteq[\mathbb{N}]^{n}$ and $T$ be an $\vec{\alpha}$-tree. For each $Y\subseteq \mathbb{N}$ with $|Y|\ge n$, let $r_{n}(Y)$ denote the $n$-smallest elements of $Y$ \emph{i.e.} the first $n$ natural numbers in the increasing enumeration of $Y$. For example, $r_{n}(\mathbb{N}) = \{0,1,2,\dots,n-1\}$. Let $\mathcal{X}=\{ Y\in[\mathbb{N}]^{\infty} : r_{n}(Y)\in A\}$. Notice that $\mathcal{X}$ can not satisfy conclusion (3) in the statement of Theorem \ref{alpha Ramsey theorem} because any $\vec{\alpha}$-tree $S$ with $|st(S)|\ge n$ will either have $[S]\subseteq \mathcal{X}$ or $[S]\cap \mathcal{X} = \emptyset$ depending on whether $r_{n}(st(S))\in A$ or $r_{n}(st(S))\not\in A$, respectively. So Theorem \ref{alpha Ramsey theorem} implies that there is an $\vec{\alpha}$-tree $S\subseteq T$ with $st(S)=st(T)$ such that either $[S]\subseteq\mathcal{X}$ or $[S]\cap \mathcal{X}=\emptyset.$ Thus, either $S(n)\subseteq A$ or $S(n)\cap A=\emptyset$ depending on whether $[S]\subseteq\mathcal{X}$ or $[S]\cap \mathcal{X}=\emptyset,$ respectively.
\end{proof}

\subsection{Local Ramsey theory} We use Theorem \ref{alpha Ramsey theorem} to construct an $\vec{\alpha}$-Ramsey theory that in its development runs parallel to local Ramsey theory. In this section we introduce the basics of local Ramsey theory and its cornerstone result, the Ellentuck Theorem. 

For $s\in[\mathbb{N}]^{<\infty}$ and $X\in[\mathbb{N}]^{\infty}$, we use the following notation:
$$[s]= \{ Y \in \mathbb{N}]^{\infty} : s\sqsubseteq Y\},$$
$$[s,X] =\{Y \in [\mathbb{N}]^{\infty} : s \sqsubseteq Y \subseteq X\}.$$
The \emph{metric topology on $[\mathbb{N}]^{\infty}$} is the compact metrizable zero-dimensional topology generated by sets of the form $[s]$. The \emph{Ellentuck Space} is the zero-dimensional topological space on $[\mathbb{N}]^{\infty}$ with the countable chain condition generated by the sets of the form $[s,X]$. Every metrically open set is also open with respect to the Ellentuck space since $[s]=[s,\mathbb{N}]$.

\begin{defn}[Baire, \cite{BaireDiss}]
Let $(X,\tau)$ be a topological space and $\mathcal{B}$ be a basis for the topology. $Y\subseteq X$ is \emph{nowhere dense} if for all nonempty $U\in \mathcal{B}$ there exists a nonempty $V\subseteq U$ such that $V\in\mathcal{B}$ and $V\cap Y = \emptyset$. $Y \subseteq X$ is \emph{meager} if it is the countable union of nowhere dense sets. $\mathcal{X}$ \emph{has the Baire property} if $X=U \Delta M$ where $U$ is open and $M$ is meager.
\end{defn}

\begin{defn}
Suppose that $X$ is a set. A \emph{$\sigma$-ideal on X} is a collection of sets $\mathcal{I}\subseteq \wp(X)$ such that $\emptyset \in I$, $I$ is closed under countable unions and if $Y\subseteq Z\in I$ then $Y\in I$. A \emph{$\sigma$-algebra on X} is a collection of sets $\mathcal{A}\subseteq \wp(X)$ such that $\emptyset, X\in \mathcal{A}$, $\mathcal{A}$ is closed under complements, countable unions and countable intersections.
\end{defn}

\begin{thm}[Baire, \cite{BaireDiss}] For all topological spaces the collection of sets with the Baire property with respect to the space is a $\sigma$-algebra. Moreover, the meager sets with respect to the space form a $\sigma$-ideal contained in the $\sigma$-algebra of sets with the Baire property.
\end{thm}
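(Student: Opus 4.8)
The plan is to isolate the one genuinely topological input and reduce everything else to the Boolean algebra of symmetric differences. Fix a basis $\mathcal{B}$ for the topology and write $\mathrm{BP}$ for the family of sets with the Baire property, i.e.\ sets of the form $U\,\Delta\,M$ with $U$ open and $M$ meager. The topological input is the claim that for every closed set $C$ the set $C\setminus\operatorname{int}(C)$ is nowhere dense; granting this, $C=\operatorname{int}(C)\,\Delta\,(C\setminus\operatorname{int}(C))$ exhibits an arbitrary closed set as a member of $\mathrm{BP}$. To prove the claim, let $U\in\mathcal{B}$ be nonempty. The set $U\setminus(C\setminus\operatorname{int}(C))$ is open, since $C\setminus\operatorname{int}(C)=C\cap(\operatorname{int}C)^{c}$ is closed, and it is nonempty: if it were empty then $U\subseteq C\setminus\operatorname{int}(C)\subseteq C$, so the open set $U$ would lie in $\operatorname{int}(C)$, contradicting $U\cap\operatorname{int}(C)=\emptyset$. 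Hence $U\setminus(C\setminus\operatorname{int}(C))$ contains a nonempty $V\in\mathcal{B}$, and $V\cap(C\setminus\operatorname{int}(C))=\emptyset$, as required.

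Next I would record the elementary closure properties directly from the definitions. If $W\subseteq Y$ and $Y$ is nowhere dense, then $W$ is nowhere dense, because any $V\in\mathcal{B}$ witnessing nowhere density of $Y$ inside a given basic $U$ also satisfies $V\cap W=\emptyset$; consequently a subset of a meager set is meager, and a countable union of meager sets is meager, since a countable union of countable unions of nowhere dense sets is again of that form. Also $\emptyset$ is nowhere dense (take $V=U$), hence meager. This already gives that the meager sets form a $\sigma$-ideal.

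For the $\sigma$-algebra claim I would use the three identities $(A\,\Delta\,B)^{c}=A^{c}\,\Delta\,B$, $(U\,\Delta\,M)\,\Delta\,U=M$, and $\bigcup_{n}A_{n}\,\Delta\,\bigcup_{n}B_{n}\subseteq\bigcup_{n}(A_{n}\,\Delta\,B_{n})$. First, $\emptyset=\emptyset\,\Delta\,\emptyset$ and $X=X\,\Delta\,\emptyset$ lie in $\mathrm{BP}$. If $A=U\,\Delta\,M\in\mathrm{BP}$ then $A^{c}=U^{c}\,\Delta\,M$; applying the topological claim to the closed set $U^{c}$, write $U^{c}=V\,\Delta\,N$ with $V$ open and $N$ nowhere dense, so $A^{c}=V\,\Delta\,(N\,\Delta\,M)$ with $N\,\Delta\,M$ meager; thus $\mathrm{BP}$ is closed under complements. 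If $A_{n}=U_{n}\,\Delta\,M_{n}\in\mathrm{BP}$ for each $n$, then $\bigl(\bigcup_{n}A_{n}\bigr)\,\Delta\,\bigl(\bigcup_{n}U_{n}\bigr)\subseteq\bigcup_{n}(A_{n}\,\Delta\,U_{n})=\bigcup_{n}M_{n}$ is meager, so $\bigcup_{n}A_{n}$ differs from the open set $\bigcup_{n}U_{n}$ by a meager set and hence lies in $\mathrm{BP}$; closure under countable intersections then follows by De Morgan. Finally, every meager $M$ satisfies $M=\emptyset\,\Delta\,M$, so the $\sigma$-ideal of meager sets is contained in $\mathrm{BP}$.

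The only real obstacle is the topological claim about closed sets; everything after it is bookkeeping with symmetric differences. One minor point deserves attention: the notion of nowhere dense here is phrased relative to the chosen basis $\mathcal{B}$, so each verification must be carried out at the level of basic open sets — harmless, since every nonempty open set contains a nonempty member of $\mathcal{B}$, which is exactly what the argument for the topological claim uses.
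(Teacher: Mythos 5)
Your proof is correct: the paper states this result as a classical theorem of Baire and cites it without supplying a proof, so there is no internal argument to compare against. What you give is the standard argument --- the boundary $C\setminus\operatorname{int}(C)$ of a closed set is nowhere dense, and the rest is symmetric-difference bookkeeping --- and you correctly carry it out with the paper's basis-relative definition of nowhere dense, which was the only point requiring care.
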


\begin{defn}
Suppose that $\mathcal{C}\subseteq [\mathbb{N}]^{\infty}$. $\mathcal{X}\subseteq [\mathbb{N}]^{\infty}$ is \emph{$\mathcal{C}$-Ramsey} if for all $[s,X]\not=\emptyset$ with $X\in\mathcal{C}$ there exists $Y\in[s,X]\cap \mathcal{C}$ such that either $[s,Y]\subseteq \mathcal{X}$ or $[s,Y]\cap \mathcal{X}=\emptyset.$ $\mathcal{X}\subseteq [\mathbb{N}]^{\infty}$ is \emph{$\mathcal{C}$-Ramsey null} if for all $[s,X]\not=\emptyset$ with $X\in\mathcal{C}$ there exists $Y\in[s,X]\cap\mathcal{C}$ such that $[s,Y]\cap \mathcal{X}=\emptyset.$
\end{defn}

\begin{defn} Suppose that $\mathcal{C}\subseteq [\mathbb{N}]^{\infty}$. We say that $([\mathbb{N}]^{\infty}, \mathcal{C}, \subseteq)$ is a \emph{topological Ramsey space} if the following conditions hold:
\begin{enumerate}
\item $\{[s,X] :X\in\mathcal{C}\}$ is a neighborhood base for a topology on $[\mathbb{N}]^{\infty}$.
\item The collection of $\mathcal{C}$-Ramsey sets coincides with the $\sigma$-algebra of sets with the Baire property with respect to the topology generated by $\{[s,X] :X\in\mathcal{C}\}$.
\item The collection of $\mathcal{C}$-Ramsey null sets coincides with the $\sigma$-ideal of meager sets with respect to the topology generated by $\{[s,X] :X\in\mathcal{C}\}$.
\end{enumerate}
\end{defn}

Local Ramsey theory is concerned with characterizing the conditions on $\mathcal{C}$ which guarantee that $([\mathbb{N}]^{\infty}, \mathcal{C}, \subseteq)$ forms a topological Ramsey space. We will use the $\vec{\alpha}$-Ellentuck Theorem to provide such a characterization when $\mathcal{C}$ is taken to be an ultrafilter on $\mathbb{N}$. The $\vec{\alpha}$-Ellentuck Theorem, which we prove later, is a generalization of the Ellentuck Theorem.
\begin{thm}[The Ellentuck Theorem, \cite{Ellentuck}]
$([\mathbb{N}]^{\infty}, [\mathbb{N}]^{\infty}, \subseteq)$ is a topological Ramsey space.
\end{thm}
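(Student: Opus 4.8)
The plan is to deduce the Ellentuck Theorem from Theorem~\ref{alpha Ramsey theorem} by observing that when $\vec{\alpha}$ is the constant sequence $\alpha_s = \alpha$ for all $s$, the $\vec{\alpha}$-trees capture enough of the combinatorial content of the Ellentuck space to run a Galvin--Prikry-style fusion argument. First I would isolate the three Ellentuck conditions as three separate tasks: (a) the sets $[s,X]$ form a neighborhood base; (b) $\mathcal{X}$ has the Baire property in the Ellentuck topology iff $\mathcal{X}$ is completely Ramsey (i.e.\ $[\mathbb{N}]^\infty$-Ramsey in the above sense); (c) $\mathcal{X}$ is meager iff $\mathcal{X}$ is completely Ramsey null. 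Condition (a) is purely formal: one checks $[s,X]\cap[t,Y]$ is a union of basic sets, which is immediate from the definition of $\sqsubseteq$ and $\subseteq$. So the real work is in (b) and (c).

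The heart of the argument is the following translation lemma, which I would prove first: for every $\vec{\alpha}$-tree $T$ (with $\vec\alpha$ constant) there is an infinite set $X_T\in[\mathbb{N}]^\infty$ with $st(T)\sqsubseteq X_T$ such that $[st(T),X_T]\subseteq[T]$, and conversely every $[s,X]$ contains $[S]$ for a suitable $\vec{\alpha}$-tree $S$ with $st(S)=s$. The forward direction is a thinning-out recursion: given $T$, build $X_T$ by repeatedly choosing, at each stage, the least new element $m$ above $\max$ of the current finite approximation so that the one-point extension still lies in $T$; the $\vec{\alpha}$-tree hypothesis $s\cup\{\alpha_s\}\in{}^*T$ guarantees that the set of admissible $m$ is infinite at every stage (by Proposition~\ref{infinite set}), so the recursion never gets stuck and $X_T$ is infinite. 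The converse direction is easier: for $[s,X]$ with $s\sqsubseteq X$, the tree $S=\{t\in[\mathbb{N}]^{<\infty}: t\sqsubseteq s\}\cup\{t : s\sqsubseteq t\subseteq X\}$ is an $\vec{\alpha}$-tree with stem $s$ provided $\alpha=\alpha_s\in{}^*X$, which holds precisely because $X$ is infinite --- here I would need the small additional observation that we may choose $\vec\alpha$ so that $\alpha_s\in{}^*X$ for the countably many relevant $X$'s, or more cleanly, restrict attention to a single $X$ at a time and invoke that $\alpha\in{}^*X$ whenever $X$ is cofinite in some fixed infinite set; actually the cleanest route is to note we only need \emph{some} $\vec\alpha$-tree inside $[s,X]$, and one can always pass to a sub-tree whose branching is thinned to sit inside ${}^*X'$ for a co-infinite further refinement $X'\subseteq X$.

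With the translation lemma in hand, (b) follows: given $\mathcal{X}$ with the Baire property, write $\mathcal{X}=U\,\Delta\,M$ with $U$ Ellentuck-open and $M$ meager; given a basic $[s,X]$, use the translation lemma to find an $\vec{\alpha}$-tree $T\subseteq$ (the tree of $[s,X]$) with stem $s$, apply Theorem~\ref{alpha Ramsey theorem} to get $S\subseteq T$ with $st(S)=s$ landing in case (1) or (2) --- case (3) is excluded because an Ellentuck-open-mod-meager set cannot be ``everywhere split'' on a non-meager piece, which is exactly where the meager ideal being a genuine $\sigma$-ideal (Baire's theorem, quoted above) does the work --- and then translate $[S]$ back to a set $[s,Y]\subseteq[s,X]$ that is either inside $\mathcal{X}$ or disjoint from it. The reverse inclusion, that completely Ramsey sets have the Baire property, and the analogous pair of inclusions for (c), are proved by the standard bookkeeping: a completely Ramsey null set is nowhere dense in the Ellentuck topology by definition, and an arbitrary completely Ramsey set differs from the union of the basic sets it ``captures'' by a completely Ramsey null, hence meager, set.

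The main obstacle I anticipate is the case-(3) exclusion in the proof of (b): one must show that if $\mathcal{X}$ has the Baire property then no $\vec{\alpha}$-tree $S$ can satisfy ``every sub-$\vec{\alpha}$-tree $S'$ has $[S']\not\subseteq\mathcal{X}$ and $[S']\cap\mathcal{X}\neq\emptyset$.'' The reason is that $[S]$, being (via the translation lemma) essentially a basic Ellentuck neighborhood, is non-meager, so on it $\mathcal{X}$ or its complement must be comeager, hence one of them must contain a basic neighborhood $[s,Y]\subseteq[S]$ --- translate back to an $\vec\alpha$-tree and this contradicts case (3). Making this ``non-meager $\Rightarrow$ contains a basic set on which we are homogeneous'' step precise requires knowing that the Ellentuck topology restricted to $[s,X]$ is again (a copy of) the Ellentuck space and that meagerness is absolute in the required sense; both are classical and I would cite them rather than reprove them. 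Everything else is routine diagram-chasing between $\vec\alpha$-trees and the neighborhoods $[s,X]$ via the translation lemma.
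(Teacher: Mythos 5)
Note first that the paper does not prove this statement at all: it is quoted as background from Ellentuck's paper, and the closest the paper's machinery comes to it is the derivation of the Galvin--Prikry and Silver theorems via Lemma \ref{all beta} (``$\beta$-Ramsey for \emph{every} nonstandard $\beta$ implies Ramsey''). Your attempt to extract the full Ellentuck Theorem from Theorem \ref{alpha Ramsey theorem} has a genuine gap, in fact two. The converse half of your translation lemma is false for a fixed constant sequence $\alpha_{s}=\alpha$: if $S$ is an $\alpha$-tree with stem $s$ and $[S]\subseteq[s,X]$, then every branching set above the stem is contained in $X$ and contains $\alpha$ in its star, so necessarily $\alpha\in\leftidx{^{*}}{X}$; consequently, for any infinite co-infinite $X$ with $\alpha\in\leftidx{^{*}}{(\mathbb{N}\setminus X)}$ there is \emph{no} $\alpha$-tree inside $[s,X]$, and one of conclusions (1)--(3) of Theorem \ref{alpha Ramsey theorem} can never be ``translated back'' into that neighborhood. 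None of your proposed repairs works: the relevant $X$'s range over a set of size continuum, not countably many, so you cannot pre-choose $\vec{\alpha}$ to handle them all; ``cofinite in a fixed infinite set'' does not give $\alpha\in\leftidx{^{*}}{X}$; and thinning the branching into $\leftidx{^{*}}{X'}$ runs into exactly the same requirement $\alpha\in\leftidx{^{*}}{X'}$. The paper's escape is to let $\beta$ vary with $X$ (Lemma \ref{basic open to a tree} together with Lemma \ref{all beta}), but that route requires the target set to be $\beta$-Ramsey for \emph{all} nonstandard $\beta$, which is established in the paper only for metrically Borel and analytic sets, not for arbitrary sets with the Ellentuck--Baire property.

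The second and more serious gap is that your exclusion of case (3) is circular. You argue that $[S]$ is non-meager in the Ellentuck topology, that on it $\mathcal{X}$ or its complement is comeager, and ``hence one of them must contain a basic neighborhood $[t,Y]\subseteq[S]$.'' But ``Ellentuck-comeager on a piece implies contains a basic $[t,Y]$'' is precisely the statement that Ellentuck-meager sets are nowhere dense (equivalently, completely Ramsey null), which \emph{is} the nontrivial content of the Ellentuck Theorem; likewise the non-meagerness of the sets $[S]$ (i.e.\ that the Ellentuck space is a Baire space) is part of the same circle of facts. Declaring these ``classical and to be cited'' while claiming to prove the Ellentuck Theorem is begging the question: the only genuinely soft ingredients are the Baire-property bookkeeping you describe, and those are not where the theorem lives. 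Finally, a smaller point: your forward translation, built by greedily adding least admissible one-point extensions, only produces a single branch $X_{T}$ of $T$ and does not give $[st(T),X_{T}]\subseteq[T]$, since subsets of $X_{T}$ skip elements and their initial segments need not lie in $T$; one needs the full diagonalization of Lemma \ref{diag lem}, which the paper proves only for constant $\beta$ (and indeed getting $X_{T}$ with the extra property $\beta\in\leftidx{^{*}}{X_{T}}$ is equivalent to selectivity by Theorem \ref{beta-SCIP}, which should warn you that these translations are not free).
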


The statement of the $\vec{\alpha}$-Ellentuck Theorem is very similar; instead of taking $[s,X]$ to be the basic open sets we take them to be $\vec{\alpha}$-trees. Under certain saturation assumptions on the Alpha-Theory, $\vec{\alpha}$-trees coincide with $\vec{\mathcal{U}}$-trees as introduced by Blass in \cite{BlassU-trees}. In this way, the proof of the $\vec{\alpha}$-Ellentuck Theorem can be seen as a proof, within the Alpha-Theory, of the Ultra-Ellentuck Theorem developed by Todorcevic in \cite{RamseySpaces}. In the final section of this paper we extend these results, to the abstract setting, in the same way that the Abstract Ellentuck Theorem extends the Ellentuck Theorem. 

\subsection{The $\vec{\alpha}$-Ellentuck Theorem}
For some subsets of $[\mathbb{N}]^{\infty}$ conclusion (1) and (3) of Theorem \ref{alpha Ramsey theorem} are impossible. For example, every one-element subset of $[\mathbb{N}]^{\infty}$ has the property that conclusion (1) and (3) are impossible. On the other hand, for some subsets (1) and (2) are possible but (3) is impossible. For example, for all $\vec{\alpha}$-trees $T$, (1) and (2) are possible for $[T]$ but (3) is impossible for $[T]$.

\begin{defn}
$\mathcal{X}\subseteq [\mathbb{N}]^{\infty}$ is said to be \emph{$\vec{\alpha}$-Ramsey} if for all $\vec{\alpha}$-trees $T$ there exists an $\vec{\alpha}$-tree $S\subseteq T$ with $st(S)=st(T)$ such that either $[S]\subseteq \mathcal{X}$ or $[S]\cap \mathcal{X}=\emptyset$. $\mathcal{X}$ is said to be \emph{$\vec{\alpha}$-Ramsey null} if for all $\vec{\alpha}$-trees $T$ there exists an $\vec{\alpha}$-tree $S\subseteq T$ with $st(S)=st(T)$ such that $[S]\cap \mathcal{X}=\emptyset$.
\end{defn}

\begin{cor}\label{sigma ideal}
The collection of $\vec{\alpha}$-Ramsey null sets is a $\sigma$-ideal.
\end{cor}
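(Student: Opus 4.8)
The plan is to verify directly the three defining properties of a $\sigma$-ideal, namely that $\emptyset$ is $\vec{\alpha}$-Ramsey null, that the collection is closed under subsets, and that it is closed under countable unions. The first two are essentially immediate: for $\emptyset$, any $\vec{\alpha}$-tree $T$ witnesses the requirement since $[T]\cap\emptyset=\emptyset$; and if $\mathcal{Y}\subseteq\mathcal{X}$ with $\mathcal{X}$ $\vec{\alpha}$-Ramsey null, then given any $\vec{\alpha}$-tree $T$ we obtain an $\vec{\alpha}$-tree $S\subseteq T$ with $st(S)=st(T)$ and $[S]\cap\mathcal{X}=\emptyset$, whence $[S]\cap\mathcal{Y}=\emptyset$ as well.

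The substantive step is closure under countable unions. Suppose $\mathcal{X}=\bigcup_{n\in\mathbb{N}}\mathcal{X}_{n}$ with each $\mathcal{X}_{n}$ $\vec{\alpha}$-Ramsey null, and fix an $\vec{\alpha}$-tree $T$. I would build a fusion sequence of $\vec{\alpha}$-trees $T=S_{0}\supseteq S_{1}\supseteq S_{2}\supseteq\cdots$, all with the same stem $st(T)$, such that at stage $n$ the tree $S_{n+1}$ has been thinned so as to avoid $\mathcal{X}_{n}$ at the appropriate level. Concretely, the natural approach is to diagonalize: to ensure the limit tree is still an $\vec{\alpha}$-tree, at stage $n$ one works node-by-node over the finitely many nodes $s\in S_{n}$ of length $n+|st(T)|$, and for each such $s$ applies the $\vec{\alpha}$-Ramsey null hypothesis for $\mathcal{X}_{n}$ to the tree $S_{n}/s$ (or rather to an $\vec{\alpha}$-tree with stem $s$ contained in $S_{n}$) to find an $\vec{\alpha}$-subtree avoiding $\mathcal{X}_{n}$; one then glues these finitely many subtrees back together above $S_{n}$'s first $n$ levels. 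The key point making the gluing legitimate is exactly the argument already used in the Claim inside the proof of Theorem \ref{alpha Ramsey theorem}: a union of $\vec{\alpha}$-trees with stems $s\cup\{m\}$ ranging over an infinite set $A$ with $\alpha_{s}\in\leftidx{^{*}}{A}$ is again an $\vec{\alpha}$-tree with stem $s$. Taking $S=\bigcap_{n}S_{n}$ (more precisely, the tree whose $n$-th level agrees with that of $S_{n}$), one checks $S$ is an $\vec{\alpha}$-tree with $st(S)=st(T)$, and that $[S]\cap\mathcal{X}_{n}=\emptyset$ for every $n$, since any $X\in[S]$ has all but finitely many of its initial segments in $S_{n+1}$, which was arranged to meet $\mathcal{X}_{n}$ trivially above level $n$; hence $[S]\cap\mathcal{X}=\emptyset$.

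I expect the main obstacle to be bookkeeping in the fusion construction: one must be careful that thinning to avoid $\mathcal{X}_{n}$ at stage $n$ does not disturb the finitely many nodes already "frozen" at earlier stages, and that the resulting limit object genuinely satisfies $s\cup\{\alpha_{s}\}\in\leftidx{^{*}}{S}$ for all $s\in S/st(S)$ — this requires that each node $s$ be frozen only after it appears, and that the infinite branching set surviving above $s$ in the limit still contains $\alpha_{s}$ in its $*$-transform, which follows because at each later stage we pass to an infinite subset whose $*$-transform contains $\alpha_{s}$, and a decreasing sequence of such sets can be diagonalized. An alternative, slicker route avoiding explicit fusion is to note that being $\vec{\alpha}$-Ramsey null for $\mathcal{X}$ is equivalent to $H_{\mathcal{X}}:=\{s : \exists\,\vec{\alpha}\text{-tree }S\text{ with stem }s,\ [S]\cap\mathcal{X}=\emptyset\}$ containing the stem of every $\vec{\alpha}$-tree, together with Lemma \ref{alpha diag}; but even this reduction ultimately rests on the same gluing lemma, so the fusion bookkeeping is the real content and I would present it carefully.
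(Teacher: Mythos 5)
Your overall strategy is the same as the paper's: dispose of $\emptyset$ and subsets immediately, then handle countable unions by a fusion in which each node, once it appears, is assigned a thinning that avoids the appropriate $\mathcal{X}_{n}$, and the limit tree's successors of a level-$n$ node are drawn from that node's assigned tree. (The paper organizes this by recursively choosing, for each node $t$ that arises, an $\vec{\alpha}$-tree $S_{t}$ with stem $t$, $S_{t}\subseteq S_{t'}$ for the immediate predecessor $t'$, and $[S_{t}]\cap\mathcal{X}_{|t|-|st(T)|}=\emptyset$, and then building $S$ level-by-level so that the successors of $t$ in $S$ are its immediate successors in $S_{t}$.) However, two points in your sketch are off. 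First, there are not ``finitely many nodes $s\in S_{n}$ of length $n+|st(T)|$'': every node of an $\vec{\alpha}$-tree above the stem has infinitely many immediate successors, since $s\cup\{\alpha_{s}\}\in\leftidx{^{*}}{S_{n}}$ forces $\{m : s\cup\{m\}\in S_{n}\}$ to have $\alpha_{s}$ in its $*$-transform and hence to be infinite by Proposition \ref{infinite set}. This is harmless for the construction — the thinning at a level is performed independently at each of the infinitely many nodes, and the $\vec{\alpha}$-condition at a node depends only on its set of immediate successors, which is untouched at all nodes strictly below the frozen level — but the ``finitely many subtrees glued by the Claim'' justification is not the right one and would need to be replaced by this local argument.

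Second, and more substantively, your fallback that ``a decreasing sequence of such sets can be diagonalized'' so that $\alpha_{s}$ survives in the $*$-transform of the limiting successor set is not available in the Alpha-Theory: if $A_{0}\supseteq A_{1}\supseteq\cdots$ all satisfy $\alpha_{s}\in\leftidx{^{*}}{A_{i}}$, a diagonal pseudo-intersection $A$ need not satisfy $\alpha_{s}\in\leftidx{^{*}}{A}$; the ability to arrange this is precisely the selectivity-type property of Theorem \ref{beta-SCIP} (equivalently SCIP), which is not a theorem of $\alpha$1--$\alpha$5. So the bookkeeping must not allow the successor set of a fixed node to be shrunk at infinitely many later stages: it has to be decided at a single stage and never revisited. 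Your own proviso that ``each node $s$ be frozen only after it appears'' (stage $n$ modifying the tree only strictly above level $n+|st(T)|$) is exactly the correct arrangement, and with it the diagonalization remark is unnecessary; if instead you relied on that diagonalization the proof would genuinely fail. With these two corrections your fusion coincides with the paper's proof.
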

\begin{proof}
It is clear that $\emptyset$ is $\vec{\alpha}$-Ramsey null. It should also be clear from the definition that if $\mathcal{X}$ is $\vec{\alpha}$-Ramsey null and $\mathcal{Y}\subseteq \mathcal{X}$ then $\mathcal{Y}$ is also $\vec{\alpha}$-Ramsey null. So it is enough to show that the countable union of $\vec{\alpha}$-Ramsey null sets is $\vec{\alpha}$-Ramsey null. To this end, let $\left<\mathcal{X}_{i}:i\in\mathbb{N}\right>$ be a sequence of $\vec{\alpha}$-Ramsey null sets. 

Let $T$ be an $\vec{\alpha}$-tree. Let $S_{st(T)}\subseteq T$ be an $\vec{\alpha}$-tree such that $[S_{st(T)}]\cap \mathcal{X}_{0} =\emptyset$, such a tree exists as $\mathcal{X}_{0}$ is $\vec{\alpha}$-Ramsey null. Suppose that $S_{t}$ has been defined, is an $\vec{\alpha}$-tree, $st(S_{t})=t$ and $[S_{t}]\cap\mathcal{X}_{|t|-|st(T)|} =\emptyset$; note that $|t|-|st(T)|$ is a non-negative integer. For each $s\in S_{t}/t$ such that $|s|=|t|+1$, since $\mathcal{X}_{|s|-|st(T)|}$ is $\vec{\alpha}$-Ramsey null, there exists an $\vec{\alpha}$-tree $S_{s}\subseteq S_{t}$ such that $st(S_{s})=s$ and $[S_{s}]\cap \mathcal{X}_{|s|-|st(T)|}=\emptyset$. Then let
$$\begin{cases}
L_{0} = \{ st(T)\},\\
L_{n+1} =\{t\cup\{m\}\in [\mathbb{N}]^{<\infty} : t \in L_{n}, \ m>\max(t) \ \& \ t\cup\{m\}\in S_{t}\}. 
\end{cases}$$
Let 
$$ S = \{ t\in[\mathbb{N}]^{<\infty} : t\sqsubseteq st(T) \} \cup \bigcup_{n=0}^{\infty} L_{n}.$$
It is clear that $S$ is a tree with $st(S)=st(T)$. If $s\in S/st(S)$ then there exists $n\in\mathbb{N}$ such that $s\in L_{n}$. Since $S_{s}$ is an $\vec{\alpha}$-tree $s\cup\{\alpha_{s}\}\in \leftidx{^{*}}{S_{s}}$. Hence, $s\cup\{\alpha_{s}\} \in \leftidx{^{*}}{L_{n+1}}\subseteq \leftidx{^{*}}{S}$. So $S$ is an $\vec{\alpha}$-tree. 

If $X\in[S]$ and $n\in \mathbb{N}$ then there exists $t\sqsubseteq X$ such that $|t|=|st(T)|+n$. If $t \sqsubset t' \sqsubseteq X$ then $t'\in S_{t'\setminus{\max(t')}} \subseteq S_{t}$. Since $S_{t}$ is an $\vec{\alpha}$-tree with $st(S_{t})=t$, $X\in [S_{t}]$. Note that $[S_{t}]\cap \mathcal{X}_{n} =[S_{t}]\cap \mathcal{X}_{|t|-|st(T)|} = \emptyset$. Hence, $X\not \in \mathcal{X}_{n}$. Since $X$ was an arbitrary element of $[S]$ and $n$ an arbitrary element of $\mathbb{N}$,
$$[S]\cap \bigcup_{n=0}^{\infty} \mathcal{X}_{n} = \emptyset.$$
Thus, $\bigcup_{n=0}^{\infty} \mathcal{X}_{n}$ is $\vec{\alpha}$-Ramsey null.
\end{proof}
\begin{cor}
$[\mathbb{N}]^{\infty}$ is uncountable. 
\end{cor}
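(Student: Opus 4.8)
The plan is to derive uncountability from Corollary~\ref{sigma ideal}: I will show that every singleton $\{X\}$, $X\in[\mathbb{N}]^{\infty}$, is $\vec{\alpha}$-Ramsey null, while $[\mathbb{N}]^{\infty}$ itself is not, so $[\mathbb{N}]^{\infty}$ cannot be a countable union of singletons.

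First I would show that every $\{X\}$ with $X\in[\mathbb{N}]^{\infty}$ is $\vec{\alpha}$-Ramsey null. Fix an $\vec{\alpha}$-tree $T$; if $X\notin[T]$ take $S=T$, so assume $X\in[T]$, which forces $st(T)\sqsubset X$. Let $m=\min(X\setminus st(T))$ and $t_{1}=st(T)\cup\{m\}$, so $t_{1}\sqsubseteq X$. Put $H=\{s\in[\mathbb{N}]^{<\infty}:t_{1}\not\sqsubseteq s\}$. Then $st(T)\in H$ since $|st(T)|<|t_{1}|$, and $H$ satisfies the hypothesis of Lemma~\ref{alpha diag}: if $t_{1}\not\sqsubseteq s$ then $t_{1}\not\sqsubseteq s\cup\{\alpha_{s}\}$ as well, because $\alpha_{s}$ is a nonstandard hypernatural, so $\alpha_{s}>\max(s)$ and $\alpha_{s}\notin\mathbb{N}$, and appending $\alpha_{s}$ to $s$ therefore cannot create the all-standard initial segment $t_{1}$; this relation transfers to $s\cup\{\alpha_{s}\}\in{}^{*}H$. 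Lemma~\ref{alpha diag} then gives an $\vec{\alpha}$-tree $S\subseteq T$ with $st(S)=st(T)$ and $S/st(S)\subseteq H$. Since $t_{1}\notin H$ but $st(T)\sqsubseteq t_{1}$, we get $t_{1}\notin S$, hence $X\notin[S]$, i.e. $[S]\cap\{X\}=\emptyset$; so $\{X\}$ is $\vec{\alpha}$-Ramsey null.

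Next I would check that $[\mathbb{N}]^{\infty}$ is \emph{not} $\vec{\alpha}$-Ramsey null, by testing the definition against the $\vec{\alpha}$-tree $[\mathbb{N}]^{<\infty}$: it suffices to show $[S]\neq\emptyset$ for every $\vec{\alpha}$-tree $S$. Build a branch recursively: set $s_{0}=st(S)$, and given $s_{k}\in S/st(S)$, note that $s_{k}\cup\{\alpha_{s_{k}}\}\in{}^{*}S$ forces $\{n>\max(s_{k}):s_{k}\cup\{n\}\in S\}$ to be nonempty (indeed infinite, exactly as in the proof of Lemma~\ref{alpha diag}), so choose $s_{k+1}=s_{k}\cup\{n_{k+1}\}\in S$. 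Then $X=\bigcup_{k}s_{k}$ is infinite and all of its initial segments lie in $S$, whence $X\in[S]$.

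Finally, if $[\mathbb{N}]^{\infty}$ were countable it would equal $\bigcup_{X\in[\mathbb{N}]^{\infty}}\{X\}$, a countable union of $\vec{\alpha}$-Ramsey null sets, hence $\vec{\alpha}$-Ramsey null by Corollary~\ref{sigma ideal}, contradicting the previous paragraph. The only delicate points are the two implicit $*$-transfer steps above — closure of $H$ under $s\mapsto s\cup\{\alpha_{s}\}$, and nonemptiness of successor sets in an $\vec{\alpha}$-tree — and each reduces to the fact that $\alpha_{s}$ is nonstandard; everything else is bookkeeping.
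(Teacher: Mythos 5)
Your proof is correct, and it reaches the same final contradiction as the paper (singletons are $\vec{\alpha}$-Ramsey null, so a countable $[\mathbb{N}]^{\infty}$ would be $\vec{\alpha}$-Ramsey null by Corollary \ref{sigma ideal}, which is impossible), but you establish the two key ingredients differently. The paper proves that $\{X\}$ is $\vec{\alpha}$-Ramsey null by invoking Theorem \ref{alpha Ramsey theorem} and ruling out alternatives (1) and (3), citing without proof that $[T]$ is infinite for every $\vec{\alpha}$-tree and dismissing (3) rather tersely; you instead bypass Theorem \ref{alpha Ramsey theorem} entirely and, given an $\vec{\alpha}$-tree $T$ with $X\in[T]$, apply Lemma \ref{alpha diag} to $H=\{s: t_{1}\not\sqsubseteq s\}$ with $t_{1}=st(T)\cup\{\min(X\setminus st(T))\}$ to produce a sub-$\vec{\alpha}$-tree missing $X$ --- the closure of $H$ under $s\mapsto s\cup\{\alpha_{s}\}$ holds exactly as you say, since the only new initial segment created is $s\cup\{\alpha_{s}\}$ itself, which cannot equal the standard set $t_{1}$. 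You also explicitly prove the fact the paper leaves implicit, namely that $[S]\not=\emptyset$ for every $\vec{\alpha}$-tree $S$, by the recursive branch construction using that each successor set $\{n>\max(s_{k}):s_{k}\cup\{n\}\in S\}$ is nonempty (the same transfer step as in Lemma \ref{alpha diag}). The trade-off: the paper's argument is shorter because it leans on the dichotomy theorem and on unproved assertions about $[T]$, while yours is more elementary and self-contained, using only Lemma \ref{alpha diag}, Corollary \ref{sigma ideal}, and direct transfer arguments, and in effect it supplies the missing justification for why conclusion (3) fails for a singleton.
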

\begin{proof}
Let $X\in [\mathbb{N}]^{\infty}$. For all $\vec{\alpha}$-trees $T$, $[T]$ is infinite. Thus for all $\vec{\alpha}$-trees $T$, $[T]\not\subseteq \{X\}$. Therefore conclusion (1) in Theorem \ref{alpha Ramsey theorem} is not possible for $\{X\}$. Conclusion (3) is also not possible as it implies that there is an $\vec{\alpha}$-tree $S$ such that for all $\vec{\alpha}$-trees $S'\subseteq S$, $[S']\cap\{X\}\not=\emptyset$, a contradiction. So, Theorem \ref{alpha Ramsey theorem} implies that $\{X\}$ is $\vec{\alpha}$-Ramsey null.

Toward a contradiction suppose that $[\mathbb{N}]^{\infty}$ is countable. By Corollary \ref{sigma ideal}, $[\mathbb{N}]^{\infty}$ would be the countable union of $\vec{\alpha}$-Ramsey null sets and hence Ramsey null itself. This is a contradiction since for all $\vec{\alpha}$-trees $S$, $[S]\subseteq [\mathbb{N}]^{\infty}$. 
\end{proof}
The next lemma, among other things, shows that for all $\vec{\alpha}$-trees $T$, $[T]$ is $\vec{\alpha}$-Ramsey. The lemma in the context of $\vec{\mathcal{U}}$-trees appears as Lemma 7.31 without proof in \cite{RamseySpaces}; however, the statement has a typo. Lemma 7.31 in \cite{RamseySpaces} states that the intersection gives an $\vec{\mathcal{U}}$-tree if and only if the stems are $\sqsubseteq$-comparable. It is possible to give examples of $\vec{\mathcal{U}}$-tress with $\sqsubseteq$-comparable stems whose intersection is not an $\vec{\mathcal{U}}$-tree.

\begin{lem}\label{intersection alpha trees}
If $T$ and $S$ are $\vec{\alpha}$-trees then $S\cap T$ is an $\vec{\alpha}$-tree if and only if either $st(T)\sqsubseteq st(S)\in T$ or $st(S)\sqsubseteq st(T)\in S$.
\end{lem}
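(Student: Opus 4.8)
The plan is to prove the two implications separately; in both, the real content is the identification of $st(S\cap T)$, and this is the only point at which the hypothesis that $S$ and $T$ are $\vec{\alpha}$-trees (rather than arbitrary trees) is used. The single fact about $\vec{\alpha}$-trees that I will invoke repeatedly is: \emph{if $R$ is an $\vec{\alpha}$-tree and $s\in R$ with $st(R)\sqsubseteq s$, then $\{n\in\mathbb{N}:n>\max(s)\ \text{and}\ s\cup\{n\}\in R\}$ is infinite.} This is exactly the argument already used for $L_{1}$ in the proof of Lemma \ref{alpha diag}: from $s\cup\{\alpha_{s}\}\in\leftidx{^{*}}{R}$ and the fact that $\alpha_{s}$ is a nonstandard hypernatural number, Proposition \ref{infinite set} (applied to the set of immediate successors of $s$ in $R$) yields infiniteness. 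I will also use without comment that $S\cap T$ is closed under $\sqsubseteq$-initial segments, since $S$ and $T$ are, and that $\leftidx{^{*}}{(S\cap T)}=\leftidx{^{*}}{S}\cap\leftidx{^{*}}{T}$ by Proposition \ref{star transform}.

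For the right-to-left implication, assume without loss of generality that $st(T)\sqsubseteq st(S)\in T$; the other case follows by interchanging $S$ and $T$. Then $st(S)\in S\cap T$, so $S\cap T$ is a nonempty tree. First I would record the implication
$$s\in S\cap T\ \text{and}\ st(S)\sqsubseteq s \implies s\cup\{\alpha_{s}\}\in\leftidx{^{*}}{(S\cap T)};$$
indeed $st(S)\sqsubseteq s$ puts $s$ into $S/st(S)$, so $s\cup\{\alpha_{s}\}\in\leftidx{^{*}}{S}$ because $S$ is an $\vec{\alpha}$-tree, while $st(T)\sqsubseteq st(S)\sqsubseteq s$ puts $s$ into $T/st(T)$, so $s\cup\{\alpha_{s}\}\in\leftidx{^{*}}{T}$ likewise, and the two combine. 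Applying this implication together with the fact recalled above to $s=st(S)$ shows that $st(S)$ has infinitely many immediate successors in $S\cap T$. Since $S\cap T\subseteq S$, every element of $S\cap T$ is $\sqsubseteq$-comparable to $st(S)$; and any element of $S\cap T$ properly extending $st(S)$ is $\sqsubseteq$-incomparable to all but one of those infinitely many successors, hence fails to be $\sqsubseteq$-comparable to every element of $S\cap T$. Therefore $st(S)$ is the $\sqsubseteq$-maximal element of $S\cap T$ comparable to all of $S\cap T$, i.e. $st(S\cap T)=st(S)$. Now $(S\cap T)/st(S\cap T)\neq\emptyset$ (it contains the successors just produced), and the displayed implication is precisely the defining $\vec{\alpha}$-tree condition for $S\cap T$, so $S\cap T$ is an $\vec{\alpha}$-tree.

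For the left-to-right implication, suppose $S\cap T$ is an $\vec{\alpha}$-tree and set $u:=st(S\cap T)\in S\cap T$. I claim $st(S)\sqsubseteq u$: otherwise $u\sqsubset st(S)$, so $st(S)$ has a unique element $m$ immediately after $u$, while by the fact above $u$ has infinitely many immediate successors in $S\cap T\subseteq S$; choosing one, $u\cup\{n\}\in S$ with $n\neq m$, and $u\cup\{n\}$ is then $\sqsubseteq$-incomparable to $st(S)$, contradicting that $st(S)$ is the stem of $S$. Hence $st(S)\sqsubseteq u$, and symmetrically $st(T)\sqsubseteq u$. Thus $st(S)$ and $st(T)$ are both $\sqsubseteq$-initial segments of $u$ and so are $\sqsubseteq$-comparable; assuming $st(T)\sqsubseteq st(S)$ (otherwise swap), from $st(S)\sqsubseteq u\in T$ and $T$ being a tree we get $st(S)\in T$, i.e. $st(T)\sqsubseteq st(S)\in T$, as desired.

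I expect the main obstacle to be the right-to-left stem computation — specifically, justifying that no node of $S\cap T$ strictly above $st(S)$ can be $\sqsubseteq$-comparable to everything in $S\cap T$ — since this is where one must combine the order-theoretic bookkeeping with the infinite-branching consequence of the Alpha-Theory axioms. Once the stems are pinned down the two defining clauses of an $\vec{\alpha}$-tree fall out immediately, and the left-to-right direction is then a short mirror image of the same branching argument.
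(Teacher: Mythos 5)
Your proof is correct, and its right-to-left half is in substance the paper's own argument: once the stem of $S\cap T$ is located, the $\vec{\alpha}$-condition falls out of $\leftidx{^{*}}{(S\cap T)}=\leftidx{^{*}}{S}\cap\leftidx{^{*}}{T}$ exactly as in the paper, except that you verify $st(S\cap T)=st(S)$ explicitly, whereas the paper simply writes $(S\cap T)/st(S\cap T)=(S\cap T)/st(T)$ without justifying the stem identification -- your incomparability argument via the infinitely many immediate successors fills a step the paper glosses over. The left-to-right implication is where you genuinely diverge: the paper argues by contrapositive that if the stems are $\sqsubseteq$-incomparable, or if the longer stem is missing from the other tree, then $S\cap T$ is contained in the finite set of initial segments of a single node, and a finite set cannot be an $\vec{\alpha}$-tree (a point made explicit only in the abstract analogue, Lemma \ref{abstract intersection alpha trees}, via Lemma \ref{infinity lemma}); you instead work directly with $u=st(S\cap T)$, use infinite branching at $u$ (the same inference from $s\cup\{\alpha_{s}\}\in\leftidx{^{*}}{H}$ and Proposition \ref{infinite set} that the paper uses in Lemma \ref{alpha diag}) to force $st(S)\sqsubseteq u$ and $st(T)\sqsubseteq u$ on pain of producing an element of $S$ or $T$ incomparable to its stem, and then read the conclusion off from $u\in S\cap T$ and closure under initial segments. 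The paper's finiteness route is shorter; yours isolates the stem computation as the real content and reuses one branching fact in both directions, at the cost of the fussier order-theoretic bookkeeping you anticipated, and both ultimately rest on the same consequence of the $\vec{\alpha}$-condition.
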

\begin{proof} Let $S$ and $T$ be $\vec{\alpha}$-trees. If $st(T)$ and $st(S)$ are not $\sqsubseteq$-comparable then $S\cap T =\{t\in[\mathbb{N}]^{<\infty} : t\sqsubseteq st(S) \ \& \ t\sqsubseteq st(T)\}$ and can not be an $\vec{\alpha}$-tree as it is a finite set. By contrapostive, if $S\cap T$ is an $\vec{\alpha}$-tree then either $st(S)\sqsubseteq st(T)$ or $st(T)\sqsubseteq st(S)$.  If $st(S)\sqsubseteq st(T)$ and $st(T)\not\in S$ then $S\cap T\subseteq \{t\in[\mathbb{N}]^{<\infty} : t\sqsubseteq st(T)\}$ can not be an $\vec{\alpha}$-tree as it is a finite set. Hence, if $S\cap T$ is an $\vec{\alpha}$-tree and $st(S)\sqsubseteq st(T)$ then $st(T) \in S$. Likewise, if $S\cap T$ is an $\vec{\alpha}$-tree and $st(T)\sqsubseteq st(S)$ then $st(S)\in T$. Altogether we have shown that if $S\cap T$ is an $\vec{\alpha}$-tree then either $st(S)\sqsubseteq st(T)\in S$ or $st(T)\sqsubseteq st(s)\in T$.

 Let $S$ and $T$ be $\vec{\alpha}$-trees and suppose $st(S)\sqsubseteq st(T)\in S$. Then $ (S\cap T)/st(S \cap T) =(S\cap T)/st(T) = S/st(T) \cap T/st(T) = S/st(S) \cap T/st(T)\not=\emptyset$.  Since $S$ and $T$ are both $\vec{\alpha}$-trees, for each $s\in (S\cap T)/st(S\cap T)$, $s\cup\{\alpha_{s}\} \in \leftidx{^{*}}{S} \cap \leftidx{^{*}}{T}=\leftidx{^{*}}{(S\cap T)}$. That is, $S\cap T$ is an $\vec{\alpha}$-tree. By symmetry, if either $st(T)\sqsubseteq st(S)\in T$ or $st(S)\sqsubseteq st(T)\in S$ then $S\cap T$ is an $\vec{\alpha}$-tree.
\end{proof}

The $\vec{\alpha}$-Ramsey and $\vec{\alpha}$-Ramsey null sets can be completely characterized in terms of topological notions with respect to a space on $[\mathbb{N}]^{\infty}$ generated from the $\vec{\alpha}$-trees. 

\begin{lem}
The collection $\{[T]: \mbox{$T$ is an $\vec{\alpha}$-tree}\}$ is a basis for a topology on $[\mathbb{N}]^{\infty}$.
\end{lem}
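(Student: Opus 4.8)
The plan is to verify the standard criterion for a family of sets to be a basis for a topology: namely, that the union of the family is all of $[\mathbb{N}]^{\infty}$, and that whenever $X$ lies in the intersection $[S]\cap[T]$ of two members of the family, there is a third member $[R]$ of the family with $X\in[R]\subseteq[S]\cap[T]$. The first condition is immediate, since $[\mathbb{N}]^{<\infty}$ is an $\vec{\alpha}$-tree (by the Example following the definition of $\vec{\alpha}$-tree) and $[[\mathbb{N}]^{<\infty}]=[\mathbb{N}]^{\infty}$; alternatively, for any $X\in[\mathbb{N}]^{\infty}$ the tree $\{s\in[\mathbb{N}]^{<\infty}: s\sqsubseteq X\}\cup\{s\cup\{n\}: s\sqsubseteq X,\ n>\max(s)\}$ — or even more simply $[\mathbb{N}]^{<\infty}$ itself — witnesses membership.

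For the intersection condition, suppose $S$ and $T$ are $\vec{\alpha}$-trees and $X\in[S]\cap[T]$. Since every initial segment of $X$ lies in both $S$ and $T$, in particular $st(S)\sqsubseteq X$ and $st(T)\sqsubseteq X$, so $st(S)$ and $st(T)$ are $\sqsubseteq$-comparable (being initial segments of the same set $X$); say without loss of generality $st(S)\sqsubseteq st(T)$. Moreover $st(T)\sqsubseteq X$ and $st(T)\in S$ because $st(T)$ is an initial segment of $X\in[S]$. Hence $st(S)\sqsubseteq st(T)\in S$, and Lemma \ref{intersection alpha trees} applies to give that $S\cap T$ is an $\vec{\alpha}$-tree. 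Finally $[S\cap T]=[S]\cap[T]$: the inclusion $[S\cap T]\subseteq[S]\cap[T]$ is clear since $S\cap T\subseteq S$ and $S\cap T\subseteq T$, and conversely if $Y\in[S]\cap[T]$ then every initial segment of $Y$ lies in both $S$ and $T$, hence in $S\cap T$, so $Y\in[S\cap T]$. Taking $R=S\cap T$ completes the verification, and in particular $X\in[S\cap T]$.

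The only nontrivial input is Lemma \ref{intersection alpha trees}, which has already been proved; the rest is the routine bookkeeping of checking the basis axioms, together with the elementary observation that $[\,\cdot\,]$ turns intersections of $\vec{\alpha}$-trees (when these are again $\vec{\alpha}$-trees) into intersections of the corresponding sets. The one point requiring a moment's care is noticing that membership of $X$ in $[S]\cap[T]$ automatically forces the stems to be comparable and the longer stem to lie in the other tree — which is exactly the hypothesis needed to invoke Lemma \ref{intersection alpha trees} — so no separate case analysis for incomparable stems is needed.
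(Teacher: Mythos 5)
Your proof is correct and follows essentially the same route as the paper's: reduce to showing $S\cap T$ is an $\vec{\alpha}$-tree whenever $[S]\cap[T]\neq\emptyset$, observe that a common element $X$ forces the stems to be $\sqsubseteq$-comparable with the longer stem lying in the other tree, and invoke Lemma \ref{intersection alpha trees}. The only difference is that you spell out the routine covering condition and the identity $[S\cap T]=[S]\cap[T]$, which the paper leaves implicit.
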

\begin{proof}
It is enough to show that, if $S$ and $T$ are $\vec{\alpha}$-trees and $[S]\cap[T]\not=\emptyset$ then $S\cap T$ is an $\vec{\alpha}$-tree.
Suppose $S$ and $T$ are $\vec{\alpha}$-trees and $X\in [S]\cap[T]$. Then either $st(T)\sqsubseteq st(S) \sqsubseteq X$ or $st(S)\sqsubseteq st(T) \sqsubseteq X$. If $st(S)\sqsubseteq X$ then $st(S) \in T$ since $X\in[T]$. Likewise, if $st(T)\sqsubseteq X$ then $st(T) \in S$. So, either $st(S)\sqsubseteq st(T)\in S$ or $st(T)\sqsubseteq st(S)\in T$. By Lemma \ref{intersection alpha trees}, $S\cap T$ is an $\vec{\alpha}$-tree.
\end{proof}

\begin{defn}
The topology on $[\mathbb{N}]^{\infty}$ generated by $\{[T] : \mbox{T is an $\vec{\alpha}$-tree}\}$ is called \emph{the $\vec{\alpha}$-Ellentuck topology}. We say that a subset of $[\mathbb{N}]^{\infty}$ is \emph{$\vec{\alpha}$-open} if it is open with respect the $\vec{\alpha}$-Ellentuck topology.
\end{defn}

\begin{rem}
We leave it to the interested reader to show that the \emph{$\vec{\alpha}$-Ellentuck space} is a zero-dimensional Baire space on $[\mathbb{N}]^{\infty}$ with the countable chain condition. 
\end{rem}

\begin{cor}\label{open is ramsey}
Every $\vec{\alpha}$-open set is $\vec{\alpha}$-Ramsey.
\end{cor}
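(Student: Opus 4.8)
The plan is to apply Theorem \ref{alpha Ramsey theorem} and then argue that, for an $\vec{\alpha}$-open set, alternative (3) can never occur, so that one of (1) or (2) must hold — which is precisely the definition of being $\vec{\alpha}$-Ramsey.

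First I would fix an $\vec{\alpha}$-open set $\mathcal{X}\subseteq[\mathbb{N}]^{\infty}$ and an arbitrary $\vec{\alpha}$-tree $T$, and apply Theorem \ref{alpha Ramsey theorem} to produce an $\vec{\alpha}$-tree $S\subseteq T$ with $st(S)=st(T)$ for which one of the three conclusions holds. If conclusion (1) or (2) holds there is nothing left to do: $[S]\subseteq\mathcal{X}$ or $[S]\cap\mathcal{X}=\emptyset$ is exactly what the definition of $\vec{\alpha}$-Ramsey demands.

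So the remaining task — and the only real content — is to derive a contradiction from conclusion (3). Assume (3) holds. Taking $S'=S$ in (3) gives $[S]\cap\mathcal{X}\neq\emptyset$, so fix some $X\in[S]\cap\mathcal{X}$. Since $\mathcal{X}$ is $\vec{\alpha}$-open and $X\in\mathcal{X}$, there is an $\vec{\alpha}$-tree $R$ with $X\in[R]\subseteq\mathcal{X}$. Now $X\in[S]\cap[R]$, so by the lemma that the sets $[T']$ for $\vec{\alpha}$-trees $T'$ form a basis (equivalently, via Lemma \ref{intersection alpha trees}, a common branch of $S$ and $R$ forces the stems to be $\sqsubseteq$-comparable with the shorter lying in the other tree), $S\cap R$ is an $\vec{\alpha}$-tree. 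It is contained in $S$, and every initial segment of a member of $[S\cap R]$ lies in $R$, whence $[S\cap R]\subseteq[R]\subseteq\mathcal{X}$. Thus $S\cap R$ is an $\vec{\alpha}$-tree with $S\cap R\subseteq S$ and $[S\cap R]\subseteq\mathcal{X}$, directly contradicting the assertion in (3) that $[S']\not\subseteq\mathcal{X}$ for every $\vec{\alpha}$-tree $S'\subseteq S$. Hence (3) is impossible and the corollary follows.

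I do not expect a genuine obstacle here: once Theorem \ref{alpha Ramsey theorem} and the basis lemma are in hand the argument is immediate. The only points requiring a moment's care are the verification that $S\cap R$ is again an $\vec{\alpha}$-tree (so that it is a legitimate witness against (3)) and the observation $[S\cap R]\subseteq[R]$.
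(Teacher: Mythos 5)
Your proposal is correct and is essentially the paper's argument: both hinge on applying Theorem \ref{alpha Ramsey theorem}, picking a point $X\in[S]\cap\mathcal{X}$ from alternative (3), and using Lemma \ref{intersection alpha trees} (equivalently the basis lemma) to intersect $S$ with a basic neighborhood of $X$, so that $[S\cap R]\subseteq\mathcal{X}$ contradicts (3). The only difference is presentational -- the paper runs the contrapositive (a non-$\vec{\alpha}$-Ramsey set contains a point outside its $\vec{\alpha}$-interior) while you argue directly that openness rules out alternative (3).
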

\begin{proof} We show that, if $\mathcal{X}\subseteq[\mathbb{N}]^{\infty}$ is not $\vec{\alpha}$-Ramsey then there exists $X\in \mathcal{X}$ such that for all $\vec{\alpha}$-trees $S$, if $X\in [S]$ then  $[S]\not\subseteq \mathcal{X}$. In other words, if $\mathcal{X}$ is not $\vec{\alpha}$-Ramsey then it contains a point not in its interior with respect to the $\vec{\alpha}$-Ellentuck topology. The result follows by taking the contrapositve of this statement.

Suppose that $\mathcal{X}$ is not $\vec{\alpha}$-Ramsey. Then there exists an $\vec{\alpha}$-tree $T$ such that for each $\vec{\alpha}$-tree $S\subseteq T$ with $st(S)=st(T)$, $[S]\not\subseteq \mathcal{X}$ and $[S]\cap \mathcal{X}\not = \emptyset$. By Theorem \ref{alpha Ramsey theorem} there is an $\vec{\alpha}$-tree $S\subseteq T$ with $st(S)=st(T)$ such that for all $\vec{\alpha}$-trees $S'\subseteq S$, $[S']\not \subseteq \mathcal{X}$ and $[S']\cap \mathcal{X}\not=\emptyset$. 

Since $S\subseteq S$, $[S]\cap \mathcal{X} \not =\emptyset$. Let $X$ be any element of $[S]\cap \mathcal{X}$. If $S'$ is an $\vec{\alpha}$-tree and $X\in[S']$ then $X\in[S]\cap [S']$. So either $st(S')\sqsubseteq st(S) \sqsubseteq X$ or $st(S)\sqsubseteq st(S') \sqsubseteq X$. If $st(S)\sqsubseteq X$ then $st(S) \in S'$ since $X\in[S']$. Likewise, if $st(S')\sqsubseteq X$ then $st(S') \in S$. By Lemma \ref{intersection alpha trees}, $S'\cap S$ is an $\vec{\alpha}$-tree. Since $S'\cap S\subseteq S$, the previous paragraph shows that $[S'\cap S]\not \subseteq \mathcal{X}$ and $[S'\cap S] \cap \mathcal{X} \not = \emptyset$. In particular, $[S']\not \subseteq \mathcal{X}$ as $[S'\cap S]\subseteq [S]$.
\end{proof}

\begin{defn}
$\mathcal{X}\subseteq [\mathbb{N}]^{\infty}$ \emph{is $\vec{\alpha}$-nowhere dense/ is $\vec{\alpha}$-meager/  has the $\vec{\alpha}$-Baire property} if it is nowhere dense/ is meager/ has the Baire property with respect to the $\vec{\alpha}$-Ellentuck topology. 

We say that $([\mathbb{N}]^{\infty},\vec{\alpha},\subseteq)$ is a \emph{$\vec{\alpha}$-Ramsey space} if the collection of $\vec{\alpha}$-Ramsey sets coincides with the $\sigma$-algebra of sets with the $\vec{\alpha}$-Baire property and the collection of $\vec{\alpha}$-Ramsey null sets coincides with the $\sigma$-ideal of $\vec{\alpha}$-meager sets.
\end{defn}

The next theorem is equivalent to the ultra-Ellentuck Theorem of Todorcevic under the assumption, which we explore later, of the $\mathfrak{c}^{+}$-enlarging property.

\begin{thm} (The $\vec{\alpha}$-Ellentuck Theorem)
$([\mathbb{N}]^{\infty},\vec{\alpha},\subseteq)$ is a $\vec{\alpha}$-Ramsey space.
\end{thm}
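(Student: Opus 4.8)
The plan is to prove the $\vec{\alpha}$-Ellentuck Theorem by establishing a characterization of the $\vec{\alpha}$-meager sets that parallels the usual Ellentuck-type arguments, and then deducing from it that the $\vec{\alpha}$-Ramsey sets are exactly those with the $\vec{\alpha}$-Baire property. Concretely, I would first prove the key technical lemma: \emph{a set $\mathcal{X}\subseteq[\mathbb{N}]^{\infty}$ is $\vec{\alpha}$-nowhere dense if and only if it is $\vec{\alpha}$-Ramsey null}. One direction is essentially Corollary \ref{open is ramsey} applied to complements: if $\mathcal{X}$ is $\vec{\alpha}$-nowhere dense then for every $\vec{\alpha}$-tree $T$ there is an $\vec{\alpha}$-tree $S\subseteq T$ with $[S]\cap\overline{\mathcal{X}}$-free interior, but since nowhere dense sets are contained in the complement of a dense open set, a diagonalization using Lemma \ref{alpha diag} (in the spirit of Corollary \ref{sigma ideal}) yields an $\vec{\alpha}$-tree $S\subseteq T$ with $st(S)=st(T)$ and $[S]\cap\mathcal{X}=\emptyset$. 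For the converse, if $\mathcal{X}$ is $\vec{\alpha}$-Ramsey null then given any basic open $[T]$ we directly produce $S\subseteq T$ with $st(S)=st(T)$ and $[S]\cap\mathcal{X}=\emptyset$, and $[S]\subseteq[T]$ is a nonempty basic open subset witnessing nowhere density.

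Once that lemma is in hand, the $\sigma$-ideal statement is immediate: the $\vec{\alpha}$-meager sets are countable unions of $\vec{\alpha}$-nowhere dense sets $=$ countable unions of $\vec{\alpha}$-Ramsey null sets, which by Corollary \ref{sigma ideal} are again $\vec{\alpha}$-Ramsey null; conversely every $\vec{\alpha}$-Ramsey null set is $\vec{\alpha}$-nowhere dense, hence $\vec{\alpha}$-meager. This proves the $\sigma$-ideal half of the theorem. For the $\sigma$-algebra half, I would argue that the $\vec{\alpha}$-Ramsey sets coincide with the sets having the $\vec{\alpha}$-Baire property. If $\mathcal{X}$ has the $\vec{\alpha}$-Baire property, write $\mathcal{X}=\mathcal{U}\,\Delta\,\mathcal{M}$ with $\mathcal{U}$ $\vec{\alpha}$-open and $\mathcal{M}$ $\vec{\alpha}$-meager; given an $\vec{\alpha}$-tree $T$, first shrink to $S_0\subseteq T$ with $st(S_0)=st(T)$ and $[S_0]\cap\mathcal{M}=\emptyset$ (possible since $\mathcal{M}$ is $\vec{\alpha}$-Ramsey null), so that $\mathcal{X}\cap[S_0]=\mathcal{U}\cap[S_0]$; then apply Corollary \ref{open is ramsey} to $\mathcal{U}$ restricted to $S_0$ to get $S\subseteq S_0$ with $st(S)=st(T)$ deciding $\mathcal{U}$, hence deciding $\mathcal{X}$. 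Conversely, if $\mathcal{X}$ is $\vec{\alpha}$-Ramsey I would show $\mathcal{X}$ differs from its $\vec{\alpha}$-interior by an $\vec{\alpha}$-meager set: the ``bad'' points are those $X\in\mathcal{X}$ with no $\vec{\alpha}$-tree $S\ni X$ with $[S]\subseteq\mathcal{X}$, and a standard decomposition over the countably many possible stems, each piece handled by the $\vec{\alpha}$-Ramsey property, shows this difference set is $\vec{\alpha}$-Ramsey null, hence $\vec{\alpha}$-nowhere dense.

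I expect the main obstacle to be the converse direction of the $\sigma$-algebra equality — showing every $\vec{\alpha}$-Ramsey set has the $\vec{\alpha}$-Baire property — because it requires a careful, uniform-over-stems diagonalization to exhibit the error set as $\vec{\alpha}$-Ramsey null, and one must verify that the countably many local applications of Theorem \ref{alpha Ramsey theorem} can be amalgamated into a single $\vec{\alpha}$-tree via a level-by-level fusion exactly as in the proof of Corollary \ref{sigma ideal}, using that $s\cup\{\alpha_s\}\in{}^{*}S_s$ at each level. The subtlety is that for a fixed $\vec{\alpha}$-tree $T$ one must decide $\mathcal{X}$ not just on $[T]$ but build the approximating open set coherently; the $\sqsubseteq$-structure of stems and Lemma \ref{intersection alpha trees} are what make the amalgamation legitimate. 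Everything else reduces to the two reference results Corollary \ref{open is ramsey} and Corollary \ref{sigma ideal} together with the nowhere-dense/Ramsey-null equivalence, so once that equivalence and the fusion lemma are established the theorem follows along the classical Ellentuck lines.
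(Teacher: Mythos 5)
Your proposal is correct and follows the same overall architecture as the paper's proof: show $\vec{\alpha}$-nowhere dense coincides with $\vec{\alpha}$-Ramsey null, pass to meager via Corollary \ref{sigma ideal}, prove Baire property $\Rightarrow$ Ramsey by splitting $\mathcal{X}=\mathcal{O}\Delta\mathcal{M}$ and shrinking twice, and prove Ramsey $\Rightarrow$ Baire property via the $\vec{\alpha}$-interior. Two local differences are worth noting. For nowhere dense $\Rightarrow$ Ramsey null, the paper applies Theorem \ref{alpha Ramsey theorem} directly, observing that alternatives (1) and (3) are incompatible with nowhere density so alternative (2) must hold; your route through Corollary \ref{open is ramsey} applied to the dense open complement of the closure also works (if $[S]\cap\mathcal{D}=\emptyset$ then the nonempty open set $[S]$ lies inside $\overline{\mathcal{X}}$, contradicting nowhere density), but the additional diagonalization via Lemma \ref{alpha diag} that you invoke there is not needed. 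More importantly, the step you flag as the main obstacle is in fact immediate and requires no uniform-over-stems fusion: given an $\vec{\alpha}$-tree $T$, a single application of the $\vec{\alpha}$-Ramsey property of $\mathcal{X}$ gives $S\subseteq T$ with $st(S)=st(T)$ and either $[S]\subseteq\mathcal{X}$, in which case $[S]$ is an $\vec{\alpha}$-open subset of $\mathcal{X}$ and hence $[S]\subseteq\mathcal{O}$, or $[S]\cap\mathcal{X}=\emptyset$; in both cases $[S]\cap(\mathcal{X}\setminus\mathcal{O})=\emptyset$, so $\mathcal{X}\setminus\mathcal{O}$ is $\vec{\alpha}$-Ramsey null, hence $\vec{\alpha}$-meager, and $\mathcal{X}=\mathcal{O}\Delta(\mathcal{X}\setminus\mathcal{O})$ has the $\vec{\alpha}$-Baire property. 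The paper reaches the same conclusion by first noting $\mathcal{X}\setminus\mathcal{O}$ is $\vec{\alpha}$-Ramsey and then ruling out $[S]\subseteq\mathcal{X}\setminus\mathcal{O}$; no amalgamation over stems or appeal to Lemma \ref{intersection alpha trees} is required in either version.
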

\begin{proof} First note that it is clear from the definitions that every $\vec{\alpha}$-Ramsey null set is $\vec{\alpha}$-nowhere dense. Let $\mathcal{X}\subseteq [\mathbb{N}]^{\infty}$ be $\vec{\alpha}$-nowhere dense and $T$ be an $\vec{\alpha}$-tree. Note that conclusion (1) in Theorem \ref{alpha Ramsey theorem} is not possible for $\mathcal{X}$ because otherwise it would not be $\vec{\alpha}$-nowhere dense. Similarly, conclusion (3) in Theorem \ref{alpha Ramsey theorem} is not possible for $\mathcal{X}$ because otherwise it would not be $\vec{\alpha}$-nowhere dense. So, by Theorem \ref{alpha Ramsey theorem}, there exists an $\vec{\alpha}$-tree $S\subseteq T$ with $st(S)=s(T)$ such that $[S]\cap\mathcal{X}=\emptyset$. Hence, $\mathcal{X}$ is $\vec{\alpha}$-Ramsey null. 

If $\mathcal{X}$ is $\vec{\alpha}$-meager then $\mathcal{X}$ is the countable union of $\vec{\alpha}$-nowhere dense sets. The previous paragraph and Corollary \ref{sigma ideal} imply that $\mathcal{X}$ is $\vec{\alpha}$-Ramsey null. On the other hand, if $\mathcal{X}$ is $\vec{\alpha}$-Ramsey null then it is also $\vec{\alpha}$-meager since the last paragraph implies that it is $\vec{\alpha}$-nowhere dense. Therefore, the collection of $\vec{\alpha}$-meager sets coincides with the $\sigma$-ideal of $\vec{\alpha}$-Ramsey null sets. 

Suppose that $\mathcal{X}$ has the $\vec{\alpha}$-Baire property. Then there is an $\vec{\alpha}$-open set $\mathcal{O}$ and an $\vec{\alpha}$-meager set $\mathcal{M}$ such that $\mathcal{X} = \mathcal{O} \Delta \mathcal{M}$. Corollary \ref{open is ramsey} and the previous paragraph imply that $\mathcal{O}$ and $\mathcal{M}$ are $\vec{\alpha}$-Ramsey and $\vec{\alpha}$-Ramsey null, respectively. If $T$ is an $\vec{\alpha}$-tree then there exists $\vec{\alpha}$-trees $S'\subseteq S\subseteq T$ with $st(S')=st(S)=st(T)$ such that $[S]\cap \mathcal{M} =\emptyset$ and either $[S']\subseteq \mathcal{O}$ or $[S']\cap \mathcal{O}=\emptyset$. So either, $[S'] \cap (\mathcal{O}\Delta \mathcal{M})=\emptyset$ or $[S']\subseteq \mathcal{O}\Delta \mathcal{M}$. Hence, $\mathcal{X}$ is $\vec{\alpha}$-Ramsey.

Let $\mathcal{X}$ be an $\vec{\alpha}$-Ramsey set. Let $\mathcal{O}$ be the interior of $\mathcal{X}$ with respect to the $\vec{\alpha}$-Ellentuck topology.  $\mathcal{O}$ is $\vec{\alpha}$-Ramsey by Corollary \ref{open is ramsey}. So, $\mathcal{X} \setminus \mathcal{O}$ is $\vec{\alpha}$-Ramsey by Corollary \ref{sigma ideal}. So, for all $\vec{\alpha}$-trees $T$ there exists an $\vec{\alpha}$-tree $S\subseteq T$ such that either $[S]\subseteq \mathcal{X}\setminus \mathcal{O}$ or $[S]\cap (\mathcal{X}\setminus\mathcal{O})=\emptyset$. $[S]\subseteq \mathcal{X}\setminus \mathcal{O}$ is not possible because it would mean that $[S]\subseteq\mathcal{O}$ as it is an $\vec{\alpha}$-open set contained in $\mathcal{X}$. Thus, $[S]\cap (\mathcal{X}\setminus\mathcal{O})=\emptyset$. So $\mathcal{X}\setminus\mathcal{O}$ is $\vec{\alpha}$-Ramsey null. By the second paragraph of this proof, $\mathcal{X}\setminus \mathcal{O}$ is $\vec{\alpha}$-meager. $\mathcal{X}$ has the $\vec{\alpha}$-Baire property since $\mathcal{X} = \mathcal{O} \Delta (\mathcal{X}\setminus \mathcal{O})$. 

The previous two paragraphs show that the collection of sets with the $\vec{\alpha}$-Baire property coincides with the $\sigma$-algebra of $\vec{\alpha}$-Ramsey sets. 
\end{proof}

\subsection{$\beta$-Ramsey theory}
If there exists a hypernatural number $\beta$ such that for all $s\in [\mathbb{N}]^{<\infty}$, $\alpha_{s}=\beta$ then we suppress the arrow and denote $\vec{\alpha}=\left< \alpha_{s} :s\in[\mathbb{N}]^{<\infty}\right>$ by $\beta$. For example, $T$ is a $\beta$-tree if and only if $T$ is a tree with stem $st(T)$ such that $T/st(T)\not=\emptyset$ and for all $s\in T/st(T)$, $s\cup\{\beta\} \in  \leftidx{^{*}}{T}.$

\begin{lem}\label{diag lem}
Suppose that $\beta$ is a nonstandard hypernatural number and $T$ is a $\beta$-tree. Then for all $s\in T/st(T)$ there exists $X\in[s,\mathbb{N}]$ such that $[s,X] \subseteq [T].$
\end{lem}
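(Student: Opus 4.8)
The plan is to build the witness $X$ one element at a time, using the $\beta$-tree property to keep every relevant initial segment of every $Y\in[s,\mathbb{N}]$ with $Y\subseteq X$ inside $T$. The engine is the following transfer observation: for any $t\in T/st(T)$ the set $A_t:=\{n\in\mathbb{N}:n>\max(t)\text{ and }t\cup\{n\}\in T\}$ is infinite. Indeed, $t\in T/st(T)$ gives $t\cup\{\beta\}\in\leftidx{^{*}}{T}$ by the definition of a $\beta$-tree, and since $\max(t)<\beta$ and the operation $n\mapsto t\cup\{n\}$ carries $\beta$ to $t\cup\{\beta\}$ under the $*$-transform (by the preservation properties of $\leftidx{^{*}}{(\cdot)}$), we get $\beta\in\leftidx{^{*}}{A_t}$; Proposition \ref{infinite set} then forces $A_t$ to be infinite.

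Fix $s\in T/st(T)$. I would recursively choose naturals $y_0<y_1<\cdots$, all exceeding $\max(s)$, maintaining the invariant that at stage $m$ one has $s\cup W\in T$ for every $W\subseteq\{y_0,\dots,y_m\}$. Given $y_0,\dots,y_{m-1}$, each of the finitely many sets $s\cup W$ with $W\subseteq\{y_0,\dots,y_{m-1}\}$ lies in $T$ by the inductive hypothesis, hence in $T/st(T)$ since $st(T)\sqsubseteq s\sqsubseteq s\cup W$; so each $A_{s\cup W}$ is infinite, and as finite intersections commute with the $*$-transform we get $\beta\in\bigcap_{W}\leftidx{^{*}}{A_{s\cup W}}=\leftidx{^{*}}{\bigl(\bigcap_{W}A_{s\cup W}\bigr)}$, so this intersection is infinite and I may choose $y_m$ in it with $y_m>y_{m-1}$. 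The invariant persists: for $W\subseteq\{y_0,\dots,y_m\}$, either $y_m\notin W$ and the inductive hypothesis applies, or $W=W'\cup\{y_m\}$ with $W'\subseteq\{y_0,\dots,y_{m-1}\}$ and $y_m\in A_{s\cup W'}$, which gives $s\cup W=(s\cup W')\cup\{y_m\}\in T$.

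Finally set $X:=s\cup\{y_0,y_1,\dots\}$. Since the $y_i$ are increasing and above $\max(s)$ we have $s\sqsubseteq X$, so $X\in[s,\mathbb{N}]$; and if $s\sqsubseteq Y\subseteq X$ then any initial segment $u\sqsubseteq Y$ is either an initial segment of $s$ — hence in $T$, because $T$ is a tree containing $s$ — or is of the form $u=s\cup W$ with $W=u\setminus s$ a finite subset of $X\setminus s=\{y_0,y_1,\dots\}$, hence contained in some $\{y_0,\dots,y_m\}$ and so in $T$ by the invariant. Thus $Y\in[T]$, i.e. $[s,X]\subseteq[T]$. The one genuinely delicate point is the transfer step in the first paragraph — converting the nonstandard membership $t\cup\{\beta\}\in\leftidx{^{*}}{T}$ into the standard statement that infinitely many one-point extensions of $t$ remain in $T$ — which is exactly where Proposition \ref{infinite set} and the convention of suppressing $*$'s are used; the remainder is routine finite book-keeping over the subsets of $\{y_0,\dots,y_m\}$.
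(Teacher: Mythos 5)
Your argument is correct, and it reaches the conclusion by a noticeably different construction than the paper's. The paper also starts from your engine (its first step is precisely the observation that $s\cup\{\beta\}\in\leftidx{^{*}}{T}$ yields an infinite set of one-point extensions of $s$ inside $T$), but it then builds a nested sequence of \emph{infinite sets} $X_{0}\supseteq X_{1}\supseteq\cdots$, each required to satisfy $\beta\in\leftidx{^{*}}{X_{n}}$ and to work simultaneously for \emph{all} increasing selections $x_{0}\in X_{0},\dots,x_{n-1}\in X_{n-1}$, and finally diagonalizes across the $X_{n}$ to extract $X$. You instead choose the points $y_{0}<y_{1}<\cdots$ of $X$ directly, maintaining the invariant that $s\cup W\in T$ for every $W\subseteq\{y_{0},\dots,y_{m}\}$; at each stage you only intersect the finitely many sets $A_{s\cup W}$, and since finite intersections commute with the $*$-transform, $\beta$ witnesses (via Proposition \ref{infinite set}) that this intersection is infinite, which is all you need to continue. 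This buys you two things: the bookkeeping is purely finite, so you never have to produce a single set handling infinitely many selections at once (the most delicate point of the paper's construction), and you never need $\beta$ to remain in the star of anything you build — consistent with the fact that demanding $\beta\in\leftidx{^{*}}{X}$ in the conclusion is a strictly stronger statement, shown in Theorem \ref{beta-SCIP} to be equivalent to a selectivity property. What the paper's formulation buys in exchange is that its nested-sets-then-diagonalize structure is reused verbatim later: the implication (2)$\implies$(1) of Theorem \ref{beta-SCIP} is proved by replacing the diagonalization step of the paper's proof with the selectivity hypothesis, a substitution that is less immediate from your pointwise construction. Your final verification (splitting an initial segment $u\sqsubseteq Y$ into $u\sqsubseteq s$ or $u=s\cup W$ with $W$ a finite subset of $\{y_{0},y_{1},\dots\}$) is exactly what the invariant was designed for, and the transfer step you flag as delicate is handled at the same level of rigor as the paper's own implicit use of it.
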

\begin{proof} Let $T$ be a $\beta$-tree. We first construct a sequence of infinite subsets of $\mathbb{N}$ and then diagonalize them to obtain $X$. Since $s\cup\{\beta\}\in \leftidx{^{*}}{T}$ there is an infinite set $X$ such that $\beta\in\leftidx{^{*}}{X}$ and  for all $x\in X$, $s\cup\{x\}\in T$. Let $X_{0}$ be any such infinite set. Now suppose that $(X_{i})_{i<n}$ is a sequence of infinite sets such that $X_{0}\supseteq X_{1} \supseteq \dots \supseteq X_{n-1}$, $\beta\in \leftidx{^{*}}{X_{n}}$ and for all finite sets $\{x_{0}, x_{1}, \dots, x_{n-1}\}$ listed in increasing order, if for all $i<n$, $x_{i} \in X_{i}$ then $s\cup\{x_{0}, x_{1}, \dots, x_{n-1}\} \in T$. Because $T$ is a $\beta$-tree, for all $i<n$ and for all finite sets $\{x_{0}, x_{1}, \dots, x_{n-1}\}$ listed in increasing order, if for all $i<n$, $x_{i} \in X_{i}$ then $s\cup\{x_{0}, x_{1}, \dots, x_{n-1}\}\cup\{\beta\} \in \leftidx{^{*}}{T}$. Therefore, there is an infinite set $Y\subseteq X_{n-1}$ such that $\beta\in \leftidx{^{*}}{Y}$ and for all finite sets $\{x_{0}, x_{1}, \dots, x_{n-1}\}$ listed in increasing order, if for all $i<n$, $x_{i} \in X_{i}$ and $x_{n}\in \{x\in Y: x>x_{n-1}\}$ then $s\cup\{x_{0}, x_{1}, \dots, x_{n-1}\}\cup \{x_{n}\} \in T$. Let $X_{n}=\{x\in Y: x>x_{n-1}\}$. This completes the construction of $\left<X_{i}:i\in \mathbb{N}\right>$. 

Now we obtain $X$ by diagonalizing this sequence, let 
$$\begin{cases}
x_{0} = \min \{x\in X_{0} : x >\max(s)\}  \\
x_{i+1} = \min \{x\in X_{i} : x >x_{i}\} 
\end{cases}$$
and $X= s\cup\{x_{0}, x_{1}, x_{2}, \dots\}$. Suppose that $Y$ is an infinite set such that $s\sqsubseteq Y \subseteq X$. If $t$ is an initial segment of $Y$ and $s\sqsubset t$ then there exists $n\in \mathbb{N}$ and a finite set $\{y_{0},y_{1}, \cdots, y_{n}\}\subseteq X$, listed in increasing order, such that $t=s\cup \{y_{0},y_{1}, \cdots, y_{n}\}$ and for all $i<n+1$, $y_{i} \in X_{i}$. Thus $t=s\cup \{y_{0},y_{1}, \cdots, y_{n}\}\in T$. If $t$ is an initial segments of $Y$ and $t\sqsubseteq s$ then $s\in T$ because $T$ is a tree. So, any initial segment of $Y$ is in $T$. In other words, $Y\in [T]$. Therefore,
$[s,X] \subseteq [T].$
\end{proof}

\begin{lem} \label{basic open to a tree}
If $[s,X]\not=\emptyset$ then for all nonstandard hypernatural numbers $\beta\in \leftidx{^{*}}{X}$, there exists a $\beta$-tree $T$ with stem $s$ such that $[T]= [s,X]$. 
\end{lem}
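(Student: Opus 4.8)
The plan is to exhibit the tree $T$ explicitly as the set of all finite subsets of $X$ extending $s$ (together with the initial segments of $s$), and then verify the two required properties: that $T$ is a $\beta$-tree with stem $s$, and that $[T]=[s,X]$. Concretely, I would define
\[
T = \{ t \in [\mathbb{N}]^{<\infty} : t \sqsubseteq s \} \cup \{ t \in [\mathbb{N}]^{<\infty} : s \sqsubseteq t \subseteq X \}.
\]
Here I am using that $[s,X]\not=\emptyset$ guarantees $s\subseteq X$, so the second set is nonempty and in particular $s\in T$. It is immediate that $T$ is a tree: if $t'\sqsubseteq t\in T$ and $t$ is an initial segment of $s$ then so is $t'$; and if $s\sqsubseteq t\subseteq X$, then an initial segment $t'$ of $t$ either refines down past $s$ (still of the form $s\sqsubseteq t'\subseteq X$) or sits inside $s$ (hence $\sqsubseteq s$). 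One then checks that $s$ is $\sqsubseteq$-comparable to every element of $T$ and is $\sqsubseteq$-maximal with this property, so $st(T)=s$; and $T/s = \{t : s\sqsubseteq t\subseteq X\}\not=\emptyset$.

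Next I would verify the $\beta$-tree condition: for every $t\in T/st(T)$, i.e. every $t$ with $s\sqsubseteq t\subseteq X$, one needs $t\cup\{\beta\}\in\leftidx{^{*}}{T}$. Using the characterization of $\leftidx{^{*}}{T}$ from Axiom $\alpha 5$ together with Proposition \ref{star transform}, membership in $\leftidx{^{*}}{T}$ can be detected by writing $t\cup\{\beta\}$ as the ideal value of a sequence of elements of $T$. Since $\beta\in\leftidx{^{*}}{X}$, pick a sequence $\langle x_i : i\in\mathbb{N}\rangle$ of elements of $X$ with $\varphi[\alpha]=\beta$ where $\varphi_i = x_i$; then for the sequence $\vartheta_i = t\cup\{x_i\}$, Axiom $\alpha 4$ (pairing, applied iteratively, or the internal set axiom) gives $\vartheta[\alpha] = t\cup\{\beta\}$. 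Each $\vartheta_i$, being a finite subset of $X$ containing $t\supseteq s$, lies in $T$ — except possibly for the finitely many $i$ with $x_i\le\max(t)$, which can be harmlessly discarded since only cofinal behavior matters for ideal values (more carefully, one uses that $\beta$ is nonstandard, so by Proposition \ref{infinite set} the set $\{x_i : x_i>\max(t)\}$ still has $\beta$ in its $*$-transform). Hence $t\cup\{\beta\}\in\leftidx{^{*}}{T}$, and $T$ is a $\beta$-tree.

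Finally I would show $[T]=[s,X]$. For the inclusion $[s,X]\subseteq[T]$: if $s\sqsubseteq Y\subseteq X$ then every initial segment of $Y$ is either an initial segment of $s$ or is of the form $s\sqsubseteq t\subseteq Y\subseteq X$, so $t\in T$, whence $Y\in[T]$. For the reverse inclusion $[T]\subseteq[s,X]$: if $Y\in[T]$ then $s=st(T)\sqsubseteq Y$ (the stem is an initial segment of every branch), and every element of $Y$ appears in some initial segment $t\sqsubseteq Y$ with $s\sqsubseteq t$, which by definition of $T$ forces $t\subseteq X$, hence $Y\subseteq X$; so $Y\in[s,X]$. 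This completes the argument. I expect no serious obstacle here — the statement is essentially a bookkeeping exercise — but the one point requiring care is the verification that $t\cup\{\beta\}\in\leftidx{^{*}}{T}$, specifically handling the finitely many indices where $x_i$ fails to exceed $\max(t)$; this is where one must invoke that $\beta$ is nonstandard (Proposition \ref{infinite set}) rather than argue naively term-by-term.
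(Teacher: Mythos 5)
Your proposal is correct and is essentially the paper's own argument: the tree you define in closed form (all finite $t$ with $s\sqsubseteq t\subseteq X$, together with the initial segments of $s$) is exactly the tree the paper builds level-by-level, and in both cases the $\beta$-tree property reduces to $\beta\in\leftidx{^{*}}{X}$ while $[T]=[s,X]$ is routine. Your extra care in verifying $t\cup\{\beta\}\in\leftidx{^{*}}{T}$ (restricting to representatives $x_{i}>\max(t)$ via the fact that $\beta$ is nonstandard) just fills in a detail the paper leaves implicit, and it is handled correctly.
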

\begin{proof} Let $\beta$ be a nonstandard hypernatural number such that $\beta\in \leftidx{^{*}}{X}$. Construct a $\beta$-tree, level-by-level, as follows
$$\begin{cases}
L_{0} = \{s\},\\
L_{i+1} = \{ s\cup\{n\} \in [\mathbb{N}]^{<\infty}: s\in L_{i}, n>\max(s) \ \& \ n\in X\}.
\end{cases}$$
Let $T$ denote the tree $\{t\in[\mathbb{N}]^{<\infty}: t\sqsubseteq t\}\cup\bigcup_{i=0}^{\infty} L_{i}$. $T$ is an $\beta$-tree with $st(T)=\emptyset$ since $\beta\in \leftidx{^{*}}{X}$. It is clear that $[T]=[s,X]$.
\end{proof}

The next result shows that Ramsey's Theorem follows as a direct consequence of Theorem \ref{alpha Ramsey theorem} and the previous two lemmas.
\begin{cor}[Ramsey's Theorem, \cite{RamseyThm}]
Let $n\in\mathbb{N}$ and $A\subseteq [\mathbb{N}]^{n}$. For all $X\in[\mathbb{N}]^{\infty}$ there exists an $Y\in[X]^{\infty}$ such that either $[Y]^{n}\subseteq A$ or $[Y]^{n}\cap A = \emptyset$.
\end{cor}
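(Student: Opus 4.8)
The plan is to reduce Ramsey's Theorem to the $\vec{\alpha}$-Ramsey Theorem (Corollary \ref{alpha-tree Ramsey theorem}) in the special case of a constant sequence $\vec{\alpha}=\beta$, using Lemma \ref{basic open to a tree} and Lemma \ref{diag lem} to translate back and forth between basic open sets $[s,X]$ and $\beta$-trees.

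First I would fix $n\in\mathbb{N}$, $A\subseteq[\mathbb{N}]^{n}$ and $X\in[\mathbb{N}]^{\infty}$, and choose a nonstandard hypernatural number $\beta\in\leftidx{^{*}}{X}$; such a $\beta$ exists, e.g.\ let $\left<x_{i}:i\in\mathbb{N}\right>$ be the increasing enumeration of $X$ and set $\beta=\left<x_{i}:i\in\mathbb{N}\right>[\alpha]$, which lies in $\leftidx{^{*}}{X}$ by $\alpha5$ and is nonstandard since $x_{i}\ge i$ forces $\beta\ge\alpha\notin\mathbb{N}$. Take $\vec{\alpha}$ to be the constant sequence with $\alpha_{s}=\beta$ for all $s$. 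Since $X$ is infinite, $[\emptyset,X]\not=\emptyset$, so Lemma \ref{basic open to a tree} (with $s=\emptyset$) yields a $\beta$-tree $T$ with stem $\emptyset$ and $[T]=[\emptyset,X]=[X]^{\infty}$.

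Next I would apply Corollary \ref{alpha-tree Ramsey theorem}, specialized to $\vec{\alpha}=\beta$, to $A$ and $T$: there is a $\beta$-tree $S\subseteq T$ with $st(S)=st(T)=\emptyset$ such that either $S(n)\subseteq A$ or $S(n)\cap A=\emptyset$. Then I would apply Lemma \ref{diag lem} to $S$ at $s=\emptyset\in S/st(S)$, obtaining $Y\in[\emptyset,\mathbb{N}]=[\mathbb{N}]^{\infty}$ with $[\emptyset,Y]\subseteq[S]$. Since $Y\in[\emptyset,Y]\subseteq[S]\subseteq[T]=[X]^{\infty}$, we get $Y\in[X]^{\infty}$.

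Finally I would verify $[Y]^{n}\subseteq S(n)$: given $u\in[Y]^{n}$, the set $Z=u\cup\{y\in Y:y>\max(u)\}$ lies in $[\emptyset,Y]\subseteq[S]$, and since $u\sqsubseteq Z$ and $Z\in[S]$ we have $u\in S$, hence $u\in S(n)$. Therefore $[Y]^{n}\subseteq A$ in the first case and $[Y]^{n}\cap A=\emptyset$ in the second, which is the desired dichotomy. The argument is essentially bookkeeping once the lemmas are available; the only points needing care are the legitimacy of fixing $\vec{\alpha}=\beta$ for a $\beta$ witnessing $\beta\in\leftidx{^{*}}{X}$, the identifications $[\emptyset,X]=[X]^{\infty}$ and $[\emptyset,Y]=[Y]^{\infty}$, and the observation that every $n$-element subset of $Y$ actually appears as a level-$n$ node of $S$.
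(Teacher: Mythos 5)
Your proposal is correct and follows essentially the same route as the paper: Lemma \ref{basic open to a tree} applied to $[\emptyset,X]$ with a nonstandard $\beta\in\leftidx{^{*}}{X}$, then Corollary \ref{alpha-tree Ramsey theorem} for the constant sequence $\beta$, then Lemma \ref{diag lem} to extract $Y$ with $[\emptyset,Y]\subseteq[S]\subseteq[T]\subseteq[\emptyset,X]$, and the observation that every $u\in[Y]^{n}$ occurs as $r_{n}(Z)$ for some $Z\in[\emptyset,Y]$ and hence lies in $S(n)$. Your explicit construction of $\beta$ from the increasing enumeration of $X$ and the explicit check $[Y]^{n}\subseteq S(n)$ merely spell out details the paper leaves implicit.
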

\begin{proof}
Let $n\in \mathbb{N}$, $A\subseteq[\mathbb{N}]^{n}$ and $X\in[\mathbb{N}]^{\infty}$. By Lemma \ref{basic open to a tree}, applied to $[\emptyset,X]$, there exists $\beta\in \leftidx{^{*}}{X}\setminus X$ and a $\beta$-tree $T$ with stem $\emptyset$ such that $[T]\subseteq [\emptyset,X]$. Corollary \ref{alpha-tree Ramsey theorem} implies that there is a $\beta$-tree $S\subseteq T$ with $st(S)=\emptyset$ such that either $S(n)\subseteq A$ or $S(n)\cap A=\emptyset.$ By Lemma \ref{diag lem} there exist $Y\in[\mathbb{N}]^{\infty}$ such that $[\emptyset, Y] \subseteq [S]$. If $S(n)\subseteq A$ then $[Y]^{n}\subseteq A$ since for all $s\in[Y]^{n}$ there exists $Z\in[Y]^{\infty}$ such that $r_{n}(Z)=s$. If $S(n)\cap A=\emptyset$ then $[Y]^{n}\cap A=\emptyset$ since for all $s\in[Y]^{n}$ there exists $Z\in[Y]^{\infty}$ such that $r_{n}(Z)=s$. Note that $[\emptyset, Y]\subseteq [S]\subseteq [T] \subseteq [\emptyset, X]$. Hence, $Y\in [X]^{\infty}$.
\end{proof}

\begin{thm}\label{beta-SCIP} The following are equivalent:
\begin{enumerate}
\item If $T$ is a $\beta$-tree then for all $s\in T/st(T)$ there exists $X\in[s,\mathbb{N}]$ such that $\beta\in\leftidx{^{*}}{X}$ and $[s,X] \subseteq [T].$
\item For each sequence of sets $X_{0}\supseteq X_{1}\supseteq X_{2} \supseteq \cdots$ such that $\beta\in\bigcap_{i=0}^{\infty} \leftidx{^{*}}{X_{i}}$, there exists $X=\{x_{0},x_{1},\dots\}$ enumerated in increasing order such that $\beta\in \leftidx{^{*}}{X}$ and for all $n\in\mathbb{N}$, $ x_{n+1}\in X_{n}.$
\end{enumerate}
\end{thm}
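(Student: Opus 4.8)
The plan is to prove the two implications separately, with the bulk of the work going into (1) $\Rightarrow$ (2), since (2) $\Rightarrow$ (1) should follow from a diagonalization essentially already carried out in Lemma \ref{diag lem}.

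For (2) $\Rightarrow$ (1): given a $\beta$-tree $T$ and $s\in T/st(T)$, I would run the construction in the proof of Lemma \ref{diag lem} to build a decreasing sequence $X_{0}\supseteq X_{1}\supseteq\cdots$ with $\beta\in\bigcap_{i}\leftidx{^{*}}{X_{i}}$ and with the property that any finite increasing selection $x_{i}\in X_{i}$ for $i<n$ yields $s\cup\{x_{0},\dots,x_{n-1}\}\in T$. Then apply hypothesis (2) to this sequence to extract $X=\{x_{0},x_{1},\dots\}$ with $\beta\in\leftidx{^{*}}{X}$ and $x_{n+1}\in X_{n}$ for all $n$. The point is that the condition $x_{n+1}\in X_{n}$ plus $X_{n}\subseteq X_{i}$ for $i\le n$ gives exactly the hypothesis needed to conclude, as in the last paragraph of Lemma \ref{diag lem}, that every initial segment of every $Y$ with $s\sqsubseteq Y\subseteq X$ lies in $T$; hence $[s,X]\subseteq[T]$. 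One has to be slightly careful that the indexing of the selected elements against the $X_{i}$'s matches up (an off-by-one in where $x_{0}$ starts relative to $\max(s)$), but this is bookkeeping, not a real obstacle. Note also that statement (1) as phrased asserts existence of such an $X$ for \emph{every} $\beta$-tree, so we do genuinely need the selection from (2) to respect membership in the $X_{n}$'s, which it does.

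For (1) $\Rightarrow$ (2): given a decreasing sequence $X_{0}\supseteq X_{1}\supseteq\cdots$ with $\beta\in\bigcap_{i}\leftidx{^{*}}{X_{i}}$, I would manufacture a $\beta$-tree $T$ with stem $\emptyset$ whose branches encode exactly the admissible selections, namely by setting $L_{0}=\{\emptyset\}$ and $L_{n+1}=\{t\cup\{m\}: t\in L_{n},\ m>\max(t),\ m\in X_{n}\}$, and letting $T=\{t:t\sqsubseteq t\}\cup\bigcup_{n}L_{n}$. Since $\beta\in\leftidx{^{*}}{X_{n}}$ for each $n$, one checks level-by-level that $t\cup\{\beta\}\in\leftidx{^{*}}{L_{n+1}}\subseteq\leftidx{^{*}}{T}$ for $t\in L_{n}$, so $T$ is a $\beta$-tree with stem $\emptyset$. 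Applying (1) with $s=\emptyset$ (or with $s$ the one-element stem extension, whichever makes the indexing cleanest) produces $X$ with $\beta\in\leftidx{^{*}}{X}$ and $[\emptyset,X]\subseteq[T]$. Enumerating $X=\{x_{0},x_{1},\dots\}$ in increasing order, the fact that $X\in[T]$ forces, by definition of the levels, $x_{n+1}\in X_{n}$ for all $n$ — possibly after shifting the index by one to account for the stem. The main thing to verify here is that membership of $X$ in $[T]$ does transfer down to the required membership statements $x_{n+1}\in X_{n}$, which is immediate from unwinding the recursive definition of $L_{n+1}$.

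The main obstacle, such as it is, is purely notational: aligning the enumeration index of the extracted set $X$ with the index of the sets $X_{n}$ in both directions, since the stem and the ``$m>\max(t)$'' clause each introduce a one-step shift. There is no deep difficulty — both directions are repackagings of the diagonalization in Lemma \ref{diag lem} and the tree-construction in Lemma \ref{basic open to a tree} — so the proof should be short once the indexing conventions are pinned down. I would write it by first fixing, once and for all, that $X$ is enumerated as $\{x_{0}<x_{1}<\cdots\}$ and that the admissibility condition reads ``$x_{n+1}\in X_{n}$,'' and then checking that each construction produces or consumes exactly that condition.
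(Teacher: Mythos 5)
Your proposal is correct, but the (1)$\implies$(2) direction takes a genuinely different route from the paper. For (2)$\implies$(1) you do exactly what the paper does: its whole proof of that direction is the sentence ``in the proof of Lemma \ref{diag lem} use (2) instead of constructing a diagonalization,'' and your version, including the off-by-one you flag (the set from (2) satisfies $x_{n+1}\in X_{n}$ while the selection property of the constructed sequence wants the $j$-th chosen element in $X_{j}$; discarding $x_{0}$ and taking $s\cup(X\setminus\{x_{0}\})$ fixes this and keeps $\beta$ in the star-transform), is at least as careful as the paper's. For (1)$\implies$(2), however, the paper does not build a tree from the $X_{i}$ directly: it forms the $\beta$-open set $\mathcal{X}$ of all $Y$ whose two least elements $y_{0}<y_{1}$ satisfy $y_{1}\in X_{y_{0}}$, invokes the $\beta$-Ellentuck theorem to get a $\beta$-tree deciding $\mathcal{X}$, applies (1) to that tree, and rules out the negative alternative via $\beta\in\leftidx{^{*}}{((X\setminus\{x_{0}\})\cap X_{x_{0}})}$, concluding $x_{n+1}\in X_{x_{n}}\subseteq X_{n}$. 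You instead build the $\beta$-tree whose $(n+1)$-st level extends by elements of $X_{n}$ (as in Lemma \ref{basic open to a tree}), apply (1) with $s=\emptyset$, and read off $x_{n}\in X_{n}$ (hence $x_{n+1}\in X_{n+1}\subseteq X_{n}$) from the levels containing the initial segments of $X$. Your route is more elementary and self-contained, bypassing Theorem \ref{alpha Ramsey theorem} and the contradiction step, and it even yields the slightly stronger conclusion $x_{n}\in X_{n}$; the paper's route buys a demonstration of how the $\beta$-Ellentuck machinery itself can be put to work. Both arguments are sound.
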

\begin{proof}
(2)$\implies$(1). In the proof of Lemma \ref{diag lem} instead of constructing a diagonalization use (2).

(1)$\implies$(2). Suppose that $X_{0}\supseteq X_{1}\supseteq X_{2} \supseteq \cdots$ and $\beta\in\bigcap_{i=0}^{\infty} \leftidx{^{*}}{X_{i}}$. Consider the following set
$$\mathcal{X} = \{Y\in[\mathbb{N}]^{\infty}: \{y_{0},y_{1}\}\sqsubseteq Y,\ y_{0}<y_{1} \ \& \ y_{1}\in X_{y_{0}}\}.$$
If $Y\in\mathcal{X}$ and $\{y_{0},y_{1}\}$ are the two smallest elements of $Y$ listed in increasing order, then $y_{1}\in X_{y_{0}}$. So $Y\in[\{y_{0},y_{1}\}, \mathbb{N}]\subseteq \mathcal{X}$. In particular, $\mathcal{X}$ is  $\beta$-open by Lemma \ref{basic open to a tree}. By the $\beta$-Ellentuck theorem there exists a $\beta$-tree $T$ with $st(T)=\emptyset$ such that either $[T]\subseteq \mathcal{X}$ or $[T]\cap \mathcal{X}=\emptyset$. So by $(1)$ there exists $X=\{x_{0},x_{1},x_{2},\dots\}$ listed in increasing order such that $\alpha\in \leftidx{^{*}}{X}$ and either $[\emptyset,X]\subseteq \mathcal{X}$ or $[\emptyset,X] \cap \mathcal{X}=\emptyset$. 

If $[\emptyset, X]\cap \mathcal{X}=\emptyset$ then for all $i\in\mathbb{N}$, $x_{0}\not \in X_{ x_{i+1}}$. Thus $\leftidx{^{*}}{((X\setminus\{x_{0}\}) \cap X_{x_{0}}\})}=\emptyset$. But this is a contradiction since $\beta\in  \leftidx{^{*}}{((X\setminus\{x_{0}\}) \cap X_{x_{0}}\})}$. So $[\emptyset, X]\subseteq \mathcal{X}$. In particular, for all $n\in\mathbb{N}$, $x_{n+1}\in X_{x_{n}}\subseteq X_{n}$ and $\beta\in \leftidx{^{*}}{X}$.
\end{proof}

\subsection{Some applications to Ramsey Theory}
In addition to Ramsey's Theorem our methods can be used to prove some well-known results in infinite-dimensional Ramsey theory. 

\begin{defn}[Galvin-Prikry, \cite{Galvin-Prikry}]
$\mathcal{X}\subseteq[\mathbb{N}]^{\infty}$ is \emph{Ramsey} if for each $[s,X]\not=\emptyset$ there exists $Y\in[s,X]$ such that either $[s,Y]\subseteq \mathcal{X}$ or $[s,Y]\cap \mathcal{X}=\emptyset$. In the notation of local Ramsey theory, $\mathcal{X}$ is Ramsey  if and only if $\mathcal{X}$ is $[\mathbb{N}]^{\infty}$-Ramsey.
\end{defn}

\begin{lem}\label{all beta}
If $\mathcal{X}$ is $\beta$-Ramsey for all nonstandard hypernatural numbers $\beta$ then $\mathcal{X}$ is Ramsey.
\end{lem}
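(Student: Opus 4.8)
The plan is to reduce the Galvin--Prikry condition directly to the $\beta$-Ramsey hypothesis, using Lemma \ref{basic open to a tree} and Lemma \ref{diag lem} as the two devices that translate between basic open sets $[s,X]$ and $\beta$-trees. The argument is essentially a matter of assembling these pieces, with the $\beta$-Ramsey property applied in the middle.

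Concretely, I would fix a nonempty $[s,X]$ and choose a nonstandard hypernatural number $\beta\in\leftidx{^{*}}{X}$; such a $\beta$ exists because $X$ is infinite (this is the same observation used when deriving Ramsey's Theorem above, where one takes $\beta\in\leftidx{^{*}}{X}\setminus X$). By Lemma \ref{basic open to a tree} there is a $\beta$-tree $T$ with stem $s$ and $[T]=[s,X]$. Since $\mathcal{X}$ is $\beta$-Ramsey, applying the definition to $T$ produces a $\beta$-tree $S\subseteq T$ with $st(S)=st(T)=s$ such that either $[S]\subseteq\mathcal{X}$ or $[S]\cap\mathcal{X}=\emptyset$. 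Now $st(S)=s$ itself belongs to $S/st(S)$ (as $s\sqsubseteq s$), so Lemma \ref{diag lem} applied to $S$ at its stem yields $Y\in[s,\mathbb{N}]$ with $[s,Y]\subseteq[S]$. From $S\subseteq T$ we get $[s,Y]\subseteq[S]\subseteq[T]=[s,X]$, and since $Y\in[s,Y]$ this gives $s\sqsubseteq Y\subseteq X$, i.e. $Y\in[s,X]$, so $Y$ is an admissible witness. In the case $[S]\subseteq\mathcal{X}$ we then have $[s,Y]\subseteq\mathcal{X}$, and in the case $[S]\cap\mathcal{X}=\emptyset$ we have $[s,Y]\cap\mathcal{X}\subseteq[S]\cap\mathcal{X}=\emptyset$; either way $Y$ satisfies the Galvin--Prikry alternative, so $\mathcal{X}$ is Ramsey.

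There is no genuinely hard step; the only points requiring care are to invoke Lemma \ref{diag lem} at the stem of $S$ (so that the resulting diagonal set $Y$ actually has $s$ as an initial segment, matching the $[s,\,\cdot\,]$ appearing in the definition of Ramsey) and to observe that this $Y$ lies below $X$, which is forced by $[S]\subseteq[T]=[s,X]$. The same bookkeeping, using the $\beta$-Ramsey null analogue and the case $[S]\cap\mathcal{X}=\emptyset$ of Lemma \ref{diag lem}, shows in passing that if $\mathcal{X}$ is $\beta$-Ramsey null for every nonstandard hypernatural $\beta$ then $\mathcal{X}$ is Ramsey null.
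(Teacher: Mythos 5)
Your proposal is correct and follows essentially the same route as the paper: build a $\beta$-tree $T$ with $[T]=[s,X]$ via Lemma \ref{basic open to a tree}, apply the $\beta$-Ramsey hypothesis to get $S\subseteq T$, and then use Lemma \ref{diag lem} to extract $Y$ with $[s,Y]\subseteq[S]\subseteq[s,X]$. Your extra care in checking that the diagonalization is taken at the stem and that $Y\in[s,X]$ simply makes explicit what the paper leaves implicit.
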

\begin{proof}
Let $\mathcal{X}$ be given and $\beta$-Ramsey for all nonstandard hypernatural numbers $\beta$. Let $[s,X]\not=\emptyset$ and $\beta$ be some nonstandard hypernatural number in $\leftidx{^{*}}{X}$. By Lemma \ref{basic open to a tree} there exists a $\beta$-tree $T$ such that $s=st(T)$ and $[T]= [s,X]$. Since $\mathcal{X}$ is $\beta$-Ramsey there exists a $\beta$-tree $S\subseteq T$ such that $s=st(T)=st(S)$ and either $[S]\subseteq \mathcal{X}$ or $[S]\cap \mathcal{X} = \emptyset$. By Lemma \ref{diag lem} there exists $Y\in [S]$ such that $[s,Y]\subseteq [S]$. Thus, either $[s,Y]\subseteq \mathcal{X}$ or $[s,Y]\cap \mathcal{X}=\emptyset$. Since $[s,X]$ was arbitrary and $Y\in[s,X]$, $\mathcal{X}$ is Ramsey.
\end{proof}

\begin{cor}[Galvin-Prikry Theorem, \cite{Galvin-Prikry}]\label{Galvin-Prikry}
Every metrically Borel subset of $[\mathbb{N}]^{\infty}$ is Ramsey.
\end{cor}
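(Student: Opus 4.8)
The plan is to deduce the Galvin--Prikry Theorem from the $\vec{\alpha}$-machinery by showing that the metrically Borel sets are built up, starting from the metrically open sets, by operations that preserve the property ``$\beta$-Ramsey for every nonstandard hypernatural $\beta$,'' and then invoking Lemma \ref{all beta}. First I would fix an arbitrary nonstandard hypernatural number $\beta$ and work entirely within the $\beta$-Ellentuck space, whose good behavior is guaranteed by the $\beta$-Ellentuck Theorem (the specialization of the $\vec{\alpha}$-Ellentuck Theorem to the constant sequence $\vec{\alpha}=\beta$). The point is that $([\mathbb{N}]^{\infty},\beta,\subseteq)$ being a $\beta$-Ramsey space means precisely that the $\beta$-Ramsey sets form a $\sigma$-algebra (namely the sets with the $\beta$-Baire property), so it suffices to check that every metrically open set has the $\beta$-Baire property; closure under complements and countable unions then comes for free from the $\sigma$-algebra structure, and since the metric Borel $\sigma$-algebra is the smallest $\sigma$-algebra containing the metrically open sets, every metrically Borel set will be $\beta$-Ramsey.

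So the heart of the argument is the single containment: every metrically open subset of $[\mathbb{N}]^{\infty}$ is $\beta$-open (hence has the $\beta$-Baire property, hence is $\beta$-Ramsey by Corollary \ref{open is ramsey}). For this I would recall from the discussion after the definition of the Ellentuck Space that the metric topology is generated by the cylinders $[s]=[s,\mathbb{N}]$, so it is enough to show each $[s,\mathbb{N}]$ is $\beta$-open. But by Lemma \ref{basic open to a tree}, applied to the nonempty set $[s,\mathbb{N}]$ with the nonstandard hypernatural $\beta\in\leftidx{^{*}}{\mathbb{N}}$, there is a $\beta$-tree $T$ with stem $s$ such that $[T]=[s,\mathbb{N}]$; hence $[s,\mathbb{N}]=[T]$ is a basic open set in the $\beta$-Ellentuck topology. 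A general metrically open set is a union of such cylinders, hence a union of sets of the form $[T]$, hence $\beta$-open.

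Assembling the pieces: fix $\beta$; the metrically open sets are $\beta$-open, hence have the $\beta$-Baire property; by the $\beta$-Ellentuck Theorem the sets with the $\beta$-Baire property form a $\sigma$-algebra; therefore the smallest $\sigma$-algebra containing the metrically open sets — the metric Borel sets — consists of sets with the $\beta$-Baire property, each of which is $\beta$-Ramsey. Since $\beta$ was an arbitrary nonstandard hypernatural number, every metrically Borel set is $\beta$-Ramsey for all such $\beta$, and Lemma \ref{all beta} gives that it is Ramsey.

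The main obstacle I anticipate is purely bookkeeping rather than conceptual: one must be careful that the relevant instance of the $\vec{\alpha}$-Ellentuck Theorem is exactly the constant-sequence (i.e. $\beta$-) version, that ``$\beta$-Ramsey space'' unpacks to the $\sigma$-algebra statement I am using, and that the metric Borel $\sigma$-algebra really is generated by the cylinders $[s]$ (so that no appeal to an uncountable generating family is needed). None of these steps involves a genuinely hard estimate — all the combinatorial work has already been done in Theorem \ref{alpha Ramsey theorem}, the $\vec{\alpha}$-Ellentuck Theorem, and Lemmas \ref{basic open to a tree} and \ref{all beta} — so the proof should be short once the definitions are lined up correctly.
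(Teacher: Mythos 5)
Your proposal is correct and follows essentially the same route as the paper: cylinders $[s]=[s,\mathbb{N}]$ are realized as $[T]$ for a $\beta$-tree via Lemma \ref{basic open to a tree}, the $\beta$-Ellentuck Theorem supplies the $\sigma$-algebra of $\beta$-Ramsey sets containing them, and Lemma \ref{all beta} converts ``$\beta$-Ramsey for all nonstandard $\beta$'' into Ramsey. The only cosmetic difference is that you fix $\beta$ and show every metrically open set is $\beta$-open before quantifying over $\beta$, whereas the paper first intersects the $\sigma$-algebras over all $\beta$ and checks only the generating cylinders; the two bookkeeping orders are interchangeable.
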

\begin{proof}
Note that the collection of sets that are $\beta$-Ramsey for all nonstandard hypernatural numbers $\beta$ is a $\sigma$-algebra as it obtained by intersecting $\sigma$-algebras. By Lemma \ref{basic open to a tree}, for all $s\in[\mathbb{N}]^{<\infty}$ and all nonstandard hypernatural numbers $\beta$, there is a $\beta$-tree $T$ such that $[T]=[s,\mathbb{N}]=[s]$. By the $\beta$-Ellentuck theorem, $[s]$ is $\beta$-Ramsey for all nonstandard hypernatural numbers $\beta$ as $[s]=[T]$ is $\beta$-open. Since the collection of sets that are $\beta$-Ramsey for all nonstandard hypernatural numbers $\beta$ is a $\sigma$-algebra every metrically Borel set is $\beta$-Ramsey for all nonstandard hypernatural numbers $\beta$. By Lemma \ref{all beta}, every metrically Borel set is Ramsey. 
\end{proof}

\begin{cor}[Silver Theorem, \cite{silver}]
Every metrically analytic subset of $[\mathbb{N}]^{\infty}$ is Ramsey.
\end{cor}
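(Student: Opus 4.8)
The plan is to follow the same route used for the Galvin--Prikry Theorem, replacing ``$\sigma$-algebra'' reasoning with the observation that the analytic sets are obtained from the Borel (indeed metrically open) sets by the Souslin operation, together with the fact that the collection of $\beta$-Ramsey sets is closed under the Souslin operation. Concretely, I would first recall that a metrically analytic set $\mathcal{X}\subseteq[\mathbb{N}]^{\infty}$ can be written as $\mathcal{X}=\bigcup_{f\in\mathbb{N}^{\mathbb{N}}}\bigcap_{n\in\mathbb{N}}\mathcal{F}_{f\restriction n}$ where $\{\mathcal{F}_{s}:s\in[\mathbb{N}]^{<\infty}\text{ as finite sequences}\}$ is a Souslin scheme of metrically closed (hence, after the usual reduction, metrically open or at least $\beta$-Ramsey) sets. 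The key input is a Nash-Williams/Souslin-type closure lemma: for each fixed nonstandard hypernatural $\beta$, the collection of $\beta$-Ramsey sets is closed under the Souslin operation $\mathcal{A}$. This is the analogue, in the $\beta$-Ellentuck space, of the classical theorem that Ramsey sets in the Ellentuck space are closed under $\mathcal{A}$; its proof uses only Theorem \ref{alpha Ramsey theorem} (for $\vec{\alpha}=\beta$), Lemma \ref{intersection alpha trees}, and a fusion argument along a $\beta$-tree, exactly paralleling the proof of Corollary \ref{sigma ideal}.

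Granting that closure lemma, the corollary follows in three short steps. First, by Lemma \ref{basic open to a tree}, every basic metrically open set $[s]=[s,\mathbb{N}]$ equals $[T]$ for some $\beta$-tree $T$, so $[s]$ is $\beta$-open and hence $\beta$-Ramsey by the $\beta$-Ellentuck Theorem; since this holds for every nonstandard hypernatural $\beta$, every metrically open set is $\beta$-Ramsey for all $\beta$. Second, a metrically analytic set is the result of the Souslin operation applied to a scheme of metrically closed sets, and metrically closed sets, being complements of metrically open sets, are $\beta$-Ramsey for all $\beta$ because the $\beta$-Ramsey sets form a $\sigma$-algebra (they coincide, by the $\beta$-Ellentuck Theorem, with the sets having the $\beta$-Baire property). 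Applying the Souslin-closure lemma, separately for each $\beta$, shows that every metrically analytic set is $\beta$-Ramsey for all nonstandard hypernatural $\beta$. Third, Lemma \ref{all beta} upgrades this to: every metrically analytic set is Ramsey.

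The main obstacle is establishing the Souslin-closure lemma for $\beta$-Ramsey sets. The subtlety is that, unlike $\sigma$-algebra operations, the Souslin operation requires a genuine fusion: one must diagonalize over all nodes $s$ of a $\beta$-tree simultaneously, choosing at node $s$ a $\beta$-subtree that ``decides'' the relevant approximations $\mathcal{F}_{t}$ for the finitely many sequences $t$ coded by initial segments up to $|s|$, while keeping the stems nested so that Lemma \ref{intersection alpha trees} guarantees the limit object is again a $\beta$-tree. The bookkeeping for which closed sets must be decided at which level, and the verification that the fused tree $S$ satisfies $[S]\subseteq\mathcal{X}$ or $[S]\cap\mathcal{X}=\emptyset$, is where all the real work lies; it is a faithful transcription of the classical Ellentuck-space argument (as in Todorcevic's \cite{RamseySpaces}) into the language of $\beta$-trees, using $\alpha 5$ to pass intersections inside $*$-transforms exactly as in the proofs of Lemma \ref{alpha diag} and Corollary \ref{sigma ideal}. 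Once that lemma is in hand the rest is a formality.

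\begin{proof}
By the $\beta$-Ellentuck Theorem, for each nonstandard hypernatural number $\beta$ the collection of $\beta$-Ramsey subsets of $[\mathbb{N}]^{\infty}$ coincides with the $\sigma$-algebra of sets with the $\beta$-Baire property; in particular it is a $\sigma$-algebra and, by the classical fusion argument transcribed to $\beta$-trees (using Theorem \ref{alpha Ramsey theorem} with $\vec{\alpha}=\beta$, Lemma \ref{intersection alpha trees}, and $\alpha 5$ to commute intersections with $*$-transforms as in the proof of Corollary \ref{sigma ideal}), it is closed under the Souslin operation. By Lemma \ref{basic open to a tree}, for every $s\in[\mathbb{N}]^{<\infty}$ and every nonstandard hypernatural $\beta$ there is a $\beta$-tree $T$ with $[T]=[s,\mathbb{N}]=[s]$, so each basic metrically open set is $\beta$-open, hence $\beta$-Ramsey; therefore every metrically open set, and so every metrically closed set, is $\beta$-Ramsey for all nonstandard hypernatural $\beta$. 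A metrically analytic subset $\mathcal{X}$ of $[\mathbb{N}]^{\infty}$ is the result of applying the Souslin operation to a scheme of metrically closed sets, so by closure of the $\beta$-Ramsey sets under the Souslin operation, $\mathcal{X}$ is $\beta$-Ramsey for every nonstandard hypernatural $\beta$. By Lemma \ref{all beta}, $\mathcal{X}$ is Ramsey.
\end{proof}
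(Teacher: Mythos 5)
Your overall skeleton is the same as the paper's: show every metrically Borel (in fact metrically open, then closed) set is $\beta$-Ramsey for all nonstandard hypernatural $\beta$ via Lemma \ref{basic open to a tree} and the $\beta$-Ellentuck Theorem, show the $\beta$-Ramsey sets are closed under the Souslin operation, conclude that every metrically analytic set is $\beta$-Ramsey for every such $\beta$, and finish with Lemma \ref{all beta}. The one place where you diverge from the paper is the crucial one, and there your write-up has a genuine gap: you assert that the $\beta$-Ramsey sets are closed under the Souslin operation ``by the classical fusion argument transcribed to $\beta$-trees,'' but you never carry out that fusion. You yourself flag it as ``where all the real work lies''; as written, the central lemma is unproved, and the simultaneous diagonalization you gesture at (deciding the relevant pieces of the Souslin scheme level by level along a $\beta$-tree, keeping stems nested so that Lemma \ref{intersection alpha trees} applies) is essentially the entire difficulty of Silver's theorem rather than something one can cite as routine. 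Note also that Corollary \ref{sigma ideal} and Theorem \ref{alpha Ramsey theorem}, which you list as the ingredients, only give $\sigma$-algebra and $\sigma$-ideal closure; the Souslin operation runs over continuum many branches and does not reduce to countable unions and intersections, so those results do not by themselves deliver the closure you need.

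The paper avoids this work entirely, and you already had the needed ingredient in hand without exploiting it. The $\beta$-Ellentuck Theorem identifies the $\beta$-Ramsey sets with the sets having the Baire property relative to the $\beta$-Ellentuck topology, and for an arbitrary topological space the family of sets with the Baire property is closed under the Souslin operation --- a classical general-topology theorem requiring no Ramsey-theoretic input and no fusion. You invoke the identification only to obtain the $\sigma$-algebra structure; invoking it together with this general fact yields Souslin closure of the $\beta$-Ramsey sets immediately. With that substitution, your remaining steps (basic sets $[s]=[s,\mathbb{N}]$ are $\beta$-open hence $\beta$-Ramsey; metrically closed sets are $\beta$-Ramsey by complementation; analytic sets are obtained from these by the Souslin operation, hence $\beta$-Ramsey for all nonstandard hypernatural $\beta$; Lemma \ref{all beta} upgrades this to Ramsey) coincide with the paper's proof.
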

\begin{proof}
The collection of metrically analytic sets is obtained by closing the set of metrically Borel sets under the Souslin operation. Note that the for any topological space the corresponding collection of sets with the Baire property with respect to the space is closed under the Souslin operation. In particular, for all nonstandard hypernatural numbers $\beta$, the collection of $\beta$-Ramsey sets are closed under the Souslin operation as the $\beta$-Ellentuck Theorem implies that the $\beta$-Ramsey sets coincide with the sets with the $\beta$-Baire property. Note that in the proof of the previous Corollary we showed that every metrically Borel subset of $[\mathbb{N}]^{\infty}$ is $\beta$-Ramsey for all nonstandard hypernatural numbers $\beta$. So if $\mathcal{X}$ is the image of some collection of metrically Borel subsets of $[\mathbb{N}]^{\infty}$ under the Souslin operation, then $\mathcal{X}$ is $\beta$-Ramsey for all nonstandard hypernatural numbers $\beta$. By Lemma \ref{all beta}, every metrically analytic subset of $[\mathbb{N}]^{\infty}$ is Ramsey.
\end{proof}
 
\subsection{Ultra-Ramsey theory} 
Under the assumption of the $\mathfrak{c}^{+}$-enlarging property, a saturation principle, we show that the $\vec{\alpha}$-Ellentuck Theorem is equivalent to Ultra-Ellentuck Theorem introduced by Todorcevic in \cite{RamseySpaces}.
In a footnote within \cite{AlphaTheory}, Benci and Di Nasso mention that the Alpha-Theory can be generalized to nonstandard arguments which use some prescribed level of $\kappa$-saturation. In our context, we will only need the following saturation property. 
\begin{defn}[$\mathfrak{c}^{+}$-enlarging property] Suppose $\mathcal{F}\subseteq \wp(A)$ is a family of subsets of some set $A$ and $|\mathcal{F}|\le\mathfrak{c}$. If $\mathcal{F}$ has the finite intersection property, then $$\bigcap_{F\in\mathcal{F}}\leftidx{^{*}}{F} \not=\emptyset.$$
\end{defn} In the setup of the Ultra-Ellentuck Theorem in \cite{RamseySpaces}, a sequence $\left<\mathcal{U}_{s} : s\in \mathbb{N}]^{<\infty} \right>$ of ultrafilters on $\mathbb{N}$ are chosen and all definitions and results are taken with respect to this sequence. Recall that an \emph{ultrafilter $\mathcal{U}$ on $\mathbb{N}$} is a subset of $\wp(\mathbb{N})$ such that for all subsets $A$ and $B$ of $\mathbb{N}$,
\begin{multicols}{2}
\begin{enumerate}
\item $\emptyset \not \in \mathcal{U} \ \& \ \mathbb{N}\in \mathcal{U}$,
\item $A\cup B \in \mathcal{U} \Leftrightarrow A\in U \mbox{ or } B\in U$,
\item $A \cap B \in \mathcal{U} \Leftrightarrow A \in U \ \& \ B\in U$,
\item $A\in \mathcal{U} \Leftrightarrow \mathbb{N}\setminus A \not \in \mathcal{U}$.
\end{enumerate}
\end{multicols}
An ultrafilter $\mathcal{U}$ is \emph{non-principal} if contains no finite set. The $\mathfrak{c}^{+}$-enlarging property is needed to provide a correspondence between ultrafilters on $\mathbb{N}$ and hypernatural numbers. 

\begin{prop} Suppose that the $\mathfrak{c}^{+}$-enlarging property holds. $\mathcal{U}$ is an ultrafilter on $\mathbb{N}$ if and only if there exists $\beta \in \leftidx{^{*}}{\mathbb{N}}$ such that 
$$\mathcal{U} = \{ A\subseteq \mathbb{N} : \beta \in \leftidx{^{*}}{A}\}.$$ Moreover, $\mathcal{U}$ is non-principal if and only if $\beta \not \in \mathbb{N}$.
\end{prop}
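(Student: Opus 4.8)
The plan is to establish the two implications separately, using only $\alpha 1$–$\alpha 5$ and Proposition \ref{star transform} for the "easy" direction and the ``moreover'' clause, and invoking the $\mathfrak{c}^{+}$-enlarging property only to produce the hypernatural number in the harder direction.

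First I would dispatch the direction $(\Leftarrow)$: given $\beta \in \leftidx{^{*}}{\mathbb{N}}$, put $\mathcal{U} = \{A \subseteq \mathbb{N} : \beta \in \leftidx{^{*}}{A}\}$ and verify the four ultrafilter clauses by pushing the condition $\beta \in \leftidx{^{*}}{(\cdot)}$ through Proposition \ref{star transform}. Since $\leftidx{^{*}}{\emptyset} = \emptyset$ we get $\emptyset \notin \mathcal{U}$, and $\beta \in \leftidx{^{*}}{\mathbb{N}}$ gives $\mathbb{N} \in \mathcal{U}$; clause (6) of Proposition \ref{star transform} gives the union law, clause (7) the intersection law, and clause (8) gives $\leftidx{^{*}}{(\mathbb{N}\setminus A)} = \leftidx{^{*}}{\mathbb{N}} \setminus \leftidx{^{*}}{A}$, so that $\leftidx{^{*}}{A}$ and $\leftidx{^{*}}{(\mathbb{N}\setminus A)}$ partition $\leftidx{^{*}}{\mathbb{N}}$; hence $A \in \mathcal{U} \iff \beta \notin \leftidx{^{*}}{(\mathbb{N}\setminus A)} \iff \mathbb{N}\setminus A \notin \mathcal{U}$. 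No saturation is needed here.

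For $(\Rightarrow)$, let $\mathcal{U}$ be an ultrafilter on $\mathbb{N}$. Then $|\mathcal{U}| \le |\wp(\mathbb{N})| = \mathfrak{c}$, and $\mathcal{U}$ has the finite intersection property since a finite intersection of members of $\mathcal{U}$ again lies in $\mathcal{U}$ (clause (3)) and so is nonempty (clause (1)). The $\mathfrak{c}^{+}$-enlarging property then yields some $\beta \in \bigcap_{A \in \mathcal{U}} \leftidx{^{*}}{A}$, and $\mathbb{N} \in \mathcal{U}$ forces $\beta \in \leftidx{^{*}}{\mathbb{N}}$. The inclusion $\mathcal{U} \subseteq \{A \subseteq \mathbb{N} : \beta \in \leftidx{^{*}}{A}\}$ is immediate from the choice of $\beta$; for the reverse inclusion, if $\beta \in \leftidx{^{*}}{A}$ but $A \notin \mathcal{U}$, then $\mathbb{N}\setminus A \in \mathcal{U}$, so $\beta \in \leftidx{^{*}}{(\mathbb{N}\setminus A)} = \leftidx{^{*}}{\mathbb{N}} \setminus \leftidx{^{*}}{A}$ by Proposition \ref{star transform}(8), contradicting $\beta \in \leftidx{^{*}}{A}$.

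Finally, for the ``moreover'' clause: if $\beta \in \mathbb{N}$, then $\leftidx{^{*}}{\{\beta\}} = \{\beta\}$ (as computed in the proof of Proposition \ref{infinite set}), so $\{\beta\} \in \mathcal{U}$ and $\mathcal{U}$ is principal; conversely, if $\beta \notin \mathbb{N}$, i.e.\ $\beta$ is a nonstandard hypernatural number, then Proposition \ref{infinite set} gives $\beta \notin \leftidx{^{*}}{X}$ for every finite $X \subseteq \mathbb{N}$, whence $\mathcal{U}$ contains no finite set and is non-principal. The only genuine obstacle is the existence step in $(\Rightarrow)$, where the $\mathfrak{c}^{+}$-enlarging property is essential to realize the ultrafilter as the ``trace'' of a single hypernatural number; the rest is routine bookkeeping with Proposition \ref{star transform}.
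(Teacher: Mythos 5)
Your proposal is correct and follows essentially the same route as the paper: Proposition \ref{star transform}(6)--(8) for the direction producing an ultrafilter from $\beta$, the $\mathfrak{c}^{+}$-enlarging property applied to $\mathcal{U}$ (which has the finite intersection property and cardinality at most $\mathfrak{c}$) to obtain $\beta$, and the complementation argument to get the reverse inclusion. Your handling of the ``moreover'' clause via Proposition \ref{infinite set} is just the contrapositive packaging of the paper's pair-axiom computation, so there is no substantive difference.
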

\begin{proof} If $\beta$ is a hypernatural number then Proposition \ref{star transform} (6), (7) and (8) immediately show that $\{ A\subseteq \mathbb{N} : \beta \in \leftidx{^{*}}{A}\}$ satisfies (2), (3) and (4) in the definition of an ultrafilter. Clearly, $\beta\not \in \leftidx{^{*}}{\emptyset}=\emptyset$. By assumption, $\beta \in \leftidx{^{*}}{\mathbb{N}}$. Thus $\{A\subseteq \mathbb{N} : \beta \in \leftidx{^{*}}{A}\}$ is an ultrafilter on $\mathbb{N}$.

If $\mathcal{U}$ is an ultrafilter on $\mathbb{N}$ then $|\mathcal{U}|\le\mathfrak{c}$ and $\mathcal{U}$ has the finite intersection property. By the $\mathfrak{c}^{+}$-enlarging property, $\bigcap_{A\in \mathcal{U}} \leftidx{^{*}}{A}\not=\emptyset.$ Let $\beta$ be any element of this intersection. If $A\in \mathcal{U}$ then $ \bigcap_{B\in \mathcal{U}} \leftidx{^{*}}{B}\subseteq \leftidx{^{*}}{A}.$ So for all $A\in \mathcal{U}$,  $\beta \in \leftidx{^{*}}{A}$. On the other hand, for each $A\in \wp(\mathbb{N})$, $A\cup(\mathbb{N} \setminus A)= \mathbb{N} \in \mathcal{U}$. Thus $A\in \mathcal{U}$ or $(\mathbb{N}\setminus A)\in \mathcal{U}$. Since $\beta \not \in \leftidx{^{*}}{\mathbb{N}} \setminus \leftidx{^{*}}{A}=\leftidx{^{*}}{(\mathbb{N}\setminus A)}$, $\mathbb{N}\setminus A\not \in \mathcal{U}$. Hence, $A\in \mathcal{U}$. Therefore $\{ A\subseteq \mathbb{N} : \beta \in \leftidx{^{*}}{A}\}=\mathcal{U}$.

Suppose that $\mathcal{U}$ is principal. Then there exists a finite set $\{a_{0}, a_{1}, \dots , a_{n}\}$ such that $\beta \in\leftidx{^{*}}{\{a_{0}, a_{1}, \dots , a_{n}\}}.$ By the pair axiom $$\beta \in\leftidx{^{*}}{\{a_{0}, a_{1}, \dots , a_{n}\}}=\{a_{0}, a_{1}, \dots , a_{n}\}.$$ So there exists $i\le n$ such that $\beta = a_{i}$. In particular, $\beta\in \mathbb{N}$. 

If $\beta \in \mathbb{N}$ then $\beta \in \{\beta\} = \leftidx{^{*}}{\{\beta\}}$. So $\{ A\subseteq \mathbb{N} : \beta \in \leftidx{^{*}}{A}\}$ contains a finite set and $\mathcal{U}$ is principal.
\end{proof}

\begin{defn}
Let $\vec{\mathcal{U}}=\left<\mathcal{U}_{s} : s\in \mathbb{N}]^{<\infty} \right>$ be a sequence of non-principal ultrafilters on $\mathbb{N}$ indexed by $[\mathbb{N}]^{\infty}$. A tree $T$ on $\mathbb{N}$ with stem $st(T)$ is a \emph{$\vec{\mathcal{U}}$-tree} if for all $s\in T/st(T)$
$$ \{n\in\mathbb{N} : s\cup \{n\} \in T\} \in \mathcal{U}_{s}.$$
\end{defn}

\begin{prop}\label{U-tree alpha-tree} Suppose that the $\mathfrak{c}^{+}$-enlarging property holds.  For all sequences $\vec{\mathcal{U}}=\left<\mathcal{U}_{s} : s\in [\mathbb{N}]^{<\infty} \right>$ of non-principal ultrafilters on $\mathbb{N}$ there exists a sequence $\vec{\alpha}=\left<\alpha_{s} : s\in[\mathbb{N}]^{<\infty}\right>$ of nonstandard hypernatural numbers such that for all trees $T$ on $\mathbb{N}$, $T$ is an $\vec{\mathcal{U}}$-tree if and only if $T$ is an $\vec{\alpha}$-tree.
\end{prop}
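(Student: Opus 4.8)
The plan is to read off $\vec{\alpha}$ from $\vec{\mathcal{U}}$ using the correspondence between ultrafilters on $\mathbb{N}$ and hypernatural numbers proved just above. For each $s\in[\mathbb{N}]^{<\infty}$ the set $\mathcal{U}_{s}$ is a non-principal ultrafilter on $\mathbb{N}$, so by that Proposition there is a hypernatural $\alpha_{s}$ with $\mathcal{U}_{s}=\{A\subseteq\mathbb{N}:\alpha_{s}\in\leftidx{^{*}}{A}\}$, and $\alpha_{s}\notin\mathbb{N}$ since $\mathcal{U}_{s}$ is non-principal; I set $\vec{\alpha}=\left<\alpha_{s}:s\in[\mathbb{N}]^{<\infty}\right>$. (This is the only place the $\mathfrak{c}^{+}$-enlarging property is used.) Now observe that both ``$\vec{\mathcal{U}}$-tree'' and ``$\vec{\alpha}$-tree'' apply only to trees $T$ possessing a stem, that $st(T)\in T/st(T)$ for any such tree, and that each notion then imposes a local requirement at every $s\in T/st(T)$. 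So it is enough to prove that, for every tree $T$ on $\mathbb{N}$ with a stem and every $s\in T/st(T)$, the two local requirements at $s$ coincide.

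Concretely, writing $A_{s}=\{n\in\mathbb{N}:s\cup\{n\}\in T\}$, I would prove the biconditional
$$A_{s}\in\mathcal{U}_{s}\iff s\cup\{\alpha_{s}\}\in\leftidx{^{*}}{T}.$$
By the choice of $\alpha_{s}$ the left-hand side is $\alpha_{s}\in\leftidx{^{*}}{A_{s}}$, and here $s\cup\{\alpha_{s}\}$ denotes, as usual, the ideal value $\leftidx{^{*}}{g_{s}}(\alpha_{s})$ of the map $g_{s}\colon n\mapsto s\cup\{n\}$; in particular, if $\alpha_{s}=\vartheta[\alpha]$ then $s\cup\{\alpha_{s}\}=(g_{s}\circ\vartheta)[\alpha]$ by Proposition 2.3 of \cite{AlphaTheory}. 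For the forward implication: if $\alpha_{s}\in\leftidx{^{*}}{A_{s}}$ then, by the Internal Set axiom $\alpha 5$, $\alpha_{s}=\vartheta[\alpha]$ for a sequence with $\vartheta_{i}\in A_{s}$ for all $i$, so $i\mapsto s\cup\{\vartheta_{i}\}$ is a sequence of elements of $T$ whose ideal value is $s\cup\{\alpha_{s}\}$; hence $s\cup\{\alpha_{s}\}\in\leftidx{^{*}}{T}$, again by $\alpha 5$. For the converse: $\alpha_{s}\in\leftidx{^{*}}{\mathbb{N}}$, and $\leftidx{^{*}}{(\mathbb{N}\setminus A_{s})}=\leftidx{^{*}}{\mathbb{N}}\setminus\leftidx{^{*}}{A_{s}}$ by Proposition \ref{star transform}, so if $\alpha_{s}\notin\leftidx{^{*}}{A_{s}}$ then $\alpha_{s}\in\leftidx{^{*}}{(\mathbb{N}\setminus A_{s})}$; running the same argument with $\mathbb{N}\setminus A_{s}$ in place of $A_{s}$ exhibits a sequence of elements of $[\mathbb{N}]^{<\infty}\setminus T$ with ideal value $s\cup\{\alpha_{s}\}$, whence $s\cup\{\alpha_{s}\}\in\leftidx{^{*}}{([\mathbb{N}]^{<\infty}\setminus T)}=\leftidx{^{*}}{[\mathbb{N}]^{<\infty}}\setminus\leftidx{^{*}}{T}$, contradicting $s\cup\{\alpha_{s}\}\in\leftidx{^{*}}{T}$.

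With the biconditional in hand, the equivalence ``$T$ is a $\vec{\mathcal{U}}$-tree $\iff$ $T$ is a $\vec{\alpha}$-tree'' follows immediately: a $\vec{\mathcal{U}}$-tree has a stem by definition, hence $st(T)\in T/st(T)$, and the biconditional turns its local requirement at each $s\in T/st(T)$ into the $\vec{\alpha}$-tree requirement; conversely a $\vec{\alpha}$-tree has a stem (and $T/st(T)\neq\emptyset$) by definition, and the biconditional runs the other way. I expect the only delicate point to be the bookkeeping around the symbol $s\cup\{\alpha_{s}\}$ --- making sure it is legitimately identified with the ideal value of $i\mapsto s\cup\{\vartheta_{i}\}$, which is exactly what the composition property recorded in Proposition 2.3 of \cite{AlphaTheory} provides --- together with checking that the degenerate case in which $A_{s}$ is finite (possibly empty) causes no trouble: there $A_{s}\notin\mathcal{U}_{s}$, while $\alpha_{s}\in\leftidx{^{*}}{(\mathbb{N}\setminus A_{s})}$ forces $s\cup\{\alpha_{s}\}\notin\leftidx{^{*}}{T}$, so both sides of the biconditional are false.
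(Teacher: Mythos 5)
Your proposal is correct and takes essentially the same route as the paper: choose each $\alpha_{s}$ via the preceding proposition so that $\mathcal{U}_{s}=\{A\subseteq\mathbb{N}:\alpha_{s}\in\leftidx{^{*}}{A}\}$, and then observe that the defining local conditions coincide, i.e. $\{n\in\mathbb{N}:s\cup\{n\}\in T\}\in\mathcal{U}_{s}\iff s\cup\{\alpha_{s}\}\in\leftidx{^{*}}{T}$ for every $s\in T/st(T)$. The only difference is that the paper asserts this biconditional as immediate, whereas you verify it in detail from $\alpha 2$ and $\alpha 5$ (including the degenerate finite-successor case), which is a harmless elaboration of the same argument.
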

\begin{proof}
The previous proposition, allows us to chose a sequence $\left<\alpha_{s} : s\in \mathbb{N}]^{<\infty} \right>$ of nonstandard hypernatural numbers such that for all $s\in [\mathbb{N}]^{<\infty}$, $\mathcal{U}_{s} = \{ A\subseteq \mathbb{N} : \alpha_{s} \in \leftidx{^{*}}{A}\}.$ In particular, for all trees $T$ with stem $st(T)$, for all $s\in T/st(T)$,
 $$\{n\in\mathbb{N} : s\cup \{n\} \in T\} \in \mathcal{U}_{s} \iff s\cup\{\alpha_{s}\}\in \leftidx{^{*}}{T}.$$
\end{proof}

\begin{cor} Suppose that the $\mathfrak{c}^{+}$-enlarging property holds and $\vec{\mathcal{U}}=\left<\mathcal{U}_{s} : s\in [\mathbb{N}]^{<\infty} \right>$ is a sequence of non-principal ultrafilters on $\mathbb{N}$. For all $\mathcal{X}\subseteq [\mathbb{N}]^{\infty}$ and for all $\vec{\mathcal{U}}$-trees $T$ there exists an $\vec{\mathcal{U}}$-tree $S\subseteq T$ with $st(S)=st(T)$ such that one of the following holds:
\begin{enumerate}
\item $[S]\subseteq \mathcal{X}$.
\item $[S]\cap \mathcal{X}=\emptyset$.
\item For all $\vec{\mathcal{U}}$-trees $S'$, if $S'\subseteq S$ then $[S']\not \subseteq \mathcal{X}$ and $[S']\cap\mathcal{X} \not = \emptyset$.
\end{enumerate}
\end{cor}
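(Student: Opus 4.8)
The plan is to reduce the statement directly to Theorem \ref{alpha Ramsey theorem} via the dictionary between $\vec{\mathcal{U}}$-trees and $\vec{\alpha}$-trees supplied by Proposition \ref{U-tree alpha-tree}. First I would invoke Proposition \ref{U-tree alpha-tree}: since the $\mathfrak{c}^{+}$-enlarging property is assumed, it yields a sequence $\vec{\alpha}=\left<\alpha_{s}:s\in[\mathbb{N}]^{<\infty}\right>$ of nonstandard hypernatural numbers such that a tree $T$ on $\mathbb{N}$ is a $\vec{\mathcal{U}}$-tree if and only if it is an $\vec{\alpha}$-tree. From that point on, the classes of $\vec{\mathcal{U}}$-trees and of $\vec{\alpha}$-trees coincide, and in particular they have the same stems, the same subtrees, and the same branch sets $[S]$.

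Next, given $\mathcal{X}\subseteq[\mathbb{N}]^{\infty}$ and a $\vec{\mathcal{U}}$-tree $T$, I would view $T$ as an $\vec{\alpha}$-tree and apply Theorem \ref{alpha Ramsey theorem} to $\mathcal{X}$ and $T$. This produces an $\vec{\alpha}$-tree $S\subseteq T$ with $st(S)=st(T)$ for which one of conclusions (1), (2), (3) of that theorem holds. Translating back through Proposition \ref{U-tree alpha-tree}, $S$ is a $\vec{\mathcal{U}}$-tree with $st(S)=st(T)$ and $S\subseteq T$. Conclusions (1) and (2) are word-for-word the same. For conclusion (3) I would observe that the universal quantifier "for all $\vec{\alpha}$-trees $S'\subseteq S$" is, under the same equivalence, the quantifier "for all $\vec{\mathcal{U}}$-trees $S'\subseteq S$," so the statement "$[S']\not\subseteq\mathcal{X}$ and $[S']\cap\mathcal{X}\neq\emptyset$ for all such $S'$" transfers verbatim. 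This completes the argument.

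Since essentially all of the content is carried by Theorem \ref{alpha Ramsey theorem} together with the translation in Proposition \ref{U-tree alpha-tree}, there is no real obstacle here; the only point meriting a line of care is to confirm that Proposition \ref{U-tree alpha-tree} gives an equality of the two \emph{collections} of trees — not merely an equivalence of the branching condition at a single prescribed stem — so that the reduction respects the relations $S\subseteq T$ and $S'\subseteq S$, the stems, and the sets $[S]$, $[S']$ with no further verification needed.
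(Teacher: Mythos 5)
Your proposal is correct and is essentially identical to the paper's own proof: the paper likewise invokes Proposition \ref{U-tree alpha-tree} to obtain a sequence $\vec{\alpha}$ for which the classes of $\vec{\mathcal{U}}$-trees and $\vec{\alpha}$-trees coincide, and then observes that the corollary is just Theorem \ref{alpha Ramsey theorem} restated for $\vec{\mathcal{U}}$-trees. Your extra remark that the equivalence applies to all trees (hence transfers the quantifier in conclusion (3), the subtree relations, and the stems) is exactly the point that makes this restatement legitimate.
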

\begin{proof}
By Proposition \ref{U-tree alpha-tree} there exists a sequence $\vec{\alpha}=\left<\alpha_{s} : s\in[\mathbb{N}]^{<\infty}\right>$ of nonstandard hypernatural numbers such that for all trees $T$ on $\mathbb{N}$, $T$ is an $\vec{\mathcal{U}}$-tree if and only if $T$ is an $\vec{\alpha}$-tree. This result is simply a restatement of Theorem \ref{alpha Ramsey theorem} using $\vec{\mathcal{U}}$-trees instead of $\vec{\alpha}$-trees.
\end{proof}

\begin{defn}Suppose that $\vec{\mathcal{U}}=\left<\mathcal{U}_{s} : s\in [\mathbb{N}]^{<\infty} \right>$ is a sequence of non-principal ultrafilters on $\mathbb{N}$.
$\mathcal{X}\subseteq [\mathbb{N}]^{\infty}$ is said to be \emph{$\vec{\mathcal{U}}$-Ramsey} if for all $\vec{\mathcal{U}}$-trees $T$ there exists an $\vec{\mathcal{U}}$-tree $S\subseteq T$ with $st(S)=st(T)$ such that either $[S]\subseteq \mathcal{X}$ or $[S]\cap \mathcal{X}=\emptyset$. $\mathcal{X}$ is said to be \emph{$\vec{\mathcal{U}}$-Ramsey null} if for all $\vec{\mathcal{U}}$-trees $T$ there exists an $\vec{\mathcal{U}}$-tree $S\subseteq T$ with $st(S)=st(T)$ such that $[S]\cap \mathcal{X}=\emptyset$.

The \emph{$\vec{\mathcal{U}}$-Ellentuck space} is the topological space on $[\mathbb{N}]^{\infty}$ generated by $\{[T]:\mbox{$T$ is a $\vec{\mathcal{U}}$-tree}\}$. $\mathcal{X}\subseteq [\mathbb{N}]^{\infty}$ \emph{is $\vec{\mathcal{U}}$-nowhere dense/ is $\vec{\mathcal{U}}$-meager/ has the $\vec{\mathcal{U}}$-Baire property} if it is nowhere dense/ is meager/ has the Baire property with respect to the $\vec{\mathcal{U}}$-Ellentuck space. 

A triple $([\mathbb{N}]^{\infty},\vec{\mathcal{U}}, \subseteq)$ is said to be an \emph{ultra-Ramsey space} if the collection of $\vec{\mathcal{U}}$-Ramsey sets coincides with the $\sigma$-algebra of sets with the $\vec{\mathcal{U}}$-Baire property and the collection of $\vec{\mathcal{U}}$-Ramsey null sets coincides with the $\sigma$-ideal of $\vec{\mathcal{U}}$-meager sets.
\end{defn}

\begin{cor}[The Ultra-Ellentuck Theorem, Todorcevic \cite{RamseySpaces}] Suppose that the $\mathfrak{c}^{+}$-enlarging property holds and $\vec{\mathcal{U}}=\left<\mathcal{U}_{s} : s\in [\mathbb{N}]^{<\infty} \right>$ is a sequence of non-principal ultrafilters on $\mathbb{N}$. Then $([\mathbb{N}]^{\infty},\vec{\mathcal{U}}, \subseteq)$ is an ultra-Ramsey space.
\end{cor}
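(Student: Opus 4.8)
The plan is to show that this corollary is simply a translation of the $\vec{\alpha}$-Ellentuck Theorem across the dictionary provided by Proposition \ref{U-tree alpha-tree}. First I would invoke Proposition \ref{U-tree alpha-tree}: since the $\mathfrak{c}^{+}$-enlarging property holds and $\vec{\mathcal{U}}=\left<\mathcal{U}_{s}:s\in[\mathbb{N}]^{<\infty}\right>$ is a sequence of non-principal ultrafilters on $\mathbb{N}$, there is a sequence $\vec{\alpha}=\left<\alpha_{s}:s\in[\mathbb{N}]^{<\infty}\right>$ of nonstandard hypernatural numbers such that, for every tree $T$ on $\mathbb{N}$, $T$ is a $\vec{\mathcal{U}}$-tree if and only if $T$ is an $\vec{\alpha}$-tree. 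Thus the class of $\vec{\mathcal{U}}$-trees and the class of $\vec{\alpha}$-trees are literally the same collection of subsets of $[\mathbb{N}]^{<\infty}$, and the notions of stem and of the inclusion relation $S\subseteq T$ between trees are unaffected by the choice of labels.

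Next I would observe that every notion appearing in the statement ``$([\mathbb{N}]^{\infty},\vec{\mathcal{U}},\subseteq)$ is an ultra-Ramsey space'' is defined purely in terms of the class of $\vec{\mathcal{U}}$-trees, their stems, the relation $\subseteq$, and the sets $[T]$ they determine: a set $\mathcal{X}$ being $\vec{\mathcal{U}}$-Ramsey (resp. $\vec{\mathcal{U}}$-Ramsey null) is a quantification over $\vec{\mathcal{U}}$-trees; the $\vec{\mathcal{U}}$-Ellentuck topology is the one generated by $\{[T]:T\text{ a }\vec{\mathcal{U}}\text{-tree}\}$; and $\vec{\mathcal{U}}$-nowhere dense, $\vec{\mathcal{U}}$-meager, and the $\vec{\mathcal{U}}$-Baire property are the corresponding topological notions for that topology. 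Because the class of $\vec{\mathcal{U}}$-trees coincides with the class of $\vec{\alpha}$-trees, each of these coincides verbatim with the corresponding $\vec{\alpha}$-notion: $\vec{\mathcal{U}}$-Ramsey $=$ $\vec{\alpha}$-Ramsey, $\vec{\mathcal{U}}$-Ramsey null $=$ $\vec{\alpha}$-Ramsey null, the $\vec{\mathcal{U}}$-Ellentuck topology $=$ the $\vec{\alpha}$-Ellentuck topology, and hence $\vec{\mathcal{U}}$-nowhere dense/meager/Baire property $=$ $\vec{\alpha}$-nowhere dense/meager/Baire property.

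Finally, the $\vec{\alpha}$-Ellentuck Theorem asserts that $([\mathbb{N}]^{\infty},\vec{\alpha},\subseteq)$ is an $\vec{\alpha}$-Ramsey space, i.e. the $\vec{\alpha}$-Ramsey sets form exactly the $\sigma$-algebra of sets with the $\vec{\alpha}$-Baire property and the $\vec{\alpha}$-Ramsey null sets form exactly the $\sigma$-ideal of $\vec{\alpha}$-meager sets. Substituting the identifications from the previous paragraph yields that the $\vec{\mathcal{U}}$-Ramsey sets form the $\sigma$-algebra of sets with the $\vec{\mathcal{U}}$-Baire property and the $\vec{\mathcal{U}}$-Ramsey null sets form the $\sigma$-ideal of $\vec{\mathcal{U}}$-meager sets, which is precisely the statement that $([\mathbb{N}]^{\infty},\vec{\mathcal{U}},\subseteq)$ is an ultra-Ramsey space. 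There is essentially no analytic obstacle here; the only point requiring care is the bookkeeping check that each defined notion is genuinely a function of the tree class alone (and not of the hypernatural labels $\alpha_{s}$ or the ultrafilters $\mathcal{U}_{s}$ as auxiliary data), so that the equivalence of tree classes propagates to an equivalence of all derived notions — that is the step I would write out most carefully.
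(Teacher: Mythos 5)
Your proposal is correct and follows essentially the same route as the paper: invoke Proposition \ref{U-tree alpha-tree} to identify the class of $\vec{\mathcal{U}}$-trees with the class of $\vec{\alpha}$-trees for a suitable $\vec{\alpha}$, and then observe that the $\vec{\alpha}$-Ellentuck Theorem restates verbatim in the $\vec{\mathcal{U}}$-language since every notion involved depends only on the tree class. The paper's proof is simply a compressed version of this argument, while you spell out the bookkeeping it leaves implicit.
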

\begin{proof}
By Proposition \ref{U-tree alpha-tree} there exists a sequence $\vec{\alpha}=\left<\alpha_{s} : s\in[\mathbb{N}]^{<\infty}\right>$ of nonstandard hypernatural numbers such that for all trees $T$ on $\mathbb{N}$, $T$ is an $\vec{\mathcal{U}}$-tree if and only if $T$ is an $\vec{\alpha}$-tree. This result is simply a restatement of the $\vec{\alpha}$-Ellentuck Theorem in the setting of $\vec{\mathcal{U}}$-trees instead of $\vec{\alpha}$-trees.
\end{proof}

\subsection{Selective ultrafilters} Suppose that $\mathcal{U}$ is a non-principal ultrafilter on $\mathbb{N}$. In this subsection we consider the ultra-Ramsey theory in the context of sequences $\vec{\mathcal{U}}=\left<\mathcal{U}_{s}:s\in[\mathbb{N}]^{<\infty}\right>$ such that for all $s\in [\mathbb{N}]^{<\infty}$, $\mathcal{U}_{s}=\mathcal{U}$. Note that we do not suppress the arrow as in \cite{RamseySpaces}. 

Concepts like $\vec{\mathcal{U}}$-Ramsey and $\mathcal{U}$-Ramsey are different with the latter definition coming from local Ramsey theory and the former from ultra-Ramsey theory. As we shall see these two notions coincide precisely when the ultrafilter $\mathcal{U}$ is taken to be a selective ultrafilter.

\begin{defn}
An ultrafilter $\mathcal{U}$ on $\mathbb{N}$ is \emph{selective} if for all sequences $X_{0}\supseteq X_{1}\supseteq X_{2} \supseteq \cdots$ of sets in $\mathcal{U}$, there exists $X=\{x_{0},x_{1},\dots\}\in\mathcal{U}$ enumerated in increasing order such that for all $n\in\mathbb{N},$   $x_{n+1}\in X_{n}.$
\end{defn}

\begin{thm}\label{selective ultrafilter theorem}
Suppose that the $\mathfrak{c}^{+}$-enlarging property holds and $\mathcal{U}$ is selective ultrafilter on $\mathbb{N}$. Let $\mathcal{X}\subseteq [\mathbb{N}]^{\infty}$. The following are equivalent:
\begin{enumerate}
\item $\mathcal{X}$ has the $\vec{\mathcal{U}}$-Baire property.
\item $\mathcal{X}$ is $\vec{\mathcal{U}}$-Ramsey.
\item $\mathcal{X}$ has the $\mathcal{U}$-Baire property.
\item $\mathcal{X}$ is $\mathcal{U}$-Ramsey.
\end{enumerate}
Furthermore, the following are equivalent:
\begin{enumerate}
\item[(5)] $\mathcal{X}$ is $\vec{\mathcal{U}}$-meager.
\item[(6)] $\mathcal{X}$ is $\vec{\mathcal{U}}$-Ramsey null.
\item[(7)] $\mathcal{X}$ is $\mathcal{U}$-meager.
\item[(8)] $\mathcal{X}$ is $\mathcal{U}$-Ramsey null.
\end{enumerate}
\end{thm}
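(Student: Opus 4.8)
The plan is to reduce the whole statement to the $\beta$-Ellentuck Theorem by choosing a $\beta$ that ``is'' $\mathcal{U}$. Since $\mathcal{U}$ is non-principal and the $\mathfrak{c}^{+}$-enlarging property holds, fix a nonstandard hypernatural number $\beta$ with $\mathcal{U}=\{A\subseteq\mathbb{N}:\beta\in\leftidx{^{*}}{A}\}$ and let $\vec{\alpha}$ be the constant sequence $\alpha_{s}=\beta$. As in Proposition \ref{U-tree alpha-tree}, a tree on $\mathbb{N}$ is a $\vec{\mathcal{U}}$-tree if and only if it is a $\beta$-tree, so the $\vec{\mathcal{U}}$-Ellentuck topology is literally the $\beta$-Ellentuck topology; consequently ``$\vec{\mathcal{U}}$-Baire'', ``$\vec{\mathcal{U}}$-meager'', ``$\vec{\mathcal{U}}$-Ramsey'' and ``$\vec{\mathcal{U}}$-Ramsey null'' are synonyms for their $\beta$-counterparts, and the $\beta$-Ellentuck Theorem (the $\vec{\alpha}$-Ellentuck Theorem for the constant sequence $\beta$) gives $(1)\Leftrightarrow(2)$ and $(5)\Leftrightarrow(6)$ at once.

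The second ingredient is that selectivity of $\mathcal{U}$ is exactly clause (2) of Theorem \ref{beta-SCIP}: reading $A\in\mathcal{U}$ as $\beta\in\leftidx{^{*}}{A}$, clause (2) of Theorem \ref{beta-SCIP} says precisely that every decreasing sequence of members of $\mathcal{U}$ has a diagonal set in $\mathcal{U}$. Hence, by Theorem \ref{beta-SCIP}, clause (1) holds: for every $\beta$-tree $T$ and every $s\in T/st(T)$ there is $X\in[s,\mathbb{N}]$ with $\beta\in\leftidx{^{*}}{X}$ (equivalently $X\in\mathcal{U}$) and $[s,X]\subseteq[T]$. Together with Lemma \ref{basic open to a tree} (turning a nonempty $[s,X]$ with $X\in\mathcal{U}$ into a $\beta$-tree with stem $s$ and branch set $[s,X]$) and Lemma \ref{intersection alpha trees}, this gives a two-way dictionary between $\beta$-trees and the basic sets $[s,X]$, $X\in\mathcal{U}$, of the local Ramsey space, which I use to prove $(2)\Leftrightarrow(4)$ and $(6)\Leftrightarrow(8)$. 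For $(4)\Rightarrow(2)$ and $(8)\Rightarrow(6)$: given a $\beta$-tree $T$, use clause (1) of Theorem \ref{beta-SCIP} with $s=st(T)$ to get $[s,X]\subseteq[T]$ with $X\in\mathcal{U}$, apply the $\mathcal{U}$-Ramsey (null) property to obtain $Y\in[s,X]\cap\mathcal{U}$ deciding $\mathcal{X}$ on $[s,Y]$, realise $[s,Y]$ as a $\beta$-tree $S_{0}$ by Lemma \ref{basic open to a tree}, and pass to $S_{0}\cap T$, which by Lemma \ref{intersection alpha trees} is a $\beta$-tree $\subseteq T$ with stem $s$ and $[S_{0}\cap T]=[s,Y]$. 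For $(2)\Rightarrow(4)$ and $(6)\Rightarrow(8)$: given $[s,X]\neq\emptyset$ with $X\in\mathcal{U}$, realise it as $[T]$ for a $\beta$-tree $T$, shrink $T$ inside $\mathcal{X}$ or its complement by the $\beta$-Ramsey (null) property, and apply clause (1) of Theorem \ref{beta-SCIP} to the shrunk tree to extract the required $Y\in[s,X]\cap\mathcal{U}$; in each case one checks $[s,Y]\subseteq[S]\subseteq[T]=[s,X]$ to see that $Y$ really lies in $[s,X]\cap\mathcal{U}$.

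It remains to insert (3) and (7). Every $\mathcal{U}$-open set is $\beta$-open (each basic $[s,X]$ with $X\in\mathcal{U}$ equals $[T]$ for a $\beta$-tree by Lemma \ref{basic open to a tree}), hence $\beta$-Ramsey by Corollary \ref{open is ramsey}, hence $\mathcal{U}$-Ramsey. Dually, clause (1) of Theorem \ref{beta-SCIP} shows every $\mathcal{U}$-nowhere dense set is $\beta$-nowhere dense — inside any nonempty $[T]$ one first finds $[s,X]\subseteq[T]$ with $X\in\mathcal{U}$, then a nonempty $[t,Y]\subseteq[s,X]$ with $Y\in\mathcal{U}$ disjoint from the set, and $[t,Y]$ is $\beta$-open — hence $\beta$-Ramsey null, hence $\mathcal{U}$-Ramsey null; conversely, ``$\mathcal{U}$-Ramsey null'' is by definition ``$\mathcal{U}$-nowhere dense'' with respect to the basis $\{[s,X]:X\in\mathcal{U}\}$, and since the $\beta$-Ramsey null sets form a $\sigma$-ideal (Corollary \ref{sigma ideal}) this yields $(7)\Leftrightarrow(8)$ and, with the previous block, all of (5)--(8). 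Then $(3)\Rightarrow(4)$: writing a $\mathcal{U}$-Baire set as $\mathcal{O}\,\Delta\,\mathcal{M}$ with $\mathcal{O}$ $\mathcal{U}$-open and $\mathcal{M}$ $\mathcal{U}$-meager, both $\mathcal{O}$ and $\mathcal{M}$ are $\beta$-Ramsey (using $(7)\Rightarrow(8)$ for $\mathcal{M}$) and $\beta$-Ramsey sets form a $\sigma$-algebra. And $(4)\Rightarrow(3)$ by the usual interior argument: let $\mathcal{O}$ be the $\mathcal{U}$-interior of $\mathcal{X}$; then $\mathcal{X}\setminus\mathcal{O}$ is again $\mathcal{U}$-Ramsey but can contain no nonempty $[s,Y]$ with $Y\in\mathcal{U}$, so it is $\mathcal{U}$-Ramsey null, hence $\mathcal{U}$-meager by $(8)\Rightarrow(7)$, and $\mathcal{X}=\mathcal{O}\,\Delta\,(\mathcal{X}\setminus\mathcal{O})$ has the $\mathcal{U}$-Baire property.

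I expect the main obstacle to be the bookkeeping in $(2)\Leftrightarrow(4)$ and $(6)\Leftrightarrow(8)$: one must verify that the set $Y$ produced by clause (1) of Theorem \ref{beta-SCIP} genuinely lies in $[s,X]\cap\mathcal{U}$, and that intersecting the tree obtained from Lemma \ref{basic open to a tree} with the ambient tree (via Lemma \ref{intersection alpha trees}) preserves both the stem and the branch set. The argument deliberately routes everything through ``Ramsey'' rather than ``open'', so it never needs the $\vec{\mathcal{U}}$-Ellentuck and $\mathcal{U}$-local topologies to coincide on the nose; and apart from the single application of the $\mathfrak{c}^{+}$-enlarging property used to choose $\beta$, the entire proof takes place inside the Alpha-Theory.
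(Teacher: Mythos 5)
Your proposal is correct in substance and, for most of the theorem, follows the same route as the paper: you choose $\beta$ with $\mathcal{U}=\{A\subseteq\mathbb{N}:\beta\in\leftidx{^{*}}{A}\}$, identify $\vec{\mathcal{U}}$-trees with $\beta$-trees, get $(1)\Leftrightarrow(2)$ and $(5)\Leftrightarrow(6)$ from the $\beta$-Ellentuck Theorem, and prove $(2)\Leftrightarrow(4)$, $(6)\Leftrightarrow(8)$ by exactly the paper's combination of clause (1) of Theorem \ref{beta-SCIP} (which, as you note, is selectivity of $\mathcal{U}$ read through $\beta$) with Lemma \ref{basic open to a tree}; your passage to $S_{0}\cap T$ via Lemma \ref{intersection alpha trees} is a harmless variant of the paper's direct observation that the tree with branch set $[s,Y]$ is already a subtree of $T$. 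Where you genuinely diverge is in how (3) and (7) enter. The paper uses Theorem \ref{beta-SCIP}(1) to write every $[T]$ as $\bigcup_{s\in T/st(T)}[s,X_{s}]$ with $X_{s}\in\mathcal{U}$, concluding that the $\beta$-Ellentuck and $\mathcal{U}$-Ellentuck topologies are \emph{identical}; $(1)\Leftrightarrow(3)$ and $(5)\Leftrightarrow(7)$ are then immediate, and the Louveau equivalences $(3)\Leftrightarrow(4)$, $(7)\Leftrightarrow(8)$ fall out at the end (Corollary \ref{Louveau}). You instead keep the two topologies separate and transfer at the level of Ramsey notions ($\mathcal{U}$-open $\Rightarrow$ $\beta$-open, $\mathcal{U}$-nowhere dense $\Rightarrow$ $\beta$-Ramsey null $\Rightarrow$ $\mathcal{U}$-Ramsey null, then the Baire decomposition and interior arguments), in effect proving $(3)\Leftrightarrow(4)$ and $(7)\Leftrightarrow(8)$ directly. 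This buys robustness, since you never need the topologies to coincide on the nose, at the price of redoing $\sigma$-algebra and $\sigma$-ideal bookkeeping that the paper gets for free from the identification, which is the slicker observation here.

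One step should be stated more carefully: in $(4)\Rightarrow(3)$ you assert that $\mathcal{X}\setminus\mathcal{O}$ ``is again $\mathcal{U}$-Ramsey,'' but closure of the $\mathcal{U}$-Ramsey sets under removing an open set is part of what is being proved (it is an instance of Louveau's theorem). This is easily repaired with tools you already have: either route through $\beta$ ($\mathcal{X}$ is $\beta$-Ramsey by $(4)\Rightarrow(2)$, $\mathcal{O}$ is $\mathcal{U}$-open hence $\beta$-open hence $\beta$-Ramsey by Corollary \ref{open is ramsey}, the $\beta$-Ramsey sets form a $\sigma$-algebra by the $\beta$-Ellentuck Theorem, and then apply $(2)\Rightarrow(4)$ to $\mathcal{X}\setminus\mathcal{O}$), or drop the claim altogether: for any nonempty $[s,X]$ with $X\in\mathcal{U}$, $\mathcal{U}$-Ramseyness of $\mathcal{X}$ itself gives $Y\in[s,X]\cap\mathcal{U}$ with $[s,Y]\subseteq\mathcal{X}$ or $[s,Y]\cap\mathcal{X}=\emptyset$; in the first case $[s,Y]\subseteq\mathcal{O}$, so in either case $[s,Y]\cap(\mathcal{X}\setminus\mathcal{O})=\emptyset$, i.e. $\mathcal{X}\setminus\mathcal{O}$ is $\mathcal{U}$-Ramsey null outright. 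With that patch your argument is complete.
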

\begin{proof}
Suppose that $\mathcal{U}$ is a selective ultrafilter on $\mathbb{N}$. Note that (1)$\iff$(2) and (5)$\iff$(6) follow from the Ultra-Ellentuck Theorem. By Lemma \ref{U-tree alpha-tree} there exists a nonstandard hypernatural number $\beta$ such that $\mathcal{U} =\{ X\in[\mathbb{N}]^{\infty} : \beta\in\leftidx{^{*}}{X}\}$. Since $\mathcal{U}$ is selective, Theorem \ref{beta-SCIP} implies that if $T$ is a $\beta$-tree then for all $s\in T/st(T)$ there exists $X_{s}\in[s,\mathbb{N}]$ such that $\beta\in\leftidx{^{*}}{X_{s}}$ and $[s,X_{s}] \subseteq [T].$ Thus for each $\beta$-tree $T$, 
$$[T]=\bigcup_{s\in T/st(T)} [s, X_{s}].$$
 In particular, $\{[s,X] : \beta\in\leftidx{^{*}}{X}\}$ generates the $\beta$-Ellentuck topology. Since the $\beta$-Ellentuck topology and the $\mathcal{U}$-Ellentuck topology are identical we have (1)$\iff$(3) and (5)$\iff$(7).

(2)$\implies$(4). Suppose that $\mathcal{X}$ satisfies (2) and $[s,X]\not=\emptyset$ with $\beta\in\leftidx{^{*}}{X}$. Note that $\mathcal{X}$ is $\beta$-Ramsey. By Lemma \ref{basic open to a tree} there exists a $\beta$-tree $T$ such that $st(T)=s$ and $[T]=[s,X]$. The $\beta$-Ellentuck Theorem implies that there exists a $\beta$-tree $S\subseteq T$ such that $st(S)=st(T)=s$ and either $[S]\subseteq \mathcal{X}$ or $[S]\cap \mathcal{X}=\emptyset$. Since $\mathcal{U}$ is selective, Theorem \ref{beta-SCIP} implies that there exists $Y\in[s,\mathbb{N}]$ such that $\beta\in\leftidx{^{*}}{Y}$ and $[s,Y] \subseteq [S]$. Hence, $Y\in[s,X]$ and either $[s,Y]\subseteq \mathcal{X}$ or $[s,Y]\cap \mathcal{X} =\emptyset$. Since $[s,X]$ was arbitrary (4) holds. By a similar argument (6)$\implies$(8).

(4)$\implies$(2). Suppose that $\mathcal{X}$ satisfies (4) and let $T$ be $\vec{\mathcal{U}}$-tree. Note that $T$ is a $\beta$-tree. Since $\mathcal{U}$ is selective, Theorem \ref{beta-SCIP} implies that there exists $X\in[st(T),\mathbb{N}]$ such that $\beta\in\leftidx{^{*}}{X}$ and $[st(T),X]\subseteq[T]$. By (4) there exists $Y\in[st(T),X]$ such that $\beta\in\leftidx{^{*}}{Y}$ and either $[st(T),Y]\subseteq \mathcal{X}$ or  $[st(T),Y]\cap\mathcal{X} =\emptyset$. By Lemma \ref{basic open to a tree} there exists a $\beta$-tree $S$ such that $st(S)=st(T)$ and $[S]=[s,Y]$. Thus $S\subseteq T$, $st(T)=st(S)$, $S$ is a $\vec{\mathcal{U}}$-tree and either $[S]\subseteq \mathcal{X}$ or $[S]\cap \mathcal{X}=\emptyset$. Since $T$ was arbitrary (2) holds. By a similar argument (8)$\implies$(6).

Altogether, we have shown that (3)$\iff$(1)$\iff$(2)$\iff$(4) and (7)$\iff$(5)$\iff$(6)$\iff$(8).
\end{proof}

The equivalences (3)$\iff$(4) and (7)$\iff$(8) in the previous theorem follow from the work of Louveau in \cite{Louveau} but in a different framework. For a proof in the setting of ultra-Ramsey theory see Corollary 7.24 in \cite{RamseySpaces}. 

\begin{cor}[Louveau, \cite{Louveau}]\label{Louveau} If $\mathcal{U}$ is selective ultrafilter on $\mathbb{N}$ then $([\mathbb{N}]^{\infty}, \mathcal{U}, \subseteq)$ is a topological Ramsey space.
\end{cor}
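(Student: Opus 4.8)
The plan is to obtain the three clauses in the definition of a topological Ramsey space directly from Theorem \ref{selective ultrafilter theorem}, after one short verification.

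First I would check clause (1): that $\{[s,X] : s\in[\mathbb{N}]^{<\infty},\ X\in\mathcal{U}\}$ is a neighborhood base for a topology on $[\mathbb{N}]^{\infty}$. Since $\mathbb{N}\in\mathcal{U}$, the set $[\emptyset,\mathbb{N}]=[\mathbb{N}]^{\infty}$ belongs to this family, so the family covers $[\mathbb{N}]^{\infty}$. If $Y\in[s,X]\cap[t,W]$ with $X,W\in\mathcal{U}$, then $s\sqsubseteq Y$ and $t\sqsubseteq Y$, so $s$ and $t$ are $\sqsubseteq$-comparable; letting $u$ be the longer of the two and using that $X\cap W\in\mathcal{U}$ because $\mathcal{U}$ is a filter, one checks $Y\in[u,X\cap W]\subseteq[s,X]\cap[t,W]$. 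Hence the family is a base, and the topology it generates is precisely the $\mathcal{U}$-Ellentuck topology appearing in the proof of Theorem \ref{selective ultrafilter theorem}. Note that selectivity plays no role in this step; only that $\mathcal{U}$ is a filter.

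Clauses (2) and (3) are then a reading-off of equivalences already in hand. By Baire's theorem, for any topological space the sets with the Baire property form a $\sigma$-algebra and the meager sets form a $\sigma$-ideal contained in it; applied to the $\mathcal{U}$-Ellentuck topology, this says the sets with the $\mathcal{U}$-Baire property form a $\sigma$-algebra and the $\mathcal{U}$-meager sets form a $\sigma$-ideal. Clause (2), that the $\mathcal{U}$-Ramsey sets coincide with that $\sigma$-algebra, is exactly the equivalence (3)$\iff$(4) of Theorem \ref{selective ultrafilter theorem}; clause (3), that the $\mathcal{U}$-Ramsey null sets coincide with that $\sigma$-ideal, is exactly (7)$\iff$(8). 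Thus $([\mathbb{N}]^{\infty},\mathcal{U},\subseteq)$ is a topological Ramsey space.

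The one delicate point — and the part I expect to need the most care — is that Theorem \ref{selective ultrafilter theorem} is proved under the $\mathfrak{c}^{+}$-enlarging property, whereas the corollary (like Louveau's original theorem) mentions no nonstandard objects at all. The conclusion ``$([\mathbb{N}]^{\infty},\mathcal{U},\subseteq)$ is a topological Ramsey space'' is a statement of ordinary mathematics: no $*$-transform or hypernatural number occurs in it. Since this saturation principle can be adjoined to the Alpha-Theory without adding new theorems of ordinary mathematics (see \cite{AlphaTheory}), the corollary holds in ZFC. If instead one keeps the $\mathfrak{c}^{+}$-enlarging property as a standing hypothesis, the argument of the preceding three paragraphs already gives a complete proof.
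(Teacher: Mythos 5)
Your proposal is correct and takes essentially the same route as the paper: the paper's own proof is just the observation that the equivalences (3)$\iff$(4) and (7)$\iff$(8) established in the proof of Theorem \ref{selective ultrafilter theorem} are precisely clauses (2) and (3) in the definition of a topological Ramsey space. Your additional verification that $\{[s,X]:X\in\mathcal{U}\}$ is a base (clause (1), using only the filter property) and your remark on removing the $\mathfrak{c}^{+}$-enlarging hypothesis by conservativity are details the paper leaves implicit, and they are consistent with its argument.
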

\begin{proof}
Note that the string of implications in the previous proof established (3)$\iff$(4) and (7)$\iff$(8). These two equivalences show that the triple satisfies the definition of a topological Ramsey space.
\end{proof}

\subsection{The strong Cauchy infinitesimal principle}
Cleave in \cite{Cleave} proposed an interpretation within nonstandard analysis of the infinitely small quantities described by Cauchy as ``variables converging to zero" in his nineteenth century textbooks. Benci and Di Nasso in \cite{AlphaTheory} have formulated a stronger version of Cleave's interpretation which characterizes when $\{X\in[\mathbb{N}]^{\infty} : \alpha\in\leftidx{^{*}}{X}\}$ is a selective ultrafilter.
\begin{defn}(Strong Cauchy Infinitesimal Principle, SCIP)
Every nonstandard hypernatural number $\beta$ is the ideal value of an increasing sequence of natural numbers. 
\end{defn}
In \cite{AlphaTheory} Benci and Di Nasso show that the Alpha-Theory cannot prove nor disprove SCIP. Moreover, they show that the Alpha-Theory+SCIP is a ``sound" system. That is, the system does not prove a contradiction. 
\begin{thm} The following are equivalent:
\begin{enumerate}
\item The strong Cauchy infinitesimal principle.
\item $\{X\in[\mathbb{N}]^{\infty} : \alpha\in\leftidx{^{*}}{X}\}$ is a selective ultrafilter.
\item If $T$ is an $\alpha$-tree and $s\in T/st(T)$ then there exists $X\in[s,\mathbb{N}]$ such that $\alpha\in \leftidx{^{*}}{X}$ and $[s,X] \subseteq [T].$
\item $([\mathbb{N}]^{\infty},\{X\in[\mathbb{N}]^{\infty} : \alpha\in\leftidx{^{*}}{X}\},\subseteq)$ is a topological Ramsey space.
\end{enumerate}
\end{thm}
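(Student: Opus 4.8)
The plan is to route everything through the family $\mathcal{U}_{\alpha}=\{X\subseteq\mathbb{N}:\alpha\in{}^{*}X\}$. First I would note that no saturation hypothesis is needed to see that this is a non-principal ultrafilter: Proposition \ref{star transform}(6)--(8) gives the Boolean closure conditions, $\alpha\notin{}^{*}\emptyset=\emptyset$ and $\alpha\in{}^{*}\mathbb{N}$ give the remaining axioms, and Proposition \ref{infinite set} gives non-principality. Next, for any tree $T$ with stem $st(T)$ and any $s\in T/st(T)$ one has $\{n:s\cup\{n\}\in T\}\in\mathcal{U}_{\alpha}$ iff $s\cup\{\alpha\}\in{}^{*}T$ (exactly as in the proof of Proposition \ref{U-tree alpha-tree}), so the $\alpha$-trees coincide with the $\vec{\mathcal{U}}$-trees of the constant sequence $\vec{\mathcal{U}}\equiv\mathcal{U}_{\alpha}$, and the $\vec{\alpha}$-Ellentuck Theorem for the constant sequence $\vec{\alpha}\equiv\alpha$ is available in plain Alpha-Theory. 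I would also use the elementary fact that $g[\alpha]=h[\alpha]$ iff $\{i:g(i)=h(i)\}\in\mathcal{U}_{\alpha}$, for $g,h\colon\mathbb{N}\to\mathbb{N}$. With this in hand, (2)$\Leftrightarrow$(3) is immediate from Theorem \ref{beta-SCIP} with $\beta=\alpha$: a $\subseteq$-decreasing sequence of members of $\mathcal{U}_{\alpha}$ is precisely a sequence $X_{0}\supseteq X_{1}\supseteq\cdots$ with $\alpha\in\bigcap_{i}{}^{*}X_{i}$, and $X\in\mathcal{U}_{\alpha}$ means $\alpha\in{}^{*}X$, so the assertion that $\mathcal{U}_{\alpha}$ is selective is verbatim clause (2) of Theorem \ref{beta-SCIP}, while our clause (3) is its clause (1).

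For (3)$\Rightarrow$(1) I would argue directly, which is cleaner than going through (2). Let $\beta$ be a nonstandard hypernatural; by the internal set axiom $\beta=f[\alpha]$ for some $f\colon\mathbb{N}\to\mathbb{N}$. Put $T=\{s\in[\mathbb{N}]^{<\infty}:f\restriction s\text{ is non-decreasing}\}$. This is a tree with stem $\emptyset$ and $T/st(T)=T\neq\emptyset$, and for every $s\in T$ one checks $s\cup\{\alpha\}\in{}^{*}T$: for $s\neq\emptyset$ the value $f(\max s)$ is a standard natural, hence below the nonstandard $\beta={}^{*}f(\alpha)$, so ${}^{*}f\restriction(s\cup\{\alpha\})$ is non-decreasing. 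Thus $T$ is an $\alpha$-tree, and applying (3) at its stem yields $X\in[\emptyset,\mathbb{N}]$ with $\alpha\in{}^{*}X$ and $[\emptyset,X]\subseteq[T]$, which says exactly that $f\restriction X$ is non-decreasing. Extending $f\restriction X$ to a non-decreasing $\psi\colon\mathbb{N}\to\mathbb{N}$ (take $\psi$ constant with value $f(x_{j})$ on the block of integers between the $j$-th and $(j+1)$-st elements of $X$) gives $\psi=f$ on $X\in\mathcal{U}_{\alpha}$, whence $\psi[\alpha]=f[\alpha]=\beta$. Since a strictly increasing sequence has ideal value $\geq\alpha$, ``increasing'' in SCIP must be read as ``non-decreasing,'' and $\psi$ witnesses SCIP for $\beta$.

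I would then close the loop through (4). For (3)$\Rightarrow$(4): by (2)$\Leftrightarrow$(3), $\mathcal{U}_{\alpha}$ is selective, and the argument in the proofs of Theorem \ref{selective ultrafilter theorem} and Corollary \ref{Louveau} applies with $\mathcal{U}$ replaced by $\mathcal{U}_{\alpha}$; the only use of the $\mathfrak{c}^{+}$-enlarging property there is to produce a hypernatural representing the ultrafilter, and $\mathcal{U}_{\alpha}$ is represented by $\alpha$ by definition, while the appeal to the Ultra-Ellentuck Theorem becomes the appeal to the $\vec{\alpha}$-Ellentuck Theorem for $\vec{\alpha}\equiv\alpha$. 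This yields that $([\mathbb{N}]^{\infty},\mathcal{U}_{\alpha},\subseteq)$ is a topological Ramsey space. For (4)$\Rightarrow$(2) I would argue contrapositively: assuming $\mathcal{U}_{\alpha}$ is not selective, fix $X_{0}\supseteq X_{1}\supseteq\cdots$ in $\mathcal{U}_{\alpha}$ with no selector in $\mathcal{U}_{\alpha}$, and let $\mathcal{X}=\{Y\in[\mathbb{N}]^{\infty}:y_{1}\in X_{y_{0}}\}$, where $y_{0}<y_{1}$ are the two least elements of $Y$. Then $\mathcal{X}=\bigcup\{[\{a,b\},\mathbb{N}]:a<b,\ b\in X_{a}\}$ is $\mathcal{U}_{\alpha}$-open, hence $\mathcal{U}_{\alpha}$-Ramsey by (4); applying this to $[\emptyset,\mathbb{N}]$ yields $Z=\{z_{0}<z_{1}<\cdots\}\in\mathcal{U}_{\alpha}$ with $[\emptyset,Z]\subseteq\mathcal{X}$ or $[\emptyset,Z]\cap\mathcal{X}=\emptyset$. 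In the first case $z_{n+1}\in X_{z_{n}}\subseteq X_{n}$ for all $n$, so $Z$ is a selector --- contradiction; in the second $z_{m}\notin X_{z_{0}}$ for all $m\geq1$, so $Z\cap X_{z_{0}}\subseteq\{z_{0}\}$, contradicting $Z,X_{z_{0}}\in\mathcal{U}_{\alpha}$.

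The remaining implication, (1)$\Rightarrow$(2) --- SCIP implies that $\mathcal{U}_{\alpha}$ is selective --- is the one I expect to be the real obstacle: it is the Benci--Di Nasso characterization of selectivity, and the only step that calls for a genuine combinatorial argument rather than an appeal to an earlier result of the paper. The plan there is to exploit that SCIP is equivalent to the statement that every $f\colon\mathbb{N}\to\mathbb{N}$ agrees, on a set in $\mathcal{U}_{\alpha}$, with a non-decreasing function; from this one first deduces that $\mathcal{U}_{\alpha}$ is a P-point (a non-decreasing unbounded function is finite-to-one, so $f$ is finite-to-one or constant on a set in $\mathcal{U}_{\alpha}$), and then, with more care, that $\mathcal{U}_{\alpha}$ is also a Q-point, so that it is selective --- or else one simply cites \cite{AlphaTheory}. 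Together with (1)$\Rightarrow$(2), the cycle $(1)\Rightarrow(2)\Leftrightarrow(3)\Rightarrow(1)$ and the equivalences $(3)\Leftrightarrow(4)$ complete the proof.
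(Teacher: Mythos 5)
Your proposal is correct, and its skeleton overlaps the paper's own proof in several places --- (2)$\iff$(3) via Theorem \ref{beta-SCIP} with $\beta=\alpha$, the passage to (4) through selectivity of $\mathcal{U}_{\alpha}=\{X:\alpha\in\leftidx{^{*}}{X}\}$ and the Louveau-type argument, and the reliance on \cite{AlphaTheory} for (1)$\Rightarrow$(2) (the paper cites Corollary 6.8(ii) there for all of (1)$\iff$(2)) --- but you close the cycle by a genuinely different route. The paper returns to (1) only once, proving (4)$\Rightarrow$(1) directly: given $\varphi$ with $\varphi[\alpha]$ nonstandard it takes the metrically open set $\mathcal{X}_{\varphi}$ of those $X$ whose two least elements $i<j$ satisfy $\varphi_{i}<\varphi_{j}$, applies the topological Ramsey space property on $[\emptyset,\mathbb{N}]$, rules out the empty-intersection alternative, and reads off a block-constant non-decreasing $\psi$ agreeing with $\varphi$ on a set in $\mathcal{U}_{\alpha}$. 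You instead prove (3)$\Rightarrow$(1) at the tree level, via the $\alpha$-tree of finite sets on which $f$ is non-decreasing (this is indeed an $\alpha$-tree: for $s\neq\emptyset$, $\alpha\in\leftidx{^{*}}{\{n>\max(s): f(n)\ge f(\max(s))\}}$ because $\leftidx{^{*}}{f}(\alpha)=f[\alpha]$ is nonstandard), and (4)$\Rightarrow$(2) contrapositively from a decreasing sequence with no selector, using the same two-least-elements trick; both arguments are sound and the final extension-to-$\psi$ step is the same as the paper's. Your route buys two things: the (3)$\Rightarrow$(1) argument bypasses the topological machinery entirely, and your remark that the only role of the $\mathfrak{c}^{+}$-enlarging property in Theorem \ref{selective ultrafilter theorem} and Corollary \ref{Louveau} is to produce a representing hypernatural --- supplied here by $\alpha$ itself --- repairs a bookkeeping point the paper glosses over when it invokes Corollary \ref{Louveau} (whose proof passes through a theorem stated under that saturation hypothesis) inside a theorem asserted in plain Alpha-Theory; what the paper's route buys is economy, needing only one new combinatorial argument beyond the citations. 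Your reading of ``increasing'' as non-decreasing matches the paper's own construction, and since you fall back on citing \cite{AlphaTheory} for (1)$\Rightarrow$(2), exactly as the paper does, the vagueness of your sketched P-point/Q-point alternative is not a gap.
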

\begin{proof}
The equivalence (1)$\iff$(2) follows from Corollary 6.8 (ii) in \cite{AlphaTheory}. The equivalence (2)$\iff$(3) follows from Theorem \ref{beta-SCIP}. By Corollary \ref{Louveau} we have (2)$\implies$(4). We complete the proof by showing that (4)$\implies$(1).

Let $\mathcal{U}=\{X\in[\mathbb{N}]^{\infty} : \alpha\in\leftidx{^{*}}{X}\}$ and suppose that $([\mathbb{N}]^{\infty},\mathcal{U},\subseteq)$ is a topological Ramsey space. Let $\varphi=\left<\varphi_{i} :i\in\mathbb{N}\right>$ be a sequence of natural numbers such that $\varphi[\alpha]$ is a nonstandard hypernatural number. Consider the following sets 
$$A_{\varphi}=\{\{i,j\}\in[\mathbb{N}]^{2} : i<j \ \& \ \varphi_{i}<\varphi_{j}\}$$
$$\mathcal{X}_{\varphi} =\{X\in [\mathbb{N}]^{\infty}: r_{2}(X)\in A_{\varphi}\}$$
here $r_{2}(X)$ denotes the two smallest elements of $X$. It is clear that $\mathcal{X}_{\varphi}$ is metrically open. So it is also $\mathcal{U}$-open. Since $([\mathbb{N}]^{\infty},\mathcal{U},\subseteq)$ is a topological Ramsey space, there exists $X\in\mathcal{U}$ such that $[\emptyset,X]\subseteq \mathcal{X}_{\varphi}$ or $[\emptyset, X]\cap \mathcal{X}_{\varphi}$.

Notice that $[\emptyset,X]\cap \mathcal{X}_{\varphi}=\emptyset$ is impossible as it implies that for all $i\in X$, $i>\min(X)\implies\{\min(X),i\}\not\in A_{\varphi}$. That is, for all $i\in X$, $\varphi_{\min(X)}\ge \varphi_{i}$. Since $\alpha\in\leftidx{^{*}}{X}$, $\varphi_{\min(X)}\ge \varphi[\alpha]$. However, this contradicts the fact that $\varphi[\alpha]$ is a nonstandard hypernatural number.  Thus, $[\emptyset,X]\subseteq \mathcal{X}_{\varphi}$.

Let $\{i_{0},i_{1},i_{2},\dots\}$ be an increasing enumeration of $X$ and define $\psi=\left<\psi_{i} :i\in \mathbb{N}\right>$ as follows
$$\psi_{i} =\begin{cases} \varphi_{i_{n-1}} & \text{if } i_{n-1}\le i \le i_{n}, \\
0 & \text{if } i <i_{0}.\end{cases}$$

By construction, for all $i\in X$, $\varphi_{i}=\psi_{i}$. Since $\alpha\in\leftidx{^{*}}{X}$, $\varphi[\alpha]=\psi[\alpha]$. Since $\psi$ is an increasing sequence of natural numbers SCIP holds.
\end{proof}

\section{Abstract Alpha-Ramsey Theory}

We extend the main results of the previous section to the setting of triples $(\mathcal{R},\le,r)$ where $\le$ is a quasi-order on $\mathcal{R}$ and $r$ is a function with domain $\mathbb{N}\times \mathcal{R}$. The prototype example of such a triple is $([\mathbb{N}]^{\infty}, \subseteq, r)$ where $r$ is the map such that for all $n\in\mathbb{N}$ and for all $X=\{x_{0},x_{1}, x_{2}, \dots\}$, listed in increasing order,
$$ r(n, X )  = \begin{cases} \emptyset & \mbox{ if } n=0, \\
                                                        \{x_{0},\dots,x_{n-1}\} & \mbox{ otherwise}.
																					 \end{cases}$$
For example, if $E$ is the set of even numbers then $r(3,E) = \{2,4,6\}$. For this triple the range of $r$ is $[\mathbb{N}]^{<\infty}$ and for all $s\in[\mathbb{N}]^{<\infty}$ and for all $X\in [\mathbb{N}]^{\infty}$, $s\sqsubseteq X$ if and only if there exists $n\in\mathbb{N}$ such that $r(n,X)=s$. 

For the remainder of this section we fix a triple $(\mathcal{R},\le,r)$ where $\le$ is a quasi-order on $\mathcal{R}$ and $r$ is a function with domain $\mathbb{N}\times \mathcal{R}$. For $n\in \mathbb{N}$ and $X\in \mathcal{R}$, we abbreviate $r(n,X)$ by $r_{n}(X)$ and call it the \emph{$n^{th}$-approximation of $X$}. The elements of the range of $r$, $\{r_{n}(X) : n\in\mathbb{N} \ \& \ X\in \mathcal{R}\}$, are called \emph{finite approximations of $\mathcal{R}$}. The set of finite approximations, i.e. the range of $r$, is denoted by $\mathcal{AR}$. For $n\in \mathbb{N}$ and $X\in \mathcal{R}$ we use the following notation 
$$\mathcal{AR}_{n} =\{r_{n}(X) \in \mathcal{AR} : X\in \mathcal{R}\},$$
$$\mathcal{AR}_{n}\restriction X =\{r_{n}(Y) \in \mathcal{AR} : Y\in \mathcal{R} \ \& \ Y \le X\},$$
$$\mathcal{AR} \restriction X =\bigcup_{n=0}^{\infty} \mathcal{AR}_{n}\restriction X.$$

If $s\in\mathcal{AR}$ and $X\in \mathcal{R}$ then we say \emph{$s$ is an initial segment of $X$} and write $s\sqsubseteq X$, if there exists $n\in\mathbb{N}$ such that $s = r_{n}(X)$. If $s\sqsubseteq X$ and $s\not=X$ then we write $s\sqsubset X$. We use the following notation:
$$[s]= \{ Y \in \mathcal{R} : s\sqsubseteq Y\},$$
$$[s,X] =\{Y \in \mathcal{R} : s \sqsubseteq Y \le X\}.$$
\begin{rem}
From this point forward, in order to avoid any trivial cases, we assume that $(\mathcal{R},\le,r)$ has the property that for all $[s,X]\not=\emptyset$, $\{ t\in \mathcal{AR} : |t|=|s|+1,\ s\sqsubseteq t\sqsubseteq X\}$ is infinite. 
\end{rem}

\begin{defn}
Suppose that $\mathcal{C}\subseteq \mathcal{R}$. $\mathcal{X}\subseteq \mathcal{R}$ is \emph{$\mathcal{C}$-Ramsey} if for all $[s,X]\not=\emptyset$ with $X\in\mathcal{C}$ there exists $Y\in[s,X]\cap \mathcal{C}$ such that either $[s,Y]\subseteq \mathcal{X}$ or $[s,Y]\cap \mathcal{X}=\emptyset.$ $\mathcal{X}\subseteq \mathcal{R}$ is \emph{$\mathcal{C}$-Ramsey null} if for all $[s,X]\not=\emptyset$ with $X\in\mathcal{C}$ there exists $Y\in[s,X]\cap\mathcal{C}$ such that $[s,Y]\cap \mathcal{X}=\emptyset.$
\end{defn}

\begin{defn} Suppose that $\mathcal{C}\subseteq \mathcal{R}$. We say that $(\mathcal{R}, \mathcal{C}, \le,r)$ is a \emph{topological Ramsey space} if the following conditions hold:
\begin{enumerate}
\item $\{[s,X] :X\in\mathcal{C}\}$ is a basis for a topology on $\mathcal{R}$.
\item The collection of $\mathcal{C}$-Ramsey sets coincides with the $\sigma$-algebra of sets with the Baire property with respect to the topology generated by $\{[s,X] :X\in\mathcal{C}\}$.
\item The collection of $\mathcal{C}$-Ramsey null sets coincides with the $\sigma$-ideal of meager sets with respect to the topology generated by $\{[s,X] :X\in\mathcal{C}\}$.
\end{enumerate}
If $\mathcal{C}=\mathcal{R}$ then we abbreviate $(\mathcal{R},\mathcal{C},\le,r)$ by $(\mathcal{R},\le,r)$ and we omit the $\mathcal{C}$ from the above definitions. 

For example, a subset ${\mathcal X}$ of $\mathcal{R}$ is \emph{Ramsey} if for every $\emptyset \not= [s,X],$ there is a $Y \in [s,X]$ such that $[s,Y] \subseteq {\mathcal X}$ or $[s,Y]\cap {\mathcal X} = \emptyset.$ A subset ${\mathcal X}$ of ${\mathcal R}$ is \emph{Ramsey null} if for every $\emptyset \not= [s,X],$ there is a $Y \in [s,X]$ such that $[s.Y]\cap {\mathcal X} = \emptyset.$
\end{defn}

Abstract local Ramsey theory is concerned with characterizing the conditions on $\mathcal{C}$ which guarantee that $(\mathcal{R},\mathcal{C},\le,r)$ forms a topological Ramsey space.
\subsection{The Abstract Ellentuck Theorem}
We follow the presentation of the Abstract Ellentuck Theorem given by Todorcevic in \cite{RamseySpaces}, rather than the earlier reference \cite{CarlsonSimpson}. In particular, we introduce four axioms about triples $({\mathcal R}, \le, r )$ sufficient for proving an abstract version of the Ellentuck Theorem. The first axiom we consider tells us that $\mathcal{R}$ is collection of infinite sequences of objects and $\mathcal{AR}$ is collection of finite sequences approximating these infinite sequences. 

\begin{axm}(A.1 - Sequencing) For each $X,Y \in {\mathcal R}$, \begin{enumerate}
		\item[(a)] $r_{0}(X) = \emptyset$.
		\item[(b)] $X\not = Y$ implies $r_{i}(X) \not = r_{i}(X)$ for some $i\in\mathbb{N}$.
		\item[(c)] $r_{i}(X) = r_{j}(Y)$ implies $i=j$ and $r_{k}(X) = r_{k}(Y)$ for all $k<i$.
	\end{enumerate}
\end{axm}
On the basis of this axiom, $\mathcal{R}$ can be identified with a subset of $\mathcal{AR}^{\mathbb{N}}$ by associating $X\in\mathcal{R}$ with the sequence $\left<r_{i}(X)): i\in\mathbb{N}\right>$. Similarly, $s\in\mathcal{AR}$ can be identified with $\left<r_{i}(X)):i<j\right>$ where $j$ is the unique natural number such that $s=r_{j}(X)$ for some $X\in\mathcal{R}$. For each $s\in \mathcal{AR}$, let $|s|$ equal the natural number $i$ for which $s=r_{i}(s)$. For $s,t \in \mathcal{AR}$, $s \sqsubseteq t$ if and only if $s=r_{i}(t)$ for some $i \le |t|$. $s \sqsubset t$ if and only if $s=r_{i}(t)$ for some $i<|t|$.

\begin{axm}(A.2 - Finitization) There is a quasi-ordering $\le_{\mathrm{fin}}$ on $\mathcal{AR}$ such that
	\begin{enumerate}
		\item[(a)] $\{t \in \mathcal{AR} : t \le_{\mathrm{fin}} s\}$ is finite for all $s \in \mathcal{AR}$,
		\item[(b)] $ X \le Y$ iff $(\forall i)(\exists j) \ r_{i}(X) \le_{\mathrm{fin}} r_{j}(Y),$
		\item[(c)] $\forall s,t,u \in \mathcal{AR} [ s \sqsubseteq t \wedge t \le_{\mathrm{fin}} u \rightarrow \exists v \sqsubseteq u \ s \le_{\mathrm{fin}} v ].$
	\end{enumerate}
For $s\in\mathcal{AR}$ and $X\in\mathcal{R}$, $\mathrm{depth}_{X}(s)$ is the least $i$, if it exists, such that $ s \le_{\mathrm{fin}} r_{i}(X)$. If such an $i$ does not exist, then we write $\mathrm{depth}_{X}(s) = \infty$. If depth$_{X}(s)=i< \infty,$ then $[\mathrm{depth}_{X}(s),X]$ denotes $[r_{i}(X), X]$.  
\end{axm}
\begin{axm}(A.3 - Amalgamation) For each $X, Y\in\mathcal{ R}$ and each $s\in\mathcal{AR}$,
	\begin{enumerate}
		\item[(a)] If $\mathrm{depth}_{X}(s) < \infty$ then $[s,X]\not = \emptyset$ for all $X \in [\mathrm{depth}_{Y}(s),Y]$.
		\item[(b)] $X \le Y$ and $[s, X]\not = \emptyset$ imply that there is an $X' \in [ \mathrm{depth}_{Y}(s),Y]$ such that $\emptyset \not = [s, X'] \subseteq [s, X]$.
	\end{enumerate}
\end{axm}
If $n>|s|$, then $r_{n}[s,X]$ denotes the collection of all $t\in \mathcal{AR}_{n}$ such that $ s \sqsubset t$ and $t \le_{\mathrm{fin}} X$.

\begin{axm}(A.4 - Pigeonhole Principle) For each $Y\in\mathcal{R}$ and each $s\in\mathcal{AR}$, if depth$_{Y}(s) < \infty$ and $\mathcal{ O} \subseteq \mathcal{AR}_{|s|+1}$, then there is $X \in [\mathrm{depth}_{Y}(s), Y]$ such that
$$ r_{|s|+1}[s,X] \subseteq {\mathcal O}\mbox{ or } r_{|s|+1}[s, X] \subseteq {\mathcal O}^{c}.$$
\end{axm}
The next result, using a slightly different set of axioms, is a theorem of Carlson and Simpson in \cite{CarlsonSimpson}. The version using {A.1}-{A.4} below follows from the work of Todorcevic in \cite{RamseySpaces}. In the next theorem the topology on $\mathcal{AR}^{\mathbb{N}}$ refers to the product topology where $\mathcal{AR}$ is endowed with the discrete topology. 
\begin{thm}[Abstract Ellentuck Theorem, \cite{RamseySpaces,CarlsonSimpson}] If $(\mathcal{R}, \le , r)$ is a closed subspace of $\mathcal{AR}^{\mathbb{N}}$ satisfies \emph{A.1, A.2, A.3} and \emph{A.4} then $(\mathcal{R}, \le , r)$ forms a topological Ramsey space.
\end{thm}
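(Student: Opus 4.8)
The plan is to follow the template used above for the $\vec{\alpha}$-Ellentuck Theorem. First I would check the basis condition — that $\{[s,X] : X \in \mathcal{R}\}$ generates a topology on $\mathcal{R}$, the \emph{Ellentuck topology} — which is routine from reflexivity and transitivity of $\le$ together with the coherence of approximations built into A.1. The assertion that does the real work, in place of Theorem \ref{alpha Ramsey theorem}, is: \emph{every Ellentuck-open set is Ramsey}. Once this is available, the implications (i) nowhere dense $\Rightarrow$ Ramsey null, (ii) meager $\Rightarrow$ Ramsey null via a $\sigma$-ideal/fusion argument, (iii) Ramsey null $\Rightarrow$ nowhere dense $\Rightarrow$ meager, (iv) Baire property $\Rightarrow$ Ramsey, and (v) Ramsey $\Rightarrow$ Baire property go through essentially verbatim as in the $\vec{\alpha}$-case, with $[s,X]$ in place of $[T]$ and the Ramsey sets replacing the $\vec{\alpha}$-Ramsey sets; I would simply transcribe those arguments.

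For the combinatorial core, fix an Ellentuck-open $\mathcal{X} \subseteq \mathcal{R}$. For $A \in \mathcal{R}$ and $s \in \mathcal{AR}$ with $\mathrm{depth}_{A}(s) < \infty$, I would say $A$ \emph{accepts} $s$ if $[s,A] \subseteq \mathcal{X}$, $A$ \emph{rejects} $s$ if no $B \in [\mathrm{depth}_{A}(s),A]$ accepts $s$, and $A$ \emph{decides} $s$ if it does one or the other. Using Amalgamation (A.3) and the finiteness clause A.2(a), one refines any $A$ to decide a prescribed $s$; handling the finitely many $t \le_{\mathrm{fin}} r_{n}(A)$ level by level and fusing then produces $B \le A$ deciding every $s$ with $s \le_{\mathrm{fin}} B$. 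Next, applying the Pigeonhole Principle (A.4) with $\mathcal{O} = \{ t \in \mathcal{AR}_{|s|+1} : B \text{ accepts } t \}$, one shows that if such a $B$ rejects $s$ then there is $C \in [\mathrm{depth}_{B}(s),B]$ rejecting every $t \in r_{|s|+1}[s,C]$: the other pigeonhole outcome, that $C$ accepts every one-step extension of $s$ along $C$, would by openness of $\mathcal{X}$ force $C$ to accept $s$, contradicting that $B$ rejects $s$. Iterating this step and passing to the limit of the resulting decreasing sequence of approximations — which lies in $\mathcal{R}$ precisely because $\mathcal{R}$ is \emph{closed} in $\mathcal{AR}^{\mathbb{N}}$ — yields, from any rejecting pair $(s,A)$, a $B \le A$ with $[s,B] \cap \mathcal{X} = \emptyset$. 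Hence for an arbitrary $[s,A] \ne \emptyset$ I first refine to a deciding $B \le A$: if $B$ accepts $s$ then $[s,B] \subseteq \mathcal{X}$, and otherwise the fusion gives $C \le B$ with $[s,C] \cap \mathcal{X} = \emptyset$. Thus $\mathcal{X}$ is Ramsey.

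The remaining steps reuse the earlier pattern. A nowhere dense $\mathcal{X}$ admits no basic $[s,Y] \subseteq \overline{\mathcal{X}}$, so the Ramsey dichotomy for $\overline{\mathcal{X}}$ — which is Ramsey as the complement of an open set, since Ramsey sets are trivially closed under complementation — forces $\mathcal{X}$ to be Ramsey null; a fusion across a countable list of nowhere dense sets, again invoking closedness of $\mathcal{R}$, shows the Ramsey null sets form a $\sigma$-ideal containing every meager set, and conversely a Ramsey null set is nowhere dense and hence meager. For the Baire property: if $\mathcal{X} = \mathcal{O} \Delta \mathcal{M}$ with $\mathcal{O}$ open and $\mathcal{M}$ meager, then given $[s,X]$ I refine it successively — first to kill $\mathcal{M}$, then to decide $\mathcal{O}$ — obtaining a $[s,Z]$ on which $\mathcal{O}\Delta\mathcal{M}$ behaves decisively, so $\mathcal{X}$ is Ramsey; and if $\mathcal{X}$ is Ramsey with Ellentuck interior $\mathcal{O}$, the Ramsey dichotomy for $\mathcal{X}$ shows $\mathcal{X}\setminus\mathcal{O}$ is Ramsey null (a basic open subset of $\mathcal{X}$ must lie in $\mathcal{O}$), hence meager, so $\mathcal{X} = \mathcal{O}\Delta(\mathcal{X}\setminus\mathcal{O})$ has the Baire property.

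I expect the main obstacle to be the two fusion constructions — the one inside the combinatorial core and the one underlying the $\sigma$-ideal of Ramsey null sets — specifically, organizing the level-by-level amalgamation, with all the bookkeeping around $\mathrm{depth}_{X}(s)$ and A.2(a)--(c), so that the limit of the approximation sequence genuinely belongs to $\mathcal{R}$; this is exactly the place where the hypothesis that $\mathcal{R}$ is a closed subspace of $\mathcal{AR}^{\mathbb{N}}$ is used in an essential way. A secondary subtlety is the step promoting ``$C$ accepts every one-step extension of $s$'' to ``$C$ accepts $s$'', which is where openness of $\mathcal{X}$ enters and which is what makes the result an Ellentuck-type theorem rather than a bare pigeonhole statement.
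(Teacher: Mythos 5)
The paper does not actually prove this theorem: it is quoted with citations to Carlson--Simpson and to Todorcevic, so your proposal has to be measured against the standard combinatorial-forcing proof in \cite{RamseySpaces,CarlsonSimpson}. Your outline is that proof: accept/reject/decide relative to an Ellentuck-open $\mathcal{X}$, refinement to a deciding $B$ using A.3 and the finiteness clause A.2(a), the pigeonhole step via A.4, fusion whose limit lies in $\mathcal{R}$ because $\mathcal{R}$ is closed in $\mathcal{AR}^{\mathbb{N}}$, and then the transfer to meager sets and sets with the Baire property exactly as in the $\vec{\alpha}$-case above.

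There is, however, one genuine misstep, and it sits at the crux. You locate the use of openness in the implication ``$C$ accepts every one-step extension of $s$ along $C$ $\Rightarrow$ $C$ accepts $s$,'' but that implication needs no openness at all: by A.1(c) and A.2(b), every $Y\in[s,C]$ has $r_{|s|+1}(Y)\in r_{|s|+1}[s,C]$, so $[s,C]=\bigcup_{t\in r_{|s|+1}[s,C]}[t,C]\subseteq\mathcal{X}$. Where openness is indispensable is in the sentence you state without justification: ``iterating and passing to the limit yields $B\le A$ with $[s,B]\cap\mathcal{X}=\emptyset$.'' The fusion only gives a $B$ that \emph{rejects} every finite approximation $t$ with $s\sqsubseteq t$ and $t\le_{\mathrm{fin}} B$; for a general $\mathcal{X}$ this does not yield disjointness. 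One must take $Y\in[s,B]\cap\mathcal{X}$, use Ellentuck-openness to find a basic set $[t,Y']$ with $Y\in[t,Y']\subseteq\mathcal{X}$ and $s\sqsubseteq t\sqsubseteq Y$, observe $[t,Y]\subseteq\mathcal{X}$, and then amalgamate via A.3(b) to obtain a member of $[\mathrm{depth}_{B}(t),B]$ accepting $t$, contradicting rejection. A related piece of bookkeeping you gloss is that in the pigeonhole step ``$B$ does not accept $t$'' must first be upgraded to ``$B$ rejects $t$'' (by having already arranged that $B$ decides all $t\le_{\mathrm{fin}} B$) and rejection must be passed down to $C$, again via A.3(b). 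With the openness argument relocated and written out, your sketch is the proof of \cite{RamseySpaces}.
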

\begin{example}[The Ellentuck Space] The triple $([\mathbb{N}]^{\infty}, \subseteq, r)$ where $r$ is the map such that for all $n\in\mathbb{N}$ and for all $X=\{x_{0},x_{1}, x_{2}, \dots\}$, listed in increasing order,
$$ r(n, X )  = \begin{cases} \emptyset & \mbox{ if } n=0, \\
                                                        \{x_{0},\dots,x_{n-1}\} & \mbox{ otherwise},
																					 \end{cases}$$
gives rise to the Ellentuck topology on $[\mathbb{N}]^{\infty}$. It is easy to show that $([\mathbb{N}]^{\infty}, \subseteq, r)$ satisfies axioms A.1-A.4 and $[\mathbb{N}]^{\infty}$ is a closed subspace of $([\mathbb{N}]^{<\infty})^{\mathbb{N}}$. Hence, the Abstract Ellentuck Theorem implies that $([\mathbb{N}]^{\infty}, \subseteq, r)$ is a topological Ramsey space. In the terminology of the first section, $([\mathbb{N}]^{\infty},[\mathbb{N}]^{\infty}, \subseteq)$ is a topological Ramsey space. All the results of this section when taken with respect to the triple $([\mathbb{N}]^{\infty}, \subseteq, r)$ reduce to the theorems of the first section.														
\end{example}

\begin{example}[The Milliken Space]
Let $\mathrm{FIN}$ denote the collection of nonempty finite subsets of $\mathbb{N}$. That is, $\mathrm{FIN}=[\mathbb{N}]^{<\infty}\setminus\{\emptyset\}$. We say that a sequence $\left<s_{i} :i\in\mathbb{N}\right>$ of elements of $\mathrm{FIN}$ is a \emph{block sequence} if for all $i\in\mathbb{N}$, $\max(s_{i})<\min(s_{i+1})$. The set of all block sequences is denoted by $[\mathrm{FIN}]^{\infty}$. A finite sequence with the same property is called a \emph{finite block sequence}. The set of all finite block sequences is denoted by $[\mathrm{FIN}]^{<\infty}$. For $S=\left<s_{i} :i\in\mathbb{N}\right>$ and $T=\left<t_{i} :i\in\mathbb{N}\right>$ in 
$[\mathrm{FIN}]^{\infty}$, we say that $S\le T$ if and only if for all $i\in\mathbb{N}$ there exists $I\in\mathrm{FIN}$ such that $s_{i} = \cup_{i\in I} t_{i}$. Let $r:\mathbb{N}\times [\mathrm{FIN}]^{\infty} \rightarrow [\mathrm{FIN}]^{<\infty}$ be the function such that for all $(n,S)\in \mathbb{N}\times [\mathrm{FIN}]^{\infty}$,
$$ r(n,S) = \left< s_{i} : i < n\right>.$$
The work of Milliken in \cite{milliken} implies that the triple $([\mathrm{FIN}]^{\infty}, \le, r)$ forms a topological Ramsey space. However, this fact also follows form the Abstract Ellentuck Theorem.

The relation $\le$ can be finitized, for $0<n\le m$ in $\mathbb{N}$, $s=\left<s_{i} :i<n\right>$ and $t=\left<t_{i} :i<m\right>$ in 
$[\mathrm{FIN}]^{<\infty}$, we say that $s\le_{\mathrm{fin}} t$ if and only if for all $i<n$ there exists $I\in\mathrm{FIN}$ such that $s_{i} = \cup_{i\in I} t_{i}$. With this quasi-order on $[\mathrm{FIN}]^{<\infty}$, the triple $([\mathrm{FIN}]^{\infty},\le, r)$ satisfies axioms A.1-A.4. For this triple A.4 is equivalent to celebrated Hindman's Theorem from \cite{Hindman}. It is easy to show that $[\mathrm{FIN}]^{\infty}$ is a closed subspace of $([\mathrm{FIN}]^{<\infty})^{\mathbb{N}}$. The Abstract Ellentuck Theorem implies $([\mathrm{FIN}]^{\infty}, \le, r)$ is a topological Ramsey space. \end{example}

For more examples of topological Ramsey spaces satisfying A.1-A.4 and some recent developments in topological Ramsey theory see \cite{DMN,dob,DT1,DT2,DMT,DM,MijaresSelective,Mijares1,Mijares2,Mijares3,Mijares4,RamseySpaces,Tr1,Tr2}.

\subsection{Abstract $\vec{\alpha}$-Ramsey Theory}
A subset $T$ of $\mathcal{AR}$ is called a \emph{tree on $\mathcal{R}$} if $T\not =\emptyset$ and for all $s,t\in\mathcal{AR}$,
$$s\sqsubseteq t\in T \implies s\in T.$$ For a tree $T$ on $\mathcal{R}$ and $n\in \mathbb{N}$, we use the following notation: 
$$[T] =\{X\in\mathcal{R} : \forall s\in\mathcal{AR}( s\sqsubseteq X \implies s \in T) \},$$
$$T(n) =\{s\in T : s\in \mathcal{AR}_{n}\}.$$
The \emph{stem of $T$}, if it exists, is the $\sqsubseteq$-maximal $s$ in  $T$ that is $\sqsubseteq$-comparable to every element of $T$. If $T$ has a stem we denote it by $st(T)$. For $s\in T$, we use the following notation $$T/s = \{ t\in T : s\sqsubseteq t\}.$$

\begin{lem} \label{infinity lemma} Suppose that for all $s\in\mathcal{AR}$, $\leftidx{^{*}}{s} =s$. Let $\beta\in\leftidx{^{*}}{(\mathcal{AR})}\setminus\mathcal{AR}$ such that $r_{|\beta|-1}(\beta)\in\mathcal{AR}$. If $\mathcal{F}\subseteq \mathcal{AR}$ and $\beta\in \leftidx{^{*}}{\mathcal{F}}$ then $\mathcal{F}$ is infinite. In particular, if $X\in\mathcal{R}$ and $\beta\in \leftidx{^{*}}{r_{|\beta|}[r_{|\beta|-1}(\beta),X]}$ then $r_{|\beta|}[r_{|\beta|-1}(\beta),X]$ is infinite.
\end{lem}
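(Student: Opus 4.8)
The plan is to follow the proof of Proposition~\ref{infinite set} almost verbatim, with $\mathcal{AR}$ playing the role that $\mathbb{N}$ played there. I would prove the contrapositive of the first assertion: \emph{if $\mathcal{F}\subseteq\mathcal{AR}$ is finite, then $\beta\notin\leftidx{^{*}}{\mathcal{F}}$}. The entire content is the claim that finite subsets of $\mathcal{AR}$ are fixed by the $*$-transform, and this is exactly where the standing hypothesis ``$\leftidx{^{*}}{s}=s$ for all $s\in\mathcal{AR}$'' enters.

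To carry this out, write $\mathcal{F}=\{s_{0},s_{1},\dots,s_{n}\}$ with each $s_{i}\in\mathcal{AR}$ (the case $\mathcal{F}=\emptyset$ being immediate from $\alpha 5$). Expressing $\mathcal{F}=\{s_{0}\}\cup\{s_{1}\}\cup\cdots\cup\{s_{n}\}$ and iterating Proposition~\ref{star transform}(4) and (6), one obtains $\leftidx{^{*}}{\mathcal{F}}=\{\leftidx{^{*}}{s_{0}},\leftidx{^{*}}{s_{1}},\dots,\leftidx{^{*}}{s_{n}}\}$; by hypothesis each $\leftidx{^{*}}{s_{i}}=s_{i}$, so $\leftidx{^{*}}{\mathcal{F}}=\mathcal{F}\subseteq\mathcal{AR}$. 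Since $\beta\in\leftidx{^{*}}{(\mathcal{AR})}\setminus\mathcal{AR}$, in particular $\beta\notin\mathcal{AR}\supseteq\leftidx{^{*}}{\mathcal{F}}$, hence $\beta\notin\leftidx{^{*}}{\mathcal{F}}$. Taking the contrapositive gives the first assertion: if $\mathcal{F}\subseteq\mathcal{AR}$ and $\beta\in\leftidx{^{*}}{\mathcal{F}}$, then $\mathcal{F}$ is infinite.

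For the ``in particular'' clause I would set $s:=r_{|\beta|-1}(\beta)$. The hypothesis $s\in\mathcal{AR}$ makes $|s|$ a genuine natural number with $|s|=|\beta|-1$ (so, incidentally, $|\beta|$ is standard and $\mathcal{AR}_{|\beta|}$ is an honest set), and it makes $[s,X]$ well-formed; since $|\beta|=|s|+1>|s|$, the set $r_{|\beta|}[s,X]$ is by definition a collection of elements of $\mathcal{AR}_{|\beta|}$, hence a subset of $\mathcal{AR}$. Applying the first assertion to $\mathcal{F}=r_{|\beta|}[r_{|\beta|-1}(\beta),X]$ together with the hypothesis $\beta\in\leftidx{^{*}}{r_{|\beta|}[r_{|\beta|-1}(\beta),X]}$ then yields that $r_{|\beta|}[r_{|\beta|-1}(\beta),X]$ is infinite.

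The main (and essentially only) subtlety is the identity $\leftidx{^{*}}{\mathcal{F}}=\mathcal{F}$ for finite $\mathcal{F}\subseteq\mathcal{AR}$, and the hypothesis on $\mathcal{AR}$ is precisely what guarantees it. I expect no real obstacle in the intended applications: in the Ellentuck space and the Milliken space the finite approximations are hereditarily finite sets built from natural numbers, so $\leftidx{^{*}}{s}=s$ holds automatically by $\alpha 3$ and $\alpha 4$, exactly as in the proof of Proposition~\ref{infinite set}.
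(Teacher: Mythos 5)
Your proposal is correct and follows essentially the same argument as the paper's proof: the key step in both is that a finite $\mathcal{F}\subseteq\mathcal{AR}$ satisfies $\leftidx{^{*}}{\mathcal{F}}=\{\leftidx{^{*}}{s_{0}},\dots,\leftidx{^{*}}{s_{n}}\}=\mathcal{F}\subseteq\mathcal{AR}$ by the pair axiom together with the hypothesis $\leftidx{^{*}}{s}=s$, forcing $\beta\notin\leftidx{^{*}}{\mathcal{F}}$ since $\beta\notin\mathcal{AR}$. Your phrasing as a contrapositive rather than a contradiction, and your explicit check that $r_{|\beta|}[r_{|\beta|-1}(\beta),X]\subseteq\mathcal{AR}$ for the ``in particular'' clause, are only cosmetic differences.
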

\begin{proof} Toward a contradiction suppose that $\{s_{0},s_{1},\dots,s_{n}\}$ is an enumeration of $\mathcal{F}$. Then by Axiom $\alpha$4,
$$\beta\in\leftidx{^{*}}{\mathcal{F}}=\{\leftidx{^{*}}{s_{0}},\leftidx{^{*}}{s_{1}},\dots,\leftidx{^{*}}{s_{n}}\} = \{s_{0},s_{1},\dots,s_{n}\}\subseteq \mathcal{AR}.$$ However, this is a contradiction since $\beta\not\in \mathcal{AR}$.
\end{proof}

\begin{lem}\label{abstract alpha sequence}
If $(\mathcal{R},\le, r)$ satisfies \emph{A.1}, \emph{A.2} and \emph{A.4} then for all $s\in \mathcal{AR}$ and for all $X\in\mathcal{X}$ such that $s\sqsubseteq X$, there exists $\alpha_{s}\in \leftidx{^{*}}{(\mathcal{AR}\restriction X)}\setminus (\mathcal{AR}\restriction X)$ such that
$$s \sqsubseteq \alpha_{s}\in \leftidx{^{*}}{\mathcal{AR}_{|s|+1}}.$$
\end{lem}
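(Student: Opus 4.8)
The plan is to let $\alpha_s$ be the ideal value of an injective enumeration of the set of one–step extensions of $s$ that lie below $X$; since that set is infinite, $\alpha_s$ will automatically be nonstandard, and the remaining two properties will follow by transfer from Axioms $\alpha 1$–$\alpha 5$ and Proposition \ref{star transform}. So the proof is essentially a packaging of the Alpha-Theory machinery of Section 2, with only one genuinely structural input.

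In detail, first observe that $s\sqsubseteq X$ gives $X\in[s,X]$, so $[s,X]\not=\emptyset$; hence, by the standing nontriviality assumption on $(\mathcal R,\le,r)$ (one–step extensions of $s$ below $X$ are never only finitely many), the set
$$\mathcal F \;=\; \{\, t\in\mathcal{AR}_{|s|+1}\restriction X : s\sqsubseteq t \,\}$$
is infinite (it is nonempty, since $r_{|s|+1}(X)\in\mathcal F$). Fix a bijection of $\mathbb N$ onto $\mathcal F$, write it as $\varphi=\left<t_i : i\in\mathbb N\right>$, and set $\alpha_s:=\varphi[\alpha]$, which exists by $\alpha 1$. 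For the membership claims, note that every $t_i$ lies in both $\mathcal{AR}_{|s|+1}$ and $\mathcal{AR}\restriction X$, so $\alpha 5$ gives $\alpha_s\in\leftidx{^{*}}{\mathcal F}$, and then Proposition \ref{star transform}(3) yields $\alpha_s\in\leftidx{^{*}}{\mathcal{AR}_{|s|+1}}$ and $\alpha_s\in\leftidx{^{*}}{(\mathcal{AR}\restriction X)}$. For $s\sqsubseteq\alpha_s$, note that $(s,t_i)$ belongs to the relation $\sqsubseteq$ (viewed as a subset of $\mathcal{AR}\times\mathcal{AR}$) for every $i$; applying $\alpha 4$ to pass from terms to ordered pairs, and $\alpha 5$, to the constant sequence $\left<s:i\in\mathbb N\right>$ (whose ideal value is $\leftidx{^{*}}{s}$) and to $\varphi$, shows that $(\leftidx{^{*}}{s},\alpha_s)$ belongs to the $*$-transform of $\sqsubseteq$, which — in the notational conventions fixed in Section 2 — is exactly the assertion $s\sqsubseteq\alpha_s$.

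Finally, to see $\alpha_s\notin\mathcal{AR}\restriction X$, I would reproduce the argument that $\alpha\notin\mathbb N$: the inverse enumeration $\varphi^{-1}\colon\mathcal F\to\mathbb N$ satisfies $\leftidx{^{*}}{(\varphi^{-1})}(\alpha_s)=(\varphi^{-1}\circ\varphi)[\alpha]=\left<i:i\in\mathbb N\right>[\alpha]=\alpha\notin\mathbb N$; but if $\alpha_s$ lay in $\mathcal{AR}\restriction X$, then, since $\alpha_s\in\leftidx{^{*}}{\mathcal F}$, Proposition \ref{star transform}(2) (using $\leftidx{^{*}}{t}=t$ for $t\in\mathcal{AR}$, as in Lemma \ref{infinity lemma}) would force $\alpha_s=t_k$ for some $k$, whence $\leftidx{^{*}}{(\varphi^{-1})}(\alpha_s)=\varphi^{-1}(t_k)=k\in\mathbb N$, a contradiction. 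The only substantive step is the very first one — that $\mathcal F$ is infinite — and that is precisely what the standing nontriviality hypothesis on $(\mathcal R,\le,r)$ provides; the roles of A.1 and A.2 here are just to make sense of $|s|$, of $\sqsubseteq$, and of $\mathcal{AR}_{|s|+1}\restriction X$. Everything else is a routine transfer computation.
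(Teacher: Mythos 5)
Your transfer computations are fine: granted the blanket hypothesis $\leftidx{^{*}}{t}=t$ for all $t\in\mathcal{AR}$ (which the lemma as stated does not include, but which the paper adopts immediately afterwards and which its own proof also uses implicitly), the device ``take the ideal value of an injective enumeration, then compose with the inverse to see it is nonstandard'' does produce an element of $\leftidx{^{*}}{(\mathcal{AR}\restriction X)}\setminus(\mathcal{AR}\restriction X)$ lying above $s$ and inside $\leftidx{^{*}}{\mathcal{AR}_{|s|+1}}$ --- \emph{provided} your set $\mathcal F=\{t\in\mathcal{AR}_{|s|+1}\restriction X: s\sqsubseteq t\}=\{r_{|s|+1}(Y):Y\in[s,X]\}$ is infinite. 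That proviso is the genuine gap. The standing nontriviality assumption, as the paper's own proof of this lemma invokes it, is that $r_{|s|+1}[s,X]$ is infinite, and $r_{|s|+1}[s,X]$ is defined purely at the level of $\le_{\mathrm{fin}}$: it is the set of $t\in\mathcal{AR}_{|s|+1}$ with $s\sqsubset t$ and $t\le_{\mathrm{fin}}X$. Your $\mathcal F$ is the subset of those $t$ that are actually \emph{realized} as $r_{|s|+1}(Y)$ for some $Y\in[s,X]$. Passing from ``$t\le_{\mathrm{fin}}X$'' to ``$t=r_{|s|+1}(Y)$ for some $Y\in[s,X]$'' is an amalgamation-type statement (essentially A.3(a)), and A.3 is exactly the axiom the lemma does not assume: only A.1, A.2 and A.4 are available. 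So under the hypotheses the paper works with, your very first (and, as you say, only substantive) step is not justified; a priori $\mathcal F$ could be finite even though $r_{|s|+1}[s,X]$ is infinite. (A smaller point: even when infinite, $\mathcal F$ need not be countable, so take an injective sequence into $\mathcal F$ rather than a bijection with $\mathbb N$; your inverse-composition argument only needs injectivity.)

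This is precisely where A.4 earns its keep in the paper's proof, which your argument bypasses entirely. There one sets $\mathcal{O}_n=\{t\in r_{|s|+1}[s,X]: t\le_{\mathrm{fin}}r_n(X)\}$, applies A.4 repeatedly to get $X\ge X_1\ge X_2\ge\cdots$ with $r_{|s|+1}[s,X_i]\cap\mathcal{O}_i=\emptyset$ (the alternative $r_{|s|+1}[s,X_i]\subseteq\mathcal{O}_i$ is impossible since A.2(a) makes each $\mathcal{O}_i$ finite while $r_{|s|+1}[s,X_i]$ is infinite), and then uses the countable enlarging property to pick $\alpha_s\in\bigcap_{i}\leftidx{^{*}}{r_{|s|+1}[s,X_i]}$; if $\alpha_s$ were some $r_{|s|+1}(Y)$ with $Y\le X$, then A.2(b) would place it in some $\mathcal{O}_n$, a contradiction. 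In other words, the paper manufactures the nonstandard element from $\le_{\mathrm{fin}}$-information alone and uses the depth function to exclude membership in $\mathcal{AR}\restriction X$, with no need for one-step extensions to be realized below $X$. If you strengthen the standing assumption to ``infinitely many one-step extensions of $s$ are realized by members of $[s,X]$'' (true in all the paper's examples, and one possible reading of the Remark), then your shorter argument does go through --- and even yields $\alpha_s\notin\mathcal{AR}$ --- without A.4 or any saturation; but as a proof of the lemma from the paper's stated hypotheses it has the gap described above.
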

\begin{proof}
Let $s\in\mathcal{AR}$ and $X\in \mathcal{R}$ such that $s\sqsubseteq X$. For all $n\in\mathbb{N}$, let
$$\mathcal{O}_{n} = \{t\in r_{|s|+1}[s,X] : t \le_{\mathrm{fin}} r_{n}(X)\}.$$
By A.4 there exists a sequence $X\ge X_{1} \ge X_{2} \ge X_{3} \ge \dots$ such that for all positive natural numbers $i$, either $r_{|s|+1}[s,X_{i}]\subseteq \mathcal{O}_{i}$ or $r_{|s|+1}[s,X_{i}]\cap \mathcal{O}_{i}=\emptyset$. By A.2 (a), for all $i\in\mathbb{N}$, $\mathcal{O}_{i}$ is finite. So it can not be the case that $r_{|s|+1}[s,X_{i}]\subseteq \mathcal{O}_{i}$ (recall that, we assumed $r_{|s|+1}[s,X]$ was infinite for any $[s,X]\not=\emptyset$ to avoid trivialities). Thus for all $i\in \mathbb{N}$, $r_{|s|+1}[s,X_{i}]\cap \mathcal{O}_{i}=\emptyset$. Note that $$r_{|s|+1}[s,X_{1}]\supseteq r_{|s|+1}[s,X_{2}]\supseteq r_{|s|+1}[s, X_{3}]\supseteq\dots$$ Hence $\{r_{|s|+1}[s,X_{i}]:i\in \mathbb{N}\}$ has the finite intersection property. By the countable enlargement property,
$$\bigcap_{i=1}^{\infty} \leftidx{^{*}}{r_{|s|+1}[s,X_{i}]}\not=\emptyset.$$
Let $\alpha_{s}$ be any element of this intersection. It is clear that $s \sqsubseteq \alpha_{s} \in \leftidx{^{*}}{\mathcal{AR}\restriction X}$. 

Toward a contradiction suppose that $\alpha_{s}\in \mathcal{AR}\restriction X$. Then there exists $Y\le X$ such that $\alpha_{s} = r_{|s|+1}(Y)$. By A.2 (b) there exists $n\in\mathbb{N}$ such that $r_{|s|+1}(Y)\le_{\mathrm{fin}} r_{n}(X)$. So $r_{|s|+1}(Y)\in \mathcal{O}_{n}$ which is a contradiction since $\alpha_{s}\in \leftidx{^{*}}{r_{|s|+1}[s, X_{n}]}$ and $r_{|s|+1}[s, X_{n}]\cap \mathcal{O}_{n}=\emptyset$ . Therefore $\alpha_{s}\not \in \mathcal{AR}\restriction X$.
\end{proof}
From this point forward we will assume in each statement that $(\mathcal{R},\le, r)$ satisfies {A.1}, {A.2} and {A.4} and for all $s\in\mathcal{AR}$, $\leftidx{^{*}}{s}=s$. Because of this assumption, for the rest of this section, we can fix a sequence $\vec{\alpha}=\left< \alpha_{s} : s\in \mathcal{AR}\right>$ where for all $s\in \mathcal{AR}$ 
$$s\sqsubseteq \alpha_{s}\in \leftidx{^{*}}{\mathcal{AR}_{|s|+1}}.$$ By Lemma \ref{abstract alpha sequence} at least one such $\vec{\alpha}$ sequence exists.

\begin{defn} 
An \emph{$\vec{\alpha}$-tree} is a tree $T$ on $\mathcal{R}$ with stem $st(T)$ such that $T/st(T)\not=\emptyset$ and for all $s\in T/st(T)$, $$\alpha_{s} \in  \leftidx{^{*}}{T}.$$
\end{defn}

\begin{example}Note that $\mathcal{AR}$ is a tree on $\mathcal{R}$ with stem $\emptyset$. Moreover, for all $s\in\mathcal{AR}$, $\alpha_{s}\in \leftidx{^{*}}{\mathcal{AR}}$. Thus, $\mathcal{AR}$ is an $\vec{\alpha}$-tree.
\end{example}

\begin{lem}\label{alpha diag} Assume that $(\mathcal{R},\le, r)$ satisfies \emph{A.1}, \emph{A.2} and \emph{A.4} and for all $s\in\mathcal{AR}$, $\leftidx{^{*}}{s}=s$. Suppose that $H\subseteq \mathcal{AR}$ and for all $s\in H$, $\alpha_{s}\in \leftidx{^{*}}{H}$. Then for all $\vec{\alpha}$-trees $T$, if $st(T)\in H$ then there exists an $\vec{\alpha}$-tree $S\subseteq T$ with $st(S)=st(T)$ such that $S/st(S)\subseteq H$.
\end{lem}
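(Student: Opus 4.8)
The plan is to reproduce, in the abstract setting, the level-by-level construction from the proof given earlier for trees on $\mathbb{N}$, with the operation $s\mapsto s\cup\{m\}$ replaced by passage to one-step extensions inside $\mathcal{AR}$. First I would set $L_{0}=\{st(T)\}$ and, having defined $L_{n}$, let
$$L_{n+1}=\{\,t\in\mathcal{AR}_{|st(T)|+n+1}:\exists s\in L_{n}\ (s\sqsubseteq t\in H\cap T)\,\},$$
and then put $S=\{s\in\mathcal{AR}:s\sqsubseteq st(T)\}\cup\bigcup_{n=0}^{\infty}L_{n}$. A straightforward induction on $n$, using that $st(T)\in H$ (hypothesis) and $st(T)\in T$ (the stem of an $\vec{\alpha}$-tree lies in the tree), shows that $L_{n}\subseteq H\cap T$ and $st(T)\sqsubseteq s$ for every $s\in L_{n}$; in particular each such $s$ lies in $T/st(T)$.

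The crucial point is the transfer step: for every $s\in L_{n}$ one has $\alpha_{s}\in\leftidx{^{*}}{L_{n+1}}$. Indeed, since $s\in H$ the hypothesis on $H$ gives $\alpha_{s}\in\leftidx{^{*}}{H}$, and since $s\in T/st(T)$ and $T$ is an $\vec{\alpha}$-tree we get $\alpha_{s}\in\leftidx{^{*}}{T}$, hence $\alpha_{s}\in\leftidx{^{*}}{H}\cap\leftidx{^{*}}{T}=\leftidx{^{*}}{(H\cap T)}$. Writing $L_{n+1}$ as the finite union over $s\in L_{n}$ of the sets $\{t\in\mathcal{AR}_{|s|+1}:s\sqsubseteq t\in H\cap T\}$, applying the $*$-transform to the $s$-th such set and using $\leftidx{^{*}}{s}=s$ together with $s\sqsubseteq\alpha_{s}$ and $\alpha_{s}\in\leftidx{^{*}}{\mathcal{AR}_{|s|+1}}$ shows that $\alpha_{s}$ belongs to its $*$-transform, hence to $\leftidx{^{*}}{L_{n+1}}\subseteq\leftidx{^{*}}{S}$.

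With this established the remaining checks are routine. Axiom A.1(c) identifies, for each $t\in L_{n+1}$, its immediate predecessor $r_{|t|-1}(t)$ with the witnessing element of $L_{n}$, so $S$ is a tree; clearly $S\subseteq T$. Since $\alpha_{st(T)}\in\leftidx{^{*}}{L_{1}}$ and $\alpha_{st(T)}\in\leftidx{^{*}}{\mathcal{AR}}\setminus\mathcal{AR}$ (as provided by Lemma \ref{abstract alpha sequence}) with $r_{|\alpha_{st(T)}|-1}(\alpha_{st(T)})=st(T)\in\mathcal{AR}$, Lemma \ref{infinity lemma} shows $L_{1}$ is infinite; as distinct elements of a common level $\mathcal{AR}_{|st(T)|+1}$ are $\sqsubseteq$-incomparable, no proper extension of $st(T)$ is $\sqsubseteq$-comparable with all of $S$, so $st(S)=st(T)$ and $S/st(S)=\bigcup_{n}L_{n}\neq\emptyset$. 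The transfer step gives $\alpha_{s}\in\leftidx{^{*}}{S}$ for all $s\in S/st(S)$, so $S$ is an $\vec{\alpha}$-tree, and the induction above gives $S/st(S)=\bigcup_{n}L_{n}\subseteq H$.

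The step I expect to require the most care is the transfer argument: one must phrase $L_{n+1}$ so that each piece is carved out by a single first-order condition on $t$ — membership in $\mathcal{AR}_{|s|+1}$, the relation $s\sqsubseteq t$, and membership in $H$ and in $T$ — so that the $*$-transform of that piece is governed by $\leftidx{^{*}}{\mathcal{AR}_{|s|+1}}$, the $*$-transform of $\sqsubseteq$, $\leftidx{^{*}}{H}$ and $\leftidx{^{*}}{T}$, and then use $\leftidx{^{*}}{s}=s$ to conclude that $\alpha_{s}$ meets all of these conditions. Apart from this, the argument follows the concrete proof line for line; the axioms A.2 and A.4 enter only through the standing hypotheses guaranteeing the existence of the sequence $\vec{\alpha}$ via Lemma \ref{abstract alpha sequence}.
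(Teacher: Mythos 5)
Your proof is correct and takes essentially the same route as the paper's own argument: the same level-by-level construction of $L_{n}$ and $S$, the same transfer step $\alpha_{s}\in\leftidx{^{*}}{(H\cap T)}$ giving $\alpha_{s}\in\leftidx{^{*}}{L_{n+1}}$ (which the paper asserts in one line and you spell out), and the same stem and $\vec{\alpha}$-tree checks. One minor slip worth noting: $L_{n+1}$ need not be a \emph{finite} union of the pieces $P_{s}=\{t\in\mathcal{AR}_{|s|+1}: s\sqsubseteq t\in H\cap T\}$, since $L_{n}$ can be infinite, but this is harmless because all you actually use is $P_{s}\subseteq L_{n+1}$, hence $\leftidx{^{*}}{P_{s}}\subseteq\leftidx{^{*}}{L_{n+1}}$, i.e.\ monotonicity of the $*$-transform rather than its behavior on unions.
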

\begin{proof}
Let $H\subseteq \mathcal{AR}$ such that for all $s\in H$, $\alpha_{s}\in \leftidx{^{*}}{H}$. Suppose that $T$ is an $\vec{\alpha}$-tree and $st(T)\in H$. We construct an $\vec{\alpha}$-tree $S$, level-by-level, recursively as follows  
$$\begin{cases}
L_{0} =\{st(T)\} \\
L_{n+1} = \{ t \in \mathcal{AR}: \exists s\in L_{n}, \ s\sqsubseteq t, \ |t|=|s|+1 \ \& \ t\in H\cap T\}.
\end{cases}$$
 Since $T$ is an $\vec{\alpha}$-tree, for all $s\in L_{n}, \ s\sqsubseteq \alpha_{s}, \ |\alpha_{s}|=|s|+1$ and $\alpha_{s}\in\leftidx{^{*}}{H}\cap\leftidx{^{*}}{T}=\leftidx{^{*}}{(H\cap T)}$. In particular, $\alpha_{s}\in \leftidx{^{*}}{L_{n+1}}$.  Let 
$$S = \{s \in \mathcal{AR} : s \sqsubseteq st(T)\} \cup \bigcup_{n=0}^{\infty} L_{n}.$$

It is clear that $S$ is a tree and $S\subseteq T$. The set $\{t\in \mathcal{AR} : st(T)\sqsubseteq t\ \&\ t\in L_{1}\}$ is infinite since $\alpha_{st(T)}\in \leftidx{^{*}}{L_{1}}$. Thus, $st(S)=st(T)$. If $s\in S/st(S)$ then there exists $n\in \mathbb{N}$ such that $s\in L_{n}$. So $\alpha_{s} \in \leftidx{^{*}}{L_{n+1}} \subseteq \leftidx{^{*}}{S}$. Hence, $S$ is an $\vec{\alpha}$-tree. Note that for all $n\in\mathbb{N}$, $L_{n}\subseteq H$. Thus
$$S/st(S) =\bigcup_{n=0}^{\infty} L_{n} \subseteq H.$$ \end{proof}

\begin{thm}\label{abstract alpha Ramsey theorem}
 Assume that $(\mathcal{R},\le, r)$ satisfies \emph{A.1}, \emph{A.2} and \emph{A.4} and for all $s\in\mathcal{AR}$, $\leftidx{^{*}}{s}=s$. For all $\mathcal{X}\subseteq \mathcal{R}$ and for all $\vec{\alpha}$-trees $T$ there exists an $\vec{\alpha}$-tree $S\subseteq T$ with $st(S)=st(T)$ such that one of the following holds:
\begin{enumerate}
\item $[S]\subseteq \mathcal{X}$.
\item $[S]\cap \mathcal{X}=\emptyset$.
\item For all $\vec{\alpha}$-trees $S'$, if $S'\subseteq S$ then $[S']\not \subseteq \mathcal{X}$ and $[S']\cap\mathcal{X} \not = \emptyset$.
\end{enumerate}
\end{thm}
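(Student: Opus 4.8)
The plan is to run the proof of Theorem \ref{alpha Ramsey theorem} essentially verbatim, with $\mathcal{R}$ in place of $[\mathbb{N}]^{\infty}$, $\mathcal{AR}$ in place of $[\mathbb{N}]^{<\infty}$, and the operation $s\mapsto s\cup\{\alpha_{s}\}$ replaced by $s\mapsto\alpha_{s}$. Fixing $\mathcal{X}\subseteq\mathcal{R}$ and an $\vec{\alpha}$-tree $T$, I would introduce $G=\{s\in\mathcal{AR}:\exists\ \vec{\alpha}\text{-tree }S\subseteq T\text{ with stem }s\text{ such that }[S]\subseteq\mathcal{X}\}$, the analogous set $F$ with $\mathcal{R}\setminus\mathcal{X}$ in place of $\mathcal{X}$, and $H=\mathcal{AR}\setminus(G\cup F)$. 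If $st(T)\in G$ then (1) holds, witnessed by the tree defining membership in $G$; if $st(T)\in F$ then (2) holds; so the whole content is the case $st(T)\in H$, which will be handled by Lemma \ref{alpha diag} once we know $H$ has the diagonalization property $s\in H\Rightarrow\alpha_{s}\in\leftidx{^{*}}{H}$, equivalently its contrapositive, the Claim below.

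\textbf{The Claim: $\alpha_{s}\notin\leftidx{^{*}}{H}$ implies $s\notin H$.} Using Proposition \ref{star transform} I would write $\leftidx{^{*}}{H}=\leftidx{^{*}}{\mathcal{AR}}\setminus(\leftidx{^{*}}{G}\cup\leftidx{^{*}}{F})$; since $\alpha_{s}\in\leftidx{^{*}}{\mathcal{AR}_{|s|+1}}\subseteq\leftidx{^{*}}{\mathcal{AR}}$, failure to lie in $\leftidx{^{*}}{H}$ forces $\alpha_{s}\in\leftidx{^{*}}{G}$ or $\alpha_{s}\in\leftidx{^{*}}{F}$. In the case $\alpha_{s}\in\leftidx{^{*}}{G}$, set $A=\{t\in\mathcal{AR}_{|s|+1}:s\sqsubseteq t\in G\}$, so $\alpha_{s}\in\leftidx{^{*}}{A}$, fix for each $t\in A$ an $\vec{\alpha}$-tree $T_{t}\subseteq T$ with stem $t$ and $[T_{t}]\subseteq\mathcal{X}$, and put $S=\bigcup_{t\in A}T_{t}$. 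Then $S\subseteq T$ is a tree, $\{st(T_{t}):t\in A\}=A\subseteq S$, and $[S]=\bigcup_{t\in A}[T_{t}]\subseteq\mathcal{X}$; the case $\alpha_{s}\in\leftidx{^{*}}{F}$ is identical. The \textbf{main obstacle} — the only non-formal point — is verifying that this glued $S$ is again an $\vec{\alpha}$-tree with stem exactly $s$. For the stem: Lemma \ref{infinity lemma}, applied with $\beta=\alpha_{s}$ (note $r_{|\alpha_{s}|-1}(\alpha_{s})=s\in\mathcal{AR}$ since $s\sqsubseteq\alpha_{s}$ and $|\alpha_{s}|=|s|+1$), shows $A$ is infinite, so $s$ has at least two $\sqsubseteq$-incomparable extensions in $S$ and hence $st(S)=s$. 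For the $\vec{\alpha}$-tree condition: each $u\in S/s$ is either $u=s$, and then $\alpha_{u}=\alpha_{s}\in\leftidx{^{*}}{A}=\leftidx{^{*}}{\{st(T_{t}):t\in A\}}\subseteq\leftidx{^{*}}{S}$ by Axiom $\alpha5$ together with Proposition \ref{star transform}(3), or $u\in T_{t}/t$ for some $t\in A$, and then $\alpha_{u}\in\leftidx{^{*}}{T_{t}}\subseteq\leftidx{^{*}}{S}$ because $T_{t}$ is an $\vec{\alpha}$-tree. This is precisely the computation in the Claim of Theorem \ref{alpha Ramsey theorem}, transported to the abstract setting; the genuinely new inputs are Lemma \ref{infinity lemma} and the abstract, pigeonhole-driven existence of the $\alpha_{s}$'s guaranteed by Lemma \ref{abstract alpha sequence} under A.1, A.2, A.4 and $\leftidx{^{*}}{s}=s$.

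\textbf{Conclusion.} Given the Claim, $H$ satisfies the hypothesis of Lemma \ref{alpha diag}, so if $st(T)\in H$ there is an $\vec{\alpha}$-tree $S\subseteq T$ with $st(S)=st(T)$ and $S/st(S)\subseteq H$. For any $\vec{\alpha}$-tree $S'\subseteq S$ we have $st(S')\in S/st(S)\subseteq H$ and $S'\subseteq T$, so by the definition of $H$, $[S']\not\subseteq\mathcal{X}$ and $[S']\cap\mathcal{X}\neq\emptyset$; hence (3) holds. Combined with the two easy cases $st(T)\in G$ and $st(T)\in F$, this gives the trichotomy.
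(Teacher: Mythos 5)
Your proposal is correct and follows essentially the same route as the paper: the same sets $G$, $F$, $H=\mathcal{AR}\setminus(G\cup F)$, the same Claim proved by gluing the trees $T_{t}$ over $t\in A$ with $\alpha_{s}\in\leftidx{^{*}}{A}$, and the same appeal to the abstract Lemma \ref{alpha diag} in the case $st(T)\in H$. Your invocation of Lemma \ref{infinity lemma} to confirm $A$ is infinite (hence $st(S)=s$) merely spells out a step the paper labels as clear.
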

\begin{proof}
Suppose that $\mathcal{X}\subseteq\mathcal{R}$ and $T$ is an $\vec{\alpha}$-tree. Consider the following sets,
$$G= \{ s \in \mathcal{AR}: \exists \mbox{$\vec{\alpha}$-tree $S\subseteq T$ with stem $s$ such that $[S]\subseteq \mathcal{X}$}\}$$ 
$$F= \{ s \in \mathcal{AR}: \exists \mbox{$\vec{\alpha}$-tree $S\subseteq T$ with stem $s$ such that $[S]\subseteq \mathcal{R}\setminus\mathcal{X}$}\}$$
$$H= \{s\in \mathcal{AR} : \forall \mbox{$\vec{\alpha}$-tree $S\subseteq T$ with stem $s$, $[S]\not\subseteq \mathcal{X}$ and $[S]\cap \mathcal{X} \not = \emptyset$}\}.$$
Notice that $H= \mathcal{AR}\setminus(G \cup F)$.

\begin{claim}
If $\alpha_{s} \not\in \leftidx{^{*}}{H}$ then $s\not \in H$.
\end{claim}
\begin{proof}
Suppose that $s\in \mathcal{AR}$ and $\alpha_{s} \not \in \leftidx{^{*}}{H} = \leftidx{^{*}}{[\mathbb{N}]^{<\infty}\setminus(\leftidx{^{*}}{G} \cup \leftidx{^{*}}{F})}$. Hence, $\alpha_{s}\in \leftidx{^{*}}{G}$ or $\alpha_{s}\in \leftidx{^{*}}{F}$.

Consider the case when $\alpha_{s}\in \leftidx{^{*}}{G}$. For each $t \in \mathcal{AR}_{|s|+1}$ such that $s\sqsubseteq t\in G$, let $T_{t}$ be an $\vec{\alpha}$-tree with stem $t$ such that $[T_{t}]\subseteq \mathcal{X}$. Let $A= \{t \in \mathcal{AR}_{|s|+1}:s\sqsubseteq t\in G\}$ and note that $\alpha_{s} \in \leftidx{^{*}}{A}$. Let $S=\bigcup_{t\in A} T_{t}$. It is clear that $S$ is a tree with stem $s$ and $A =\bigcup_{n\in A}\{st(T_{n})\} \subseteq S$. In addition, $[S] = \bigcup_{t\in A} [T_{t}] \subseteq \mathcal{X}$. If $t\in S$ then either $t=s$ or there exists $t'\in A$ such that $t\in T_{t'}/t'$. If $t=s$ then $\alpha_{t} = \alpha_{s} \in \leftidx{^{*}}{ A} =\leftidx{^{*}}{\bigcup_{u\in A}\{st(T_{u})\}} \subseteq \leftidx{^{*}}{S}.$ If there exists $t'\in A$ such that $t\in T_{n}/(t')$, then $\alpha_{t} \in \leftidx{^{*}}{T_{t'}} \subseteq \leftidx{^{*}}{S}$. So $S$ is an $\vec{\alpha}$-tree with stem $s$ such that $[S]\subseteq \mathcal{X}$. Thus, $s\in G$. In particular, $s\not \in H$.

By an identical argument, if $\alpha_{s}\in \leftidx{^{*}}{F}$ then there is an $\vec{\alpha}$-tree $S$ with stem $s$ such that $[S]\subseteq \mathcal{R} \setminus\mathcal{X}.$ In this case, we also have $s \not \in H$ as $s\in F$.\end{proof}

If $st(T)\in G$ then (1) holds. If $st(T)\in F$ then (2) holds. Otherwise $st(T)\in H$. By Lemma \ref{alpha diag} there is an $\vec{\alpha}$-tree $S\subseteq T$ such that $st(S)=st(T)$ and $S/st(S)\subseteq H$. If $S'\subseteq S$ is an $\vec{\alpha}$-tree then $st(S')\in S/st(S)\subseteq H$. Since $S'\subseteq T$, $[S']\not \subseteq \mathcal{X}$ and $[S']\cap \mathcal{X} \not=\emptyset$. So if (1) and (2) fail there is an $\vec{\alpha}$-tree showing that (3) holds.
\end{proof}
The next result is an abstract version of the $\vec{\alpha}$-Ramsey Theorem from the previous section.

\begin{cor}[Abstract $\vec{\alpha}$-Ramsey Theorem] 
 Assume that $n\in\mathbb{N}$, $(\mathcal{R},\le, r)$ satisfies \emph{A.1}, \emph{A.2} and \emph{A.4} and for all $s\in\mathcal{AR}$, $\leftidx{^{*}}{s}=s$. For all $A\subseteq \mathcal{AR}_{n}$ and for all $\vec{\alpha}$-trees $T$ there exists an $\vec{\alpha}$-tree $S\subseteq T$ with $st(S)=st(T)$ such that either $S(n)\subseteq A$ or $S(n)\cap A =\emptyset$.
\end{cor}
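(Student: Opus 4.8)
The plan is to deduce this corollary from Theorem~\ref{abstract alpha Ramsey theorem} in exactly the way Corollary~\ref{alpha-tree Ramsey theorem} was deduced from Theorem~\ref{alpha Ramsey theorem}. Given $n$, $A\subseteq\mathcal{AR}_{n}$, and an $\vec{\alpha}$-tree $T$, set
$$\mathcal{X}=\{Y\in\mathcal{R}:r_{n}(Y)\in A\},$$
and apply Theorem~\ref{abstract alpha Ramsey theorem} to $\mathcal{X}$ and $T$ to obtain an $\vec{\alpha}$-tree $S\subseteq T$ with $st(S)=st(T)$ satisfying one of (1), (2), (3). The strategy is then to show that alternative (3) is impossible for this particular $\mathcal{X}$, and to translate (1) and (2) into ``$S(n)\subseteq A$'' and ``$S(n)\cap A=\emptyset$'', respectively.

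To see that (3) cannot hold, I would first record that whenever $U$ is an $\vec{\alpha}$-tree with $|st(U)|\ge n$, every $Y\in[U]$ has $r_{n}(Y)=r_{n}(st(U))$, so that $[U]\subseteq\mathcal{X}$ or $[U]\cap\mathcal{X}=\emptyset$ according to whether $r_{n}(st(U))\in A$ or not. It therefore suffices to exhibit, inside any $\vec{\alpha}$-tree $S$, an $\vec{\alpha}$-subtree whose stem has length at least $n$, since plugging such a subtree into clause (3) contradicts it. This is obtained by lengthening the stem one node at a time: for $s\in S/st(S)$ we have $\alpha_{s}\in\leftidx{^{*}}{S}$ with $s\sqsubseteq\alpha_{s}\in\leftidx{^{*}}{\mathcal{AR}_{|s|+1}}$, so Lemma~\ref{infinity lemma} (the $\alpha_{s}$ being nonstandard) shows the set of one-step extensions of $s$ lying in $S$ is infinite, in particular nonempty; iterating, choose $t\in S$ with $|t|\ge n$ and set $S'=\{u\in\mathcal{AR}:u\sqsubseteq t\}\cup(S/t)$. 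A short verification, using $\leftidx{^{*}}{t}=t$ and transitivity of $\sqsubseteq$, shows $S'$ is again an $\vec{\alpha}$-tree, $S'\subseteq S$, with $t\sqsubseteq st(S')$. Hence $S$ must satisfy (1) or (2).

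It then remains to read off the conclusion. Suppose $[S]\subseteq\mathcal{X}$ and let $t\in S(n)$. Extending $t$ one node at a time inside $S$ (each step legitimate by Lemma~\ref{infinity lemma}) yields a $\sqsubseteq$-increasing chain of nodes of $S$ starting at $t$; its limit is an element $Y\in\mathcal{R}$ with $r_{n}(Y)=t$ and all of whose initial segments lie in $S$, so $Y\in[S]\subseteq\mathcal{X}$ and therefore $t=r_{n}(Y)\in A$. Thus $S(n)\subseteq A$; symmetrically $[S]\cap\mathcal{X}=\emptyset$ forces $S(n)\cap A=\emptyset$. When $|st(S)|\ge n$ the set $S(n)$ is the singleton $\{r_{n}(st(S))\}$ and the conclusion is immediate.

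The one point genuinely requiring care is the last paragraph: one must know that the $\sqsubseteq$-increasing chain of nodes produced by iterated one-step extension actually converges to an element of $\mathcal{R}$, so that it lies in $[S]$. This is precisely where the closedness of $\mathcal{R}$ as a subspace of $\mathcal{AR}^{\mathbb{N}}$ enters — the same hypothesis underlying the Abstract Ellentuck Theorem — and it plays the role that the trivial remark ``for every $s\in[Y]^{n}$ there is $Z\in[Y]^{\infty}$ with $r_{n}(Z)=s$'' played in the concrete case. Apart from this, the argument is a routine transcription of the proof of Corollary~\ref{alpha-tree Ramsey theorem}, with $[\mathbb{N}]^{<\infty}$ replaced by $\mathcal{AR}$, the ``$n$ smallest elements'' map replaced by $r_{n}$, and the elementary facts about $\ast$-transforms replaced by Lemma~\ref{infinity lemma} together with the standing assumption $\leftidx{^{*}}{s}=s$.
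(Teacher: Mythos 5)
Your argument is the same as the paper's: the paper also sets $\mathcal{X}=\{Y\in\mathcal{R}:r_{n}(Y)\in A\}$, rules out alternative (3) of Theorem \ref{abstract alpha Ramsey theorem} by observing that any $\vec{\alpha}$-tree whose stem has length at least $n$ already satisfies (1) or (2), and then reads off $S(n)\subseteq A$ or $S(n)\cap A=\emptyset$ from $[S]\subseteq\mathcal{X}$ or $[S]\cap\mathcal{X}=\emptyset$. Where you differ is only in making explicit two steps the paper leaves tacit: the construction, inside a given $\vec{\alpha}$-tree, of an $\vec{\alpha}$-subtree with stem of length at least $n$ (your $S'=\{u:u\sqsubseteq t\}\cup(S/t)$, justified via Lemma \ref{infinity lemma} and $\leftidx{^{*}}{t}=t$, is fine), and the fact that every $t\in S(n)$ is of the form $r_{n}(Y)$ for some $Y\in[S]$, without which ``$[S]\subseteq\mathcal{X}$ implies $S(n)\subseteq A$'' does not follow. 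For the latter you invoke closedness of $\mathcal{R}$ in $\mathcal{AR}^{\mathbb{N}}$, which is \emph{not} among the stated hypotheses of the corollary (the statement assumes only A.1, A.2, A.4 and $\leftidx{^{*}}{s}=s$); so, strictly, you prove the corollary under an extra assumption. You have, however, put your finger on a real point: the paper's own proof asserts this translation with no justification at all, and it is exactly the abstract analogue of the concrete remark ``every $s\in[Y]^{n}$ extends to some $Z\in[Y]^{\infty}$ with $r_{n}(Z)=s$.'' Since closedness is assumed wherever A.1--A.4 are used for the Abstract Ellentuck Theorem and holds in all the paper's examples, your reading is harmless and arguably the more honest one; if one insists on the hypotheses exactly as stated, the corollary should instead be proved directly by a finite induction on $n-|st(T)|$ using the abstract diagonalization lemma (Lemma \ref{alpha diag}), which never passes through branches of $S$ and so never needs $[S]$ to be rich.
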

\begin{proof}
Let $n\in \mathbb{N}$, $A\subseteq\mathcal{AR}_{n}$ and $T$ be an $\vec{\alpha}$-tree. Let $\mathcal{X}=\{ Y\in\mathcal{R} : r_{n}(Y)\in A\}$. Notice that $\mathcal{X}$ can not satisfy conclusion (3) in the statement of Theorem \ref{abstract alpha Ramsey theorem} because any $\vec{\alpha}$-tree $S$ with $|st(S)|\ge n$ will either have $[S]\subseteq \mathcal{X}$ or $[S]\cap \mathcal{X} = \emptyset$ depending on whether $r_{n}(st(S))\in A$ or $r_{n}(st(S))\not\in A$, respectively. So Theorem \ref{alpha Ramsey theorem} implies that there is an $\vec{\alpha}$-tree $S\subseteq T$ with $st(S)=st(T)$ such that either $[S]\subseteq\mathcal{X}$ or $[S]\cap \mathcal{X}=\emptyset.$ Thus, either $S(n)\subseteq A$ or $S(n)\cap A=\emptyset$ depending on whether $[S]\subseteq\mathcal{X}$ or $[S]\cap \mathcal{X}=\emptyset,$ respectively.
\end{proof}

\subsection{The Abstract $\vec{\alpha}$-Ellentuck Theorem}
For some subsets of $\mathcal{R}$ conclusion (1) and (3) of the previous Theorem are impossible. For example, every one-element subset of $\mathcal{R}$ has the property that conclusion (1) and (3) are impossible. On the other hand, for some subsets (1) and (2) are possible but (3) is impossible. For example, for all $\vec{\alpha}$-trees $T$, (1) and (2) are possible for $[T]$ but (3) is impossible for $[T]$.

\begin{defn}
$\mathcal{X}\subseteq \mathcal{R}$ is said to be \emph{$\vec{\alpha}$-Ramsey} if for all $\vec{\alpha}$-trees $T$ there exists an $\vec{\alpha}$-tree $S\subseteq T$ with $st(S)=st(T)$ such that either $[S]\subseteq \mathcal{X}$ or $[S]\cap \mathcal{X}=\emptyset$. $\mathcal{X}$ is said to be \emph{$\vec{\alpha}$-Ramsey null} if for all $\vec{\alpha}$-trees $T$ there exists an $\vec{\alpha}$-tree $S\subseteq T$ with $st(S)=st(T)$ such that $[S]\cap \mathcal{X}=\emptyset$.
\end{defn}

\begin{cor}\label{abstract sigma ideal}
 Assume that $(\mathcal{R},\le, r)$ satisfies \emph{A.1}, \emph{A.2} and \emph{A.4} and for all $s\in\mathcal{AR}$, $\leftidx{^{*}}{s}=s$. Then the collection of $\vec{\alpha}$-Ramsey null sets is a $\sigma$-ideal.
\end{cor}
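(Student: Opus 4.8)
The plan is to follow, essentially verbatim, the proof of Corollary~\ref{sigma ideal}, replacing $[\mathbb{N}]^{<\infty}$ by $\mathcal{AR}$, the phrase ``$s\cup\{m\}$ with $m>\max(s)$'' by ``$t\in\mathcal{AR}_{|s|+1}$ with $s\sqsubseteq t$'', and the node $s\cup\{\alpha_s\}$ by the abstract $\alpha_s\in\leftidx{^{*}}{\mathcal{AR}_{|s|+1}}$ with $s\sqsubseteq\alpha_s$. First I would dispatch the two easy clauses: $\emptyset$ is trivially $\vec{\alpha}$-Ramsey null, and if $\mathcal{X}$ is $\vec{\alpha}$-Ramsey null and $\mathcal{Y}\subseteq\mathcal{X}$ then so is $\mathcal{Y}$, directly from the definition. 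So it remains to show that if $\langle\mathcal{X}_i:i\in\mathbb{N}\rangle$ are all $\vec{\alpha}$-Ramsey null then so is $\bigcup_i\mathcal{X}_i$.

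For the union, fix an $\vec{\alpha}$-tree $T$. Using $\vec{\alpha}$-Ramsey nullity of each $\mathcal{X}_i$ I would build, by recursion on the levels above $st(T)$, a family of $\vec{\alpha}$-trees $\langle S_t\rangle$ indexed by the nodes $t$ with $st(T)\sqsubseteq t$ that arise in the recursion, so that (i) $st(S_t)=t$; (ii) $[S_t]\cap\mathcal{X}_{|t|-|st(T)|}=\emptyset$; and (iii) $S_t\subseteq S_{t'}$ whenever $t'\sqsubseteq t$. The base step applies $\vec{\alpha}$-Ramsey nullity of $\mathcal{X}_0$ to $T$ to get $S_{st(T)}$; once $S_t$ is defined, for each immediate successor $s\in S_t$ with $|s|=|t|+1$ one applies $\vec{\alpha}$-Ramsey nullity of $\mathcal{X}_{|s|-|st(T)|}$ inside $S_t$ to obtain $S_s\subseteq S_t$ with stem $s$ avoiding that set, and (iii) is preserved since the new trees sit inside the old ones. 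Then I would fuse the family exactly as in the proof of Corollary~\ref{sigma ideal} and Lemma~\ref{alpha diag}: set $L_0=\{st(T)\}$, $L_{n+1}=\{t\in\mathcal{AR}:\exists s\in L_n,\ s\sqsubseteq t,\ |t|=|s|+1,\ t\in S_s\}$, and $S=\{s\in\mathcal{AR}:s\sqsubseteq st(T)\}\cup\bigcup_n L_n$.

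Two verifications then finish the argument. \emph{$S$ is an $\vec{\alpha}$-tree:} it is a tree with $st(S)=st(T)$ (the first level $\{t\in\mathcal{AR}:st(T)\sqsubseteq t\ \&\ t\in L_1\}$ is infinite because $\alpha_{st(T)}\in\leftidx{^{*}}{L_1}$, by Lemma~\ref{infinity lemma}), and for $s\in S/st(S)$ we have $s\in L_n$ for some $n$, so $\alpha_s\in\leftidx{^{*}}{S_s}\subseteq\leftidx{^{*}}{L_{n+1}}\subseteq\leftidx{^{*}}{S}$ since $S_s$ is an $\vec{\alpha}$-tree with stem $s$. \emph{$[S]\cap\bigcup_i\mathcal{X}_i=\emptyset$:} given $X\in[S]$ and $n\in\mathbb{N}$, put $t=r_{|st(T)|+n}(X)$; every $t'$ with $t\sqsubset t'\sqsubseteq X$ lies in some $L_{|t'|}$ and hence in $S_{r_{|t'|-1}(t')}$, so by (iii) all initial segments of $X$ extending $t$ lie in $S_t$, whence $X\in[S_t]$ and $X\notin\mathcal{X}_n=\mathcal{X}_{|t|-|st(T)|}$ by (ii). As $X$ and $n$ were arbitrary, $\bigcup_i\mathcal{X}_i$ is $\vec{\alpha}$-Ramsey null.

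The only real obstacle is bookkeeping: making the recursive family $\langle S_t\rangle$ coherent enough (conditions (i)--(iii)) that the fusion $S$ is genuinely an $\vec{\alpha}$-tree and that $X\in[S]$ forces $X\in[S_t]$ for the appropriate initial segment $t$ of $X$. Everything goes through because A.1 gives the sequencing of approximations (so ``immediate successor'' and ``level'' are well defined), while A.2, A.4 and the standing hypothesis $\leftidx{^{*}}{s}=s$ for $s\in\mathcal{AR}$ are exactly what Lemmas~\ref{infinity lemma}, \ref{abstract alpha sequence} and \ref{alpha diag} require; no new appeal to saturation is needed beyond the countable enlargement property already used to produce $\vec{\alpha}$.
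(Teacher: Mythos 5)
Your proposal is correct and is essentially the paper's own proof: the same two trivial clauses, the same recursive family $\langle S_t\rangle$ with $[S_t]\cap\mathcal{X}_{|t|-|st(T)|}=\emptyset$, the same fusion via the levels $L_n$, and the same verification that $X\in[S]$ lands in $[S_t]$ for the length-$(|st(T)|+n)$ initial segment $t$. One cosmetic caveat: the chain $\leftidx{^{*}}{S_s}\subseteq\leftidx{^{*}}{L_{n+1}}$ is not literally an inclusion; what you need (and what the argument actually gives, by transfer of the definition of $L_{n+1}$) is only that the one-step extension $\alpha_s$ of $s$ lying in $\leftidx{^{*}}{S_s}$ belongs to $\leftidx{^{*}}{L_{n+1}}$.
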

\begin{proof}
It is clear that $\emptyset$ is $\vec{\alpha}$-Ramsey null. It should also be clear from the definition that if $\mathcal{X}$ is $\vec{\alpha}$-Ramsey null and $\mathcal{Y}\subseteq \mathcal{X}$ then $\mathcal{Y}$ is also $\vec{\alpha}$-Ramsey null. So it is enough to show that the countable union of $\vec{\alpha}$-Ramsey null sets is $\vec{\alpha}$-Ramsey null. To this end, let $\left<\mathcal{X}_{i}:i\in\mathbb{N}\right>$ be a sequence of $\vec{\alpha}$-Ramsey null sets. 

Let $T$ be an $\vec{\alpha}$-tree. Let $S_{st(T)}\subseteq T$ be an $\vec{\alpha}$-tree such that $[S_{st(T)}]\cap \mathcal{X}_{0} =\emptyset$, such a tree exists as $\mathcal{X}_{0}$ is $\vec{\alpha}$-Ramsey null. Suppose that $S_{t}$ has been defined, is an $\vec{\alpha}$-tree, $st(S_{t})=t$ and $[S_{t}]\cap\mathcal{X}_{|t|-|st(T)|} =\emptyset$; note that $|t|-|st(T)|$ is a non-negative integer. For each $s\in S_{t}/t$ such that $|s|=|t|+1$, since $\mathcal{X}_{|s|-|st(T)|}$ is $\vec{\alpha}$-Ramsey null, there exists an $\vec{\alpha}$-tree $S_{s}\subseteq S_{t}$ such that $st(S_{s})=s$ and $[S_{s}]\cap \mathcal{X}_{|s|-|st(T)|}=\emptyset$. Then let
$$\begin{cases}
L_{0} = \{ st(T)\},\\
L_{n+1} =\{t\in \mathcal{AR} : \exists s\in L_{n}, \ s\sqsubseteq t, \ |t|=|s|+1 \ \& \ t\in S_{s}\}. 
\end{cases}$$
Let 
$$ S = \{ t\in\mathcal{AR} : t\sqsubseteq st(T) \} \cup \bigcup_{n=0}^{\infty} L_{n}.$$
It is clear that $S$ is a tree with $st(S)=st(T)$. If $s\in S/st(S)$ then there exists $n\in\mathbb{N}$ such that $s\in L_{n}$. Since $S_{s}$ is an $\vec{\alpha}$-tree $\alpha_{s}\in \leftidx{^{*}}{S_{s}}$. Hence, $\alpha_{s} \in \leftidx{^{*}}{L_{n+1}}\subseteq \leftidx{^{*}}{S}$. So $S$ is an $\vec{\alpha}$-tree. 

If $X\in[S]$ and $n\in \mathbb{N}$ then there exists $t\sqsubseteq X$ such that $|t|=|st(T)|+n$. If $t \sqsubset t' \sqsubseteq X$ then $t'\in S_{r_{|t'|-1}(t')} \subseteq S_{t}$. Since $S_{t}$ is an $\vec{\alpha}$-tree with $st(S_{t})=t$, $X\in [S_{t}]$. Note that $[S_{t}]\cap \mathcal{X}_{n} =[S_{t}]\cap \mathcal{X}_{|t|-|st(T)|} = \emptyset$. Hence, $X\not \in \mathcal{X}_{n}$. Since $X$ was an arbitrary element of $[S]$ and $n$ an arbitrary element of $\mathbb{N}$,
$$[S]\cap \bigcup_{n=0}^{\infty} \mathcal{X}_{n} = \emptyset.$$
Thus, $\bigcup_{n=0}^{\infty} \mathcal{X}_{n}$ is $\vec{\alpha}$-Ramsey null.
\end{proof}
\begin{cor}  If $(\mathcal{R},\le, r)$ satisfies \emph{A.1}, \emph{A.2} and \emph{A.4} and for all $s\in\mathcal{AR}$, $\leftidx{^{*}}{s}=s$ then $\mathcal{R}$ is uncountable. 
\end{cor}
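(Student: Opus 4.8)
The plan is to transcribe, almost verbatim, the proof that $[\mathbb{N}]^{\infty}$ is uncountable from the concrete section, replacing $[\mathbb{N}]^{\infty}$ by $\mathcal{R}$, replacing $[\mathbb{N}]^{<\infty}$ by $\mathcal{AR}$, and replacing the two ingredients used there by their abstract counterparts proved above: Theorem \ref{abstract alpha Ramsey theorem} in place of the concrete $\vec{\alpha}$-Ramsey theorem, and Corollary \ref{abstract sigma ideal} in place of the concrete $\sigma$-ideal statement.

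First I would show that for every $X\in\mathcal{R}$ the singleton $\{X\}$ is $\vec{\alpha}$-Ramsey null. Fix an $\vec{\alpha}$-tree $T$ and apply Theorem \ref{abstract alpha Ramsey theorem} to $\mathcal{X}=\{X\}$, obtaining an $\vec{\alpha}$-tree $S\subseteq T$ with $st(S)=st(T)$ satisfying one of (1)--(3). Conclusion (1) is ruled out once one knows that $[S]$ is infinite, hence not contained in a singleton. Conclusion (3) is ruled out because it would assert that every $\vec{\alpha}$-subtree $S'$ of $S$ contains $X$ as a branch, whereas one can always pass to an $\vec{\alpha}$-subtree of $S$ with the same stem that omits $X$: if $st(S)\not\sqsubseteq X$ there is nothing to do, and otherwise one drops from $S$ the unique immediate successor of $st(S)$ lying along $X$ together with its cone, keeping every other cone; the result is still an $\vec{\alpha}$-tree, because $\alpha_{st(S)}\in\leftidx{^{*}}{S}$ forces infinitely many immediate successors of $st(S)$ in $S$ (Lemma \ref{infinity lemma}) and removing a single finite approximation $t\in\mathcal{AR}$ cannot disturb membership of the \emph{nonstandard} element $\alpha_{st(S)}$, since $\leftidx{^{*}}{t}=t$ forces $\alpha_{st(S)}\neq t$. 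Therefore conclusion (2) holds, i.e.\ $[S]\cap\{X\}=\emptyset$, which is exactly what $\vec{\alpha}$-Ramsey nullness of $\{X\}$ requires for $T$; as $T$ was arbitrary, $\{X\}$ is $\vec{\alpha}$-Ramsey null.

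Next, toward a contradiction, I would assume $\mathcal{R}$ is countable, say $\mathcal{R}=\{X_{n}:n\in\mathbb{N}\}$. By the previous paragraph each $\{X_{n}\}$ is $\vec{\alpha}$-Ramsey null, and by Corollary \ref{abstract sigma ideal} the $\vec{\alpha}$-Ramsey null sets form a $\sigma$-ideal, so $\mathcal{R}=\bigcup_{n=0}^{\infty}\{X_{n}\}$ is itself $\vec{\alpha}$-Ramsey null. Since $\mathcal{AR}$ is an $\vec{\alpha}$-tree (as noted in the Example above), there is then an $\vec{\alpha}$-tree $S\subseteq\mathcal{AR}$ with $[S]\cap\mathcal{R}=\emptyset$; but $[S]\subseteq\mathcal{R}$ by definition, so $[S]=\emptyset$, contradicting the fact that $[S]$ is nonempty, indeed infinite, for every $\vec{\alpha}$-tree $S$.

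The one point that genuinely requires work beyond copying the concrete argument is the fact, used twice above, that $[S]$ is infinite (in particular nonempty) for every $\vec{\alpha}$-tree $S$; the hard part will be verifying this in the abstract setting. The natural route is a fusion: starting from $st(S)$, repeatedly use that every node $s\in S/st(S)$ has $\alpha_{s}\in\leftidx{^{*}}{S}$, so by Lemma \ref{infinity lemma} it has infinitely many immediate successors inside $S$, and thereby build infinitely many pairwise distinct branches, each of which is realized by an element of $\mathcal{R}$ via the identification of $\mathcal{R}$ with a subset of $\mathcal{AR}^{\mathbb{N}}$ afforded by A.1; this is handled exactly as the analogous fact is handled for $\vec{\alpha}$-trees in the concrete section. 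Once that lemma is in hand, the corollary follows by the two-paragraph argument above.
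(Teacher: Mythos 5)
Your proposal is correct and takes essentially the same route as the paper's own proof: singletons $\{X\}$ are shown to be $\vec{\alpha}$-Ramsey null by ruling out alternatives (1) and (3) of Theorem \ref{abstract alpha Ramsey theorem}, and then a countable $\mathcal{R}$ would itself be $\vec{\alpha}$-Ramsey null by Corollary \ref{abstract sigma ideal}, contradicting the fact that $[S]\subseteq\mathcal{R}$ and $[S]\not=\emptyset$ for every $\vec{\alpha}$-tree $S$. The only differences are elaborations rather than a different argument: you spell out an explicit pruning construction to exclude alternative (3) and a fusion sketch for the infinitude of $[S]$, where the paper simply declares the contradiction and appeals to Lemma \ref{infinity lemma}.
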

\begin{proof}
Let $X\in \mathcal{R}$. By Lemma \ref{infinity lemma}, for all $\vec{\alpha}$-trees $T$, $[T]$ is infinite. Thus for all $\vec{\alpha}$-trees $T$, $[T]\not\subseteq \{X\}$. Therefore conclusion (1) in Theorem \ref{abstract alpha Ramsey theorem} is not possible for $\{X\}$. Conclusion (3) is also not possible as it implies that there is an $\vec{\alpha}$-tree $S$ such that for all $\vec{\alpha}$-trees $S'\subseteq S$, $[S']\cap\{X\}\not=\emptyset$, a contradiction. So, Theorem \ref{abstract alpha Ramsey theorem} implies that $\{X\}$ is $\vec{\alpha}$-Ramsey null.

Toward a contradiction suppose that $\mathcal{R}$ is countable. By Corollary \ref{abstract sigma ideal}, $\mathcal{R}$ would be the countable union of $\vec{\alpha}$-Ramsey null sets and hence $\vec{\alpha}$-Ramsey null itself. This is a contradiction since for all $\vec{\alpha}$-trees $S$, $[S]\subseteq \mathcal{R}$. 
\end{proof}
The next Lemma, among other things, shows that for all $\vec{\alpha}$-trees $T$, $[T]$ is $\vec{\alpha}$-Ramsey. 

\begin{lem}\label{abstract intersection alpha trees} 
 Assume that $(\mathcal{R},\le, r)$ satisfies \emph{A.1}, \emph{A.2} and \emph{A.4} and for all $s\in\mathcal{AR}$, $\leftidx{^{*}}{s}=s$. If $T$ and $S$ are $\vec{\alpha}$-trees then $S\cap T$ is an $\vec{\alpha}$-tree if and only if either $st(T)\sqsubseteq st(S)\in T$ or $st(S)\sqsubseteq st(T)\in S$.
\end{lem}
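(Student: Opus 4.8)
The plan is to follow the proof of Lemma~\ref{intersection alpha trees} essentially verbatim, replacing the two ingredients that were specific to $[\mathbb{N}]^{<\infty}$ by their abstract counterparts: the identity $\leftidx{^{*}}{(S\cap T)}=\leftidx{^{*}}{S}\cap\leftidx{^{*}}{T}$, which is Proposition~\ref{star transform}(7), and the assertion ``an $\vec{\alpha}$-tree is infinite'', which in this setting comes from Lemma~\ref{infinity lemma} and is exactly where the standing hypotheses A.1, A.2, A.4 and $\leftidx{^{*}}{s}=s$ (together with the nontriviality convention on $(\mathcal{R},\le,r)$) are used. For the latter: if $R$ is an $\vec{\alpha}$-tree then $st(R)\in R/st(R)$, so $\alpha_{st(R)}\in\leftidx{^{*}}{R}$ with $\leftidx{^{*}}{}$-length $|st(R)|+1$, and Lemma~\ref{infinity lemma} forces $R$ to be infinite.

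For the forward implication I would argue by contraposition. If $st(S)$ and $st(T)$ are $\sqsubseteq$-incomparable, then every element of $S\cap T$ is $\sqsubseteq$-below both stems, hence $S\cap T\subseteq\{u\in\mathcal{AR}:u\sqsubseteq st(S)\}$, which is finite by A.1, so $S\cap T$ is not an $\vec{\alpha}$-tree. Thus the stems are comparable, say $st(S)\sqsubseteq st(T)$. If in addition $st(T)\notin S$, then $S\cap T\subseteq\{u\in\mathcal{AR}:u\sqsubseteq st(T)\}$, again finite, so $S\cap T$ is not an $\vec{\alpha}$-tree. Therefore, if $S\cap T$ is an $\vec{\alpha}$-tree and $st(S)\sqsubseteq st(T)$, then $st(T)\in S$; by symmetry, if $st(T)\sqsubseteq st(S)$ then $st(S)\in T$. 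Either way one of the two displayed alternatives holds.

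For the converse, assume $st(S)\sqsubseteq st(T)\in S$ (the case $st(T)\sqsubseteq st(S)\in T$ is symmetric). Since $st(T)\in S$ and $st(S)\sqsubseteq st(T)$, we have $st(T)\in S/st(S)$, so $\alpha_{st(T)}\in\leftidx{^{*}}{S}$; and $st(T)\in T/st(T)$, so $\alpha_{st(T)}\in\leftidx{^{*}}{T}$. Hence $\alpha_{st(T)}\in\leftidx{^{*}}{S}\cap\leftidx{^{*}}{T}=\leftidx{^{*}}{(S\cap T)}$, and Lemma~\ref{infinity lemma} then yields infinitely many $u\in\mathcal{AR}_{|st(T)|+1}$ with $st(T)\sqsubseteq u\in S\cap T$; in particular $(S\cap T)/st(T)\neq\emptyset$. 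This lets me identify $st(S\cap T)=st(T)$: the element $st(T)$ is $\sqsubseteq$-comparable to every element of $S\cap T\subseteq T$, while no proper $\sqsubseteq$-extension of $st(T)$ lying in $S\cap T$ can be comparable to two distinct one-step extensions of $st(T)$ in $S\cap T$, so $st(T)$ is $\sqsubseteq$-maximal with the comparability property. Finally, for any $s\in (S\cap T)/st(T)$ we get $st(S)\sqsubseteq s$ and $st(T)\sqsubseteq s$, hence $s\in S/st(S)$ and $s\in T/st(T)$, so $\alpha_s\in\leftidx{^{*}}{S}\cap\leftidx{^{*}}{T}=\leftidx{^{*}}{(S\cap T)}$; combined with $st(S\cap T)=st(T)$ and $(S\cap T)/st(T)\neq\emptyset$, this shows $S\cap T$ is an $\vec{\alpha}$-tree.

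The step I expect to require the most care is the identification $st(S\cap T)=st(T)$ in the converse: unlike the concrete case, where the stem of an intersection of trees is transparent, here one must explicitly produce an element of $S\cap T$ strictly extending $st(T)$ and rule out any deeper common refinement, and both tasks route through Lemma~\ref{infinity lemma} and hence through the standing hypotheses on $(\mathcal{R},\le,r)$. Everything else is a routine transcription of the proof of Lemma~\ref{intersection alpha trees}.
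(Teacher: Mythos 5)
Your proposal is correct and follows essentially the same route as the paper: the forward direction by contraposition using finiteness of $\{u\in\mathcal{AR}:u\sqsubseteq st(S)\}$ together with the fact (via Lemma \ref{infinity lemma}) that every $\vec{\alpha}$-tree is infinite, and the converse via $\leftidx{^{*}}{S}\cap\leftidx{^{*}}{T}=\leftidx{^{*}}{(S\cap T)}$. The only difference is that you explicitly justify the identification $st(S\cap T)=st(T)$ (branching at $st(T)$ from the infinitely many one-step extensions), a point the paper's proof passes over silently when it writes $(S\cap T)/st(S\cap T)=(S\cap T)/st(T)$; this is a welcome bit of extra care, not a different argument.
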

\begin{proof} Let $S$ and $T$ be $\vec{\alpha}$-trees. If $st(T)$ and $st(S)$ are not $\sqsubseteq$-comparable then $S\cap T =\{t\in\mathcal{AR} : t\sqsubseteq st(S) \ \& \ t\sqsubseteq st(T)\}$ and can not be an $\vec{\alpha}$-tree as it is a finite set (Note that by Lemma \ref{infinity lemma} every $\vec{\alpha}$-tree must be infinite). By contrapostive, if $S\cap T$ is an $\vec{\alpha}$-tree then either $st(S)\sqsubseteq st(T)$ or $st(T)\sqsubseteq st(S)$.  If $st(S)\sqsubseteq st(T)$ and $st(T)\not\in S$ then $S\cap T\subseteq \{t\in\mathcal{AR} : t\sqsubseteq st(T)\}$ can not be an $\vec{\alpha}$-tree as it is a finite set. Hence, if $S\cap T$ is an $\vec{\alpha}$-tree and $st(S)\sqsubseteq st(T)$ then $st(T) \in S$. Likewise, if $S\cap T$ is an $\vec{\alpha}$-tree and $st(T)\sqsubseteq st(S)$ then $st(S)\in T$. Altogether we have shown that if $S\cap T$ is an $\vec{\alpha}$-tree then either $st(S)\sqsubseteq st(T)\in S$ or $st(T)\sqsubseteq st(s)\in T$.

 Let $S$ and $T$ be $\vec{\alpha}$-trees and suppose $st(S)\sqsubseteq st(T)\in S$. Then $ (S\cap T)/st(S \cap T) =(S\cap T)/st(T) = S/st(T) \cap T/st(T) = S/st(S) \cap T/st(T)\not=\emptyset$.  Since $S$ and $T$ are both $\vec{\alpha}$-trees, for each $s\in (S\cap T)/st(S\cap T)$, $\alpha_{s} \in \leftidx{^{*}}{S} \cap \leftidx{^{*}}{T}=\leftidx{^{*}}{(S\cap T)}$. That is, $S\cap T$ is an $\vec{\alpha}$-tree. By symmetry, if either $st(T)\sqsubseteq st(S)\in T$ or $st(S)\sqsubseteq st(T)\in S$ then $S\cap T$ is an $\vec{\alpha}$-tree.
\end{proof}

The $\vec{\alpha}$-Ramsey and $\vec{\alpha}$-Ramsey null sets can be completely characterized in terms of topological notions with respect to a space on $\mathcal{R}$ generated from the $\vec{\alpha}$-trees. 

\begin{lem}
 Assume that $(\mathcal{R},\le, r)$ satisfies \emph{A.1}, \emph{A.2} and \emph{A.4} and for all $s\in\mathcal{AR}$, $\leftidx{^{*}}{s}=s$.
The collection $\{[T]: \mbox{$T$ is an $\vec{\alpha}$-tree}\}$ is a basis for a topology on $\mathcal{R}$.
\end{lem}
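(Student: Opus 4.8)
The plan is to transcribe, in the abstract setting, the proof of the corresponding lemma for the Ellentuck space. A collection of subsets of $\mathcal{R}$ is a basis for a topology precisely when its union is $\mathcal{R}$ and whenever two members of the collection meet, every point of the intersection lies in some member contained in that intersection. The covering condition is immediate: by the Example following the definition of an $\vec{\alpha}$-tree, $\mathcal{AR}$ itself is an $\vec{\alpha}$-tree, and $[\mathcal{AR}]=\mathcal{R}$ because every initial segment of every $X\in\mathcal{R}$ lies in $\mathcal{AR}$ by the very definition of $\mathcal{AR}$. So it remains only to treat the refinement condition, and as in the concrete case this reduces to the assertion that if $S$ and $T$ are $\vec{\alpha}$-trees with $[S]\cap[T]\not=\emptyset$ then $S\cap T$ is an $\vec{\alpha}$-tree (and contains the relevant point in its ``$[\cdot]$'').

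For this, let $S$ and $T$ be $\vec{\alpha}$-trees and let $X\in[S]\cap[T]$; I would show $X\in[S\cap T]\subseteq[S]\cap[T]$ with $S\cap T$ an $\vec{\alpha}$-tree. Since $X\in[S]$ we have $st(S)\sqsubseteq X$, and since $X\in[T]$ we have $st(T)\sqsubseteq X$. Any two initial segments of the same $X$ are $\sqsubseteq$-comparable (this uses Axiom A.1, via the identification of an element of $\mathcal{AR}$ with an $r_i$-sequence, so that $s=r_i(X)$ and $t=r_j(X)$ with $i\le j$ give $s\sqsubseteq t$), so without loss of generality $st(S)\sqsubseteq st(T)\sqsubseteq X$. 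Now $st(T)$ is an initial segment of $X$ and $X\in[S]$, so by the definition of $[S]$ we get $st(T)\in S$; hence $st(S)\sqsubseteq st(T)\in S$, and Lemma \ref{abstract intersection alpha trees} shows that $S\cap T$ is an $\vec{\alpha}$-tree. Finally every initial segment of $X$ lies in both $S$ and $T$ (again since $X\in[S]\cap[T]$), hence in $S\cap T$, so $X\in[S\cap T]$; and $[S\cap T]\subseteq[S]\cap[T]$ is immediate from the definition of $[\cdot]$. This produces the required basic open set around $X$.

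The one genuinely substantive ingredient is Lemma \ref{abstract intersection alpha trees}, which is already available; given that, the argument is routine. The only point requiring a little care — and the closest thing to an obstacle — is that the $\sqsubseteq$-comparability of $st(S)$ and $st(T)$ must be \emph{derived} from their both being initial segments of the common point $X$ (using A.1 and the abstract $\sqsubseteq$-machinery set up after Axiom A.2), rather than assumed as in some naive accounts; once this is observed, the remaining bookkeeping with $[\cdot]$ is straightforward.
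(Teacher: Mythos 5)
Your proof is correct and follows essentially the same route as the paper's: reduce the basis condition to showing that $[S]\cap[T]\not=\emptyset$ forces $S\cap T$ to be an $\vec{\alpha}$-tree, derive $\sqsubseteq$-comparability of the stems from their being initial segments of a common $X$, and invoke Lemma \ref{abstract intersection alpha trees}. The only difference is that you spell out the covering condition ($[\mathcal{AR}]=\mathcal{R}$) and the inclusions $X\in[S\cap T]\subseteq[S]\cap[T]$, which the paper leaves implicit in its ``it is enough to show'' reduction.
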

\begin{proof}
It is enough to show that, if $S$ and $T$ are $\vec{\alpha}$-trees and $[S]\cap[T]\not=\emptyset$ then $S\cap T$ is an $\vec{\alpha}$-tree.
Suppose $S$ and $T$ are $\vec{\alpha}$-trees and $X\in [S]\cap[T]$. Then either $st(T)\sqsubseteq st(S) \sqsubseteq X$ or $st(S)\sqsubseteq st(T) \sqsubseteq X$. If $st(S)\sqsubseteq X$ then $st(S) \in T$ since $X\in[T]$. Likewise, if $st(T)\sqsubseteq X$ then $st(T) \in S$. So, either $st(S)\sqsubseteq st(T)\in S$ or $st(T)\sqsubseteq st(S)\in T$. By Lemma \ref{abstract intersection alpha trees}, $S\cap T$ is an $\vec{\alpha}$-tree.
\end{proof}

\begin{defn} 
 Assume that $(\mathcal{R},\le, r)$ satisfies {A.1}, {A.2} and {A.4} and for all $s\in\mathcal{AR}$, $\leftidx{^{*}}{s}=s$.
The topology on $\mathcal{R}$ generated by $\{[T] : \mbox{$T$ is an $\vec{\alpha}$-tree}\}$ is called \emph{the $\vec{\alpha}$-Ellentuck topology}. We say that a subset of $\mathcal{R}$ is \emph{$\vec{\alpha}$-open} if it is open with respect the $\vec{\alpha}$-Ellentuck topology.
\end{defn}

\begin{rem}
We leave it to the interested reader to show that, if $(\mathcal{R},\le, r)$ satisfies {A.1}, {A.2} and {A.4} and for all $s\in\mathcal{AR}$, $\leftidx{^{*}}{s}=s$, then the \emph{$\vec{\alpha}$-Ellentuck space} is a zero-dimensional Baire space on $\mathcal{R}$. 
\end{rem}

\begin{cor}\label{abstract open is ramsey} 
 Assume that $(\mathcal{R},\le, r)$ satisfies \emph{A.1}, \emph{A.2} and \emph{A.4} and for all $s\in\mathcal{AR}$, $\leftidx{^{*}}{s}=s$.
Every $\vec{\alpha}$-open set is $\vec{\alpha}$-Ramsey.
\end{cor}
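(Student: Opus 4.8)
The plan is to run exactly the argument used for Corollary \ref{open is ramsey} in the first section, now with the abstract machinery in place. I would prove the contrapositive: if $\mathcal{X}\subseteq\mathcal{R}$ is \emph{not} $\vec{\alpha}$-Ramsey, then $\mathcal{X}$ contains a point that lies in no basic $\vec{\alpha}$-open set contained in $\mathcal{X}$, hence $\mathcal{X}$ is not $\vec{\alpha}$-open. Precisely, I would show: if $\mathcal{X}$ is not $\vec{\alpha}$-Ramsey then there is $X\in\mathcal{X}$ such that for every $\vec{\alpha}$-tree $S'$ with $X\in[S']$ we have $[S']\not\subseteq\mathcal{X}$. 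Since the sets $[T]$ for $\vec{\alpha}$-trees $T$ form a basis for the $\vec{\alpha}$-Ellentuck topology, this says $X$ is a point of $\mathcal{X}$ not in its interior, so $\mathcal{X}$ is not open; contrapositively every $\vec{\alpha}$-open set is $\vec{\alpha}$-Ramsey.

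To carry this out, first unfold the hypothesis: if $\mathcal{X}$ is not $\vec{\alpha}$-Ramsey there is an $\vec{\alpha}$-tree $T$ such that for every $\vec{\alpha}$-tree $S\subseteq T$ with $st(S)=st(T)$ one has $[S]\not\subseteq\mathcal{X}$ and $[S]\cap\mathcal{X}\neq\emptyset$. In particular conclusions (1) and (2) of Theorem \ref{abstract alpha Ramsey theorem} fail for $\mathcal{X}$ and $T$, so that theorem yields an $\vec{\alpha}$-tree $S\subseteq T$ with $st(S)=st(T)$ for which conclusion (3) holds: every $\vec{\alpha}$-tree $S'\subseteq S$ satisfies $[S']\not\subseteq\mathcal{X}$ and $[S']\cap\mathcal{X}\neq\emptyset$. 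Taking $S'=S$ gives $[S]\cap\mathcal{X}\neq\emptyset$; fix any $X\in[S]\cap\mathcal{X}$.

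Finally I would check that this $X$ works. Let $S'$ be any $\vec{\alpha}$-tree with $X\in[S']$, so $X\in[S]\cap[S']$. Since both $st(S)$ and $st(S')$ are initial segments of $X$ they are $\sqsubseteq$-comparable (using A.1(c)), and because $X\in[S']$ every initial segment of $X$ — in particular $st(S)$ — lies in $S'$, while because $X\in[S]$ we get $st(S')\in S$. Hence either $st(S)\sqsubseteq st(S')\in S$ or $st(S')\sqsubseteq st(S)\in S'$, so by Lemma \ref{abstract intersection alpha trees} $S'\cap S$ is an $\vec{\alpha}$-tree. As $S'\cap S\subseteq S$, conclusion (3) above gives $[S'\cap S]\not\subseteq\mathcal{X}$, and since $[S'\cap S]\subseteq[S']$ this forces $[S']\not\subseteq\mathcal{X}$, as required. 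I do not expect any real obstacle here: the two nontrivial inputs (the trichotomy of Theorem \ref{abstract alpha Ramsey theorem} and the intersection lemma \ref{abstract intersection alpha trees}) are already established under the standing hypotheses A.1, A.2, A.4 and $\leftidx{^{*}}{s}=s$, so the argument is pure bookkeeping; the only point deserving a line of care is the stem-comparability step, which goes through exactly because $X$ is a common branch of $S$ and $S'$.
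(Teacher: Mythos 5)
Your proposal is correct and is essentially the paper's own argument: prove the contrapositive via Theorem \ref{abstract alpha Ramsey theorem} (alternatives (1),(2) fail, so (3) gives a tree $S$), pick $X\in[S]\cap\mathcal{X}$, and use stem-comparability along the common branch $X$ together with Lemma \ref{abstract intersection alpha trees} to conclude $[S']\not\subseteq\mathcal{X}$ for every basic neighborhood $[S']$ of $X$. The only cosmetic difference is your explicit appeal to A.1(c) for comparability of initial segments, which the paper leaves implicit.
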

\begin{proof} We show that, if $\mathcal{X}\subseteq\mathcal{R}$ is not $\vec{\alpha}$-Ramsey then there exists $X\in \mathcal{X}$ such that for all $\vec{\alpha}$-trees $S$, if $X\in [S]$ then  $[S]\not\subseteq \mathcal{X}$. In other words, if $\mathcal{X}$ is not $\vec{\alpha}$-Ramsey then it contains a point not in its interior with respect to the $\vec{\alpha}$-Ellentuck topology. The result follows by taking the contrapositve of this statement.

Suppose that $\mathcal{X}$ is not $\vec{\alpha}$-Ramsey. Then there exists an $\vec{\alpha}$-tree $T$ such that for each $\vec{\alpha}$-tree $S\subseteq T$ with $st(S)=st(T)$, $[S]\not\subseteq \mathcal{X}$ and $[S]\cap \mathcal{X}\not = \emptyset$. By Theorem \ref{abstract alpha Ramsey theorem} there is an $\vec{\alpha}$-tree $S\subseteq T$ with $st(S)=st(T)$ such that for all $\vec{\alpha}$-trees $S'\subseteq S$, $[S']\not \subseteq \mathcal{X}$ and $[S']\cap \mathcal{X}\not=\emptyset$. 

Since $S\subseteq S$, $[S]\cap \mathcal{X} \not =\emptyset$. Let $X$ be any element of $[S]\cap \mathcal{X}$. If $S'$ is an $\vec{\alpha}$-tree and $X\in[S']$ then $X\in[S]\cap [S']$. So either $st(S')\sqsubseteq st(S) \sqsubseteq X$ or $st(S)\sqsubseteq st(S') \sqsubseteq X$. If $st(S)\sqsubseteq X$ then $st(S) \in S'$ since $X\in[S']$. Likewise, if $st(S')\sqsubseteq X$ then $st(S') \in S$. By Lemma \ref{abstract intersection alpha trees}, $S'\cap S$ is an $\vec{\alpha}$-tree. Since $S'\cap S\subseteq S$, the previous paragraph shows that $[S'\cap S]\not \subseteq \mathcal{X}$ and $[S'\cap S] \cap \mathcal{X} \not = \emptyset$. In particular, $[S']\not \subseteq \mathcal{X}$ as $[S'\cap S]\subseteq [S']$.
\end{proof}

\begin{defn}
$\mathcal{X}\subseteq \mathcal{R}$ \emph{is $\vec{\alpha}$-nowhere dense/ is $\vec{\alpha}$-meager/ has the $\vec{\alpha}$-Baire property} if it is nowhere dense/ is meager/ has the Baire property with respect to the $\vec{\alpha}$-Ellentuck topology. 

We say that $(\mathcal{R},\vec{\alpha},\le,r)$ is an \emph{$\vec{\alpha}$-Ramsey space} if the collection of $\vec{\alpha}$-Ramsey sets coincides with the $\sigma$-algebra of sets with the $\vec{\alpha}$-Baire property and the collection of $\vec{\alpha}$-Ramsey null sets coincides with the $\sigma$-ideal of $\vec{\alpha}$-meager sets.
\end{defn}

\begin{thm}[The Abstract $\vec{\alpha}$-Ellentuck Theorem] \label{abstract alpha-Ellentuck theorem}
 If $(\mathcal{R},\le, r)$ satisfies \emph{A.1}, \emph{A.2} and \emph{A.4} and for all $s\in\mathcal{AR}$, $\leftidx{^{*}}{s}=s$ then $(\mathcal{R},\vec{\alpha},\le,r)$ is an $\vec{\alpha}$-Ramsey space.

\end{thm}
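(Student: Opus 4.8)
The plan is to transcribe, essentially line for line, the proof of the non-abstract $\vec{\alpha}$-Ellentuck Theorem, since every tool used there has already been re-established in the present setting: Theorem \ref{abstract alpha Ramsey theorem} supplies the trichotomy for an arbitrary $\mathcal{X}\subseteq\mathcal{R}$, Corollary \ref{abstract sigma ideal} says the $\vec{\alpha}$-Ramsey null sets form a $\sigma$-ideal, Corollary \ref{abstract open is ramsey} says $\vec{\alpha}$-open sets are $\vec{\alpha}$-Ramsey, Lemma \ref{abstract intersection alpha trees} controls intersections of $\vec{\alpha}$-trees, and Lemma \ref{infinity lemma} guarantees every $[T]$ is infinite and in particular nonempty. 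Throughout I assume, as in the statement, that $(\mathcal{R},\le,r)$ satisfies A.1, A.2, A.4 and that $\leftidx{^{*}}{s}=s$ for all $s\in\mathcal{AR}$, so that all these cited results apply.

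First I would show that the $\sigma$-ideal of $\vec{\alpha}$-meager sets coincides with the collection of $\vec{\alpha}$-Ramsey null sets. One direction is immediate from the definitions: a $\vec{\alpha}$-Ramsey null set is $\vec{\alpha}$-nowhere dense, hence $\vec{\alpha}$-meager. For the converse, let $\mathcal{X}$ be $\vec{\alpha}$-nowhere dense and $T$ an $\vec{\alpha}$-tree, and apply Theorem \ref{abstract alpha Ramsey theorem} to $\mathcal{X}$. Conclusion (1) is impossible, since it would place a nonempty basic open set $[S]$ inside $\mathcal{X}$, contradicting $\vec{\alpha}$-nowhere density; conclusion (3) is impossible, since it would force every $\vec{\alpha}$-subtree $S'\subseteq S$ to meet $\mathcal{X}$, again contradicting $\vec{\alpha}$-nowhere density (here I use that $\{[T]:T\text{ an }\vec{\alpha}\text{-tree}\}$ is a genuine basis, as established in the preceding lemma). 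Hence conclusion (2) holds, so $\mathcal{X}$ is $\vec{\alpha}$-Ramsey null. A $\vec{\alpha}$-meager set is then a countable union of $\vec{\alpha}$-nowhere dense, hence $\vec{\alpha}$-Ramsey null, sets, and is therefore $\vec{\alpha}$-Ramsey null by Corollary \ref{abstract sigma ideal}; conversely a $\vec{\alpha}$-Ramsey null set is $\vec{\alpha}$-nowhere dense, hence $\vec{\alpha}$-meager.

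Next I would prove that the $\sigma$-algebra of sets with the $\vec{\alpha}$-Baire property coincides with the collection of $\vec{\alpha}$-Ramsey sets. For one inclusion, let $\mathcal{X}$ have the $\vec{\alpha}$-Baire property, say $\mathcal{X}=\mathcal{O}\,\Delta\,\mathcal{M}$ with $\mathcal{O}$ $\vec{\alpha}$-open and $\mathcal{M}$ $\vec{\alpha}$-meager; by Corollary \ref{abstract open is ramsey} and the previous paragraph, $\mathcal{O}$ is $\vec{\alpha}$-Ramsey and $\mathcal{M}$ is $\vec{\alpha}$-Ramsey null. Given an $\vec{\alpha}$-tree $T$, first take an $\vec{\alpha}$-tree $S\subseteq T$ with $st(S)=st(T)$ and $[S]\cap\mathcal{M}=\emptyset$, then take $S'\subseteq S$ with $st(S')=st(S)$ and either $[S']\subseteq\mathcal{O}$ or $[S']\cap\mathcal{O}=\emptyset$; since $[S']\subseteq[S]$ misses $\mathcal{M}$, in the two cases one gets $[S']\subseteq\mathcal{O}\,\Delta\,\mathcal{M}=\mathcal{X}$ or $[S']\cap(\mathcal{O}\,\Delta\,\mathcal{M})=\emptyset$, so $\mathcal{X}$ is $\vec{\alpha}$-Ramsey. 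For the reverse inclusion, let $\mathcal{X}$ be $\vec{\alpha}$-Ramsey and let $\mathcal{O}$ be its interior in the $\vec{\alpha}$-Ellentuck topology. I claim $\mathcal{X}\setminus\mathcal{O}$ is $\vec{\alpha}$-Ramsey null: given an $\vec{\alpha}$-tree $T$, apply Theorem \ref{abstract alpha Ramsey theorem} to $\mathcal{X}$; since $\mathcal{X}$ is $\vec{\alpha}$-Ramsey, conclusion (3) fails, so there is an $\vec{\alpha}$-tree $S\subseteq T$ with $st(S)=st(T)$ and either $[S]\subseteq\mathcal{X}$, whence $[S]\subseteq\mathcal{O}$ because $[S]$ is a $\vec{\alpha}$-open subset of $\mathcal{X}$, so $[S]\cap(\mathcal{X}\setminus\mathcal{O})=\emptyset$, or $[S]\cap\mathcal{X}=\emptyset$, so again $[S]\cap(\mathcal{X}\setminus\mathcal{O})=\emptyset$. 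Thus $\mathcal{X}\setminus\mathcal{O}$ is $\vec{\alpha}$-Ramsey null, hence $\vec{\alpha}$-meager by the second paragraph, and $\mathcal{X}=\mathcal{O}\,\Delta\,(\mathcal{X}\setminus\mathcal{O})$ has the $\vec{\alpha}$-Baire property. Combining the two inclusions and the two-sided identification of the ideals completes the proof.

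I do not expect a genuine obstacle here: all the real work is done by Theorem \ref{abstract alpha Ramsey theorem} and Corollaries \ref{abstract sigma ideal} and \ref{abstract open is ramsey} together with Lemmas \ref{infinity lemma} and \ref{abstract intersection alpha trees}, each of which already carries the A.1/A.2/A.4 hypotheses, so the argument is a routine bookkeeping exercise identical in structure to the concrete $\vec{\alpha}$-Ellentuck Theorem. The only step deserving slight care is "$\vec{\alpha}$-nowhere dense implies $\vec{\alpha}$-Ramsey null": one must confirm that nowhere density relative to the basis $\{[T]\}$ really does exclude both conclusion (1) and conclusion (3) of Theorem \ref{abstract alpha Ramsey theorem}, which it does precisely because that collection is a basis and each $[S]$ is nonempty.
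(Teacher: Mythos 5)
Your proposal is correct and follows the paper's own proof essentially line for line: the same two-part decomposition ($\vec{\alpha}$-meager $=$ $\vec{\alpha}$-Ramsey null via Theorem \ref{abstract alpha Ramsey theorem} and Corollary \ref{abstract sigma ideal}, then $\vec{\alpha}$-Baire property $=$ $\vec{\alpha}$-Ramsey via Corollary \ref{abstract open is ramsey} and the interior argument). The only deviation is in the last step, where you derive that $\mathcal{X}\setminus\mathcal{O}$ is $\vec{\alpha}$-Ramsey null directly from the Ramsey property of $\mathcal{X}$ together with the openness of $[S]$, which is slightly cleaner than the paper's intermediate assertion that $\mathcal{X}\setminus\mathcal{O}$ is itself $\vec{\alpha}$-Ramsey.
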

\begin{proof} First note that it is clear from the definitions that every $\vec{\alpha}$-Ramsey null set is $\vec{\alpha}$-nowhere dense. Let $\mathcal{X}\subseteq \mathcal{R}$ be $\vec{\alpha}$-nowhere dense and $T$ be an $\vec{\alpha}$-tree. Note that conclusion (1) in Theorem \ref{abstract alpha Ramsey theorem} is not possible for $\mathcal{X}$ because otherwise it would not be $\vec{\alpha}$-nowhere dense. Similarly, conclusion (3) in Theorem \ref{abstract alpha Ramsey theorem} is not possible for $\mathcal{X}$ because otherwise it would not be $\vec{\alpha}$-nowhere dense. So, by Theorem \ref{abstract alpha Ramsey theorem}, there exists an $\vec{\alpha}$-tree $S\subseteq T$ with $st(S)=s(T)$ such that $[S]\cap\mathcal{X}=\emptyset$. Hence, $\mathcal{X}$ is $\vec{\alpha}$-Ramsey null. 

If $\mathcal{X}$ is $\vec{\alpha}$-meager then $\mathcal{X}$ is the countable union of $\vec{\alpha}$-nowhere dense sets. The previous paragraph and Corollary \ref{abstract sigma ideal} imply that $\mathcal{X}$ is $\vec{\alpha}$-Ramsey null. On the other hand, if $\mathcal{X}$ is $\vec{\alpha}$-Ramsey null then it is also $\vec{\alpha}$-meager since the last paragraph implies that it is $\vec{\alpha}$-nowhere dense. Therefore, the collection of $\vec{\alpha}$-meager sets coincides with the $\sigma$-ideal of $\vec{\alpha}$-Ramsey null sets. 

Suppose that $\mathcal{X}$ has the $\vec{\alpha}$-Baire property. Then there is an $\vec{\alpha}$-open set $\mathcal{O}$ and an $\vec{\alpha}$-meager set $\mathcal{M}$ such that $\mathcal{X} = \mathcal{O} \Delta \mathcal{M}$. Corollary \ref{abstract open is ramsey} and the previous paragraph imply that $\mathcal{O}$ and $\mathcal{M}$ are $\vec{\alpha}$-Ramsey and $\vec{\alpha}$-Ramsey null, respectively. If $T$ is an $\vec{\alpha}$-tree then there exists $\vec{\alpha}$-trees $S'\subseteq S\subseteq T$ with $st(S')=st(S)=st(T)$ such that $[S]\cap \mathcal{M} =\emptyset$ and either $[S']\subseteq \mathcal{O}$ or $[S']\cap \mathcal{O}=\emptyset$. So either, $[S'] \cap (\mathcal{O}\Delta \mathcal{M})=\emptyset$ or $[S']\subseteq \mathcal{O}\Delta \mathcal{M}$. Hence, $\mathcal{X}$ is $\vec{\alpha}$-Ramsey.

Let $\mathcal{X}$ be an $\vec{\alpha}$-Ramsey set. Let $\mathcal{O}$ be the interior of $\mathcal{X}$ with respect to the $\vec{\alpha}$-Ellentuck topology.  $\mathcal{O}$ is $\vec{\alpha}$-Ramsey by Corollary \ref{abstract open is ramsey}. So, $\mathcal{X} \setminus \mathcal{O}$ is $\vec{\alpha}$-Ramsey by Corollary \ref{abstract sigma ideal}. So, for all $\vec{\alpha}$-trees $T$ there exists an $\vec{\alpha}$-tree $S\subseteq T$ such that either $[S]\subseteq \mathcal{X}\setminus \mathcal{O}$ or $[S]\cap (\mathcal{X}\setminus\mathcal{O})=\emptyset$. $[S]\subseteq \mathcal{X}\setminus \mathcal{O}$ is not possible because it would mean that $[S]\subseteq\mathcal{O}$ as it is an $\vec{\alpha}$-open set contained in $\mathcal{X}$. Thus, $[S]\cap (\mathcal{X}\setminus\mathcal{O})=\emptyset$. So $\mathcal{X}\setminus\mathcal{O}$ is $\vec{\alpha}$-Ramsey null. By the second paragraph of this proof, $\mathcal{X}\setminus \mathcal{O}$ is $\vec{\alpha}$-meager. $\mathcal{X}$ has the $\vec{\alpha}$-Baire property since $\mathcal{X} = \mathcal{O} \Delta (\mathcal{X}\setminus \mathcal{O})$. 

The previous two paragraphs show that the collection of sets with the $\vec{\alpha}$-Baire property coincides with the $\sigma$-algebra of $\vec{\alpha}$-Ramsey sets. 
\end{proof}
\subsection{Abstract Ultra-Ramsey Theory} In this subsection we extend ultra-Ramsey Theory to the abstract setting using the abstract $\vec{\alpha}$-Ellentuck Theorem. Recall that, an \emph{ultrafilter $\mathcal{U}$ on $X$} is a subset of $\wp(X)$ such that for all subsets $A$ and $B$ of $X$,
\begin{multicols}{2}
\begin{enumerate}
\item $\emptyset \not \in \mathcal{U} \ \& \ X\in \mathcal{U}$,
\item $A\cup B \in \mathcal{U} \Leftrightarrow A\in U \mbox{ or } B\in U$,
\item $A \cap B \in \mathcal{U} \Leftrightarrow A \in U \ \& \ B\in U$,
\item $A\in \mathcal{U} \Leftrightarrow X\setminus A \not \in \mathcal{U}$.
\end{enumerate}
\end{multicols} 

\begin{prop}\label{need in milliken example}
 Assume that $(\mathcal{R},\le, r)$ satisfies \emph{A.1}, \emph{A.2} and \emph{A.4} and for all $s\in\mathcal{AR}$, $\leftidx{^{*}}{s}=s$. Suppose that the $\mathfrak{c}^{+}$-enlarging property holds, $s\in\mathcal{AR}$ and $X=\{t\in\mathcal{AR}_{|s|+1} : s\sqsubseteq t\}$. $\mathcal{U}$ is an ultrafilter on $X$ if and only if there exists $\beta \in \leftidx{^{*}}{X}$ such that 
$$\mathcal{U} = \{ A\subseteq X : \beta \in \leftidx{^{*}}{A}\}.$$
 Moreover, $\mathcal{U}$ is non-principal if and only if $\beta \not \in \mathcal{AR}_{|s|+1}$.
\end{prop}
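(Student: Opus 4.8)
The plan is to mimic the earlier correspondence between ultrafilters on $\mathbb{N}$ and hypernatural numbers, with $\mathbb{N}$ replaced by the set $X=\{t\in\mathcal{AR}_{|s|+1}:s\sqsubseteq t\}$. The single new preliminary point is that $X$ is a countably infinite set: it is infinite by Lemma \ref{infinity lemma} (equivalently, by the standing Remark, since for any $Y$ with $s\sqsubseteq Y$ the infinite set $\{t\in\mathcal{AR}:|t|=|s|+1,\ s\sqsubseteq t\sqsubseteq Y\}$ is contained in $X$), and it is countable since $\mathcal{AR}$ is. Consequently $|\wp(X)|=\mathfrak{c}$, so any ultrafilter $\mathcal{U}$ on $X$ has $|\mathcal{U}|\le\mathfrak{c}$; this is exactly what lets us invoke the $\mathfrak{c}^{+}$-enlarging property.

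For the ``if'' direction, assume $\beta\in\leftidx{^{*}}{X}$ and put $\mathcal{V}=\{A\subseteq X:\beta\in\leftidx{^{*}}{A}\}$. Parts (6), (7), (8) of Proposition \ref{star transform} give $\leftidx{^{*}}{(A\cup B)}=\leftidx{^{*}}{A}\cup\leftidx{^{*}}{B}$, $\leftidx{^{*}}{(A\cap B)}=\leftidx{^{*}}{A}\cap\leftidx{^{*}}{B}$ and $\leftidx{^{*}}{(X\setminus A)}=\leftidx{^{*}}{X}\setminus\leftidx{^{*}}{A}$ for all $A,B\subseteq X$, which translate at once into ultrafilter clauses (2), (3), (4) for $\mathcal{V}$; clause (1) holds because $\beta\notin\leftidx{^{*}}{\emptyset}=\emptyset$ while $\beta\in\leftidx{^{*}}{X}$ by hypothesis. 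For the ``only if'' direction, let $\mathcal{U}$ be an ultrafilter on $X$; it has the finite intersection property and $|\mathcal{U}|\le\mathfrak{c}$, so the $\mathfrak{c}^{+}$-enlarging property provides $\beta\in\bigcap_{A\in\mathcal{U}}\leftidx{^{*}}{A}$, and $\beta\in\leftidx{^{*}}{X}$ since $X\in\mathcal{U}$. Given any $A\subseteq X$, we have $X=A\cup(X\setminus A)\in\mathcal{U}$, hence $A\in\mathcal{U}$ or $X\setminus A\in\mathcal{U}$; the second case is incompatible with $\beta\in\leftidx{^{*}}{A}$, since then $\beta\in\leftidx{^{*}}{A}\cap\leftidx{^{*}}{(X\setminus A)}=\leftidx{^{*}}{(A\cap(X\setminus A))}=\leftidx{^{*}}{\emptyset}=\emptyset$. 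Thus $\{A\subseteq X:\beta\in\leftidx{^{*}}{A}\}\subseteq\mathcal{U}$, and the reverse inclusion is immediate from the choice of $\beta$, so $\mathcal{U}=\{A\subseteq X:\beta\in\leftidx{^{*}}{A}\}$.

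For the ``moreover'' clause I would show $\mathcal{U}$ is principal if and only if $\beta\in\mathcal{AR}_{|s|+1}$. If $\mathcal{U}$ is principal it contains a finite set $\{t_{0},\dots,t_{n}\}\subseteq X$, and iterating the pair axiom $\alpha 4$ together with the standing hypothesis $\leftidx{^{*}}{t}=t$ for $t\in\mathcal{AR}$ gives $\beta\in\leftidx{^{*}}{\{t_{0},\dots,t_{n}\}}=\{t_{0},\dots,t_{n}\}$, so $\beta=t_{i}\in\mathcal{AR}_{|s|+1}$ for some $i\le n$. Conversely, if $\beta\in\mathcal{AR}_{|s|+1}$ then $\leftidx{^{*}}{\{\beta\}}=\{\beta\}$ by $\alpha 4$, hence $\beta\in\leftidx{^{*}}{X}\cap\leftidx{^{*}}{\{\beta\}}=\leftidx{^{*}}{(X\cap\{\beta\})}$; since $\leftidx{^{*}}{\emptyset}=\emptyset$ this is impossible unless $\beta\in X$, in which case $\{\beta\}$ is a finite member of $\mathcal{U}$ and $\mathcal{U}$ is principal. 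I do not anticipate a serious obstacle: the argument is essentially a transcription of the $\mathbb{N}$-case. The two spots needing a little care are (i) checking at the outset that $X$ is countably infinite, so that $|\mathcal{U}|\le\mathfrak{c}$ and the ``moreover'' clause is not vacuous, which is precisely what Lemma \ref{infinity lemma} supplies, and (ii) in the ``moreover'' clause, observing that $\beta\in\mathcal{AR}_{|s|+1}$ does not by itself say $\beta\in X$, so one intersects $\leftidx{^{*}}{X}$ with $\leftidx{^{*}}{\{\beta\}}$ to conclude $\beta\in X$ and exhibit a finite set in $\mathcal{U}$.
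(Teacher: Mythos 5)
Your proof is correct and follows essentially the same route as the paper's: the ``if'' direction via Proposition \ref{star transform} (6)--(8), the ``only if'' direction by applying the $\mathfrak{c}^{+}$-enlarging property to $\mathcal{U}$, and the principality clause via the pair axiom. Your two extra precautions --- justifying $|\mathcal{U}|\le\mathfrak{c}$ and intersecting $\leftidx{^{*}}{X}$ with $\leftidx{^{*}}{\{\beta\}}$ to conclude $\beta\in X$ before exhibiting a finite member of $\mathcal{U}$ --- are minor refinements of steps the paper leaves implicit.
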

\begin{proof} If $\beta\in\leftidx{^{*}}{X}$ then Proposition \ref{star transform} (6), (7) and (8) immediately show that $\{ A\subseteq X : \beta \in \leftidx{^{*}}{A}\}$ satisfies (2), (3) and (4) in the definition of an ultrafilter. Clearly, $\beta\not \in \leftidx{^{*}}{\emptyset}=\emptyset$. By assumption, $\beta \in \leftidx{^{*}}{X}$. Thus $\{A\subseteq X : \beta \in \leftidx{^{*}}{A}\}$ is an ultrafilter on $X$.

If $\mathcal{U}$ is an ultrafilter on $X$ then $|\mathcal{U}|\le\mathfrak{c}$ and $\mathcal{U}$ has the finite intersection property. By the $\mathfrak{c}^{+}$-enlarging property, $\bigcap_{A\in \mathcal{U}} \leftidx{^{*}}{A}\not=\emptyset.$ Let $\beta$ be any element of this intersection. If $A\in \mathcal{U}$ then $ \bigcap_{B\in \mathcal{U}} \leftidx{^{*}}{B}\subseteq \leftidx{^{*}}{A}.$ So for all $A\in \mathcal{U}$,  $\beta \in \leftidx{^{*}}{A}$. If $ A\not \in \mathcal{U}$ then $X\setminus A\in \mathcal{U}$ since $(X\setminus A)\cup A = X \in \mathcal{U}$. Thus $\beta \in \leftidx{^{*}}{(X\setminus A)} = \leftidx{^{*}}{X}\setminus \leftidx{^{*}}{A}$. In other words, $\beta\not \in \leftidx{^{*}}{A}.$ Thus, $\mathcal{U} = \{ A\subseteq X : \beta \in \leftidx{^{*}}{A}\}.$

Suppose that $\mathcal{U}$ is principal. Then there exists a finite set $\{a_{0}, a_{1}, \dots , a_{n}\}\subseteq \mathcal{AR}$ such that $\beta \in\leftidx{^{*}}{\{a_{0}, a_{1}, \dots , a_{n}\}}.$ By the pair axiom $$\beta \in\leftidx{^{*}}{\{a_{0}, a_{1}, \dots , a_{n}\}}=\{\leftidx{^{*}}{a_{0}}, \leftidx{^{*}}{a_{1}}, \dots , \leftidx{^{*}}{a_{n}}\}=\{a_{0}, a_{1}, \dots , a_{n}\}.$$ So there exists $i\le n$ such that $\beta = a_{i}$. In particular, $\beta\in \mathcal{AR}$. 

If $\beta \in \mathcal{AR}$ then $\beta \in \{\beta\} = \leftidx{^{*}}{\{\beta\}}$. So $\{ A\subseteq \mathbb{N} : \beta \in \leftidx{^{*}}{A}\}$ contains a finite set and $\mathcal{U}$ is principal.
\end{proof}

\begin{defn}
Let $\vec{\mathcal{U}}=\left<\mathcal{U}_{s} : s\in \mathcal{AR} \right>$ be a sequence of non-principal ultrafilters such that for all $s\in\mathcal{AR}$, $\mathcal{U}_{s}$ is an ultrafilter on $\{t\in\mathcal{AR}_{|s|+1} : s\sqsubseteq t\}$. A tree $T$ on $\mathcal{R}$ with stem $st(T)$ is a \emph{$\vec{\mathcal{U}}$-tree} if for all $s\in T/st(T)$
$$ \{t\in\mathcal{AR}_{|s|+1}:s\sqsubseteq t \in T\} \in \mathcal{U}_{s}.$$
\end{defn}

\begin{prop}\label{abstract U-tree alpha-tree}
 Assume that $(\mathcal{R},\le, r)$ satisfies \emph{A.1}, \emph{A.2} and \emph{A.4} and for all $s\in\mathcal{AR}$, $\leftidx{^{*}}{s}=s$. Suppose that the $\mathfrak{c}^{+}$-enlarging property holds.  For all sequences $\vec{\mathcal{U}}=\left<\mathcal{U}_{s} : s\in \mathcal{AR} \right>$ of non-principal ultrafilters such that for all $s\in\mathcal{AR}$, $\mathcal{U}_{s}$ is an ultrafilter on $\{t\in\mathcal{AR}_{|s|+1} : s\sqsubseteq t\}$, there exists a sequence $\vec{\alpha}=\left<\alpha_{s} : s\in\mathcal{AR}\right>$ of elements of $\leftidx{^{*}}{\mathcal{AR}}\setminus\mathcal{AR}$ such that for all trees $T$ on $\mathcal{R}$, $T$ is an $\vec{\mathcal{U}}$-tree if and only if $T$ is an $\vec{\alpha}$-tree.
\end{prop}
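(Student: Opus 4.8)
The plan is to imitate the proof of Proposition~\ref{U-tree alpha-tree}, with the role of $\wp(\mathbb{N})$ played by the ``fans'' $X_{s}=\{t\in\mathcal{AR}_{|s|+1} : s\sqsubseteq t\}$ and with Proposition~\ref{need in milliken example} used in place of the analogous correspondence for $\mathbb{N}$. For each $s\in\mathcal{AR}$ the hypothesis is that $\mathcal{U}_{s}$ is a non-principal ultrafilter on $X_{s}$, so Proposition~\ref{need in milliken example} furnishes a $\beta_{s}\in\leftidx{^{*}}{X_{s}}$ with $\beta_{s}\notin\mathcal{AR}_{|s|+1}$ and $\mathcal{U}_{s}=\{A\subseteq X_{s} : \beta_{s}\in\leftidx{^{*}}{A}\}$. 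Choosing one such $\beta_{s}$ for each $s$ (ZFC is assumed, so this is legitimate) and setting $\alpha_{s}=\beta_{s}$, let $\vec{\alpha}=\langle\alpha_{s}:s\in\mathcal{AR}\rangle$. I would first check that this $\vec{\alpha}$ lives in the ambient framework: since the property ``$\forall t\in X_{s}$, $s\sqsubseteq t$ and $|t|=|s|+1$'' is trivially true, transfer gives $s\sqsubseteq\alpha_{s}$ and $|\alpha_{s}|=|s|+1$, i.e.\ $\alpha_{s}\in\leftidx{^{*}}{\mathcal{AR}_{|s|+1}}\subseteq\leftidx{^{*}}{\mathcal{AR}}$; and if $\alpha_{s}$ were in $\mathcal{AR}$ then, having length $|s|+1$, it would lie in $\mathcal{AR}_{|s|+1}$, contradicting the choice of $\beta_{s}$. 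Thus $\alpha_{s}\in\leftidx{^{*}}{\mathcal{AR}}\setminus\mathcal{AR}$ and $s\sqsubseteq\alpha_{s}\in\leftidx{^{*}}{\mathcal{AR}_{|s|+1}}$, as required.

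Next I would establish the key equivalence: for every tree $T$ on $\mathcal{R}$ having a stem and every $s\in T/st(T)$,
$$\{t\in\mathcal{AR}_{|s|+1}:s\sqsubseteq t\in T\}\in\mathcal{U}_{s}\ \iff\ \alpha_{s}\in\leftidx{^{*}}{T}.$$
Put $B=\{t\in\mathcal{AR}_{|s|+1}:s\sqsubseteq t\in T\}$; this is exactly $X_{s}\cap T$. By the description of $\mathcal{U}_{s}$, the left-hand side says $\alpha_{s}\in\leftidx{^{*}}{B}$, and $\leftidx{^{*}}{B}=\leftidx{^{*}}{X_{s}}\cap\leftidx{^{*}}{T}$ by Proposition~\ref{star transform}(7). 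Since $\alpha_{s}\in\leftidx{^{*}}{X_{s}}$ holds unconditionally, this is equivalent to $\alpha_{s}\in\leftidx{^{*}}{T}$. Comparing the definitions of $\vec{\mathcal{U}}$-tree and $\vec{\alpha}$-tree now finishes the proof: both require $T$ to be a tree on $\mathcal{R}$ with a stem, and for $s$ ranging over $T/st(T)$ the clause ``$\{t\in\mathcal{AR}_{|s|+1}:s\sqsubseteq t\in T\}\in\mathcal{U}_{s}$'' is, by the displayed equivalence, the same as ``$\alpha_{s}\in\leftidx{^{*}}{T}$''; the side condition $T/st(T)\ne\emptyset$ is automatic since $st(T)\in T/st(T)$. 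Hence a tree on $\mathcal{R}$ is a $\vec{\mathcal{U}}$-tree if and only if it is an $\vec{\alpha}$-tree.

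The argument is really just bookkeeping once Proposition~\ref{need in milliken example} is available; the only step requiring any care is the verification that the chosen $\alpha_{s}$ lies in $\leftidx{^{*}}{X_{s}}$ and outside $\mathcal{AR}$ (rather than merely outside $\mathcal{AR}_{|s|+1}$), and this is handled by transferring the defining property of $X_{s}$ together with the ``moreover'' clause of Proposition~\ref{need in milliken example}. I do not anticipate any further obstacle.
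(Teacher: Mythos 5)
Your proof is correct and follows essentially the same route as the paper: the paper's own argument is exactly the application of Proposition \ref{need in milliken example} to each $\mathcal{U}_{s}$ (via the $\mathfrak{c}^{+}$-enlarging property) to choose $\alpha_{s}\in\leftidx{^{*}}{\mathcal{AR}}\setminus\mathcal{AR}$ with $\mathcal{U}_{s}=\{A\subseteq X_{s}:\alpha_{s}\in\leftidx{^{*}}{A}\}$, followed by the displayed equivalence $\{t\in\mathcal{AR}_{|s|+1}:s\sqsubseteq t\in T\}\in\mathcal{U}_{s}\iff\alpha_{s}\in\leftidx{^{*}}{T}$. You merely make explicit the bookkeeping (that $s\sqsubseteq\alpha_{s}\in\leftidx{^{*}}{\mathcal{AR}_{|s|+1}}$, that $\alpha_{s}\notin\mathcal{AR}$, and the computation $\leftidx{^{*}}{(X_{s}\cap T)}=\leftidx{^{*}}{X_{s}}\cap\leftidx{^{*}}{T}$) that the paper leaves implicit.
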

\begin{proof}
The previous proposition, allows us to chose a sequence $\left<\alpha_{s} : s\in \mathcal{AR} \right>$ of elements of $\leftidx{^{*}}{\mathcal{AR}}\setminus\mathcal{AR}$ such that for all $s\in \mathcal{AR}$ and for all $A\subseteq\{t\in\mathcal{AR}_{|s|+1}:s\sqsubseteq t\}$, $A\in\mathcal{U}_{s} \iff \alpha_{s} \in \leftidx{^{*}}{A}.$ In particular, for all trees $T$ with stem $st(T)$, for all $s\in T/st(T)$,
 $$\{t\in\mathcal{AR}_{|s|+1} :s\sqsubseteq t \in T\} \in \mathcal{U}_{s} \iff \alpha_{s}\in \leftidx{^{*}}{T}.$$
\end{proof}

For the remainder of this section, we fix a sequence $\vec{\mathcal{U}}=\left<\mathcal{U}_{s} : s\in \mathcal{AR} \right>$ of non-principal ultrafilters such that for all $s\in\mathcal{AR}$, $\mathcal{U}_{s}$ is an ultrafilter on $\{t\in\mathcal{AR}_{|s|+1} : s\sqsubseteq t\}$. All definitions are taken with respect to this sequence.

\begin{thm} \label{abstract ultra-Ramsey theorem}
 Assume that $(\mathcal{R},\le, r)$ satisfies \emph{A.1}, \emph{A.2} and \emph{A.4} and for all $s\in\mathcal{AR}$, $\leftidx{^{*}}{s}=s$. If the $\mathfrak{c}^{+}$-enlarging property holds, then  for all $\mathcal{X}\subseteq \mathcal{R}$ and for all $\vec{\mathcal{U}}$-trees $T$ there exists an $\vec{\mathcal{U}}$-tree $S\subseteq T$ with $st(S)=st(T)$ such that one of the following holds:
\begin{enumerate}
\item $[S]\subseteq \mathcal{X}$.
\item $[S]\cap \mathcal{X}=\emptyset$.
\item For all $\vec{\mathcal{U}}$-trees $S'$, if $S'\subseteq S$ then $[S']\not \subseteq \mathcal{X}$ and $[S']\cap\mathcal{X} \not = \emptyset$.
\end{enumerate}
\end{thm}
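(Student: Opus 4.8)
The plan is to deduce Theorem \ref{abstract ultra-Ramsey theorem} from the Abstract $\vec{\alpha}$-Ramsey Theorem (Theorem \ref{abstract alpha Ramsey theorem}) by replacing $\vec{\mathcal{U}}$-trees with $\vec{\alpha}$-trees, exactly as the corresponding ultra-Ramsey statement in the concrete setting was obtained from Theorem \ref{alpha Ramsey theorem}. So the argument is essentially a change of vocabulary rather than a new proof.

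First I would apply Proposition \ref{abstract U-tree alpha-tree}. Its hypotheses --- that $(\mathcal{R},\le,r)$ satisfies A.1, A.2, A.4, that $\leftidx{^{*}}{s}=s$ for all $s\in\mathcal{AR}$, and that the $\mathfrak{c}^{+}$-enlarging property holds --- are precisely the standing hypotheses of the present theorem. The proposition produces a sequence $\vec{\alpha}=\left<\alpha_{s}:s\in\mathcal{AR}\right>$ of elements of $\leftidx{^{*}}{\mathcal{AR}}\setminus\mathcal{AR}$ such that, for every tree $T$ on $\mathcal{R}$, $T$ is a $\vec{\mathcal{U}}$-tree if and only if $T$ is an $\vec{\alpha}$-tree.

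Second, I would verify that this $\vec{\alpha}$ is admissible for Theorem \ref{abstract alpha Ramsey theorem}, i.e. that $s\sqsubseteq\alpha_{s}\in\leftidx{^{*}}{\mathcal{AR}_{|s|+1}}$ for each $s\in\mathcal{AR}$. This is built into the choice of $\alpha_{s}$: since $\mathcal{U}_{s}$ is an ultrafilter on $\{t\in\mathcal{AR}_{|s|+1}:s\sqsubseteq t\}$, that whole set belongs to $\mathcal{U}_{s}$, so $\alpha_{s}\in\leftidx{^{*}}{\{t\in\mathcal{AR}_{|s|+1}:s\sqsubseteq t\}}\subseteq\leftidx{^{*}}{\mathcal{AR}_{|s|+1}}$; and because $\leftidx{^{*}}{s}=s$, transfer of the relation $s\sqsubseteq t$ gives $s\sqsubseteq\alpha_{s}$. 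Thus $\vec{\alpha}$ satisfies the standing assumption on $\vec{\alpha}$-sequences under which Theorem \ref{abstract alpha Ramsey theorem} was established.

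Finally, I would invoke Theorem \ref{abstract alpha Ramsey theorem} with this $\vec{\alpha}$: for the given $\mathcal{X}\subseteq\mathcal{R}$ and the given $\vec{\mathcal{U}}$-tree $T$, which is an $\vec{\alpha}$-tree, there is an $\vec{\alpha}$-tree $S\subseteq T$ with $st(S)=st(T)$ for which one of (1), (2), (3) holds. By the equivalence from the first step, $S$ is a $\vec{\mathcal{U}}$-tree, and in clause (3) the $\vec{\alpha}$-trees $S'\subseteq S$ are exactly the $\vec{\mathcal{U}}$-trees $S'\subseteq S$, so the conclusion is literally the assertion of the theorem. I do not expect any genuine obstacle here: the only point requiring care is the admissibility check in the previous paragraph, and the result is otherwise a direct restatement of Theorem \ref{abstract alpha Ramsey theorem} in the vocabulary of $\vec{\mathcal{U}}$-trees.
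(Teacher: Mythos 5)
Your proposal is correct and follows essentially the same route as the paper: invoke Proposition \ref{abstract U-tree alpha-tree} to translate $\vec{\mathcal{U}}$-trees into $\vec{\alpha}$-trees and then restate Theorem \ref{abstract alpha Ramsey theorem} in the new vocabulary. Your explicit verification that the resulting $\vec{\alpha}$ satisfies $s\sqsubseteq\alpha_{s}\in\leftidx{^{*}}{\mathcal{AR}_{|s|+1}}$ is a detail the paper leaves implicit, and it is handled correctly.
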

\begin{proof}
By Proposition \ref{abstract U-tree alpha-tree} there exists a sequence $\vec{\alpha}=\left<\alpha_{s} : \mathcal{AR}\right>$ such that for all trees $T$ on $\mathcal{R}$, $T$ is an $\vec{\mathcal{U}}$-tree if and only if $T$ is an $\vec{\alpha}$-tree. This result is simply a restatement of Theorem \ref{abstract alpha Ramsey theorem} using $\vec{\mathcal{U}}$-trees instead of $\vec{\alpha}$-trees.
\end{proof}

\begin{defn} $\mathcal{X}\subseteq \mathcal{R}$ is said to be \emph{$\vec{\mathcal{U}}$-Ramsey} if for all $\vec{\mathcal{U}}$-trees $T$ there exists an $\vec{\mathcal{U}}$-tree $S\subseteq T$ with $st(S)=st(T)$ such that either $[S]\subseteq \mathcal{X}$ or $[S]\cap \mathcal{X}=\emptyset$. $\mathcal{X}$ is said to be \emph{$\vec{\mathcal{U}}$-Ramsey null} if for all $\vec{\mathcal{U}}$-trees $T$ there exists an $\vec{\mathcal{U}}$-tree $S\subseteq T$ with $st(S)=st(T)$ such that $[S]\cap \mathcal{X}=\emptyset$.

The \emph{$\vec{\mathcal{U}}$-Ellentuck space} is the topological space on $\mathcal{R}$ generated by $\{[T]:T$ is a $\vec{\mathcal{U}}$-tree$\}$. $\mathcal{X}\subseteq [\mathbb{N}]^{\infty}$ \emph{is $\vec{\mathcal{U}}$-nowhere dense/ is $\vec{\mathcal{U}}$-meager/ has the $\vec{\mathcal{U}}$-Baire property} if it is nowhere dense/ is meager/ has the Baire property with respect to the $\vec{\mathcal{U}}$-Ellentuck space.

We say that $(\mathcal{R},\vec{\mathcal{U}},\le,r)$ is an \emph{ultra-Ramsey space} if the collection of $\vec{\mathcal{U}}$-Ramsey sets coincides with the $\sigma$-algebra of sets with the $\vec{\mathcal{U}}$-Baire property and the collection of $\vec{\mathcal{U}}$-Ramsey null sets coincides with the $\sigma$-ideal of $\vec{\mathcal{U}}$-meager sets.
\end{defn}

\begin{thm}[The Abstract Ultra-Ellentuck Theorem] \label{abstract ultra-Ellentuck theorem}
 Assume that $(\mathcal{R},\le, r)$ satisfies \emph{A.1}, \emph{A.2} and \emph{A.4} and for all $s\in\mathcal{AR}$, $\leftidx{^{*}}{s}=s$. If the $\mathfrak{c}^{+}$-enlarging property holds, then $(\mathcal{R},\vec{\mathcal{U}},\le,r)$ is an ultra-Ramsey space.
\end{thm}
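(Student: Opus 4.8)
The plan is to reduce the abstract ultra setting to the abstract $\vec{\alpha}$ setting, exactly as the Ultra-Ellentuck Theorem was derived from the $\vec{\alpha}$-Ellentuck Theorem. First I would invoke Proposition \ref{abstract U-tree alpha-tree}: since the $\mathfrak{c}^{+}$-enlarging property holds, there is a sequence $\vec{\alpha}=\left<\alpha_{s}:s\in\mathcal{AR}\right>$ of elements of $\leftidx{^{*}}{\mathcal{AR}}\setminus\mathcal{AR}$ with $\mathcal{U}_{s}=\{A\subseteq\{t\in\mathcal{AR}_{|s|+1}:s\sqsubseteq t\}:\alpha_{s}\in\leftidx{^{*}}{A}\}$ for every $s$, and such that a tree $T$ on $\mathcal{R}$ is a $\vec{\mathcal{U}}$-tree if and only if it is an $\vec{\alpha}$-tree. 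Before applying the abstract $\vec{\alpha}$-theory I must check that this $\vec{\alpha}$ is of the form fixed at the start of that subsection, i.e.\ that $s\sqsubseteq\alpha_{s}\in\leftidx{^{*}}{\mathcal{AR}_{|s|+1}}$. This follows because $\alpha_{s}\in\leftidx{^{*}}{\{t\in\mathcal{AR}_{|s|+1}:s\sqsubseteq t\}}$, so $\alpha_{s}\in\leftidx{^{*}}{\mathcal{AR}_{|s|+1}}$ by monotonicity of the $*$-transform, and $s\sqsubseteq\alpha_{s}$ since $\leftidx{^{*}}{s}=s$ and the relation $s\sqsubseteq\cdot$ is preserved (being an initial-segment relation expressed via the approximation maps, using A.1). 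Hence all of the results of the abstract $\vec{\alpha}$-Ramsey subsection apply verbatim to this $\vec{\alpha}$.

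Next I would observe that, because $\vec{\mathcal{U}}$-trees and $\vec{\alpha}$-trees literally coincide as subsets of $\mathcal{AR}$, the basis $\{[T]:T\text{ a }\vec{\mathcal{U}}\text{-tree}\}$ equals the basis $\{[T]:T\text{ an }\vec{\alpha}\text{-tree}\}$. Therefore the $\vec{\mathcal{U}}$-Ellentuck space and the $\vec{\alpha}$-Ellentuck space are the \emph{same} topological space on $\mathcal{R}$. Consequently the notions of $\vec{\mathcal{U}}$-nowhere dense, $\vec{\mathcal{U}}$-meager, and having the $\vec{\mathcal{U}}$-Baire property agree with their $\vec{\alpha}$-counterparts, and the definitions of $\vec{\mathcal{U}}$-Ramsey and $\vec{\mathcal{U}}$-Ramsey null agree with those of $\vec{\alpha}$-Ramsey and $\vec{\alpha}$-Ramsey null (since in each definition one quantifies over $\vec{\mathcal{U}}$-trees, equivalently over $\vec{\alpha}$-trees, with the stem condition unchanged).

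With these identifications in hand, the theorem is just a restatement of Theorem \ref{abstract alpha-Ellentuck theorem}: under A.1, A.2, A.4 and $\leftidx{^{*}}{s}=s$ for all $s\in\mathcal{AR}$, the triple $(\mathcal{R},\vec{\alpha},\le,r)$ is an $\vec{\alpha}$-Ramsey space, i.e.\ the $\vec{\alpha}$-Ramsey sets form exactly the $\sigma$-algebra of sets with the $\vec{\alpha}$-Baire property and the $\vec{\alpha}$-Ramsey null sets form exactly the $\sigma$-ideal of $\vec{\alpha}$-meager sets. Translating each term into its $\vec{\mathcal{U}}$-form via the previous paragraph yields precisely that $(\mathcal{R},\vec{\mathcal{U}},\le,r)$ is an ultra-Ramsey space.

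The only genuine content beyond bookkeeping is the verification that the $\vec{\alpha}$ produced by Proposition \ref{abstract U-tree alpha-tree} satisfies $s\sqsubseteq\alpha_{s}\in\leftidx{^{*}}{\mathcal{AR}_{|s|+1}}$ — the main obstacle, such as it is, lies in being careful that the defining property of $\mathcal{U}_{s}$ (an ultrafilter on $\{t\in\mathcal{AR}_{|s|+1}:s\sqsubseteq t\}$) transfers correctly, and in confirming that the hypothesis $\leftidx{^{*}}{s}=s$ is exactly what is needed both here and to keep $\alpha_{s}$ outside $\mathcal{AR}$. Once that is settled, no new Ramsey-theoretic argument is required; everything else is a dictionary translation through the coincidence of the two topologies.
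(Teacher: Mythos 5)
Your proposal is correct and follows essentially the same route as the paper: apply Proposition \ref{abstract U-tree alpha-tree} to replace $\vec{\mathcal{U}}$-trees by $\vec{\alpha}$-trees and then read the conclusion off the Abstract $\vec{\alpha}$-Ellentuck Theorem, Theorem \ref{abstract alpha-Ellentuck theorem}. Your explicit check that the resulting $\vec{\alpha}$ satisfies $s\sqsubseteq\alpha_{s}\in\leftidx{^{*}}{\mathcal{AR}_{|s|+1}}$ is a detail the paper leaves implicit, but it does not change the argument.
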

\begin{proof}
By Proposition \ref{U-tree alpha-tree} there exists a sequence $\vec{\alpha}=\left<\alpha_{s} :s\in \mathcal{AR}\right>$ such that for all trees $T$ on $\mathcal{R}$, $T$ is an $\vec{\mathcal{U}}$-tree if and only if $T$ is an $\vec{\alpha}$-tree. This result is simply a restatement of the Abstract $\vec{\alpha}$-Ellentuck Theorem in the setting of $\vec{\mathcal{U}}$-trees instead of $\vec{\alpha}$-trees.
\end{proof}

\subsection{An application to abstract local Ramsey theory} In this section we generalize Theorem \ref{selective ultrafilter theorem} to the abstract setting of triples $(\mathcal{R},\le,r)$. We then apply the theorem to some concrete examples of triples that form topological Ramsey spaces.
\begin{lem} Assume that $(\mathcal{R},\le, r)$ satisfies \emph{A.1}, \emph{A.2} and \emph{A.4} and for all $s\in\mathcal{AR}$, $\leftidx{^{*}}{s}=s$. Let $$\mathcal{R}_{\vec{\alpha}}= \{ X\in\mathcal{R} : \forall s\in \mathcal{AR}\restriction X, \ \alpha_{s}\in \leftidx{^{*}}{r_{|s|+1}[s,X]}\}.$$ If $X\in \mathcal{R}_{\vec{\alpha}}$ and $s\sqsubseteq X$ then there exists an $\vec{\alpha}$-tree $T$ such that $st(T)=s$ and $[T]=[s,X]$.
\end{lem}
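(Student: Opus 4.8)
The plan is to build the tree $T$ level by level, exactly as in the proof of Lemma~\ref{basic open to a tree}, but now controlling each splitting node so that the $\alpha_s$-condition is met. Concretely, I would set $L_0 = \{t \in \mathcal{AR} : t \sqsubseteq s\}$, and then recursively define
$$
L_{n+1} = \{ t \in \mathcal{AR} : \exists u \in L_n,\ u \sqsubseteq t,\ |t| = |u|+1 \ \&\ t \le_{\mathrm{fin}} X\},
$$
and let $T = \bigcup_{n=0}^\infty L_n$. It is immediate from the definition of $\mathcal{AR}\restriction X$ and the description of $[s,X]$ that $T$ is a tree on $\mathcal{R}$, that $st(T) = s$ (using the no-trivialities assumption so that the first splitting level past $s$ is nonempty), and that $T$ only contains finite approximations of members of $[s,X]$.

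The heart of the argument is verifying the $\vec{\alpha}$-tree condition: for every $u \in T/st(T) = T/s$, I must show $\alpha_u \in \leftidx{^{*}}{T}$. Fix such a $u$; by construction $u \sqsubseteq X$, so $u \in \mathcal{AR}\restriction X$, and hence $X \in \mathcal{R}_{\vec\alpha}$ gives $\alpha_u \in \leftidx{^{*}}{r_{|u|+1}[u,X]}$. The key observation is that $r_{|u|+1}[u,X]$, i.e. the set of $t \in \mathcal{AR}_{|u|+1}$ with $u \sqsubset t$ and $t \le_{\mathrm{fin}} X$, is precisely the set of immediate successors of $u$ that get put into the next level $L_{|u|-|s|+|s|+1}$ — that is, it is exactly $\{t \in T : u \sqsubseteq t,\ |t| = |u|+1\} \subseteq T$. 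Therefore $\leftidx{^{*}}{r_{|u|+1}[u,X]} \subseteq \leftidx{^{*}}{T}$, and so $\alpha_u \in \leftidx{^{*}}{T}$. (One also needs $s \sqsubseteq \alpha_u \in \leftidx{^{*}}{\mathcal{AR}_{|u|+1}}$ to conclude it lands in the splitting level, but that is built into the fixed choice of the sequence $\vec\alpha$ together with $u \in \mathcal{AR}\restriction X$.) This establishes that $T$ is an $\vec{\alpha}$-tree.

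Finally I would check $[T] = [s,X]$. For $\supseteq$: if $s \sqsubseteq Y \le X$ then every initial segment $t$ of $Y$ is in $\mathcal{AR}\restriction X$, and an easy induction on $|t|$ using axiom A.2(b)–(c) shows $t \le_{\mathrm{fin}} X$ and hence $t \in T$ once $s \sqsubseteq t$ (and $t \in L_0$ otherwise), so $Y \in [T]$. For $\subseteq$: if $Y \in [T]$ then every $t \sqsubseteq Y$ with $s \sqsubseteq t$ satisfies $t \le_{\mathrm{fin}} X$, and by A.2(b) this forces $Y \le X$; combined with $s \sqsubseteq Y$ we get $Y \in [s,X]$. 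The main obstacle I anticipate is the bookkeeping around the finitization order: one must be careful that ``$t \le_{\mathrm{fin}} X$ for every initial segment $t$ of $Y$'' is genuinely equivalent to $Y \le X$, which is where A.2(b) and A.2(c) (the coherence of $\le_{\mathrm{fin}}$ along $\sqsubseteq$) do the real work. Everything else is a routine transcription of the classical $\beta$-tree construction into the abstract setting.
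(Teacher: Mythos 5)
Your strategy is the same as the paper's: build $T$ level by level so that the immediate successors in $T$ of each node $u$ above the stem are exactly $r_{|u|+1}[u,X]$, and then read the $\vec{\alpha}$-condition off the hypothesis $X\in\mathcal{R}_{\vec{\alpha}}$. But the construction as you wrote it does not produce the tree you claim. You put \emph{every} initial segment of $s$ into $L_{0}$ and then extend every element of each $L_{n}$, so $L_{1}$ already contains one-step extensions of proper initial segments of $s$ that are incompatible with $s$. Concretely, in the Ellentuck space $([\mathbb{N}]^{\infty},\subseteq,r)$ take $s=\{0,5\}$ and $X\supseteq s$ infinite: the node $\emptyset\in L_{0}$ contributes every singleton $\{m\}$ with $m\in X$ to $L_{1}$, and these stray nodes generate entire branches inside $X$ that avoid $s$. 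Hence for your $T$ one gets $st(T)\sqsubset s$ and $[T]=[\emptyset,X]\supsetneq[s,X]$, so the assertions ``$st(T)=s$'' and ``$T$ only contains finite approximations of members of $[s,X]$'' fail. The repair is exactly the paper's construction: set $L_{0}=\{s\}$, run the recursion only above $s$, and adjoin the trunk $\{t\in\mathcal{AR}:t\sqsubseteq s\}$ to $\bigcup_{n}L_{n}$ afterwards; with that change your verification of $[T]=[s,X]$ via A.2(b)--(c) goes through.

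There is a second incorrect step: to apply $X\in\mathcal{R}_{\vec{\alpha}}$ at a node $u\in T/s$ you assert that ``by construction $u\sqsubseteq X$.'' That is false in general: nodes of $T$ above $s$ are elements of $r_{|u'|+1}[u',X]$ for their predecessor $u'$, not initial segments of $X$ (in the Ellentuck example, $s\cup\{m\}$ with $m\in X$ large is not an initial segment of $X$). What the definition of $\mathcal{R}_{\vec{\alpha}}$ actually requires is $u\in\mathcal{AR}\restriction X$, i.e.\ $u=r_{|u|}(Y)$ for some $Y\le X$; this is the membership the paper invokes (tacitly) when it writes $\alpha_{u}\in\leftidx{^{*}}{r_{|u|+1}[u,X]}$, and it does not follow from the false statement $u\sqsubseteq X$. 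If one reads $r_{n}[t,X]$ as the set of $r_{n}$-images of elements of $[t,X]$ the membership is immediate; under the paper's $\le_{\mathrm{fin}}$ formulation you must instead argue that a one-step extension $t$ of a node of $\mathcal{AR}\restriction X$ with $t\le_{\mathrm{fin}}X$ is again in $\mathcal{AR}\restriction X$, rather than appeal to $u\sqsubseteq X$. Your identification of the successor set of $u$ with $r_{|u|+1}[u,X]$, and the conclusion $\alpha_{u}\in\leftidx{^{*}}{r_{|u|+1}[u,X]}\subseteq\leftidx{^{*}}{T}$, are otherwise the right idea and match the paper once these two points are fixed.
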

\begin{proof}
We build $T$ level-by-level, recursively. 
$$\begin{cases}
L_{0}= \{s\} &\\
L_{n+1} = \{ t\in \mathcal{AR} : \exists t'\in L_{n}, \ t\in r_{|t'|+1}[t',X]\}.&
\end{cases}$$
Let $$T = \{t\in\mathcal{AR} : t\sqsubseteq s\}\cup \bigcup_{n=0}^{\infty} L_{n}.$$
It is clear that $T$ is a tree on $\mathcal{R}$, $st(T)=s$ and $[T]=[s,X]$. If $u\in T/st(T)$ then there exists $n\in\mathbb{N}$ such that $u\in L_{n}$. Since $X\in \mathcal{R}_{\vec{\alpha}}$, $\alpha_{u}\in \leftidx{^{*}}{r_{|u|+1}[s,X]}$. Thus $\alpha_{u}\in \leftidx{^{*}}{L_{n+1}} \subseteq \leftidx{^{*}}{T}.$ In particular, $T$ is an $\vec{\alpha}$-tree.
\end{proof}

\begin{thm}
 Assume that $(\mathcal{R},\le, r)$ satisfies \emph{A.1}, \emph{A.2} and \emph{A.4} and for all $s\in\mathcal{AR}$, $\leftidx{^{*}}{s}=s$. Let 
$$\mathcal{R}_{\vec{\alpha}}=\{ X\in \mathcal{R} : \forall s\in \mathcal{AR}\restriction X, \ \alpha_{s}\in\leftidx{^{*}}{r_{|s|+1}[s,X]}\}.$$ If for all $\vec{\alpha}$-trees $T$ there exists $X\in \mathcal{R}_{\vec{\alpha}}$ such that $\emptyset\not=[st(T),X]\subseteq [T]$, then $(\mathcal{R}, \mathcal{R}_{\vec{\alpha}}, \le, r)$ is a topological Ramsey space.
\end{thm}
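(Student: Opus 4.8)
The plan is to prove this the same way the $\vec{\alpha}$-Ellentuck Theorem is leveraged in Section 3 to obtain Corollary \ref{Louveau} and Theorem \ref{selective ultrafilter theorem}, now at the abstract level: transport the Abstract $\vec{\alpha}$-Ellentuck Theorem (Theorem \ref{abstract alpha-Ellentuck theorem}) onto the triple $(\mathcal{R},\mathcal{R}_{\vec{\alpha}},\le,r)$ by setting up a tight dictionary between $\vec{\alpha}$-trees and the basic sets $[s,X]$ with $X\in\mathcal{R}_{\vec{\alpha}}$. Two facts run the dictionary. The Lemma immediately preceding the theorem gives: for every $X\in\mathcal{R}_{\vec{\alpha}}$ and every $s\sqsubseteq X$ there is an $\vec{\alpha}$-tree $T$ with $st(T)=s$ and $[T]=[s,X]$; hence every such $[s,X]$ is $\vec{\alpha}$-open. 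The hypothesis of the theorem gives the reverse: inside each $\vec{\alpha}$-tree $T$ there is a nonempty $[st(T),X]$ with $X\in\mathcal{R}_{\vec{\alpha}}$; running this over the $\vec{\alpha}$-subtrees of $T$ with stems $s\in T/st(T)$ (each again an $\vec{\alpha}$-tree, using Lemma \ref{infinity lemma}) yields, for every such $s$, an $X_{s}\in\mathcal{R}_{\vec{\alpha}}$ with $\emptyset\neq[s,X_{s}]\subseteq[T]$. Since $\{[T]:T\text{ is an }\vec{\alpha}\text{-tree}\}$ is already known to be a basis, these two facts yield condition (1) in the definition of a topological Ramsey space: $\{[s,X]:X\in\mathcal{R}_{\vec{\alpha}}\}$ is a basis for a topology on $\mathcal{R}$, and that topology coincides with the $\vec{\alpha}$-Ellentuck topology.

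Next I would check that the two sides of the dictionary see the same Ramsey-theoretic structure: $\mathcal{X}\subseteq\mathcal{R}$ is $\mathcal{R}_{\vec{\alpha}}$-Ramsey if and only if it is $\vec{\alpha}$-Ramsey, and $\mathcal{R}_{\vec{\alpha}}$-Ramsey null if and only if $\vec{\alpha}$-Ramsey null. For ``$\vec{\alpha}\Rightarrow\mathcal{R}_{\vec{\alpha}}$'', given $\emptyset\neq[s,X]$ with $X\in\mathcal{R}_{\vec{\alpha}}$, use the preceding Lemma to get an $\vec{\alpha}$-tree $T$ with $st(T)=s$ and $[T]=[s,X]$, shrink $T$ by $\vec{\alpha}$-Ramseyness to an $\vec{\alpha}$-tree $S\subseteq T$ with $st(S)=s$ deciding $\mathcal{X}$ on $[S]$, and feed $S$ to the hypothesis to obtain $Y\in\mathcal{R}_{\vec{\alpha}}$ with $\emptyset\neq[s,Y]\subseteq[S]\subseteq[s,X]$; then $Y\in[s,X]\cap\mathcal{R}_{\vec{\alpha}}$ and $[s,Y]$ decides $\mathcal{X}$. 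For ``$\mathcal{R}_{\vec{\alpha}}\Rightarrow\vec{\alpha}$'', given an $\vec{\alpha}$-tree $T$, the hypothesis produces $X\in\mathcal{R}_{\vec{\alpha}}$ with $\emptyset\neq[st(T),X]\subseteq[T]$, $\mathcal{R}_{\vec{\alpha}}$-Ramseyness produces $Y\in[st(T),X]\cap\mathcal{R}_{\vec{\alpha}}$ deciding $\mathcal{X}$ on $[st(T),Y]$ (here $st(T)\sqsubseteq Y$), and the preceding Lemma turns $[st(T),Y]$ into an $\vec{\alpha}$-tree $S$ with $st(S)=st(T)$ and $[S]=[st(T),Y]\subseteq[T]$; replacing $S$ by $S\cap T$ — an $\vec{\alpha}$-tree by Lemma \ref{abstract intersection alpha trees}, with the same stem and the same branches as $S$ since $[S]\subseteq[T]$ — gives an $\vec{\alpha}$-subtree of $T$ deciding $\mathcal{X}$. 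The Ramsey-null equivalences are the same arguments with one of the two alternatives deleted.

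Putting the pieces together: since the topology generated by $\{[s,X]:X\in\mathcal{R}_{\vec{\alpha}}\}$ is the $\vec{\alpha}$-Ellentuck topology, the sets with the Baire property (respectively, the meager sets) for that topology are precisely the sets with the $\vec{\alpha}$-Baire property (respectively, the $\vec{\alpha}$-meager sets); by the Abstract $\vec{\alpha}$-Ellentuck Theorem these are the $\vec{\alpha}$-Ramsey (respectively, $\vec{\alpha}$-Ramsey null) sets; and by the previous paragraph these are the $\mathcal{R}_{\vec{\alpha}}$-Ramsey (respectively, $\mathcal{R}_{\vec{\alpha}}$-Ramsey null) sets. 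This is exactly conditions (2) and (3), and together with (1) it says that $(\mathcal{R},\mathcal{R}_{\vec{\alpha}},\le,r)$ is a topological Ramsey space.

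The step I expect to be the main obstacle is the first one — verifying that $\{[s,X]:X\in\mathcal{R}_{\vec{\alpha}}\}$ is genuinely a basis and that it generates the full $\vec{\alpha}$-Ellentuck topology. The hypothesis only hands over a single basic set inside each $\vec{\alpha}$-tree $T$, so to conclude that an arbitrary $[T]$ is a union of such basic sets one must run the hypothesis over all the subtrees $T/s$ and argue that the resulting sets $[s,X_{s}]$ cover $[T]$ — equivalently, that every branch of $T$ lands inside one of them — and this is precisely where the axioms A.1, A.2 and A.4 enter, through Lemma \ref{infinity lemma} (every node of an $\vec{\alpha}$-tree extends to a branch) and the preceding Lemma. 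Once the topologies are identified, the remaining work is routine bookkeeping with the Abstract $\vec{\alpha}$-Ellentuck Theorem and the $\sigma$-ideal and $\sigma$-algebra already in place from Corollary \ref{abstract sigma ideal}.
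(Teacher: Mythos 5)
Your proposal follows essentially the same route as the paper's own proof: it likewise identifies the topology generated by $\{[s,X]:X\in\mathcal{R}_{\vec{\alpha}}\}$ with the $\vec{\alpha}$-Ellentuck topology (so the Baire-property and meager notions coincide), proves the two-way equivalence of $\vec{\alpha}$-Ramsey (null) with $\mathcal{R}_{\vec{\alpha}}$-Ramsey (null) by exactly your dictionary between the preceding lemma and the hypothesis, and then concludes from the Abstract $\vec{\alpha}$-Ellentuck Theorem. The only divergence is cosmetic: where the paper simply asserts $S\subseteq T$ in the passage from $\mathcal{R}_{\vec{\alpha}}$-Ramsey to $\vec{\alpha}$-Ramsey, you pass to $S\cap T$ via Lemma \ref{abstract intersection alpha trees}, a slightly more careful rendering of the same step.
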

\begin{proof} 
Note that by the Abstract $\vec{\alpha}$-Ellentuck Theorem $(\mathcal{R},\vec{\alpha}, \le ,r)$ is an $\vec{\alpha}$-Ramsey space.
\begin{claim}
Let $\mathcal{X}\subseteq \mathcal{R}$. $\mathcal{X}$ has the $\vec{\alpha}$-Baire property ( is $\vec{\alpha}$-meager) if and only if $\mathcal{X}$ has the $\mathcal{R}_{\vec{\alpha}}$-Baire property (is $\mathcal{R}_{\vec{\alpha}}$-meager).
\end{claim}
\begin{proof}
 Note that, if $T$ is a $\vec{\alpha}$-tree then for all $s\in T/st(T)$ there exists $X_{s}\in\mathcal{R}_{\vec{\alpha}}$ such that $s\sqsubseteq X_{s}$ and $[s,X_{s}] \subseteq [T].$ Thus for each $\vec{\alpha}$-tree $T$, 
$$[T]=\bigcup_{s\in T/st(T)} [s, X_{s}].$$
 In particular, $\{[s,X] : X\in\mathcal{R}_{\vec{\alpha}}\}$ generates the $\vec{\alpha}$-Ellentuck topology. Since the $\vec{\alpha}$-Ellentuck topology and the $\mathcal{R}_{\vec{\alpha}}$-Ellentuck topology are identical, the notions of the $\vec{\alpha}$-Baire property and the $\mathcal{R}_{\vec{\alpha}}$-Baire property coincide. Likewise, the notions of $\vec{\alpha}$-meager and $\mathcal{R}_{\vec{\alpha}}$-meager coincide.
\end{proof}
\begin{claim}
Let $\mathcal{X}\subseteq \mathcal{R}$. $\mathcal{X}$ is $\vec{\alpha}$-Ramsey ($\vec{\alpha}$-Ramsey null) if and only if $\mathcal{X}$ is $\mathcal{R}_{\vec{\alpha}}$-Ramsey ($\mathcal{R}_{\vec{\alpha}}$-Ramsey null).
\end{claim}
\begin{proof}
($\implies$) Suppose that $\mathcal{X}$ is $\vec{\alpha}$-Ramsey, $X\in\mathcal{R}_{\vec{\alpha}}$ and $[s,X]\not=\emptyset$. By the previous lemma there exists an $\vec{\alpha}$-tree $T$ with $st(T)=s$ such that $[T]=[s,X]$. So there exists an $\vec{\alpha}$-tree $S\subseteq T$ such that $st(S)=st(T)=s$ and either $[S]\subseteq \mathcal{X}$ or $[S]\cap \mathcal{X}=\emptyset$. By hypothesis, there exists $Y\in\mathcal{R}_{\vec{\alpha}}$ such that $\emptyset\not=[s,Y]\subseteq [S]$. Hence, $Y\in[s,X]\cap\mathcal{R}_{\vec{\alpha}}$ and either $[s,Y]\subseteq \mathcal{X}$ or $[s,Y]\cap \mathcal{X}=\emptyset$. In other words, $\mathcal{X}$ is $\mathcal{R}_{\vec{\alpha}}$-Ramsey.

($\impliedby$) Suppose that $\mathcal{X}$ is $\mathcal{R}_{\vec{\alpha}}$-Ramsey and $T$ be an $\vec{\alpha}$-tree. By hypothesis, there exists $X\in \mathcal{R}_{\vec{\alpha}}$ such that $\emptyset\not=[st(T),X]\subseteq [T]$. Hence there exists $Y\in[st(T),X]\cap\mathcal{R}_{\vec{\alpha}}$ such that either $[st(T),Y]\subseteq \mathcal{X}$ or $[st(T),Y]\cap\mathcal{X}=\emptyset$. By the previous lemma there exists an $\vec{\alpha}$-tree $S$ such that $st(S)=st(T)$ and $[S]\subseteq [s,Y]$. Hence, $S\subseteq T$, $st(S)=st(T)$ and either $[S]\subseteq \mathcal{X}$ or $[S]\cap \mathcal{X}=\emptyset$. That is, $\mathcal{X}$ is $\vec{\alpha}$-Ramsey.
\end{proof}

Let $\mathcal{X}\subseteq \mathcal{R}$. The previous two claims and the fact that $(\mathcal{R},\vec{\alpha}, \le ,r)$ is an $\vec{\alpha}$-Ramsey space imply that,
$\mathcal{X}$ is $\mathcal{R}_{\vec{\alpha}}$-Ramsey (is $\mathcal{R}_{\vec{\alpha}}$-Ramsey null) iff $\mathcal{X}$ is $\vec{\alpha}$-Ramsey (is $\vec{\alpha}$-Ramsey null) iff $\mathcal{X}$ has the $\vec{\alpha}$-Baire property (is $\vec{\alpha}$-meager) iff $\mathcal{X}$ has the $\mathcal{R}_{\vec{\alpha}}$-Baire property (is $\mathcal{R}_{\vec{\alpha}}$-meager). Therefore, $(\mathcal{R},\mathcal{R}_{\vec{\alpha}},\le,r)$ is a topological Ramsey space.
\end{proof}

\begin{thm}
Assume that $(\mathcal{R},\le, r)$ satisfies \emph{A.1}, \emph{A.2} and \emph{A.4} and for all $s\in\mathcal{AR}$, $\leftidx{^{*}}{s}=s$. Suppose that the $\mathfrak{c}^{+}$-enlarging property holds.   Let 
$$\mathcal{R}_{\vec{\mathcal{U}}}=\{ X\in \mathcal{R} : \forall s\in \mathcal{AR}\restriction X, \ r_{|s|+1}[s,X]\in\mathcal{U}_{s}\}.$$ If for all $\vec{\mathcal{U}}$-trees $T$ there exists $X\in \mathcal{R}_{\vec{\mathcal{U}}}$ such that $\emptyset\not=[st(T),X]\subseteq [T]$, then $(\mathcal{R}, \mathcal{R}_{\vec{\mathcal{U}}}, \le, r)$ is a topological Ramsey space.
\end{thm}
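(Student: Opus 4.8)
The plan is to reduce this to the immediately preceding theorem by passing from $\vec{\mathcal{U}}$-trees to $\vec{\alpha}$-trees via Proposition \ref{abstract U-tree alpha-tree}. First I would apply that proposition to the fixed sequence $\vec{\mathcal{U}}=\left<\mathcal{U}_{s}:s\in\mathcal{AR}\right>$ to obtain a sequence $\vec{\alpha}=\left<\alpha_{s}:s\in\mathcal{AR}\right>$ of elements of $\leftidx{^{*}}{\mathcal{AR}}\setminus\mathcal{AR}$ with $s\sqsubseteq\alpha_{s}\in\leftidx{^{*}}{\mathcal{AR}_{|s|+1}}$, such that a tree $T$ on $\mathcal{R}$ is a $\vec{\mathcal{U}}$-tree if and only if it is an $\vec{\alpha}$-tree. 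Recall from the proof of that proposition that the $\alpha_{s}$ are chosen so that for every $A\subseteq\{t\in\mathcal{AR}_{|s|+1}:s\sqsubseteq t\}$ one has $A\in\mathcal{U}_{s}\iff\alpha_{s}\in\leftidx{^{*}}{A}$.

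Second, I would check that $\mathcal{R}_{\vec{\mathcal{U}}}=\mathcal{R}_{\vec{\alpha}}$, where $\mathcal{R}_{\vec{\alpha}}$ is the set introduced in the preceding theorem. For $X\in\mathcal{R}$ and $s\in\mathcal{AR}\restriction X$ the set $r_{|s|+1}[s,X]$ is a subset of $\{t\in\mathcal{AR}_{|s|+1}:s\sqsubseteq t\}$, so by the choice of $\alpha_{s}$ we get $r_{|s|+1}[s,X]\in\mathcal{U}_{s}\iff\alpha_{s}\in\leftidx{^{*}}{r_{|s|+1}[s,X]}$; quantifying over $s\in\mathcal{AR}\restriction X$ yields $X\in\mathcal{R}_{\vec{\mathcal{U}}}\iff X\in\mathcal{R}_{\vec{\alpha}}$.

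Third, since the classes of $\vec{\mathcal{U}}$-trees and $\vec{\alpha}$-trees coincide and $\mathcal{R}_{\vec{\mathcal{U}}}=\mathcal{R}_{\vec{\alpha}}$, the hypothesis ``for all $\vec{\mathcal{U}}$-trees $T$ there exists $X\in\mathcal{R}_{\vec{\mathcal{U}}}$ with $\emptyset\not=[st(T),X]\subseteq[T]$'' is verbatim the hypothesis of the preceding theorem. Applying that theorem gives that $(\mathcal{R},\mathcal{R}_{\vec{\alpha}},\le,r)$ is a topological Ramsey space, which, since $\mathcal{R}_{\vec{\alpha}}=\mathcal{R}_{\vec{\mathcal{U}}}$, is exactly the assertion. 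I do not expect a genuine obstacle: the only point to watch is that each $\mathcal{U}_{s}$ is an ultrafilter on precisely the set $\{t\in\mathcal{AR}_{|s|+1}:s\sqsubseteq t\}$ in which $r_{|s|+1}[s,X]$ lives, so that Proposition \ref{abstract U-tree alpha-tree} applies as stated; this is immediate from the definitions, and the theorem is, like the Abstract Ultra-Ellentuck Theorem relative to the Abstract $\vec{\alpha}$-Ellentuck Theorem, simply a restatement of the previous result in the language of $\vec{\mathcal{U}}$-trees.
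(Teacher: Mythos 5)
Your proposal is correct and follows the same route as the paper: invoke Proposition \ref{abstract U-tree alpha-tree} to replace $\vec{\mathcal{U}}$-trees by $\vec{\alpha}$-trees and then cite the preceding theorem on $(\mathcal{R},\mathcal{R}_{\vec{\alpha}},\le,r)$. In fact you are slightly more careful than the paper, which treats the result as a mere restatement, in that you explicitly verify $\mathcal{R}_{\vec{\mathcal{U}}}=\mathcal{R}_{\vec{\alpha}}$ using the property $A\in\mathcal{U}_{s}\iff\alpha_{s}\in\leftidx{^{*}}{A}$ established in the proof of that proposition.
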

\begin{proof}
By Proposition \ref{abstract U-tree alpha-tree} there exists a sequence $\vec{\alpha}=\left<\alpha_{s} : s\in\mathcal{AR}\right>$ such that for all trees $T$ on $\mathcal{R}$, $T$ is an $\vec{\mathcal{U}}$-tree if and only if $T$ is an $\vec{\alpha}$-tree. This result is simply a restatement of the previous theorem in the setting of $\vec{\mathcal{U}}$-trees instead of $\vec{\alpha}$-trees.
\end{proof}

\begin{example}[The Ellentuck Space] Consider the triple  $([\mathbb{N}]^{\infty},\subseteq, r)$.
Let $\mathcal{U}$ be a non-principal ultrafilter on $\mathbb{N}$. By Lemma \ref{infinite set} there exists a nonstandard hypernatural number $\beta$ such that $\mathcal{U} =\{ X\subseteq \mathbb{N} : \beta\in\leftidx{^{*}}{X}\}$. For each $s\in [\mathbb{N}]^{<\infty}$, let $\mathcal{U}_{s}=\{ \{s\cup\{n\} : n\in X \ \& \max(s)<n\} : X\in \mathcal{U}\}$ and $\vec{\mathcal{U}}=\left<\mathcal{U}_{s}:s\in[\mathbb{N}]^{<\infty}\right>$. Note that for all $s\in [\mathbb{N}]^{<\infty}$, $\mathcal{U}_{s}$ is a non-principal ultrafilter on $\{t\in[\mathbb{N}]^{<\infty} : s\sqsubseteq t \ \& \ |t|=|s|+1\}$. Notice  also that for all $X\subseteq \mathbb{N}$,
\begin{align*}
X\in[\mathbb{N}]^{\infty}_{\vec{\mathcal{U}}} &\iff \forall s\in[X]^{<\infty}, \ s\cup \{\beta\} \in \leftidx{^{*}}{r_{|s|+1}[s,X]},\\
&\iff \emptyset \cup \{\beta\} \in \leftidx{^{*}}{r_{1}[\emptyset,X]},\\
&\iff \beta\in \leftidx{^{*}}{X} \iff X\in \mathcal{U}.
\end{align*}
Thus, by the previous theorem, if for all $\vec{\mathcal{U}}$-trees $T$ there exists $X\in \mathcal{U}$ such that $\emptyset\not=[st(T),X]\subseteq[T]$, then $([\mathbb{N}]^{\infty}, \mathcal{U},\subseteq, r)$ is a topological Ramsey space.
\end{example}

\begin{example}[The Milliken Space] Consider the triple $([\mathrm{FIN}]^{\infty},\le,r)$. If $s\in [\mathrm{FIN}]^{<\infty}$ and $x\in \mathrm{FIN}$ with $\max(s)<\min(x)$ then we let $s^{\frown}x$ denote the element of $[\mathrm{FIN}]^{<\infty}$ obtained by concatenating $x$ to the end of $s$. For each $S\in [\mathrm{FIN}]^{\infty}$, let $FU(S)$ denote the collection of all finite unions of elements of the sequence $S$. Then $S\le T$ if and only if for all $i\in\mathbb{N}$, $s_{i}\in FU(T)$.

Let $\mathcal{U}$ be an ultrafilter on $\mathrm{FIN}$ such that for all $s\in [\mathrm{FIN}]^{<\infty}$, $\{\{s^{\frown}x : x\in X \ \& \ \max(s)<\min(x)\}: X\in\mathcal{U}\}$ is a non-principal ultrafilter on $\{t\in[\mathrm{FIN}]^{<\infty} : s\sqsubseteq t \ \& \ |t|=|s|+1\}$. Lemma \ref{need in milliken example} implies that there exists $\beta\in\leftidx{^{*}}{\mathrm{FIN}}$ such that $\mathcal{U} =\{ X\subseteq \mathrm{FIN} : \beta\in \leftidx{^{*}}{X}\}$. Note that $\min(\beta)$ is a nonstandard hypernatural number because otherwise $\{\{\min(\beta)\}^{\frown}x : x\in X \ \& \ \min(\beta)<\min(x)\}: X\in\mathcal{U}\}$ would not be an ultrafilter on $\{t\in[\mathrm{FIN}]^{<\infty} : \{\min(\beta)\}\sqsubseteq t \ \& \ |t|=2\}$. For all $s\in[\mathrm{FIN}]^{<\infty}$, let $\mathcal{U}_{s} = \{\{s^{\frown}x : x\in X \ \& \ \max(s)<\min(x)\}: X\in\mathcal{U}\}$ and $\vec{\mathcal{U}}=\left<\mathcal{U}_{s} : s\in[\mathrm{FIN}]^{<\infty}\right>$. Notice that for all $S\in [\mathrm{FIN}]^{\infty}$,
\begin{align*}
S\in [\mathrm{FIN}]^{\infty}_{\vec{\mathcal{U}}} &\iff \forall s\in [\mathrm{FIN}]^{<\infty}\restriction S, \ s^{\frown}\beta \in \leftidx{^{*}}{r_{|s|+1}[s, S]},\\ 
&\iff \emptyset^{\frown}\beta \in \leftidx{^{*}}{r_{1}[\emptyset, S]},\\ 
&\iff \beta \in \leftidx{^{*}}{FU(S)} \iff FU(S)\in \mathcal{U}.
\end{align*}
By previous theorem, if for all $\vec{\mathcal{U}}$-trees $T$ there exists $S\in[\mathrm{FIN}]^{\infty}$ such that $FU(S)\in\mathcal{U}$ and $\emptyset\not=[st(T),S]\subseteq [T]$, then $([\mathrm{FIN}]^{\infty}, \{S\in[\mathrm{FIN}]^{\infty}: FU(S)\in\mathcal{U}\}, \le, r)$ is a topological Ramsey space. 

In \cite{MijaresSelective}, Mijares introduced a notion of selective ultrafilter corresponding to a topological Ramsey space satisfying axioms A.1-A.4. Using the theory of forcing Mijares showed that the existence of these selective ultrafilters is consistent with ZFC. In fact, the existence of such selective ultrafilters follows from the continuum hypothesis or Martin's Axiom. For the Ellentuck space and Milliken space theses selective ultrafilters give rise to sequences $\vec{\mathcal{U}}$ such that for all $\vec{\mathcal{U}}$-trees $T$ there exists $X\in \mathcal{R}_{\vec{\mathcal{U}}}$ such that $\emptyset\not=[st(T),X]\subseteq [T]$. In particular, the existence of the topological Ramsey spaces $([\mathrm{FIN}]^{\infty}, \{S\in[\mathrm{FIN}]^{\infty}: FU(S)\in\mathcal{U}\}, \le, r)$ and $([\mathbb{N}]^{\infty}, \mathcal{U},\subseteq, r)$ mentioned in the previous two examples is consistent with ZFC. Thus, their existence is also consistent with the Alpha Theory. 

Mijares in \cite{MijaresSelective}, shows that for any topological Ramsey space $(\mathcal{R},\le,r)$ satisfying A.1-A.4 the forcing notion which Mijares calls almost-reduction forces the existence of a selective ultrafilter $\mathcal{U}$ for $\mathcal{R}$. Moreover, he use a combinatorial forcing argument to show that $(\mathcal{R},\mathcal{U},\le,r)$ forms a topological Ramsey space. When restricted to the Milliken space and Ellentuck space the previous two examples show a different proof of this fact which use the Abstract Ultra-Ellentuck Theorem. However, in the general case it is unknown if the forcing gives rise to a sequence $\vec{\mathcal{U}}$ such that for all $\vec{\mathcal{U}}$-trees $T$ there exists $X\in \mathcal{R}_{\vec{\mathcal{U}}}$ such that $\emptyset\not=[st(T),X]\subseteq [T]$. If it is the case then an argument similar the previous two examples will hold for any topological Ramsey space satisfying A.1-A.4. 

More recently, Di Prisco, Mijares and Nieto in \cite{DMN} have developed an abstract version of the local Ramsey theory from the previous section. They extend the notion of a selective ultrafilter for a Ramsey space to the more general notion of semiselective coideal corresponding to the Ramsey space. These spaces give rise to Ramsey spaces that are not necessarily topological.  They also give a slightly different form of the definition of selective ultrafilter introduced by Mijares in \cite{MijaresSelective}. They show that with the continuum hypothesis, Martin's Axiom or forcing with almost-reduction the existence of such selective ultrafilters is consistent with ZFC. 

In \cite{DT1,DT2,Tr1,Tr2}, another notion of selective ultrafilter for the space is used; in fact, in \cite{DMN} Di Prisco, Mijares and Nieto mention that these definitions fail A.3 of their definition. Trujillo in \cite{Tr2} has shown that there is a topological Ramsey space $\mathcal{R}$ and an ultrafilter that is selective with respect $\mathcal{R}$, with the varying definition, such that $(\mathcal{R},\mathcal{U},\le,r)$ is not a topological Ramsey space (more precisely, he showed that the selective ultrafilter is not Ramsey for the space). This example, with the definition of selective from \cite{DT1,DT2,DMT,Tr1,Tr2}, would imply that the answer to the next question is false. However, when restricted to the Millken space or the Ellentuck space it can be shown that this definition is in fact enough to show that $(\mathcal{R},\mathcal{U},\le,r)$ is a topological Ramsey space.
\begin{question}
Let $(\mathcal{R},\le,r)$ be a topological Ramsey space satisfying A.1-A.4. Suppose that $\mathcal{U}\subseteq \mathcal{R}$ a selective ultrafilter with respect to $\mathcal{R}$ as defined in \cite{DMN}. For each $s\in\mathcal{AR}$, let $\mathcal{U}_{s}$ be the ultrafilter on $\{t\in \mathcal{AR}_{|s|+1}: s\sqsubseteq t\}$ generated by $\{ r_{|s|+1}[s,X] : X\in \mathcal{U}\}$ and $\vec{\mathcal{U}}=\left<\mathcal{U}_{s} :s\in\mathcal{AR}\right>$. Is it the case that for all $\vec{\mathcal{U}}$-trees $T$ there exists $X\in \mathcal{R}_{\vec{\mathcal{U}}}$ such that $\emptyset\not=[st(T),X]\subseteq [T]$?
\end{question}

If the answer to the previous question is yes, then the triple $(\mathcal{R},\mathcal{R}_{\vec{\mathcal{U}}},\le,r)$ will form a topological Ramsey space. However, if the answer is no then there is a topological Ramsey space $(\mathcal{R},\mathcal{R}_{\vec{\mathcal{U}}},\le,r)$ where there exists an $\vec{\mathcal{U}}$-tree $T$ such that for all $X\in \mathcal{R}_{\vec{\mathcal{U}}}$, $\emptyset=[st(T),X]$ or $[st(T),X]\not\subseteq [T]$.
\end{example}

\bibliographystyle{plain}
\bibliography{AlphaRamseyTheory1}

\end{document}